\pgfplotsset{compat=1.18}
\newtcolorbox{greybox}{colframe=black,colback=gray!10,coltext=black,boxrule=0.5mm,arc=2mm,width=\linewidth,left=6pt,right=6pt,top=6pt,bottom=6pt,breakable,enhanced
}
\definecolor{caca}{rgb}{0.137,0.3,0.541}
\newtheorem*{thmA}{Theorem A}
\newcommand{\refThmA}{A\space}
\newcommand{\refThmAnospace}{A}
\newtheorem*{thmB}{Theorem B}
\newcommand{\refThmB}{B\space}
\newcommand{\refThmBnospace}{B}
\newtheorem*{thmC}{Theorem C}
\newcommand{\refThmCnospace}{C}
\newtheorem*{thmD}{Theorem D}
\newcommand{\refThmD}{D\space}
\newcommand{\refThmDnospace}{D}
\newtheorem*{thmE}{Theorem E}
\newcommand{\refThmE}{E\space}
\newtheorem*{thmF}{Theorem F}
\newcommand{\refThmF}{F\space}
\newcommand{\refThmFnospace}{F}
\newtheorem*{thmD'}{Theorem D'}
\newtheorem{thm}{Theorem}[section]
\newtheorem{prop}[thm]{Proposition}
\newtheorem{cor}[thm]{Corollary}
\newtheorem*{corstar}{Corollary}
\newtheorem{lem}[thm]{Lemma}
\newtheorem{Claim}[thm]{Claim}
\newtheorem{dfn}[thm]{Definition}
\newtheorem{rem}[thm]{Remark}
\newtheorem{remark}[thm]{Remark}
\newcommand{\R}{\mathbb{R}}
\newcommand{\RR}{\mathbb{R}}
\newcommand{\area}{\operatorname{Area}}
\newcommand{\BB}{\mathbf{}{B}}
\newcommand{\ee}{\mathbf{e}}
\newcommand{\oomega}{\boldsymbol{\omega}}
\newcommand{\vv}{\mathbf{v}}
\newcommand{\Hh}{\mathbf{\mathscr{H}}}
\newcommand{\DriftL}{\Delta + \ee_3\cdot\nabla}
\newcommand{\Wedge}{\mathsf{V}^+}
\DeclareMathOperator{\width}{width}
\DeclareMathOperator{\genus}{genus}
\DeclareMathOperator{\Hess}{Hess}
\DeclareMathOperator{\dist}{dist} 
\DeclareMathOperator{\diverg}{div} 
\DeclareMathOperator{\osc}{osc}
\DeclareMathOperator{\Graph}{Graph}
\title[Uniqueness of tangent planes and  (non-)removable singularities]{Uniqueness of tangent planes\\ and (non-)removable singularities \\at infinity for collapsed translators}
\date{\today}
\author[E.S. Gama]{Eddygledson S. Gama}
\address[Gama]{
  Departamento de Matem\'atica,
   Centro de Ci\^encias Exatas e da Natureza, Universidade Federal Pernambuco, Recife-PE, 50670-901, Brazil.
}
\email{eddygledson.gama@ufpe.br}
\author[F. Mart\'in]{Francisco Mart\'\i{}n}
\address[Mart\'in]{
  Departamento de Geometr\'\i{}a y Topolog\'\i{}a, 
  Instituto de Matem\'aticas IMAG,
  Universidad de Granada,
  18071 Granada, Spain.
}
\email{fmartin@ugr.es}
\author[N.M. M{\o}ller]{Niels M. M{\o}ller}
\address[M{\o}ller]{
  Copenhagen Centre for Geometry and Topology (GeoTop), Department of Mathematical Sciences,
  University of Copenhagen, 
  DK-2100 Copenhagen, Denmark.
}
\email{nmoller@math.ku.dk}
\subjclass[2020]{53E10, 53A10, 35K55, 35J93 (49Q05, 53C42).}
\keywords{Mean curvature flow, translators, self-translating solitons, minimal surfaces, partial differential equations, removable singularities, drift Laplacians, Yukawa equation}
\begin{document}

\newgeometry{top=20mm, left=30mm, right=30mm, bottom=30mm}

\begin{abstract}
We show that mean curvature flow translators may exhibit non-removable
singularities at infinity, due to jump discontinuities in their asymptotic profiles, and that
oscillation can persist so as to yield a continuum of subsequential limit tangent planes. Nonetheless, we prove that as time $t\to\pm \infty$, any finite entropy, finite genus, embedded, collapsed translating soliton (not necessarily mean convex) in $\mathbb{R}^3$ converges to a uniquely determined collection of planes.

This requires global analysis of quasilinear soliton equations with non-perturbative drifts, which we analyze via sharp non-standard elliptic decay estimates for the drift Laplacian, implying improvements on the Evans-Spruck and Ecker-Huisken estimates in the soliton setting, and exploiting a link from potential theory of the Yukawa equation to heat flows with $L^\infty$-data on non-compact slice curves of these solitons.

The structure theorem follows: such solitons decompose at infinity into standard regions asymptotic to planes or grim reaper cylinders. As one application, we classify collapsed translators of entropy two with empty limits as $t\to +\infty$.

\end{abstract}

\maketitle

\setcounter{tocdepth}{1}

\tableofcontents

\section{Introduction}
  Uniqueness of tangent flows is a key issue in the study of geometric evolution equations. This means that whether the limiting behavior of the flow, for example near a singularity, is independent of the subsequences chosen. Such properties are notoriously difficult to establish in general, in particular in the case of translating solitons, used to model Type II singularities in the mean curvature flow, which can exhibit complicated asymptotic behavior. Recall that a surface \( \Sigma^2 \subset \mathbb{R}^{3} \) is a translating soliton in the direction $\mathbf{v}\in\mathbb{R}^3$ if the $1$-parameter family of translates of the surface is a mean curvature flow. This means that, for $\mathbf{H}$ the mean curvature vector,
\[
\R \ni t \mapsto\Sigma_t := \Sigma + t\mathbf{v}\quad \text{solves}\quad (\partial_tx)^\perp = \mathbf{H}.
\]

In this paper, we resolve the problem of limits at infinity for collapsed translating solitons, i.e. those contained in slabs of finite width, proving that convergence is independent of the subsequences chosen, as well as establishing an asymptotic structure theorem. We emphasize that our results do not rely on curvature assumptions such as mean convexity.

Our first goal is to show that the flow uniquely converges at infinite times to a fixed finite union of vertical planes (see Definition \ref{def:entropy} for the definition of the entropy):

\begin{thmA}[Existence of limits as time $t\to\pm\infty$]  
     Let \(\Sigma^2 \subset \mathbb{R}^3\) be a translating soliton which is complete, collapsed, embedded, of finite genus and finite entropy. Then, as \( t \to \mp\infty \), the backwards and forwards flows \( \Sigma + t \ee_3 \) converge locally smoothly to a finite union of vertical planes, with multiplicities determined by the large-scale structure of \(\Sigma\):

\begin{itemize}
    \item[(1)]  As \( t \to -\infty \), \( \Sigma + t \ee_3 \) converges to a finite collection of vertical planes, whose number, counted with multiplicity, equals the entropy \( \lambda(\Sigma) \).
    \item[(2)] As \( t \to +\infty \), \( \Sigma + t \ee_3 \) converges to another finite (possibly empty) collection of vertical planes, whose number, counted with multiplicity, corresponds to a structural invariant of \(\Sigma\) related to its asymptotic behavior.
\end{itemize}
\end{thmA}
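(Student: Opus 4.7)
The plan is to treat existence and identification of subsequential limits separately from uniqueness of the limit, using Brakke compactness and translator rigidity for the former and sharp drift-Laplacian estimates for the latter, with Huisken monotonicity then matching multiplicities to $\lambda(\Sigma)$. For the existence step, I fix a divergent sequence $t_k\to\mp\infty$ and consider $\Sigma_{t_k}=\Sigma+t_k\ee_3$ as time slices of the eternal MCF generated by $\Sigma$. Finite entropy and Brakke compactness extract a subsequence converging as integral Brakke flows to an eternal limit $\mathcal{M}^\infty$; because $(\Sigma_t)_t$ is rigidly moved by translation in $\ee_3$, this limit is automatically translation-invariant in $\ee_3$, so each time slice is stationary, i.e.\ a minimal integer rectifiable varifold. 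Using the slab constraint inherited from $\Sigma$ being collapsed together with the entropy bound and a Bernstein-type rigidity for minimal varifolds in a slab invariant under a line, $\mathcal{M}^\infty$ must be a finite weighted union $\sum_i m_i\,P_i$ of vertical planes. Smooth local convergence away from the discrete intersection set follows from Allard regularity combined with the curvature estimates available in the collapsed, finite-genus setting.

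The heart of the theorem is then independence of the subsequence. I would decompose $\Sigma\cap\{x_3>R\}$ (respectively $\{x_3<-R\}$) for $R\gg 1$ into finitely many ends, each graphical over its associated asymptotic vertical plane $P_i$, using the subsequential limit just established together with sheeting theorems in the finite-entropy, embedded, finite-genus setting. The graph function $u_i$ of each such end satisfies the quasilinear translator PDE, which in this near-planar regime is a perturbation of $(\DriftL)\,u_i = Q_i(u_i,\nabla u_i,\nabla^2 u_i)$ with a nonlinear error $Q_i$ that is quadratic in $(\nabla u_i,\nabla^2 u_i)$. Uniqueness of $\Pi:=\sum_i m_i\,P_i$ is then equivalent to showing that each $u_i$ has a well-defined limit at infinity on its planar domain, and in particular that its tilt direction does not oscillate. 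This is precisely where the novel analysis of the paper enters: the sharp non-standard elliptic decay estimates for $\DriftL$, together with the Yukawa-potential identities that arise after a conjugation of the form $u\mapsto e^{-x_3/2}u$, and the heat-flow-with-$L^\infty$-data representation on slice curves advertised in the abstract, should provide enough pointwise and integral control on $u_i$ to exclude oscillation and to pin down $P_i$ and $m_i$ intrinsically in terms of $\Sigma$.

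Finally, the multiplicities are matched to intrinsic invariants. For (1), Huisken monotonicity gives that the Gaussian density of the eternal flow $(\Sigma_t)$ at any fixed point converges to $\lambda(\Sigma)$ as $t\to-\infty$; since a union of vertical planes has density equal to its total multiplicity, $\sum_i m_i=\lambda(\Sigma)$. For (2) the corresponding count is the structural invariant naturally produced by the graphical decomposition of the downward ends of $\Sigma$, and it may vanish, as for the grim reaper cylinder, which has $x_3\geq 0$ and hence empty downward limit. The principal obstacle I expect is the uniqueness step: oscillation of $u_i$ is genuinely possible a priori -- this is the phenomenon the title flags -- and excluding it in the non-perturbative drift regime is what forces one to develop the refined drift-Laplacian and Yukawa-potential estimates, since a naive Simon--\L{}ojasiewicz approach fails because the drift $\ee_3\cdot\nabla$ is of the same order as the Laplacian at infinity and cannot be absorbed as a lower-order perturbation.
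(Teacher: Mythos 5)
Your high-level philosophy matches the paper's: isolate uniqueness as the heart of the problem, note that \L{}ojasiewicz-type arguments fail because the drift is non-perturbative, and propose the sharp drift-Laplacian decay estimates together with the Yukawa/heat-kernel representation on slice curves as the replacement machinery. Your use of Huisken monotonicity and entropy lower-semicontinuity to identify $\sum_i m_i = \lambda(\Sigma)$ for part (1) is a reasonable alternative to the paper's route, which instead counts wings via Theorem \ref{lambda-formula} and Proposition \ref{eq:decomposition-1}; both give the same multiplicity count, though the paper's version makes the part-(2) invariant (namely $\omega_P(\Sigma)/2$) explicit with no extra work.

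The genuine gap in your proposal is the decomposition step. You propose to split $\Sigma$ as graphs over $\{x_3 > R\}$ and $\{x_3 < -R\}$, and to argue that each graph function $u_i$ has a unique limit on its half-plane domain. But for the downward direction this is false: Theorem \ref{WobbleEksempel} constructs a \emph{bounded} translator graph over the lower half-plane which oscillates forever as $x_3\to -\infty$, so the sharp drift-Laplacian estimates alone cannot pin down the limit on a lower half-plane. What saves the argument in the paper is that the downward pieces $\Sigma^{\mathrm{down}}_i$ do not sit as graphs over a lower half-plane; by Theorem \ref{structure-thm} they embed in larger graphs over U-shaped domains $H$ whose two vertical arms run \emph{upward} and are contained in upward wedge graphs. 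This is exactly what gives sideways limits (Propositions \ref{sideways_limits} and \ref{cor:remov_sing}), which in turn feed into the downward-limit result Proposition \ref{inhomog_down}, whose hypothesis is not merely boundedness but the existence of sideways limits. Without this global structure the downward uniqueness would genuinely fail, so your decomposition of the ends as half-plane graphs must be replaced by the wedge/U-shaped decomposition of Section \ref{preliminaries}, and the proof becomes a two-stage argument (exponential upward limits $\Rightarrow$ sideways limits $\Rightarrow$ downward limits) rather than a per-end local analysis.
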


While Theorem \refThmA describes all possible asymptotics with respect to time, viewing instead $\Sigma$ as a static object, there are many other directions to escape to infinities along the surface, which we study in our second main theorem (see Figure \ref{fig:Xi1}).

\begin{thmB}[Removable singularities at $\infty$]
Let $\Sigma$ be as in Theorem \refThmAnospace.
Limits along all rays exist in directions $\mathbb{S}^1:=\{(0,x_2,x_3) \in \R^3  \: : \: x_2^2+x_3^2=1\}$,
\begin{equation}
\forall \oomega\in\mathbb{S}^1:\quad
\exists\lim_{r \nearrow +\infty}\left(\Sigma - r\oomega+\digamma_{\oomega}(r) \ee_3\right):=\Sigma_\infty(\oomega),
\end{equation}
where $\Sigma_\infty(\oomega)$ is a (possibly empty) finite multiset of planes and (possibly tilted) grim reaper cylinders with multiplicity and $\digamma_{\oomega}(r)$ a smooth remainder with $\displaystyle\lim_{r\to+\infty}\digamma'_{\oomega}(r)=0$. Hence the limit map \[\mathbb{S}^1\ni\oomega\mapsto \Sigma_\infty(\oomega)\] is a well-defined multiset-valued map.

There exists a (possibly empty) finite subset $\mathscr{R}(\Sigma)\subset\mathbb{S}^1\setminus\{\pm\ee_3\}$ such that:
\begin{itemize}
    \item[i)] $\oomega\:\mapsto\:\Sigma_\infty(\oomega)$ is locally constant for $\oomega\in\mathbb{S}^1\setminus\left(\mathscr{R}\:\cup\:\right\{-\ee_3\})$.
    \item[ii)] $\digamma_{\oomega}\equiv0$ , $\forall\oomega\in\mathbb{S}^1\setminus\mathscr{R}(\Sigma).$
\end{itemize}
\end{thmB}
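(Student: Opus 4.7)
The plan is to analyze every direction $\oomega\in\mathbb{S}^1$ by reducing the problem to a structural description of the ends of $\Sigma$. My strategy has three phases: (i) decompose $\Sigma$ outside a large compact set into finitely many ends of two model types; (ii) for each $\oomega$, determine $\Sigma_\infty(\oomega)$ together with the appropriate height correction $\digamma_{\oomega}$; (iii) extract $\mathscr{R}(\Sigma)$ and verify the two bullet conditions.

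\textbf{Structural decomposition.} Using finite genus for topological control and finite entropy for quantitative area bounds, I would first show that for some large $R_0$ the complement $\Sigma\setminus B_{R_0}$ has finitely many annular components $E_1,\ldots,E_N$. On each $E_i$, I would invoke the sharp non-standard elliptic decay estimates for the drift Laplacian developed earlier in the paper---which improve Evans-Spruck and Ecker-Huisken in the soliton setting---to constrain the soliton equation to one of two asymptotic models. Combined with embeddedness and slab containment, this should yield the dichotomy: each $E_i$ is asymptotic either to a vertical plane (planar end) or to a possibly tilted grim reaper cylinder (grim end). One then defines $\mathscr{R}(\Sigma)$ to be the finite set of critical asymptotic directions $\oomega\in\mathbb{S}^1\setminus\{\pm\ee_3\}$ arising from the grim ends---precisely those directions along which the cylinder's profile exhibits a non-trivial vertical drift that must be subtracted off to see the limit.

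\textbf{Identification of $\Sigma_\infty(\oomega)$ and the two bullets.} For $\oomega\notin\mathscr{R}\cup\{\pm\ee_3\}$, the ends that remain visible under $\Sigma - r\oomega$ as $r\to\infty$ should be exactly the planar ends together with any grim ends whose axes translate the surface back to itself under $-r\oomega$; in either case no vertical drift accumulates, so $\digamma_{\oomega}\equiv 0$ works and the limit consists of vertical planes and grim reaper cylinders with appropriate multiplicity. The contributing collection of ends should be locally constant on each connected component of $\mathbb{S}^1\setminus(\mathscr{R}\cup\{-\ee_3\})$, giving local constancy. Local constancy extends across $+\ee_3$ by matching with Theorem A at $t\to-\infty$, whereas near $-\ee_3$ the forward-time limit may discard some ends, producing the asymmetry in the statement. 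For $\oomega\in\mathscr{R}$, a grim end becomes visible only after subtracting the leading height correction $\digamma_{\oomega}(r)$ provided by the asymptotic expansion of Step 1; the transversality $\oomega\neq\pm\ee_3$ forces $\digamma'_{\oomega}(r)\to 0$ (sublinear height growth).

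\textbf{Main obstacle.} The hardest step will be the end classification in Step 1. The soliton equation is quasilinear with a non-perturbative drift in $\ee_3$, so standard linearization arguments do not apply directly. The key should be the paper's link from potential theory of the Yukawa equation to heat flows with $L^\infty$ data on slice curves of the translator, which is designed to deliver both the uniqueness and the explicit form of the admissible asymptotic models at each end; the finite-entropy hypothesis is essential to bound the number of ends, and the slab hypothesis restricts the allowable model types. Once Step 1 is in place, the identification and local constancy follow by a direct geometric tracking argument exploiting the finiteness of the end count and the connectedness of components of $\mathbb{S}^1\setminus(\mathscr{R}\cup\{-\ee_3\})$.
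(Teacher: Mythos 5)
Your outline reproduces the right overall skeleton (finitely many ends of two types, exceptional directions coming from the grim ends, local constancy and $\digamma\equiv 0$ elsewhere), but there are two genuine gaps between what you sketch and a proof.

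First, the dichotomy of ends into planar and grim-reaper types is not something the paper derives from the drift-Laplacian estimates; it is imported wholesale from the wing decomposition of \cite{entropy} (Propositions \ref{bigraph}, \ref{eq:decomposition-1}, Theorem \ref{structure-thm}), which is based on soft Morse-theoretic and level-set arguments, not on PDE decay. What the new estimates supply is not the decomposition into wings but the \emph{asymptotic uniqueness} of the limit attached to each wing: that a grim-reaper wing has $f_W'(s)\to \pm s(b)$ (Theorem \ref{prop:Asymp-GR}, proved with the barrier functions $w_\alpha$ and the exponential convergence Theorem \ref{thm:exponential}), and that a planar wing has well-defined sideways limits (Proposition \ref{sideways_limits}, Proposition \ref{cor:remov_sing}, which hinge on integrating $\partial_3 u$ from above via the upward-sausage Hessian bound \eqref{Hess_up_improved}). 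Your ``structural decomposition'' phase conflates these two ingredients, and as written would require reproving the wing decomposition by PDE methods alone, which is not carried out and would be much harder.

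Second, and more critically, the proposal never actually establishes the \emph{existence} of the ray limits $\lim_{r\to+\infty}(\Sigma-r\oomega+\digamma_\oomega(r)\ee_3)$. You assert that in each direction the ``contributing ends'' converge, but this is precisely the content of the hard analytical work: along a non-vertical ray the surface is approached through a moving region where neither the domain nor the boundary data are controlled a priori, and the drift Laplacian has no global $L^\infty$ maximum principle (Remark \ref{rem:MP_fails}), so convergence cannot be taken for granted. The paper resolves this by combining (i) exponential convergence on upward wedges (Theorem \ref{thm:exponential}); (ii) sideways limits for the $U$-shaped downward domains (Proposition \ref{sideways_limits}, which crucially exploits the slanted half-plane geometry rather than a bare left/right half-plane); (iii) the wing-by-wing limit $W-\gamma(s)\to$ tilted grim reaper (Corollary \ref{eq:corB}) and the ``away from the spine'' limit to two planes (Proposition \ref{conseq.}); and (iv) the growth comparison $f_W(s)\sim s(b)\,s$ (Proposition \ref{growth-function}) to decide which wings appear or disappear as $\oomega$ crosses an angle $\varrho_j$. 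Your sketch of ``geometric tracking'' skips exactly these verifications, and without them the claim that $\Sigma_\infty(\oomega)$ is well-defined and locally constant is an unproven assertion rather than a consequence of the decomposition.

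A smaller point: the statement that $\digamma'_{\oomega}(r)\to 0$ is ``forced by transversality'' and ``sublinear height growth'' is not a proof; in the paper it comes from the explicit identity $\digamma_{\oomega_j}(r)=-(f_W(x_2)-s_j x_2)$ with $r=x_2\sec\varrho_j$, combined with $f_W'(s)\to s_j$ from Theorem \ref{prop:Asymp-GR}.
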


Our third main result provides a geometrically sharp structure theorem for the solitons, demonstrating that all asymptotic regions are standard model pieces. This justifies, in a rigorous sense, the intuition that every such example can be viewed as assembled from planes and (possibly tilted) grim reaper cylinders.

\begin{thmC}[Asymptotic structure theorem]
In Theorem B, the following asymptotic properties hold:
\begin{itemize}
        \item[(i)] $\forall\oomega\in \mathbb{S}^1\setminus\mathscr{R}(\Sigma):\:$ the limit $\Sigma_\infty(\oomega)$ is either empty or consists of planes.
        
    \item[(ii)] $\forall\oomega\in\mathscr{R}(\Sigma):\:$ the limit $\Sigma_\infty(\oomega)$ consists of at least one (possibly tilted) grim reaper cylinders tangent to $\oomega$ and a (possibly empty) collection of planes. 
\end{itemize}
In both situations, the limits (planes and grim reapers) may appear with some multiplicity.
\end{thmC}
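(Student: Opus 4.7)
The plan hinges on a translation-invariance property of the asymptotic limits $\Sigma_\infty(\oomega)$ together with the classification from Theorem \refThmBnospace{} of admissible model pieces (planes and possibly tilted grim reaper cylinders). The key preliminary observation is that for every $\oomega\in\mathbb{S}^1$ and every $s\in\R$,
\[
\Sigma_\infty(\oomega) - s\oomega \;=\; \Sigma_\infty(\oomega).
\]
This follows by re-indexing $r\mapsto r+s$ in the defining limit and invoking $\digamma'_{\oomega}(r)\to 0$, which forces $\digamma_{\oomega}(r+s) - \digamma_{\oomega}(r)\to 0$. Consequently, every plane appearing in $\Sigma_\infty(\oomega)$ must contain the direction $\oomega$, and every (tilted) grim reaper cylinder appearing in $\Sigma_\infty(\oomega)$ must have its axis parallel to $\oomega$.

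For part (i), fix $\oomega\in\mathbb{S}^1\setminus\mathscr{R}(\Sigma)$. Finiteness of $\mathscr{R}(\Sigma)$ permits choosing $\oomega'\in\mathbb{S}^1\setminus(\mathscr{R}(\Sigma)\cup\{\pm\oomega\})$ arbitrarily close to $\oomega$, so that $\oomega,\oomega'$ are linearly independent in the ambient plane $\{0\}\times\R^2$. Local constancy from Theorem \refThmBnospace{}(i) gives $\Sigma_\infty(\oomega')=\Sigma_\infty(\oomega)$, and applying the translation-invariance above to both vectors shows that this common limit is invariant under the entire $2$-plane $\mathrm{span}(\oomega,\oomega')$, i.e., the full $(x_2,x_3)$-plane. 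Since any tilted grim reaper cylinder is invariant only along its $1$-dimensional axis, no such cylinder can survive this constraint, and $\Sigma_\infty(\oomega)$ reduces to a multiset of planes, necessarily of the form $\{x_1=c\}$ by the same invariance.

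For part (ii), fix $\oomega\in\mathscr{R}(\Sigma)$ and argue by contradiction: if no grim reaper cylinder with axis $\oomega$ were present in $\Sigma_\infty(\oomega)$, then by the preliminary observation and the classification $\Sigma_\infty(\oomega)$ would reduce to a multiset of planes of the form $\{x_1=c_i\}$. Such a planar configuration produces no vertical divergence of $\Sigma-r\oomega$ as $r\to\infty$ (since these planes are invariant under all horizontal translations), forcing $\digamma_{\oomega}\equiv 0$; moreover, invoking the sharp non-standard elliptic decay estimates for the drift Laplacian that underpin Theorems \refThmAnospace--\refThmBnospace, this plane configuration would persist under small perturbations of $\oomega$, propagating local constancy across $\oomega$. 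Both conclusions together contradict $\oomega\in\mathscr{R}(\Sigma)$, so some tilted grim reaper cylinder must appear in $\Sigma_\infty(\oomega)$, with axis $\oomega$ by the preliminary observation. The main obstacle is precisely this last step: the translation-invariance setup and Step (i) are essentially algebraic once Theorem \refThmBnospace{} is granted, but pinning $\mathscr{R}(\Sigma)$ down to exactly the grim-reaper-axis directions requires the finer potential-theoretic input (Yukawa equation and drift Laplacian) that distinguishes bounded planar asymptotics from the logarithmic vertical drift signature characteristic of grim reaper cylinders.
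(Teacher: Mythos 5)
Your approach differs substantially from the paper's. The paper's proof of Theorem~\refThmCnospace{} is essentially a pointer back to the explicit construction carried out in the proof of Theorem~\refThmBnospace: there $\mathscr{R}(\Sigma)$ is \emph{defined} as the set of unit vectors tangent at infinity to some grim reaper type wing (via Theorem~\ref{prop:Asymp-GR} and Corollary~\ref{eq:corB}), and $\Sigma_\infty(\oomega)$ is computed directly, sector by sector, in terms of the wing structure, using Theorem~\ref{thm-A} and Propositions~\ref{sideways_limits}, \ref{conseq.}, \ref{growth-function}. You instead try to derive Theorem~\refThmCnospace{} from the \emph{statement} of Theorem~\refThmBnospace{} via a translation-invariance observation. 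That observation is correct and the derivation of part (i) from it is essentially sound for $\oomega\in\mathbb{S}^1\setminus\bigl(\mathscr{R}(\Sigma)\cup\{-\ee_3\}\bigr)$; but it does not cover $\oomega=-\ee_3$. Local constancy in Theorem~\refThmBnospace(i) holds only on $\mathbb{S}^1\setminus\bigl(\mathscr{R}\cup\{-\ee_3\}\bigr)$, so you cannot pick a nearby $\oomega'$ with $\Sigma_\infty(\oomega')=\Sigma_\infty(-\ee_3)$; indeed the map $\oomega\mapsto\Sigma_\infty(\oomega)$ is typically discontinuous at $-\ee_3$ (Theorem~\refThmDnospace). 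For that direction you need to invoke Theorem~\ref{thm-A}(2) or Proposition~\ref{inhomog_down} directly, not local constancy.

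The more serious gap is in part (ii), as you yourself flag. The statement of Theorem~\refThmBnospace{} does not uniquely determine $\mathscr{R}(\Sigma)$: any finite set $\mathscr{R}'\supseteq\mathscr{R}(\Sigma)$ disjoint from $\{\pm\ee_3\}$ still satisfies conditions (i) and (ii) of that theorem, yet Theorem~\refThmCnospace(ii) would fail for any $\oomega\in\mathscr{R}'\setminus\mathscr{R}(\Sigma)$. So part (ii) is genuinely new information relative to the black-boxed Theorem~\refThmBnospace{} and cannot be derived from it; one needs the concrete identification of $\mathscr{R}(\Sigma)$ with the asymptotic grim reaper tangent directions, which is precisely what the proof of Theorem~\refThmBnospace{} supplies. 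Your contradiction chain (``the limit is only planes, hence $\digamma_\oomega\equiv 0$, hence local constancy propagates across $\oomega$'') does not close this: the first implication is asserted without justification (the vanishing of $\digamma_\oomega$ concerns the rate at which $\Sigma$ approaches the limit, not a property of the limit set itself), and the second is hand-waved via an appeal to decay estimates that is too vague to carry the argument.
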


The quasilinear equation involved has a drift term which does not decay at infinity, and admits solutions which exhibit a very complicated asymptotic behavior. We show:

\begin{thmD}[Non-removable singularities]
The removable singularities property in Theorem B is sharp: The map $\omega\mapsto\Sigma_\infty(\omega)$ is discontinuous at all points of $\mathscr{R}(\Sigma)$ and may exhibit jump discontinuities at $\omega_0 = -\ee_3$, with respect to Hausdorff distance. The latter behavior occurs e.g. when $\Sigma$ is one of the so-called ``pitchfork'' solitons.

Allowing $\partial\Sigma\neq\emptyset$, more complicated non-removability of singularities can occur:\\ There exist collapsed, complete with boundary $\Sigma$, which exhibit persistent oscillations at infinity, so that as time $t\nearrow +\infty$, subsequential limits of $\Sigma + t\ee_3$ exhaust a whole continuum of limiting tangent planes. 
\end{thmD}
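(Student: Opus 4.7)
\emph{Discontinuity at $\mathscr{R}(\Sigma)$.} For the first claim I would argue directly from Theorems B and C. Since $\mathscr{R}(\Sigma)$ is finite, every $\oomega_0\in\mathscr{R}(\Sigma)$ admits a punctured neighborhood $U\setminus\{\oomega_0\}\subset\mathbb{S}^1\setminus(\mathscr{R}(\Sigma)\cup\{-\ee_3\})$, on which $\oomega\mapsto\Sigma_\infty(\oomega)$ is locally constant (Theorem B(i)) and takes values in plane-only multisets with $\digamma_{\oomega}\equiv 0$ (Theorem C(i)). At $\oomega_0$ itself, Theorem C(ii) produces at least one (possibly tilted) grim reaper cylinder, which is not Hausdorff-close to any finite multiset of planes on any unbounded region, so $\Sigma_\infty$ must be discontinuous at $\oomega_0$.

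\emph{Jump at $-\ee_3$ via pitchforks.} I would exploit the structural multiplicity counts in Theorem A. By Theorem A(2) applied to a pitchfork, $\Sigma+t\ee_3\to\emptyset$ as $t\to+\infty$, i.e.\ $\Sigma_\infty(-\ee_3)=\emptyset$ after absorbing the drift correction into $\digamma_{-\ee_3}$. For $\oomega\in\mathbb{S}^1$ slightly tilted from $-\ee_3$, however, the ray $r\oomega$ no longer escapes vertically through the gap between the prongs but instead runs alongside a prong; by the asymptotic classification in Theorem C applied to that prong, the multiset $\Sigma_\infty(\oomega)$ must then contain at least the planar asymptote of that prong. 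Hence $\Sigma_\infty(\oomega)\not\to\emptyset=\Sigma_\infty(-\ee_3)$ in Hausdorff distance as $\oomega\to-\ee_3$, yielding the announced jump.

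\emph{Continuum of planes from boundary examples.} For the third and hardest claim I would construct $\Sigma$ with $\partial\Sigma\neq\emptyset$ as a graph $x_2=v(x_1,x_3)$ over a domain of the form $(-a,a)\times(-\infty,s_0]$, with $v(x_1,x_3)\to\infty$ in grim-reaper fashion as $|x_1|\to a$ (providing completeness and confining $\Sigma$ to the slab of width $2a$), and with carefully engineered Dirichlet data on the top edge $\{x_3=s_0\}$. Since the upward shift $\Sigma+t\ee_3$ is, after reparameterization, the graph of $v(x_1,\cdot-t)$, subsequential tangent planes as $t\nearrow+\infty$ correspond to subsequential $L^\infty$-limits of the slice profiles $v(\cdot,s)$ as $s\searrow-\infty$; an arc of constant slice limits then yields the required arc of parallel vertical planes. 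The mechanism is the Yukawa/heat-flow link announced in the abstract: linearizing the translator equation along slice curves reduces it, modulo lower-order terms, to $-\Delta w+w=0$, whose potential-theoretic representation is a heat equation with $L^\infty$ data on the non-compact slice, for which slowly varying (e.g.\ almost-periodic at an appropriate scale) data yield non-damping oscillating solutions.

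\emph{Main obstacle.} The hard part is turning this linear mechanism into a genuine translator. The plan is: (i) engineer the desired oscillating limit profile via the heat-flow/Yukawa representation; (ii) lift to the nonlinear translator equation by a Perron/barrier scheme, controlled by the sharp non-perturbative drift-Laplace decay estimates proven earlier in the paper, to solve the Dirichlet problem with the prescribed top-edge data and the prescribed lateral grim-reaper blow-up; (iii) read off from the representation that the nonlinear solution still oscillates to $s=-\infty$, identifying an arc's worth of subsequential vertical planes as limits of $\Sigma+t\ee_3$. Step (ii) is the crux: the nonlinear correction must be small enough that the engineered linear oscillation is neither damped by the drift nor destroyed by the quasilinearity—which is precisely why the non-perturbative drift estimates of this paper are indispensable here, rather than the standard Evans–Spruck/Ecker–Huisken bounds that would wash the oscillation out.
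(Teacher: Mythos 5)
\textbf{Part 1 (discontinuity at $\mathscr{R}(\Sigma)$).} Your argument here is essentially the paper's: the discontinuity is read off directly from the locally constant plane-only limits of Theorem B(i)/C(i) on one side and the appearance of a grim reaper in Theorem C(ii) at the exceptional direction. No issues.

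\textbf{Part 2 (jump at $-\ee_3$).} Your claim that ``$\Sigma+t\ee_3\to\emptyset$ as $t\to+\infty$'' for a pitchfork is wrong. The pitchfork has $\omega_P(\Sigma)=2$ planar wings (it is built from one $U$-shaped downward graph and three upward graphs), so Theorem A(2) / Theorem \ref{thm-A}(2) gives a \emph{nonempty} limit consisting of $\omega_P/2 = 1$ plane with multiplicity. Empty limits at $t\to+\infty$ characterize translators with no planar wings (this is precisely the hypothesis of Theorem F, which then forces grim reaper or $\Delta$-wing). The jump discontinuity for the pitchfork actually arises because Proposition \ref{inhomog_down} places the straight-down limit at the \emph{averaged} plane $\{x_1=\tfrac{c_-+c_+}{2}\}$, which by embeddedness is distinct from the plane collections $\mathscr{P}_+$ and $\mathscr{P}_-$ seen by nearby slanted rays. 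The paper's Theorem \ref{Thm-D} packages this via the notion of ``flaps of pitchfork type'' (Definition \ref{flip-flop-flap}); your geometric intuition about tilted rays running along a prong is correct, but the contrast you set up against an empty limit is not.

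\textbf{Part 3 (continuum of tangent planes).} You correctly identify the two key mechanisms --- the Yukawa/heat-flow link (Proposition \ref{GlobalStab}) and the need for nonlinear barriers adapted to the non-perturbative drift --- but the construction you propose diverges from the paper's and leaves the crucial step unfilled. The paper builds a \emph{bounded} $\ee_1$-graph $x_1=u(x_2,x_3)$ over the lower half-plane, with $0\le u\le 1$, prescribing boundary data at $x_3=0$; your proposed $\ee_2$-graph $x_2=v(x_1,x_3)$ over $(-a,a)\times(-\infty,s_0]$, with $v$ blowing up at $|x_1|\to a$, has interior normal roughly parallel to $\ee_2$, so that $\Sigma+t_k\ee_3$ cannot locally smoothly converge to a vertical plane $\{x_1=\gamma\}$ for interior $\gamma$ without $v$ developing unbounded $x_1$-gradient everywhere, which you do not arrange. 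More importantly, the device that makes the paper's Step (ii) work --- the ``barrier reef'' $w_N$ of \eqref{GreatBarrierReef}, a string of $x_2$-translated tilted grim reaper cylinders chosen so as to pin the Plateau solution below a fixed small height over an \emph{arbitrarily long} $x_2$-interval at a fixed depth $x_3=-b$ --- is absent from your plan. Without it, the heat-kernel convolution estimate in the proof of the Claim in Theorem \ref{WobbleEksempel} cannot be closed: you need barriers that stay $\varepsilon$-flat over stretches whose length grows with the oscillation scale, and neither constant barriers nor a single tilted grim reaper achieves that. You correctly flag Step (ii) as the crux; the gap is that you have not exhibited a candidate barrier family with the required elongated-flatness property.
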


\begin{figure}
     \centering     \includegraphics[trim=0 150 0 0, clip, width=0.50\linewidth]{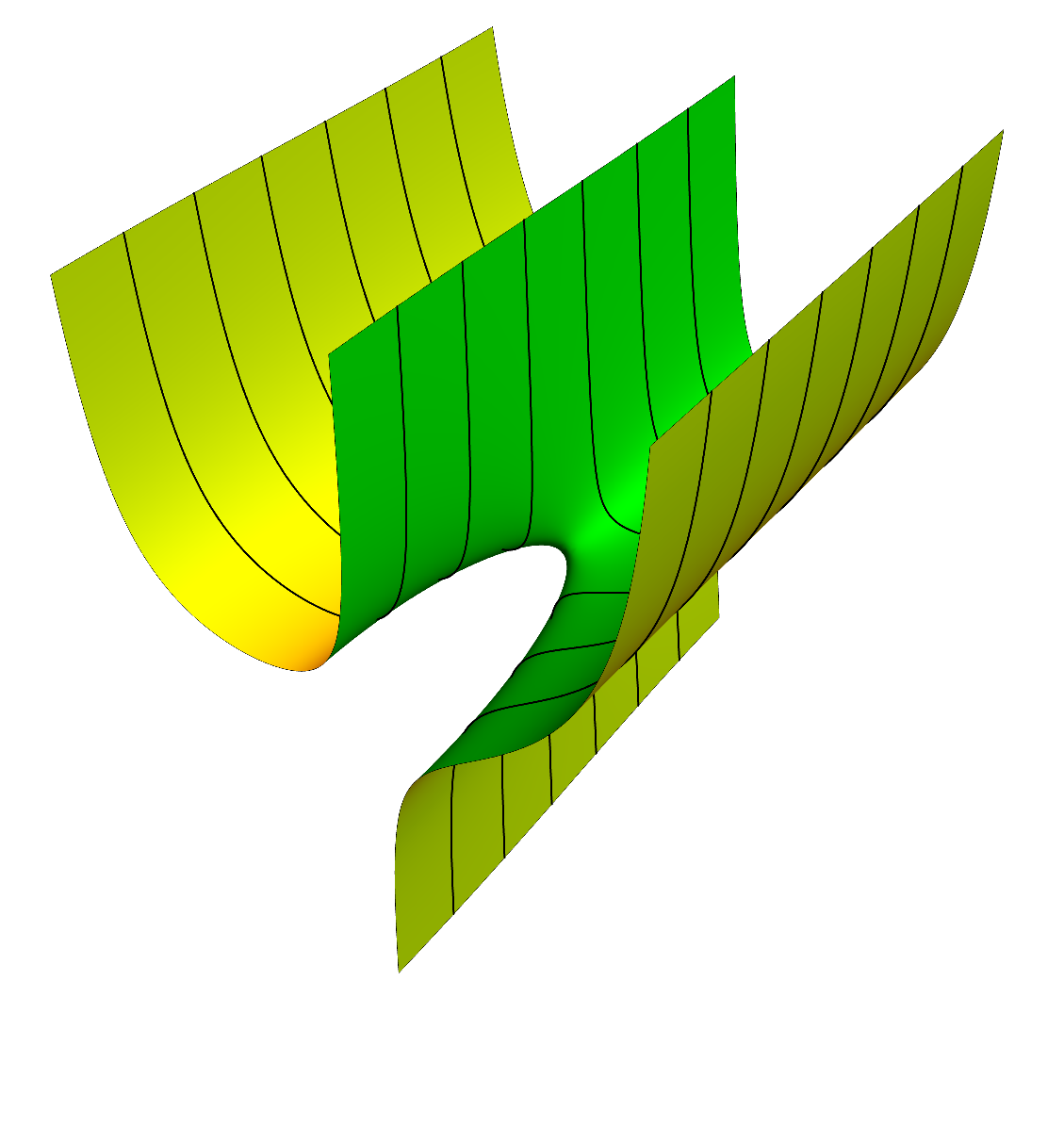}
     \caption{\small Pitchfork mean curvature translating soliton in $\mathbb{R}^3$. (See Theorem \refThmBnospace.)}
     \label{fig:pitchfork}
 \end{figure}

\subsection{Proof ideas}\label{subsec:PDE_tech}

The above theorems will not follow from application of a standard framework, due to the large drift quasilinear nature of the PDE involved, and a major innovation of our approach is in the derivation and application of refined geometric estimates in that setting. These new estimates will turn out to be strong enough to even allow bypassing the development of a larger functional-analytic setup and tools, such as weighted Sobolev spaces or special weighted (cone) H\"{o}lder spaces, that 
one might have expected to play a fundamental role in proving results of this flavor. Indeed such setups have been essential to the previous studies of self-shrinkers, which are asymptotical to cones with a smooth compact link \cite{Otis-Felix-Choi-Mantou, Otis-Felix, KKM, KM, NielsThesis}.

Instead, we first leverage ideas from classical elliptic PDE techniques reminiscent of removable singularity problems in exterior domains, previously known for perturbations at infinity of the Laplacian, as in \cite{Meyers-Serrin}, including in the case of decaying drift terms, which we then extend to the more difficult case at hand of a drift Laplacian with non-perturbative drift term. We do this via non-standard anisotropic elliptic estimates which are supported on directed sausage-shaped regions, rather than on round balls, obtaining sharp decay rates which will differ in strength depending on the global geometry and orientation of the domains.

We then push these results to the quasilinear translating soliton equation, obtaining stronger decay than in the classical gradient and higher derivative estimates of Evans-Spruck and Ecker-Huisken, allowing us to control the soliton's geometry at large scales.

Finally, we then combine the information gained from these steps with refined calculations in potential theory for the Yukawa equation (also known as the modified Helmholtz equation, or the screened Poisson equation), from which we find and exploit an interesting link to the global behaviors of certain 1-dimensional heat flows, with $L^\infty$ data on non-compact boundary curves, which we initiate from slice curves of the original soliton.

These new ideas allow us to prove uniqueness of the asymptotic configuration at infinity for a translating soliton in a fairly direct and elementary way. In particular, we avoid tools such as \L{}ojasiewicz inequalities, Almgren-type frequency monotonicity, and three-annulus theorems \cite{Choi,CM-fre,Lotay,Simon83}, which have played a central role in the analysis of self-shrinkers \cite{Colding-Minicozzi, CM25}. See also the more recent PDE–ODI methods developed for ancient cylindrical flows \cite{Bamler-Lai1,Bamler-Lai2}, and earlier work in the noncollapsed setting for mean-convex ancient solutions \cite{CDDHS}. These techniques seem harder to adapt to (not necessarily mean-convex) collapsed translators: the relevant quasi-linear equation has different symmetry and homogeneity features, and its solutions can display markedly different asymptotic geometries that require a separate analysis.

Thus, while the self-shrinkers' asymptotic cones have a compact link and are endowed with Laplacians with inwards drift terms, for the translating solitons considered in this paper we instead need to deal with two kinds of ways to escape to infinity along wedge-shaped and half-planar domains involving now additionally two distinct types of drift terms, pointing upwards and downwards, respectively.

\subsection{Geometric applications.}

One of the key first consequences of our main theorems is the classification of collapsed translators in half-slabs and collapsed translators with finite topology and exactly two wings of grim-reaper type.  

\begin{thmE}
  The unique complete, simply connected translating solitons of finite width and finite entropy in $\R^3$ satisfying the multiplicity-one property at infinity and lying in a half-slab are the (possibly tilted) grim reaper surfaces, and the $\Delta$-wings.
\end{thmE}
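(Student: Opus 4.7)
The plan is to combine the half-slab constraint with Theorems \refThmAnospace, \refThmBnospace, and \refThmCnospace to reduce the problem to a finite topological case analysis plus a rigidity argument within the two known explicit families. First I would apply Theorem \refThmAnospace: after an isometry we may assume $\Sigma \subset \{x_3 \geq c_0\} \cap \{|x_1| \leq w/2\}$, so the translates $\Sigma + t\ee_3$ escape any compact set as $t \to +\infty$; hence the forward limit is empty and the structural invariant at $+\infty$ vanishes. The backward limit, by Theorem \refThmAnospace(1) together with the multiplicity-one assumption, then consists of exactly $N := \lambda(\Sigma)$ distinct vertical planes; since these must lie in the slab, each is of the form $\{x_1 = c_i\}$, hence all are mutually parallel.

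Next I would invoke Theorems \refThmBnospace and \refThmCnospace to pin down the static end structure. Theorem \refThmBnospace supplies a well-defined multiset map $\oomega \mapsto \Sigma_\infty(\oomega)$, locally constant off the finite exceptional set $\mathscr{R}(\Sigma) \cup \{-\ee_3\}$, while the half-slab condition forces $\Sigma_\infty(-\ee_3) = \emptyset$. Theorem \refThmCnospace then decomposes the remaining limits into planes away from $\mathscr{R}(\Sigma)$ and into at least one (possibly tilted) grim reaper cylinder tangent to $\oomega$ (plus possibly planes) at each $\oomega \in \mathscr{R}(\Sigma)$. Simple connectedness (Euler characteristic $\chi = 1$), together with finite entropy and multiplicity-one, tightly restricts the possible end patterns: either $|\mathscr{R}(\Sigma)| = 1$, giving one pair of antipodal grim-reaper ends sharing the same axis and closed off by planar edges---the tilted grim reaper type---or $|\mathscr{R}(\Sigma)| = 2$, giving two non-parallel grim-reaper end directions---the $\Delta$-wing type.

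The remaining step is a rigidity argument showing that within each allowed end-pattern, $\Sigma$ must coincide with a specific example in the corresponding family. One would compare $\Sigma$ with the explicit one-parameter families of tilted grim reapers and $\Delta$-wings via a moving-plane / Alexandrov reflection argument adapted to translators, using collapsedness and simple connectedness to ensure the reflection principle applies globally. I expect this rigidity step to be the main obstacle: one needs to rule out any geometric oscillation between the identified ends and then propagate pointwise comparison across the whole surface, drawing on the refined asymptotic decay provided by Theorem \refThmCnospace together with the static end structure just established.
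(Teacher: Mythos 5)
Your opening normalization is incorrect, and it invalidates the rest of the plan. The half-slab in Definition~\ref{def:half-slab} is $\mathcal{S}(w,\theta,c)=\{|x_1|\le w,\ \cos\theta\,x_2+\sin\theta\,x_3\ge c\}$, and the only ambient isometries that preserve the translator equation (fixed direction $\ee_3$) are translations, rotations about the $x_3$-axis, and reflections in vertical planes. None of these can rotate a sideways or slanted half-slab (small $\theta$) into an upper half-slab $\{x_3\ge c_0\}\cap\{|x_1|\le w/2\}$, which is the $\theta=\pi/2$ case. In fact the paper's first line in the proof of Theorem~E rules out $\theta=\pi/2$ entirely, by the bi-halfspace theorem of \cite{Chini-Moller}, and then works with $\theta\in[0,\pi/2)$. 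So your starting assumption lands you in the one case that cannot occur, and the ``forward limit is empty, hence all end data at $+\infty$ vanishes'' step does not follow in the cases that actually do occur (e.g.\ $\theta=0$ is a \emph{sideways} half-slab, where $\Sigma+t\ee_3$ need not escape to infinity and $\inf_\Sigma x_3$ may well be $-\infty$).

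Beyond that, your proposed endgame --- read off the end pattern from Theorems~B/C using simple connectedness, then run an Alexandrov moving-planes comparison against the explicit tilted-reaper and $\Delta$-wing families --- is not the paper's route, and the hard work in that route is left unresolved (you say yourself the rigidity step is ``the main obstacle''). The paper instead reduces to showing $\lambda(\Sigma)=2$: assuming $\lambda(\Sigma)\ge4$, Lemma~\ref{gap-structure} produces an ordered pair of nested right wings (and a third left wing), whose asymptotic tilted reapers have strictly ordered widths/slopes by Theorem~\ref{prop:Asymp-GR}; Proposition~\ref{growth-function} then forces either a crossing of the two nested wings (case $\inf_\Sigma x_3>-\infty$) or a violation of the half-slab constraint by the third wing (case $\inf_\Sigma x_3=-\infty$, which requires $\theta>0$). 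Both contradictions rule out $\lambda\ge4$. Once $\lambda(\Sigma)=2$, the classification as a tilted grim reaper or $\Delta$-wing comes directly from Chini's theorem \cite[Theorem 1]{Chini} (equivalently Theorem~F in the paper), with no new moving-planes argument needed. You would need the $\lambda=2$ reduction and the wing-ordering lemmas --- neither of which appears in your sketch --- before any comparison with explicit models is relevant, and at that point the comparison is already in the literature.
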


This result can be viewed as a variant of the half-space theorem for minimal surfaces proved by Hoffman and Meeks \cite{Hoffman-Meeks}. Unlike the classical half-space theorem, our proof crucially relies on the topological assumption of simply connectivity. Without this constraint, we cannot a priori rule out that a counterexample could potentially be constructed.

An interesting consequence of Theorem \refThmE is given by the following result:

\begin{corstar}
    The unique complete translating graphs (in arbitrary direction) of finite width in $\R^3$ lying in a half-slab are the (possibly tilted) grim reaper surfaces, and $\Delta$-wings.
\end{corstar}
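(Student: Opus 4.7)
The proof plan is to reduce the corollary to Theorem E by verifying its extra hypotheses for any complete translating graph $\Sigma \subset \R^3$ in a half-slab of finite width. Writing $\Sigma$ as a graph in some direction $\vv \in \mathbb{S}^2$ over a planar domain $\Omega \subset \vv^\perp$, embeddedness is automatic; the three hypotheses of Theorem E that remain to be checked are simple connectedness, finite entropy, and the multiplicity-one property at infinity.

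For simple connectedness, since $\Sigma$ is homeomorphic to $\Omega$, it suffices to show that $\Omega$ is simply connected. Suppose towards contradiction that $\Omega$ has a bounded complementary component $D \subset \vv^\perp$. The slab constraint together with completeness of $\Sigma$ strongly restricts the boundary behavior of the graph function $u$ on $\partial D$: in the generic case $\langle\vv,\ee_3\rangle\neq 0$, the function $u$ must remain bounded on a neighborhood of $\partial D$ while $|\nabla u|$ blows up at $\partial D$, whereas in the degenerate case $\vv \perp \ee_3$ the domain $\Omega$ itself is forced into a strip in $\vv^\perp$ and the argument is even easier. In the nondegenerate situation, one constructs a suitable translator-barrier over $D$ (e.g.\ a tilted grim reaper of appropriate width placed above $D$) and applies the strong maximum principle to the quasilinear translator equation to derive a contradiction.

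For finite entropy, the Ecker-Huisken interior area estimates applied to the graphical mean curvature flow associated with $\Sigma$ yield a uniform Gaussian density bound in terms of the slab width, giving $\lambda(\Sigma) < \infty$. For multiplicity-one at infinity, the Ecker-Huisken gradient estimate for translating graphs provides local uniform gradient control away from $\partial\Omega$; any smooth subsequential limit $\Sigma_\infty(\oomega)$ produced in Theorem B is therefore itself a smooth graph over its domain and meets any plane with multiplicity one, so all asymptotic planes and grim reapers appear with multiplicity one.

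With the three missing hypotheses now verified, Theorem E applies directly and forces $\Sigma$ to be a (possibly tilted) grim reaper cylinder or a $\Delta$-wing. The main obstacle will be the simple connectedness step: the barrier argument must accommodate a generic graphing direction $\vv$, which may have nontrivial angle with both the slab normal and the translation direction $\ee_3$, and the correct choice of comparison translator genuinely depends on this configuration. The remaining two verifications are essentially direct consequences of standard Ecker-Huisken-type estimates for translating graphs in slabs.
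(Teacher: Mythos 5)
Your overall reduction is the same as the paper's: verify that a complete translating graph of finite width in a half-slab satisfies the hypotheses of Theorem E (simple connectedness, finite entropy, multiplicity-one at infinity) and then invoke Theorem E. The difference is in how the hypotheses are verified. The paper does it in one line by citing \cite[Propositions 3.6 and 4.1]{entropy}, which establish, once and for all, that any complete translating graph of finite width is simply connected, has finite entropy, and satisfies the multiplicity-one property at infinity. You instead sketch fresh arguments, and that is where the gaps appear.

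The most serious gap is in the simple-connectedness step. You assert that, when $\langle\vv,\ee_3\rangle\neq 0$, the graph function $u$ is bounded near the boundary of a bounded complementary component $D$, and that $|\nabla u|$ must blow up there. Neither is justified: the half-slab condition only bounds the $x_1$-coordinate, so when $\vv$ has a substantial $\ee_3$-component the graph height can be unbounded near $\partial D$, and without boundedness of $u$ you do not get the gradient blow-up for free from completeness. The barrier argument that is supposed to finish the contradiction is only gestured at — you say ``a tilted grim reaper of appropriate width placed above $D$'' but never say how it is placed, what boundary comparison is used, or why the maximum principle closes the argument; indeed you acknowledge in your own closing paragraph that the correct comparison translator depends on the configuration and is not supplied. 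The finite-entropy and multiplicity-one sketches are more plausible, but again rely on Ecker–Huisken-type estimates in a level of generality (arbitrary graphing direction $\vv$, behavior near $\partial\Omega$, persistence of the graph property under the subsequential limits of Theorem B) that would require a real proof rather than a pointer. These are exactly the facts that \cite{entropy} supplies, which is why the paper's proof is short. So: right route, but the verification of the hypotheses is not closed.
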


Our final application involves the classification of collapsed translators with entropy two. A natural first step in classifying collapsed translators with finite genus and finite topology is to focus on those with the lowest possible entropy. Since the only translators with entropy equal to unity are trivially the vertical planes, the first non-trivial case to consider is the classification of translators with entropy equal to 2. In \cite{Chini}, Chini showed that, under the assumption of being simply connected, the only such examples are (possibly tilted) grim reapers and $\Delta$-wings. Finally, we are able to obtain a characterization of collapsed translators of entropy 2 and finite genus. To be more precise, we prove: 
\begin{thmF}\label{thmCC}
    Let $\Sigma$ be  a complete, embedded translator with finite width, finite genus and $\lambda(\Sigma)=2$. Assume that  the limit, as $t \to +\infty$, of $\Sigma+t \ee_3$ is the empty set. Then $\Sigma$ is either a (possibly tilted) grim reaper or a $\Delta$-wing. In particular, if the width of $\Sigma$ is $\pi$, then $\Sigma$ is the standard grim reaper cylinder.
\end{thmF}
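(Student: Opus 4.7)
The strategy is to reduce Theorem F to Theorem E by establishing, under the weaker finite-genus hypothesis of F, the simply-connectedness, half-slab containment, and multiplicity-one-at-infinity conditions required by Theorem E.

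First, I would verify half-slab containment. The hypothesis that $\Sigma+t\ee_3\to\emptyset$ locally smoothly as $t\to+\infty$ means that for every compact $K\subset\R^3$, $(\Sigma+t\ee_3)\cap K=\emptyset$ for $t$ sufficiently large. This excludes sequences in $\Sigma$ whose projections to the $(x_1,x_2)$-plane remain bounded while $x_3\to-\infty$; combined with the finite-width hypothesis (so $\Sigma\subset\{|x_1|\leq M\}$ after rotation in the horizontal plane), it yields $\Sigma\subset\{|x_1|\leq M\}\cap\{x_3\geq c\}$ for some constants $M,c$.

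Next, entropy accounting pins down the asymptotic structure. By Theorem A, as $t\to-\infty$ the surface $\Sigma+t\ee_3$ converges to a finite union of vertical planes with total multiplicity $\lambda(\Sigma)=2$, and by Theorem C every nontrivial asymptotic limit $\Sigma_\infty(\oomega)$, $\oomega\in\mathbb{S}^1$, is either a finite union of planes or contains a (possibly tilted) grim reaper cylinder. Since a single grim reaper cylinder already contributes entropy $2$ (being asymptotic to two vertical planes at $t\to-\infty$), and each plane wing contributes entropy $1$, the constraint $\lambda(\Sigma)=2$ leaves exactly two configurations: (a) $\Sigma$ has exactly one grim reaper wing and no other nontrivial wings, or (b) $\Sigma$ has exactly two plane wings. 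In either configuration each distinct asymptotic plane occurs with multiplicity one, yielding the multiplicity-one-at-infinity condition.

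The main obstacle is showing that $\Sigma$ is simply connected. My plan is to exploit the subharmonicity of $x_3$ on $\Sigma$, which follows from the translator equation via $\Delta_\Sigma x_3=\langle\nu,\ee_3\rangle^2\geq 0$. Since $\Sigma$ sits in a half-slab, $x_3$ is a proper subharmonic exhaustion bounded below. A Morse-theoretic analysis of $x_3|_\Sigma$, combined with the fact that each asymptotic wing from the previous step is annular (contributing $0$ to the Euler characteristic) and that there are at most two such wings, forces $\chi(\Sigma)$ to match a genus-zero, one-topological-end template; any nontrivial handle would produce extra index-$1$ critical points of $x_3|_\Sigma$ incompatible with the identified asymptotic profile. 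Together with the finite-genus hypothesis, this gives $\Sigma\cong\R^2$. Having verified all hypotheses of Theorem E, we conclude that $\Sigma$ is a (possibly tilted) grim reaper surface or a $\Delta$-wing. The width-$\pi$ conclusion follows because a tilted grim reaper has width $\pi\cos\theta<\pi$ for tilt angle $\theta\neq 0$, and $\Delta$-wings have width strictly less than $\pi$; only the standard vertical grim reaper cylinder attains width exactly $\pi$.
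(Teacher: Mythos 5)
Your proposal takes a genuinely different route (reduce to Theorem E rather than directly to Chini's classification), but two of your three reduction steps contain serious gaps. The half-slab containment claim is incorrect as stated: the hypothesis $\Sigma+t\ee_3\to\emptyset$ only excludes sequences $p_n\in\Sigma$ with $x_3(p_n)\to-\infty$ while $(x_1(p_n),x_2(p_n))$ stays bounded, but says nothing about sequences with $x_2(p_n)\to\pm\infty$ simultaneously. Indeed the tilted grim reaper cylinders themselves satisfy every hypothesis of Theorem F and yet have $\inf_\Sigma x_3 = -\infty$, so $\Sigma\subseteq\{x_3\geq c\}$ does not follow. Containment in a half-slab in the sense of Definition \ref{def:half-slab} (allowing a tilt $\theta$) is what you would actually need, but you do not argue for it. More importantly, the simple-connectivity step, which is the real crux of the theorem, is not established by your Morse-theoretic sketch. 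It is not clear how a nontrivial handle would produce index-one critical points of $x_3|_\Sigma$ ``incompatible with the identified asymptotic profile''; controlling the critical-point structure of a coordinate function at infinity is precisely what requires the heavy machinery. The paper's proof first establishes the reflection symmetry of $\Sigma$ across $\{x_1=0\}$ via Alexandrov moving planes (Lemma \ref{sym}), shows $\Sigma$ is a bi-graph over a planar domain, counts critical points of $F_{\ee_2}$ via Morse--Rad\'o theory (Theorem \ref{th:finite-type}, Lemma \ref{counting-crit}), shows the curve of minima is a graph, and then runs a second moving-plane argument in the $\ee_3$ direction to prove $\hslash(\Sigma)=0$, hence $\genus(\Sigma)=0$ (Lemma \ref{topology}). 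This is substantial work that your one-sentence argument does not substitute for.

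Two further points. First, there is a circularity risk: the paper's proof of Theorem E itself invokes Theorem F (alongside Chini), so once simple connectivity is secured the clean move is to apply Chini directly rather than route through Theorem E. Second, several peripheral statements are off: with $\lambda(\Sigma)=2$ and no planar wings, $\omega_G(\Sigma)=2$ (a left and a right wing of grim reaper type), not one; wings are simply connected, not annular; and tilted grim reapers and $\Delta$-wings have width strictly \emph{greater} than $\pi$, not less --- the width-$\pi$ rigidity follows because the standard cylinder is the width minimizer in the HIMW classification, which is the opposite of the inequality you wrote.
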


This theorem's final statement, concerning rigidity of the grim reaper cylinder in the width $\pi$ case, was obtained under the stronger condition of $\Sigma\subseteq \{x_3 \geq a\}$ in \cite{IRM}, using quite different methods.


Theorem \refThmF implies that it is not possible to add ``wormhole'' handles to either a (possibly tilted) grim reaper or to a $\Delta$-wing.

Until recently, it was an open question whether nontrivial examples could exist with just planar wings (in \cite{HMW-SpruckVol} conjectured not to). If such existed, they would be expected to look geometrically like two perturbed vertical planes glued by small necks. From the following small observation/calculation, the existence was unlikely:

\begin{prop}[Doubling obstruction]\label{prop:Nicos}
Vertical planes as translators fail to satisfy the doubling construction condition of Kapouleas \cite{NicosSurvey}. Namely, at any point $p\in\R^3$, evaluating with respect to the Ilmanen metric, denoting $M =(\R^3,e^{x_3}\delta_{ij})$, for $\Sigma$ any vertical plane:
\begin{equation}\label{NicosSaysNo}
\left(\operatorname{Ric}_M(\nu_\Sigma,\nu_\Sigma) + |A_\Sigma|^2\right)(p) < 0.
\end{equation}
\end{prop}

The tools and methods developed in the present work as expected also form a suitable new framework in which to study such problems rigorously, as a special case. Recently Lynch and Tinaglia in \cite{Lynch-Tinaglia}, using an estimate similar to \eqref{grad_down}, which follows by interpolation from \eqref{Hess_down_improved} (see \cite{GMM-FollowUp}), proved planarity under the assumption of  strongly planar ends, which holds in the special case of finite total curvature, by Khan's result \cite{Ilyas}. In \cite{GMM-FollowUp}, among other things, we show how planarity follows in the larger class of collapsed, finite genus, finite entropy translating solitons having only planar wings, from our estimates.

\subsection{Overview of translating solitons in mean curvature flow}

Translators are especially significant in the formation of Type II singularities in mean curvature flow. Compared to Type I singularities, which are modeled by self-shrinkers and have been extensively studied and in some cases classified (e.g., generalized round cylinders as the only entropy-stable i.e. mean convex shrinkers \cite{CM12,Hu93, Whi03}), for Type II singularities a lot less is presently known and the study of them presents substantially different analytical challenges. Translators provide the primary models for Type II singularities, yet understanding in detail which surfaces this class contains remains a largely open problem, despite significant progress in recent years.

For surfaces, the basics of the class of translators is by now somewhat more well understood. The classification of graphical translators, with the bowl soliton dating back to Altschuler and Wu \cite{AW}, was recently completed by Hoffman, Ilmanen, Mart\'in, and White \cite{HIMW}, building upon the foundational work of Wang \cite{Wang}, and Spruck and Xiao \cite{Spruck-Xiao}, showing that: every such translator is either a bowl soliton, a (possibly tilted) grim reaper cylinder, or a member of the one-parameter family of \( \Delta \)-wings.

Beyond graphical examples, more intricate classes of translators have been constructed \cite{HMW-2,HMW-1,Ng,sun-wang}, including those with finite topology and entropy, often constrained within vertical slabs of finite width - these are known as collapsed translators. This class is particularly significant due to its connection with the classification of convex hulls of singularities \cite{Chini-Moller2, Chini-Moller}, making collapsed translators central objects of study in mean curvature flow.

In \cite{entropy}, Gama, Martín, and Møller therefore conducted a detailed study of the class of translating solitons for the mean curvature flow which possess finite entropy and finite genus, and are collapsed, i.e., translators which are contained in vertical slabs. A key notion introduced in this previous analysis is the concept of wings, defined by classifying the components of level sets of certain geometrically motivated Morse functions on the solitons, which describe, in topological terms, the specific asymptotic regions of these translators.

In a collapsed translator, the wings play the role of asymptotic building blocks, meaning that the solitons can be decomposed into a central region (contained in a vertical cylinder) and a collection of non-compact wings. The wings play a crucial role in understanding the global geometry and behavior of collapsed translators, as they determine how the surface extends and interacts with the surrounding space at infinity.

The wings of a collapsed translator are categorized into two distinct types based on their structures and behaviors at infinity (see Figure \ref{fig:wings}):
\begin{itemize}
\item \underline{Planar-type wings}: These wings are graphical over vertical half-planes that are parallel to the boundary planes of the slab.
\vspace{4pt}
\item \underline{Grim reaper-type wings}: Unlike planar wings, these wings are {\it bi-graphs} over certain subsets of half-planes, as happens with {\em (possibly tilted) grim reaper cylinders}.
\end{itemize}

By distinguishing between these two types, the authors provided in \cite{entropy} the initial steps toward understanding the asymptotic structure of general collapsed translators $\Sigma$, revealing how their global shapes are influenced by the interactions between their entropy invariants $\lambda(\Sigma)\in\mathbb{N}$, genera, and the geometric constraints imposed by the vertical slab condition (i.e. collapsedness).

Missing crucially from the analysis in \cite{entropy}, which emphasized softer techniques of topological nature, were thus the truly geometric aspects of the asymptotics, governed by the quasilinear PDE of the self-translating solitons. Taking this next step is what will lead to the main results in the present paper, Theorems \refThmAnospace{}-\refThmFnospace.

\subsection{Outline of the paper}

In Section \ref{preliminaries}, we begin by describing the geometric setup in details, including the decomposition of collapsed solitons into domains of special shapes, which will be needed for the proofs of the main theorems. This is a non-trivial step, since a crucial global aspect of our proofs will lie in the assumption that the surfaces are {\it complete and without boundary}, which cannot be relaxed. Indeed, as shown in Section \ref{sec:counterexamples}, in fact, there do exist collapsed translators with boundary for which the limit \(\Sigma + t \ee_3\) fails to exist as \( t \to +\infty \). This highlights the necessity of the completeness condition in ensuring the asymptotic behavior described in Theorems \refThmAnospace--\refThmCnospace.

In Section \ref{sec:PDEs_drift}, we begin the analysis of the partial differential equation in question, on the special domains from Section \ref{preliminaries}, to show a priori derivative decay estimates for this quasilinear PDE with a drift term which is not decaying at infinity, as already explained in Section \ref{subsec:PDE_tech}.

Section \ref{sec:Rem-Sin} is devoted to proving the new removable singularities theorem at infinity which gives this paper its title. We obtain an interesting dichotomy: as \( t \to -\infty \), the removable singularities theorem exhibits a more local nature in that knowing $L^\infty$ on just an upward-directed wedge is in itself enough for existence of the true limit (Proposition \ref{limit_line_up}), which is in that case even exponentially fast (Theorem \ref{thm:exponential}). However, to establish for \( t \to +\infty \) a similar result (Proposition \ref{inhomog_down}), a global approach is required, making use of the existence of upwards/sideways limits (Proposition \ref{sideways_limits}, Proposition \ref{cor:remov_sing}). 

Section \ref{sec:counterexamples} contains the counterexamples which prove that the global approach to the above proofs (e.g. Proposition \ref{inhomog_down}) is in fact strictly necessary.

Theorem \refThmA is established in Section \ref{sec:Uniqueness} (Theorem \ref{thm-A}) as a culmination of the key results obtained in Sections \ref{sec:PDEs_drift} and \ref{sec:Rem-Sin}.

Sections \ref{sec:asym-wings} through \ref{sec:two-clas} contain the paper's core geometric applications of the preceding analytical results. In Section \ref{sec:asym-wings}
, we turn our attention to the asymptotic behavior of the translating 
soliton along directions other than the vertical ones, that is, distinct from the canonical translation direction $\mathbf{e}_3$ and its negative. We carry out a detailed directional analysis and establish that, away from a finite exceptional set of directions, the translator converges to a union of vertical planes. In these exceptional directions $\mathscr{R}(\Sigma)\subset \mathbb{S}^1\setminus\{\pm\ee_3\}$, as in Theorem \refThmBnospace, we observe the emergence of (possibly tilted) grim reaper cylinders. This refined understanding of the geometry at infinity culminates in the proofs of Theorems B, C, and D, which together provide a comprehensive description of the large-scale profiles of collapsed translators.

In Section \ref{sec:collapsed}, we situate our family of collapsed translators within a broader and recently introduced framework: the natural class of finite-type translators studied by Hoffman, Martín, and White in \cite{HMW-A}. This connection is not merely formal but carries substantial geometric significance. In particular, it enables us to invoke the Morse–Radó theory for translators, developed in \cite{morserado}, which provides powerful topological and variational tools to study the global structure of translating solitons, as in \cite{entropy}. These techniques again play a key role in the classification results and rigidity theorems that we present in Section \ref{sec:half-slab} (Theorem E) and Section \ref{sec:two-clas} (Theorem F).

\subsection*{Acknowledgments}
The authors would like to express their gratitude to Rafe Mazzeo for many engaging discussions during intermediate stages of writing this paper, which provided an opportunity to explore several different ideas which in the end helped solidify the strategies that we follow. The authors are also profoundly thankful to David Hoffman and Brian White for their thoughtful suggestions and corrections. Finally, we would also like to thank Alberto Enciso for his comments and suggestions regarding this work.

E.S. Gama was partially supported by CNPq/Brazil Grant: 313412/2025-1. F. Mart\'in was partially supported by the grant PID2024-156031NB-I00 and by the  IMAG–Maria de Maeztu grant CEX2020-001105-M, both funded by MICIU/AEI/10.13039/501100011033. This material is based upon work supported by the National Science Foundation Grant No. DMS-1928930, while F. Martín was in residence at the Simons Laufer Mathematical Sciences Institute (formerly MSRI) in Berkeley, CA, during the Fall 2024 semester. N.M. M\o{}ller thanks U Hamburg for its hospitality. N.M. M\o{}ller was partially supported by DFF Sapere Aude 7027-00110B, by Carlsberg Foundation Semper Ardens CF21-0680 and by CPH-GEOTOP-DNRF151.

\section{Geometric setup and preliminaries} 
\label{preliminaries}
This section is devoted to describing the background and notation that we will be using in the rest of the paper. The decomposition of collapsed translators in special domains that we are going to introduce in this section will be crucial in the next sections.

\subsection{Translating solitons of finite entropy} \label{sector}
A surface \( \Sigma^2 \subset \mathbb{R}^{3} \) is a translating soliton if and only if its mean curvature vector \(\vec{H}\) everywhere satisfies the equation
\begin{equation}\label{TSE-geo}
\vec{H} = \mathbf{v}^\perp,
\end{equation}
for some non-zero vector \( \mathbf{v} \in \mathbb{R}^{3} \), which we fix as \( \mathbf{v} = \ee_{3} \). 

The concept of entropy of a hypersurface in Euclidean space was introduced by Magni-Mantegazza and Colding-Minicozzi \cite{CM12,MM08}.
Let $\Sigma^2\subset\R^3$ be a surface. Given $x_0\in\R^3$ and $s_0>0$, we define 
\[
F_{x_0,s_0}[\Sigma]=\frac{1}{4\pi s_0}\int_\Sigma e^{-\frac{|x-x_0|^2}{4s_0}}{\rm d}\Sigma.
\]

\begin{dfn} \label{def:entropy}
The entropy of $\Sigma$ is defined as: 
\begin{equation}
    \displaystyle \lambda(\Sigma)=\sup_{x_0\in\R^3,s_0>0}F_{x_0,s_0}[\Sigma]\in (0,\infty].
\end{equation}
\end{dfn}
Sun and Wang \cite{sun-wang} recently established that any translating soliton $\Sigma \subset \R^3$ with finite entropy admits a quantized entropy value: either $\lambda(\Sigma) \in \mathbb{Z}^+$, or $\lambda(\Sigma) \in \sqrt{\tfrac{2\pi}{e}} \cdot \mathbb{Z}^+$. Moreover, in the latter case, $\Sigma$ is necessarily non-collapsed. The former case was already established for collapsed translators by Gama, Mart\'{\i}n and M\o{}ller in \cite{entropy} (see Theorem \ref{lambda-formula} below.)

Let $\Sigma$ be a complete, collapsed, embedded translator with $\lambda(\Sigma)<\infty$ and ${\rm genus}(\Sigma)<\infty.$ Up to rotations around the $x_3$-axis and horizontal translations (all of them preserve the velocity vector of the flow), we can thus assume that $\Sigma$ is contained in the vertical slab given by
\[
\mathcal{S}_w:= \{(x_1,x_2,x_3) \in \R^3 \; :\; |x_1|<w\}, \quad \mbox{for some $w>0$.}
\]
\begin{dfn}[Width of collapsed translators]
If $\Sigma^2\subseteq\mathbb{R}^3$ is a complete, connected collapsed translator contained in a slab, then we define the width of $\Sigma$ as the infimum of the numbers $2w$, where $w$ is given as in the previous paragraph. 
Then we write $\width (\Sigma)=2 w.$

\end{dfn}
In \cite{entropy} we proved that, for such a translating soliton $\Sigma$, the portion of $\Sigma$ lying outside a vertical cylinder (which contains its topology) decomposes into a finite disjoint union of non-compact, simply connected translators with boundary, which we refer to as the wings of $\Sigma$. Moreover, we have demonstrated in \cite{entropy} that there are two kinds of wings: the ones that are graphs over the $(x_2,x_3)$-plane and the ones that are bi-graphs over the $(x_2,x_3)$-plane. Wings of the first kind are called {\em planar wings} and wings of the second kind are called {\em wings of grim reaper type,} see Figure \ref{fig:wings}.

\begin{figure}
     \centering
     \includegraphics[width=0.3\linewidth]{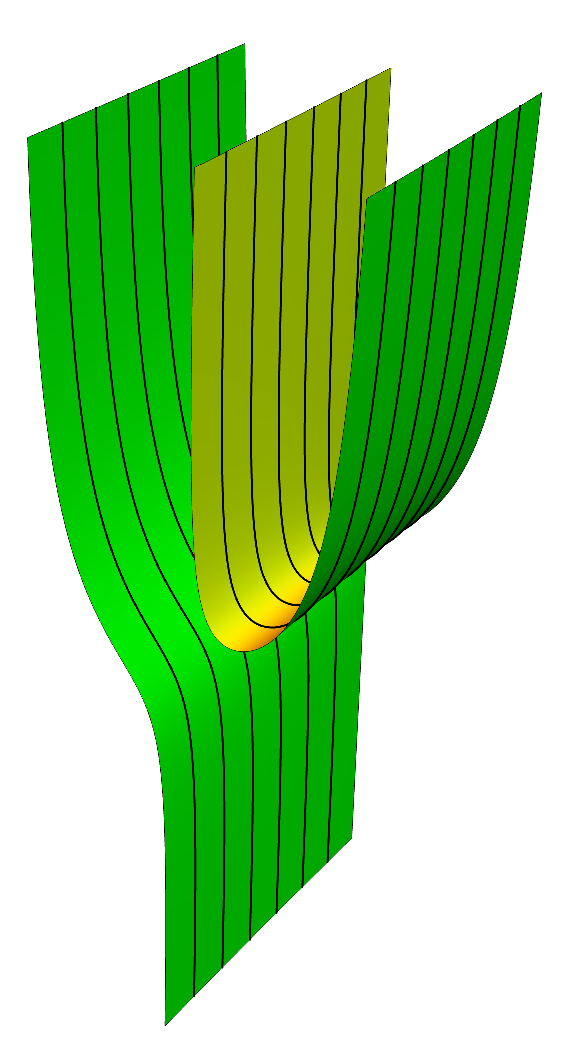}
     \caption{\small Two wings of a translator with finite entropy contained in a vertical slab. One of them is a planar wing (left) and the other one is a grim reaper-type wing (right.) }
     \label{fig:wings}
 \end{figure}
 
\begin{thm}[\cite{entropy}]
    \label{lambda-formula}
Let $\Sigma^2\subseteq\mathbb{R}^3$ be a complete, connected, collapsed embedded translator so that the entropy and the genus are finite. If $\omega_P(\Sigma)$ represents the number of planar wings and $\omega_G(\Sigma)$ represents the number of wings of grim reaper type, then $\omega_P(\Sigma)+2 \, \omega_G(\Sigma)$ is even and 
 \begin{equation}
     \lambda(\Sigma)= \frac 12 \left( \omega_P(\Sigma)+2 \, \omega_G(\Sigma)\right).
 \end{equation}
\end{thm}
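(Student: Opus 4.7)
My plan is to compute the entropy $\lambda(\Sigma)$ as the multiplicity of the backward tangent flow at $-\infty$ for the mean curvature flow $\Sigma + t\ee_3$, and then to evaluate this multiplicity by counting asymptotic half-plane ``walls'' contributed by each wing.

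First, applying Huisken's monotonicity to the eternal flow $\Sigma + t\ee_3$, the quantity $t \mapsto F_{x_0, s-t}(\Sigma + t\ee_3)$ is non-increasing in $t$; its limit as $t\to -\infty$ exists, and its supremum over $(x_0,s)$ recovers $\lambda(\Sigma)$. Combining Brakke--White compactness with $\Sigma\subset\{|x_1|<w\}$ and the local smooth convergence of the flow, any subsequential tangent flow at $-\infty$ is forced to be a static varifold supported on \emph{full} vertical planes parallel to $\{x_1=0\}$. Consequently $\lambda(\Sigma)$ equals the total number of these limit planes counted with multiplicity.

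Next, the wing decomposition of \cite{entropy} supplies the combinatorics. Each planar wing is asymptotic to a single vertical half-plane parallel to the slab boundary, contributing one half-plane wall, while each grim reaper wing is bi-graphical and asymptotic to half of a (possibly tilted) grim reaper cylinder, whose two asymptotic vertical walls each contribute one half-plane (so two walls per grim reaper wing). The total number of half-plane walls is therefore $\omega_P+2\omega_G$. On the other hand, each full plane $\{x_1=a\}$ appearing in the tangent flow (per unit of multiplicity) is assembled by pairing exactly one wall from a wing with $x_2\to+\infty$ with one wall from a wing with $x_2\to-\infty$ at the same asymptotic $x_1$-value. This yields
\[
\omega_P + 2\omega_G \ = \ 2\lambda(\Sigma),
\]
which is the desired identity; the parity claim $\omega_P+2\omega_G\in 2\mathbb{Z}$ is then immediate since $\lambda(\Sigma)\in\mathbb{Z}^+$.

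The hard part will be establishing the pairing rigorously: namely, that the tangent flow contains only full planes (no surviving free half-plane components) and that left and right walls match up at every asymptotic $x_1$-value with correct multiplicities. The fullness relies on local smooth convergence of $\Sigma+t\ee_3$ away from the core together with the closedness and embeddedness of $\Sigma$, precluding a limit carrying an interior boundary curve; the matching relies on the quantitative convergence of each wing to its asymptotic half-plane or half-grim-reaper model, as provided by the wing classification in \cite{entropy}. A secondary subtlety is that multiplicities greater than one in the tangent flow must be accounted for correctly, but the pairing is applied per unit of multiplicity and the bookkeeping is preserved by the linearity of the varifold limit.
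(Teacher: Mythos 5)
The paper does not prove this theorem itself; it cites the result from the earlier work \cite{entropy}, so there is no in-paper proof to compare against. With that caveat, your plan reads as a correct reconstruction of the argument, and the key ideas match what the present paper recapitulates of \cite{entropy}.

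Your Huisken-monotonicity step is sound: since $F_{x_0,s_0-t}[\Sigma+t\ee_3]=F_{x_0-t\ee_3,s_0-t}[\Sigma]$ is non-increasing in $t$ and is bounded above by $\lambda(\Sigma)$ at every finite $t$, while its $t\to-\infty$ limit equals the Gaussian density of the static limit, i.e.\ the multiplicity-weighted plane count $\sum_j\sigma_j$; the reverse inequality $\sum_j\sigma_j\leq\lambda(\Sigma)$ is immediate from the definition, giving $\lambda(\Sigma)=\sum_j\sigma_j$. The pairing step you correctly flag as the crux is exactly the content of the decomposition $\widetilde\Sigma=\left(\Sigma_1^{\rm up}\cup\cdots\cup\Sigma_{\lambda(\Sigma)}^{\rm up}\right)\cup\left(\Sigma_1^{\rm down}\cup\cdots\cup\Sigma_{\omega_P(\Sigma)/2}^{\rm down}\right)$ of Proposition~\ref{eq:decomposition-1} (equivalently, Corollaries~8.3 and 8.5 of \cite{entropy}): each upward graph is defined over a single wedge-type domain bridging $\{x_2>t\}$ and $\{x_2<-t\}$, so it absorbs exactly one right branch and one left branch, and completeness plus embeddedness rule out unmatched half-planes in the $t\to-\infty$ limit. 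Do note that carrying out that decomposition is the topological work that \cite{entropy} actually does -- it is not a short addendum to your plan, it essentially \emph{is} the theorem. But the route you propose is the right one and matches the source.
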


Hence, we consider $\Sigma$ a properly, connected embedded translator of width $2 w$ (i.e., contained in the slab $\{|x_1| < w\}$) with finite genus and finite entropy. So, the set
$$\Sigma_1:= \Sigma \cap \{|x_2| \geq t \}, \quad \mbox{for $t>>0$},$$
is the union of $\omega_P(\Sigma)$ planar wings and $\omega_G(\Sigma)$ grim reaper wings.

\subsection{Wings of grim reaper type}\label{sec:reaperwings}
Let's recall \cite[Section 7]{entropy} that if $W$ is a right wing of grim reaper type, then  we define the function
\begin{equation}\label{def:f_W}
(t,+\infty) \ni x_2 \longmapsto f_W(x_2):=\min\{x_3 \; : \; (x_1,x_2,x_3) \in W \} 
\end{equation}
and the (not necessarily straight) curve of minima of $W$ given by
\[
\mathcal{M}_W:=\{(x_1,x_2,x_3)\in W\;:\;x_3=f_W(x_2)\}.
\]
Concerning the set $\mathcal{M}_W$, we trivially have:

\begin{lem}\label{minimal-set}
$\mathcal{M}_W\subseteq\{\langle \nu,\ee_1\rangle=0\}.$
\end{lem}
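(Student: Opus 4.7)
The plan is to show, at each $p \in \mathcal{M}_W$, that $\ee_1 \in T_p W$, which is equivalent to the claimed orthogonality $\langle \nu(p),\ee_1\rangle = 0$. The key geometric observation is that on the planar slice $\Gamma_{p_2} := W \cap \{x_2 = p_2\}$, the height function $x_3$ attains its minimum at $p$; consequently, the tangent direction to $\Gamma_{p_2}$ at $p$ must simultaneously lie in the horizontal plane (vanishing of $x_3$-derivative) and in the plane $\{x_2 = p_2\}$, and this forces it to be a multiple of $\ee_1$.

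To implement this I would split into two cases according to the direction of the unit normal. If $\nu(p) = \pm\ee_2$, then the conclusion $\langle \nu(p),\ee_1\rangle = 0$ is immediate from orthogonality. Otherwise $\nu(p) \ne \pm\ee_2$, so the plane $\{x_2 = p_2\}$ is transverse to $W$ at $p$; by the implicit function theorem, $\Gamma_{p_2}$ is a regular smooth curve in a neighborhood of $p$, which I would parametrize by arclength as $\gamma(s)$ with $\gamma(0) = p$. By construction $\langle \gamma'(0),\ee_2\rangle = 0$, and since $p$ is a global minimum of $x_3$ along $\Gamma_{p_2}$ (this is precisely the defining property $p_3 = f_W(p_2)$), the first-derivative test yields $\langle \gamma'(0),\ee_3\rangle = 0$. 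Hence $\gamma'(0)$ is a non-zero multiple of $\ee_1$, so $\ee_1 \in T_p W$, concluding the proof.

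I do not anticipate any substantive obstacle, as the statement reduces to a one-line first-derivative test on a smooth planar section of $W$. The only prerequisites to be verified are the attainment of the minimum in the definition of $f_W$ (which follows from the properness of $\Sigma$ together with the bi-graph structure of a grim reaper-type wing recalled in Section \ref{sec:reaperwings}) and the transversality check making $\Gamma_{p_2}$ smooth near $p$, handled by the separate trivial case $\nu(p) = \pm\ee_2$ above.
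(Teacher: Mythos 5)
Your proof is correct. The paper itself gives no argument for this lemma, simply prefacing it with ``we trivially have''; your argument via the first-derivative test on the slice $\Gamma_{p_2} = W \cap \{x_2 = p_2\}$, with the separate trivial case $\nu(p) = \pm\ee_2$ to guarantee transversality of the slicing plane, is exactly the natural way to fill in that trivial step, and it goes through without issue. One could alternatively have appealed to the bi-graph decomposition of $W\setminus\mathcal{M}_W$ (Proposition~\ref{bigraph}) and argued that smoothness of $W$ across the fold locus forces $\ee_1\in T_pW$, but your argument is cleaner since it uses only the defining minimality property and does not presuppose that structural result.
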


We also have the following structural information:

\begin{prop}
\textnormal{(\cite[Proposition 7.7]{entropy})}
\label{bigraph}
    The set $W \setminus \mathcal{M}_W$ is the union of two graphs over the domain $\Omega_W=\{ (0,x_2,x_3)  \, : \, x_3 > f_W(x_2)\}$. 

    Moreover, we know that there exists a constant $C_W>0$ such that
    \begin{equation}
        \limsup_{x_2 \to \pm \infty} |f_W'(x_2)| \leq C_W.
    \end{equation}
\end{prop}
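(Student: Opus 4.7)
My strategy splits the proposition into two parts, treated by essentially different techniques.

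\emph{Two-graph decomposition.} Let $\pi:\R^3\to\{x_1=0\}$ denote the orthogonal projection $(x_1,x_2,x_3)\mapsto(0,x_2,x_3)$. Since $W$ is by definition a simply connected bi-graph over a planar domain, the restriction $\pi|_W$ fails to be a local diffeomorphism exactly on the set where $\langle\nu_W,\ee_1\rangle=0$. By Lemma \ref{minimal-set}, $\mathcal{M}_W$ lies in this critical set. For the opposite inclusion, the horizontal slice $W\cap\{x_2=\text{const}\}$ is, for $|x_2|$ large, a smooth U-shaped curve confined to the slab, and its minimum of $x_3$ is attained precisely at the unique point where the tangent to the slice is parallel to $\ee_1$. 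Hence the critical set of $\pi|_W$ is exactly $\mathcal{M}_W$, and $\pi|_{W\setminus\mathcal{M}_W}$ is a proper submersion of degree two onto its image, which is $\Omega_W$ by the very definition of $f_W$. Simple connectedness of $W$ then splits this covering into the two single-valued graphs asserted by the proposition.

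\emph{Asymptotic slope bound.} For any sequence $x_2^{(k)}\to+\infty$ I would consider the translated wings
\[
W_k := W - (0,\,x_2^{(k)},\,f_W(x_2^{(k)})).
\]
Each $W_k$ is a translator in the slab $\{|x_1|<w\}$ with the origin lying in $\mathcal{M}_{W_k}$ and with entropy bounded above by $\lambda(\Sigma)$. Uniform curvature bounds for translators in slabs (via Ecker--Huisken type estimates, or via White's compactness for stationary integral varifolds in the Ilmanen metric $e^{x_3}\delta_{ij}$), combined with the entropy bound, allow me to extract a subsequence converging locally smoothly to a complete collapsed translator $W_\infty$. Because convergence is $C^2$ near the origin, the tangent to $\mathcal{M}_{W_k}$ at $0$ converges to the tangent to $\mathcal{M}_{W_\infty}$ at $0$, so that
\[
f_W'(x_2^{(k)}) = f_{W_k}'(0) \longrightarrow \text{slope of }\mathcal{M}_{W_\infty}\text{ at }0,
\]
which is a finite number by smoothness of $W_\infty$. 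Taking a supremum over subsequential limits yields the uniform $\limsup$ bound $C_W$; the case $x_2\to-\infty$ is entirely symmetric.

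\emph{Main obstacle.} The delicate point is controlling the global topology and multiplicity of $W_\infty$: a priori the translates could develop new components, higher multiplicity sheets, or additional genus in the limit, which would break the identification of the fold set with a single smooth curve and ruin the slope convergence argument. The finite-genus and finite-entropy hypotheses on $\Sigma$ are exactly what is needed, since they bound respectively the topological complexity and the area on bounded regions along the sequence, preventing the formation of parasitic pieces. A sharper version of $C_W$ depending only on the slab width $2w$ could be obtained by identifying $W_\infty$ with a (possibly tilted) grim reaper cylinder or a $\Delta$-wing via Chini's classification \cite{Chini}, but this refinement is not needed for the proposition as stated.
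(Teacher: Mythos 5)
Since this proposition is cited verbatim from \cite[Proposition~7.7]{entropy}, the present paper supplies no proof at all, so I cannot line your argument up against the authors' argument; I can only evaluate it on its own terms. Your outline is plausible and in all likelihood close in spirit to what \cite{entropy} does (the ``move along the curve of minima, extract a blow-up limit'' pattern also underlies \cite[Proposition 7.2]{entropy}, which this paper quotes), but there are two places where you gesture at a step that actually carries real weight.

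First, in the two-graph part, the identity $\{\langle\nu_W,\ee_1\rangle=0\}=\mathcal{M}_W$ is the heart of the matter, and the inclusion $\supseteq$ is exactly what Lemma~\ref{minimal-set} gives you. For $\subseteq$ you assert that the horizontal slice $W\cap\{x_2=\mathrm{const}\}$ has a \emph{unique} point with tangent parallel to $\ee_1$, but ``U-shaped curve in a slab'' does not force unimodality: the slice could wiggle and produce several horizontal tangencies, and then $\pi|_{W\setminus\mathcal{M}_W}$ would not be a local diffeomorphism, so the degree-two covering argument collapses. You need an actual mechanism to kill the extra folds — either a maximum-principle/convexity argument special to the translator equation, or to lean harder on the definition of ``grim reaper type'' as exactly two-valued, and say explicitly that any additional fold would create a third preimage for nearby $(x_2,x_3)$, contradicting that the wing is a bi-graph.

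Second, in the slope bound, you invoke ``uniform curvature bounds via Ecker--Huisken or White's varifold compactness'' to extract a smoothly convergent subsequence $W_k\to W_\infty$. But $W_k$ is a \emph{bi-graph}, not a graph, so Ecker--Huisken does not apply off the shelf, and a bare varifold limit from White's compactness need not be a smooth surface with multiplicity one at the origin (which is exactly what you need in order to make sense of $\mathcal{M}_{W_\infty}$ and of convergence of the fold curves). You flag ``higher multiplicity sheets'' as the main obstacle and say finite genus and entropy handle it, but that sentence is doing all the work and gives no mechanism. There is also a small unstated point that the tangent to $\mathcal{M}_{W_\infty}$ at $0$ cannot be vertical (so the limit slope is finite) — this is true because $\mathcal{M}_{W_\infty}$ is by definition a graph over the $x_2$-axis, but it should be said, since ``finite by smoothness'' alone does not rule out a vertical tangent. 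With these two gaps patched, the contradiction argument ($|f_W'(x_2^{(k)})|\to\infty$ forces a vertical tangent in the limit, which cannot happen) does deliver the $\limsup$ bound without needing Chini's classification, as you correctly observe.
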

\begin{figure}[htpb]
    \centering
    \includegraphics[width=0.5\linewidth]{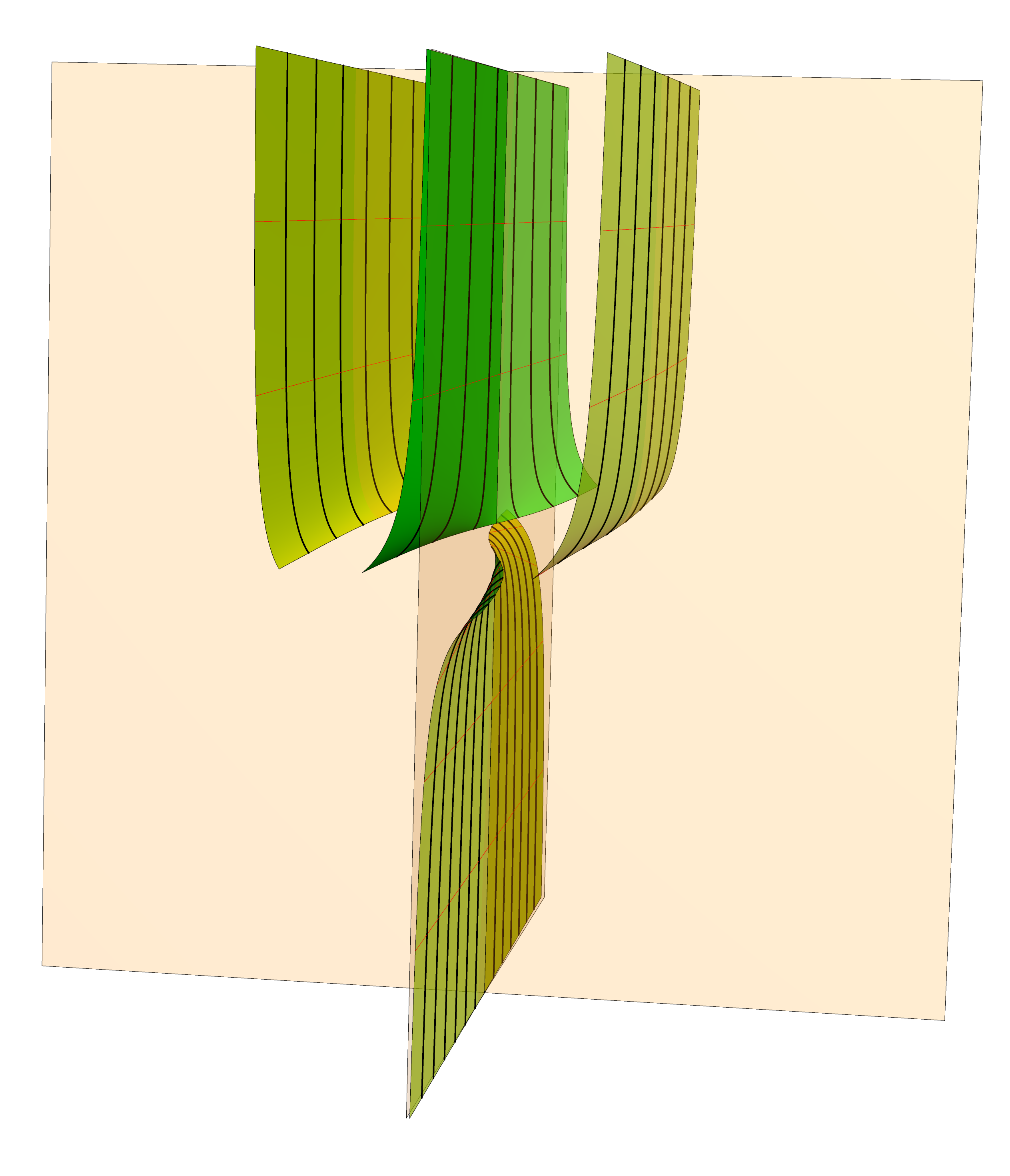}
    \caption{\small The decomposition $\widetilde \Sigma$ for a pitchfork. You can appreciate three $\ee_1$-graphs going up and one $\ee_1$-graph going down. For translators with non-trivial genus, it is necessary to remove also a solid box containing the genus of $\Sigma.$}
    \label{fig:decomp-pitchfork}
\end{figure}

If $W_1^G, \ldots, W^G_{\omega_G(\Sigma)}$ are the wings of grim reaper type, 
then let us define $$ \mathcal M:= \bigcup_{j=1}^{\omega_G(\Sigma)} \mathcal{M}_{W_j^G}.$$

\subsection{Planar wings}
 For a planar wing $W$, we proved in \cite[
Proposition 7.4]{entropy} that $W$ is the $\ee_1$-graph of a function defined over the region $\{(0,x_2,x_3) \: : \; x_2>t\}$, for a right wing, or $\{(0,x_2,x_3) \: : \; x_2<-t\}$, for a left planar wing. Then, for a planar wing $W$, we define:
 \[\mathcal{N}_{W}:= \{(x_1,x_2,x_3) \in W \; : \; x_3=0\}=W\cap \{x_3=0\}.\]

 If $W_1^P,\ldots ,W_{\omega_P(\Sigma)}^P$ are the planar wings of $\Sigma$, then we define:

\[
\mathcal{N}:= \bigcup_{j=1}^{\omega_P(\Sigma)} \mathcal{N}_{W^P_j}
\]

\begin{prop}\label{eq:decomposition-1}
     There exist $s,t>0$ large enough so that
     \[\widetilde \Sigma:= \Sigma \setminus \left(\mathcal{M} \cup \mathcal{N}\cup\{|x_2|<t,|x_3|<s\}\right) \] has a decomposition into $\ee_1$-graphs:
     \begin{equation}
    \widetilde \Sigma = \left(\Sigma_1^{\rm up} \cup \cdots \cup \Sigma_{\lambda(\Sigma)}^{\rm up}\right) \cup \left( \Sigma_1^{\rm down} \cup \cdots \cup \Sigma_{\omega_P(\Sigma)/2}^{\rm down}\right). \end{equation}

 \end{prop}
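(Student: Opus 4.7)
The plan is to verify the decomposition by combining the wing classification from \cite{entropy} (summarized in Theorem \ref{lambda-formula}) with the bi-graph description of Proposition \ref{bigraph} and the $\ee_1$-graphical structure of planar wings, then tracking how the cuts $\mathcal{M}$, $\mathcal{N}$ and the central box interact globally. To begin, I would fix $t>0$ large enough that $\Sigma\cap\{|x_2|>t\}$ is the disjoint union of the $\omega_G(\Sigma)$ grim-reaper wings and the $\omega_P(\Sigma)$ planar wings, and that on each grim-reaper wing $W$ the function $f_W$ and the minima curve $\mathcal{M}_W$ are well-defined with $|f_W'|$ bounded. Then I would choose $s>0$ large enough that the box $\{|x_2|<t,\,|x_3|<s\}$ strictly contains the finite topology of $\Sigma$ (all handles, since $\genus(\Sigma)<\infty$), so that each component of $\Sigma$ outside the box is simply connected. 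This box need not contain all of $\Sigma\cap\{|x_2|<t\}$: tall central pieces of $\Sigma$ with $|x_3|>s$ may persist and globally connect pieces coming from distinct wings.

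Next I would describe the $\ee_1$-graphical pieces locally. On each grim-reaper wing $W$, Proposition \ref{bigraph} writes $W\setminus\mathcal{M}_W$ as the disjoint union of two $\ee_1$-graphs over $\Omega_W=\{(0,x_2,x_3):x_3>f_W(x_2)\}$; since $\Omega_W$ extends to $x_3\to+\infty$, both halves are of \emph{up} type. On each planar wing $W'$, the characterization in \cite[Proposition 7.4]{entropy} shows that $W'$ is an $\ee_1$-graph over a half-plane $\{x_2>t'\}$ or $\{x_2<-t'\}$; removal of $\mathcal{N}_{W'}=W'\cap\{x_3=0\}$ cleanly splits it into an upper $\ee_1$-graph extending to $x_3\to+\infty$ and a lower $\ee_1$-graph extending to $x_3\to-\infty$. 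All remaining components of $\widetilde{\Sigma}$ are therefore of one of these two kinds.

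Finally I would count the global components of $\widetilde{\Sigma}$. Outside the discarded box, distinct wings can be connected through the tall central parts with $|x_3|>s$, which are smooth pieces of $\Sigma$ inheriting graphical structure from the wings they link. A careful bookkeeping, using the parity statement $\omega_P(\Sigma)+2\omega_G(\Sigma)\in 2\mathbb{Z}$ from Theorem \ref{lambda-formula} and the natural pairing of planar wings asymptotic to opposite halves of the same vertical plane, yields exactly $\lambda(\Sigma)=\omega_P(\Sigma)/2+\omega_G(\Sigma)$ up-components and $\omega_P(\Sigma)/2$ down-components. One verifies this on model examples: the grim reaper cylinder ($\lambda=2$, giving two up pieces and no down pieces) and the pitchfork in Figure \ref{fig:decomp-pitchfork} ($\lambda=3$, giving three up and one down).

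The main obstacle is this global count: one must correctly track how the two bi-graph halves of distinct grim-reaper wings merge through the central tall region into a smaller number of global $\ee_1$-graphs, and how paired planar wings yield exactly $\omega_P(\Sigma)/2$ down components rather than $\omega_P(\Sigma)$. The properness, finite topology, and collapsedness of $\Sigma$ reduce this to a finite combinatorial problem, but the exact matching requires careful use of the topological framework developed in \cite{entropy} together with the bounded-slope bound $|f_W'|\leq C_W$ from Proposition \ref{bigraph}, which guarantees that the cutting curves behave as proper arcs at infinity and do not re-enter the box.
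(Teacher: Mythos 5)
Your proposal correctly recalls the wing classification and the local $\ee_1$-graph structure of each wing from \cite{entropy}, and your bookkeeping of the global component count ($\lambda(\Sigma)$ up-pieces and $\omega_P(\Sigma)/2$ down-pieces) matches the statement. However, it leaves unjustified precisely the step that the paper's proof actually establishes, and the justification you offer would not close it. You choose $s$ so that the central box contains the handles of $\Sigma$, and then assert that the tall central pieces with $|x_2|<t$, $|x_3|>s$ ``inherit graphical structure from the wings they link.'' But containing the genus is not enough: $\ee_1$-graphicality of $\widetilde\Sigma$ requires that no point of $\Sigma\cap\{|x_2|<t,\,|x_3|\geq s\}$ has unit normal $\nu$ perpendicular to $\ee_1$, and a priori such points could escape to $x_3\to\pm\infty$ inside the central strip even on a simply connected piece. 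Simple connectivity and properness do not rule this out, so the ``finite combinatorial problem'' you appeal to does not reduce to topology alone.

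The paper handles this with a compactness/blowup argument that is absent from your proposal: if no such $s$ existed, one would obtain a sequence $p_j\in\Sigma\cap\{|x_2|<t\}$ with $|x_3(p_j)|\to\infty$ and $\langle\nu(p_j),\ee_1\rangle = 0$; translating, the surfaces $\Sigma - p_j$ subconverge smoothly (by a modification of \cite[Proposition 6.1]{entropy}, using finite entropy/genus) to a finite union of vertical planes parallel to the slab, counted with multiplicity $\lambda(\Sigma)$ by \cite[Corollary 8.5]{entropy}. On any such plane $\{x_1=c\}$ the normal is $\pm\ee_1$, which contradicts $\langle\nu(p_j),\ee_1\rangle=0$ in the limit. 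This is the essential analytic content of the proposition; without it your component count is resting on an unproved graphicality claim. To fix your argument, replace the ``box contains the handles'' criterion for $s$ by the contradiction/blowup argument above (keeping your choice of $t$ and your subsequent local descriptions and counting, which are fine).
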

\begin{proof}
Firstly, outside the set $\{|x_2|<t\}$ each component of the $\Sigma$ is graphical, by the preceding discussion, for the same $t>0$.

Suppose, therefore, for the sake of contradiction, that no such $s>0$ exists. We may assume that it's the upper bound that fails (the other case being similar), then there would exist a sequence of points $p_j\in\Sigma\cap\{|x_2|<t\}$ with $x_3(p_j)\nearrow +\infty$ and unit normals $N_\Sigma(p_j) \parallel \ee_1$. By a small modification of the proof of \cite[Proposition  6.1]{entropy}, the sequence of solitons $\Sigma - p_j$ would $C^\infty$ subconverge on compact subsets of $\mathbb{R}^3$ to some $\Sigma_\infty$ consisting of a finite number (in fact the integer $\lambda(\Sigma)$ counted with multiplicity, by \cite[Corollary 8.5]{entropy}) of planes parallel to the slab. But we would also have $N_{\Sigma_\infty}(0) \parallel \ee_1$, a contradiction.

\end{proof}

\subsection{Graphs $\Sigma_j^{up}$: wedges going up.}

  $\Sigma_j^{\rm up}$ is a  graph over a domain in the $(x_2,x_3)$-plane, $U_j^{\rm up}$, whose boundary is a piecewise smooth curve (see Figure \ref{fig:U_i}.) Thus
\begin{multline*}
    \partial (U_j^{\rm up})=\{(x_2,s) \: : -t \leq x_2 \leq t \} \bigcup \\ \{(t, x_3) \: : \; \varphi_j(t)\leq x_3 \leq  s\} \bigcup \{(-t, x_3) \: : \; \varphi_j(-t)\leq x_3 \leq  s\} \\ \bigcup \{x_3=\varphi_j(x_2)\: : \; |x_2|>t\}.
\end{multline*}

\begin{figure}[htpb]
    \centering
    \includegraphics[width=0.42\linewidth]{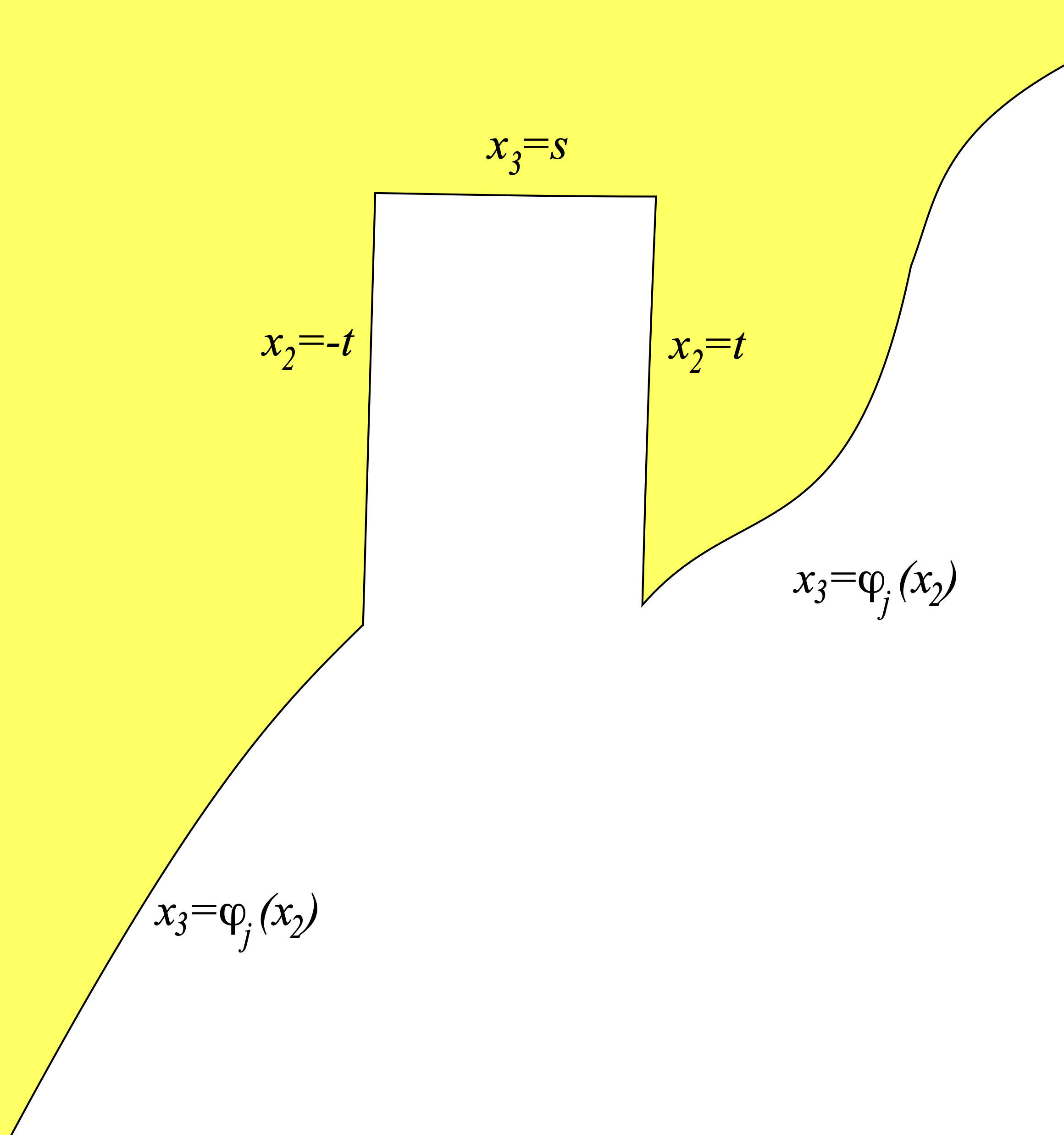}
    \includegraphics[width=0.4\linewidth]{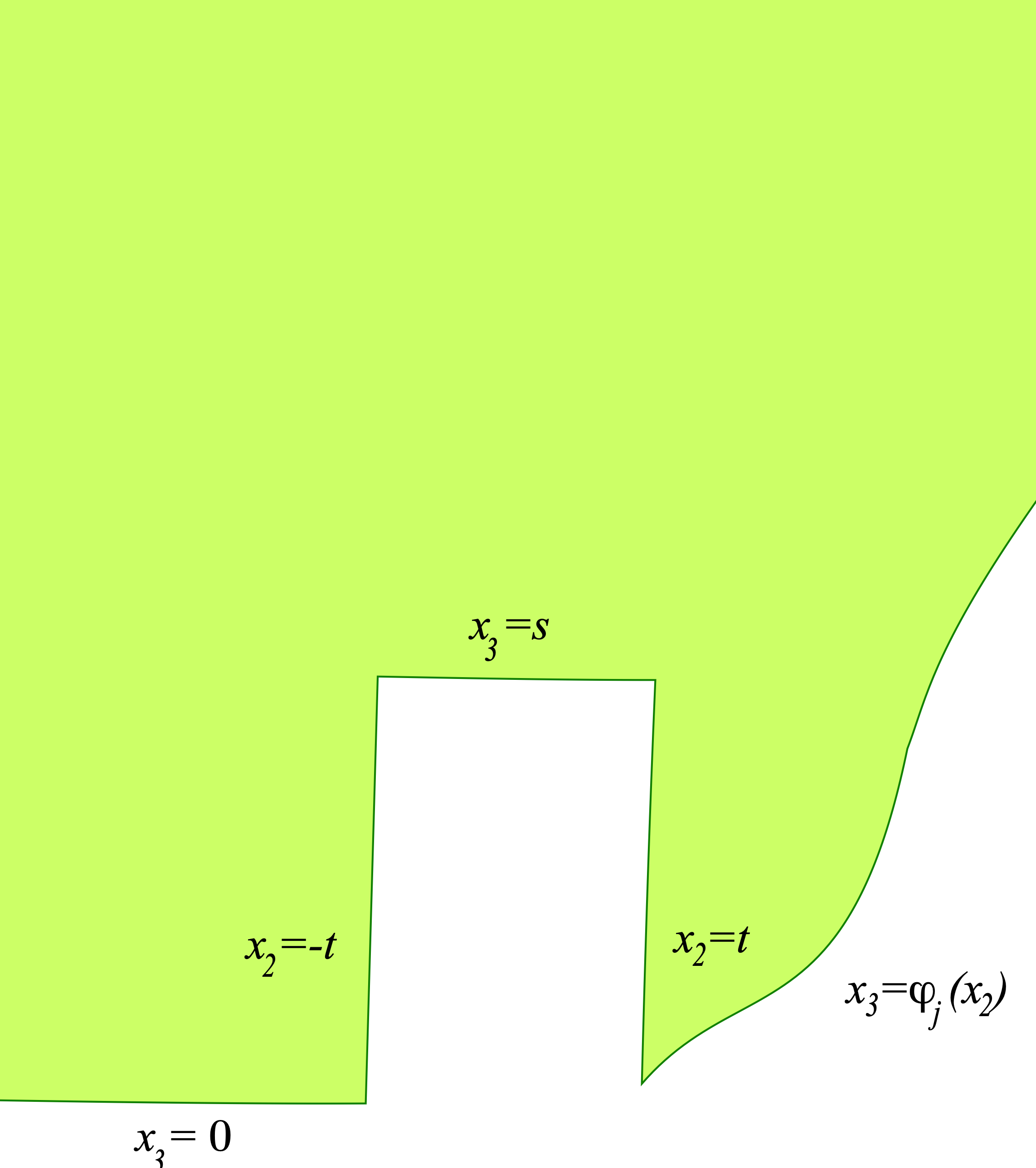}
    \caption{\small Two different types of domains $U_j^{\rm up}$. The third type of domain, that we have not included in the picture, has a boundary whose non-polygonal part consists of the graph of a function $\varphi_j$ defined on the half-line $(-\infty,-t)$ and the horizontal half-line $\{x_3=0, x_2>t\}.$}
    \label{fig:U_i}
\end{figure}

Each  piece of $\varphi_j$ has either the form  $x_3=f_{W_j^G}(x_2)$, for some grim reaper wing $W_j^G$, or the form $x_3=0$, for some planar wing $W_j^P$. Then, by Proposition \ref{bigraph},
  \[ \limsup_{x_2 \to \pm \infty} |\varphi_j'(x_2)| < C, \quad \mbox{for some constant $C>0$.}\]
  
 So, for any $d_j >C$, we have that 
  \begin{eqnarray}
      \limsup_{x \to+\infty} \;(d_j \, x_2-\varphi_j(x_2)) & = & \infty, \\
      \limsup_{x \to-\infty} \;(-d_j \, x_2-\varphi_j(x_2)) & = & \infty,
  \end{eqnarray}

Hence, we easily conclude the following:
\begin{prop}\label{cl:Up}
For any point $p=(x_2,x_3)\in U_j^{\rm up}$, there exists an angle $\theta$ so that the wedge
    \begin{equation} \label{eq:def-wedge}
\Wedge(p,\theta):=\{ q \in \R^2 \; : \; \sphericalangle(q-p,\ee_3) \in (-\theta,\theta) \}
    \end{equation}
    is contained in $U_j^{\rm up}.$ Furthermore,  the angle $\theta$ can be chosen uniformly on $$U_j^{\rm up} \cap \{x_3>s\}.$$
\end{prop}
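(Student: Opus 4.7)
The plan is to exploit the uniform slope bound on the graphical portions $\varphi_j$ of the lower boundary of $U_j^{\rm up}$ provided by Proposition \ref{bigraph}, together with the observation that points above height $x_3=s$ sit safely above the polygonal (horizontal and vertical) pieces of $\partial U_j^{\rm up}$. First, I will upgrade the asymptotic estimate $\limsup_{|x_2|\to\infty}|\varphi_j'|\le C_W$ to a uniform bound $|\varphi_j'(x_2)|\le C$ on the whole domain of definition $\{|x_2|>t\}$, by combining the limsup with the smoothness of $\varphi_j$ on each compact subinterval (taking $C=0$ where $\varphi_j\equiv 0$, i.e., on planar portions). Then I will fix $\theta_0\in(0,\pi/2)$ with $\tan\theta_0<1/C$ (arbitrary if $C=0$), so that $\cos\theta_0-C\sin\theta_0>0$.

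Next, for $p=(a,b)\in U_j^{\rm up}\cap\{x_3>s\}$, I will verify $\Wedge(p,\theta_0)\subseteq U_j^{\rm up}$ by parametrizing a generic point of the wedge as $q=p+r(\sin\alpha,\cos\alpha)$ with $r\ge 0$ and $|\alpha|<\theta_0$, and splitting into cases according to where $q_2$ lies. When $|q_2|\le t$, the condition $q\in U_j^{\rm up}$ reduces to $q_3\ge s$, which is immediate from $q_3=b+r\cos\alpha\ge b>s$. When $|q_2|>t$, the slope bound yields $\varphi_j(q_2)\le \max(\varphi_j(a),\,s)+Cr|\sin\alpha|$ (with the base point in the estimate chosen as $a$ if $|a|>t$ or as $\pm t$ if $|a|\le t$); combined with $b\ge\max(\varphi_j(a),s)$ this gives
\[ q_3-\varphi_j(q_2)\ge \bigl(b-\max(\varphi_j(a),s)\bigr)+r(\cos\alpha-C|\sin\alpha|)>0. \]
Any crossings of the vertical segments at $x_2=\pm t$ occur at heights $q_3>s$, hence above those segments, so they cause no obstruction. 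This produces the uniform choice $\theta=\theta_0$ on $U_j^{\rm up}\cap\{x_3>s\}$.

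For the general existence assertion (allowing $b\le s$), membership $p\in U_j^{\rm up}$ forces $|a|>t$, and the only new risk is that a side of the wedge strikes a vertical segment at $x_2=\pm t$ below height $s$. That side reaches $x_2=\pm t$ at height $b+(|a|-t)\cot\theta$, so additionally demanding $(|a|-t)\cot\theta>s-b$ (together with $\tan\theta<1/C$) removes this last danger. The required $\theta$ depends on $p$ and degenerates to $0$ as $p$ approaches the vertical segments, which is exactly why uniformity can only be claimed on $\{x_3>s\}$. The principal subtlety is book-keeping which boundary piece the wedge might meet near the transition $|x_2|=t$; once the slope hierarchy $\cot\theta_0>C$ is secured, the wedge opens faster than the graphs $\varphi_j$ can rise at infinity, and this is the essential geometric content of the statement.
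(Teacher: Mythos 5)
Your proof is correct and takes essentially the same route as the paper, which only sketches the argument (observing the eventual slope bound $\limsup|\varphi_j'|\le C$ and that wedges of slope $d_j>C$ eventually dominate $\varphi_j$, then writing ``hence, we easily conclude''). You have filled in the case analysis: uniform slope bound $|\varphi_j'|\le C$ on $\{|x_2|>t\}$, choice of $\theta_0$ with $\cot\theta_0>C$, and verification over the three pieces of $\partial U_j^{\rm up}$.

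One small point worth making explicit: in the general case ($b\le s$, $|a|>t$), after imposing $(|a|-t)\cot\theta>s-b$ you address only the vertical segments, but the wedge could in principle also meet the graph $\varphi_j$ on the \emph{far} side $\{x_2<-t\}$ (taking $a>t$). That case is in fact safe: once the left ray of the wedge clears $x_2=-t$ at height $b+(a+t)\cot\theta>s\ge\varphi_j(-t)$, the slope comparison $\cot\theta>C$ guarantees it stays strictly above the graph as $x_2$ decreases — the same monotonicity that handles the near side. You appeal to this implicitly (``the wedge opens faster than the graphs $\varphi_j$ can rise''); a one-line remark about the far-side graph would make the write-up airtight. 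Also, your inequality $b\ge\max(\varphi_j(a),s)$ should be strict, as both membership conditions for $U_j^{\rm up}$ are open.
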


We are going to denote \begin{equation} \label{def:xip}
    \xi_j={\xi_j}_{(p,\theta)}: \Wedge(p,\theta) \longrightarrow \R,
\end{equation} the function which is a solution of the translating graph equation \eqref{TSE} and
$$ E_j^{\rm up}(p,\theta):= {\rm Graph}(\xi_j)=\{ (\xi_j(x_2,x_3), x_2,x_3) \; : \; (x_2,x_3) \in  \Wedge(p,\theta) \} \subset \Sigma_j^{\rm up}. $$

\subsection{Graphs $ \Sigma_i^{\rm down}$: unbounded $U$-shaped domains. } In the case of a graph $\Sigma_i^{\rm down}$, we know that $\Sigma_i^{\rm down} \cap \{ |x_2| >t\} $ must be contained in 2 planar wings (one to the left and one to the right), because for a grim reaper type wing, $W$, we have that $W+t \ee_3 \to \varnothing$, as $t \to +\infty.$ Let us denote $W_{i_0}^P$ and $W_{i_1}^P$ these two planar wings.

So, if we call $U_i^{\rm down}$ the domain over which $ \Sigma_i^{\rm down}$ is a graph, then the shape of $U_i^{\rm down}$ is like the one indicated in Figure \ref{fig:domain-2}.
\begin{figure}
    \centering
    \includegraphics[width=0.5\linewidth]{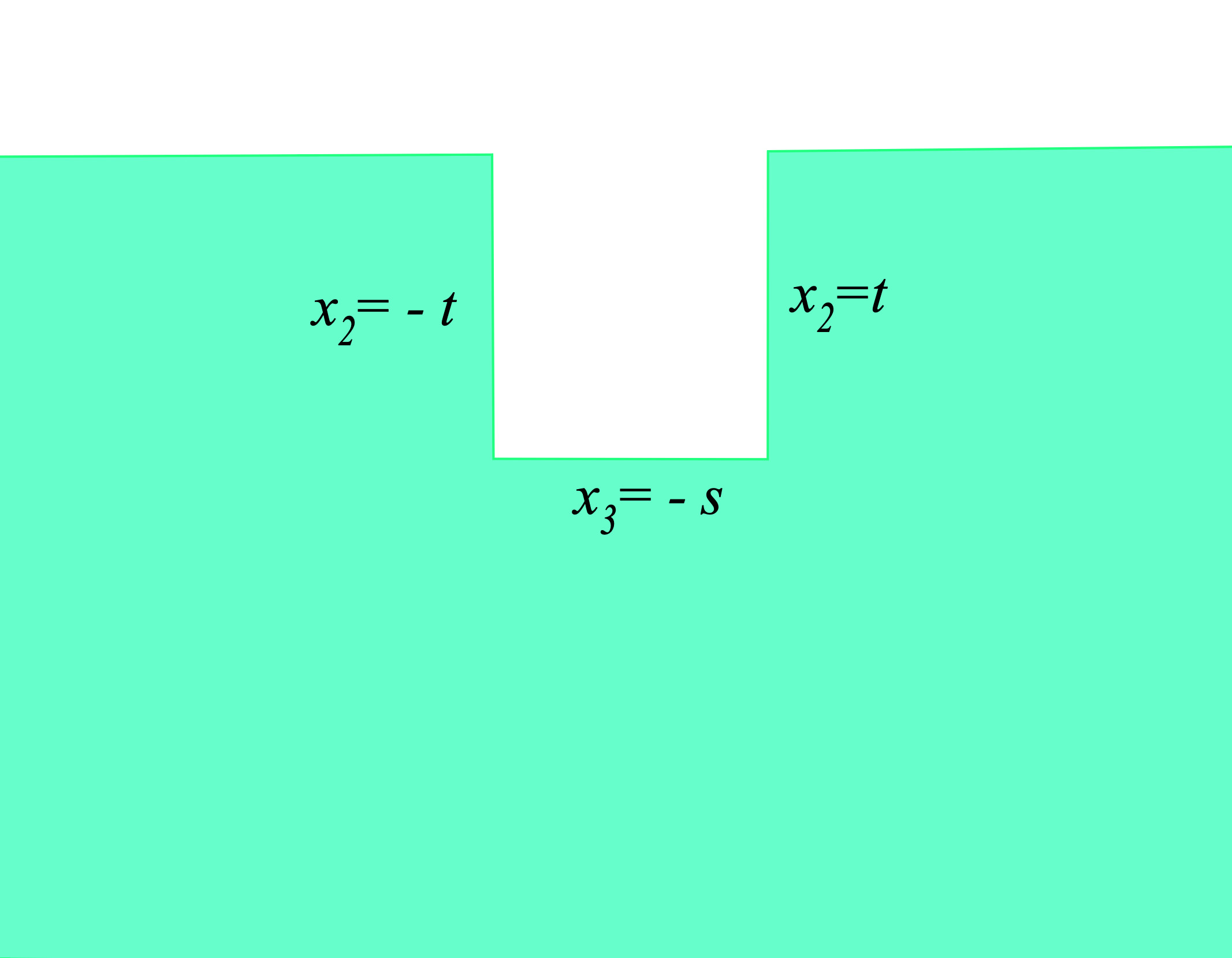}
    \caption{The domain $U_i^{\rm down}$.}
    \label{fig:domain-2}
\end{figure}
In other words, \begin{equation*}
    U_i^{\rm down}=(-\infty,-t)\times (-\infty,0) \cup  [-t,t] \times (-\infty,-s) \cup  (t,\infty) \times (-\infty,0).\end{equation*}
As $W_{i_0}^P$ and $W_{i_1}^P$ are $\ee_1$-graphs over the regions \[(-\infty,-t) \times \R, \quad \mbox{and}\quad (t,\infty) \times \R,\]
respectively. 
Thus, $ \Sigma_i^{\rm down}$ can be included in a bigger region of $\Sigma$ which is also an $\ee_1$-graph: $$ \widetilde {\Sigma_i}:=\Sigma_i^{\rm down} \cup W_{i_0}^P \cup W_{i_1}^P.$$
This region (see Figure \ref{fig:Xi1}, left) is an $\ee_1$-graph over the domain
\begin{equation}\label{def:H-domain}
H:=(-\infty,t)\times \R \cup [-t,t] \times (-\infty,-s) \cup (t,\infty) \times \R.
\end{equation}
This means that there is a function $\kappa_i: H \longrightarrow \R$ whose graph is a solution of \eqref{TSE-geo} (the function itself solving \eqref{TSE} below) and such that
$$\widetilde{\Sigma_i}= \{(\kappa_i(x_2,x_3),x_2,x_3) \; : \; (x_2,x_3) \in H\}.$$
\begin{rem}
Notice that $W^P_{i_0} \cap \{x_3>0\}$ is part of some upward graph $\Sigma_{j_0}^{\rm up}.$ Similarly, $W^P_{i_1} \cap \{x_3>0\}$ is part of some upward graph $\Sigma_{j_1}^{\rm up}.$
\end{rem}

The following theorem encapsulates and summarizes all the key results and findings discussed throughout this section (illustrated in Figure \ref{fig:Xi1}):

\begin{thm}\label{structure-thm}
    Let $\Sigma$ a complete, embedded translator so that its width, its entropy and its genus are finite. Consider $\Sigma^{\rm up}_j$, $j=1, \ldots \lambda(\Sigma)$, and  $\Sigma^{\rm down}_i$, $i=1, \ldots, \omega_P(\Sigma)/2,$ the regions of $\Sigma$ given by Proposition \ref{eq:decomposition-1}. Then one has the following:
    \begin{enumerate}
        \item For any $p \in \Sigma_j^{\rm up}$ there exists a wedge $\Wedge(p,\theta)$ and a smooth function, given by \eqref{def:xip},\[\xi_j:\Wedge(p,\theta) \rightarrow \R, \] such that the graph
        \[{\rm Graph}(\xi_j)=\{(\xi_j(x_2,x_3),x_2,x_3) \; : \; (0,x_2,x_3) \in \Wedge(p,\theta)\}\]
        is contained in $\Sigma_j^{\rm up}.$
        \item For any $\Sigma_i^{\rm down}$, there exists  a function $\kappa_i : H \rightarrow \R$ such that:
        \[ \Sigma_i^{\rm down} \subset {\rm Graph}(\kappa_i)=\{(\kappa_i(x_2,x_3),x_2,x_3) \; : \; (x_2,x_3) \in H\} \subset \Sigma.\]
        Moreover, there exists $a'>0$ and upward graphs $\Sigma_{j_0}^{\rm up}$ and $\Sigma_{j_1}^{\rm up}$ such that if we consider the regions in $\partial ({\rm Graph}(\kappa_i))$ given by \[\sigma_+ := \{ (\kappa_j(t,x_3),t, x_3) \; : \; x_3 > a'\} \quad \sigma_- := \{ (\kappa_j(-t,x_3),-t, x_3) \; : \; x_3 > a'\},\]
        then $\sigma_- \subset \Sigma_{i_0}^{\rm up}$ and $\sigma_+ \subset \Sigma_{i_1}^{\rm up}$. In particular, the curves $\sigma_+$ and $\sigma_-$ can be included in graphs over upward wedges like in item {\it (1)} (see Figure \ref{fig:Xi1}-Right.)
    \end{enumerate}
\end{thm}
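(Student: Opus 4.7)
The plan is to prove this theorem as a structural summary assembling the geometric decomposition developed throughout this section, combined with the wedge-inclusion property established in Proposition \ref{cl:Up}. Both items are essentially corollaries of results already established; no fundamentally new analytic input is required, so the proof will proceed by unwinding definitions and stitching the pieces together carefully.

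For item (1), given any $p \in \Sigma_j^{\rm up}$, I would set $q = (x_2(p), x_3(p)) \in U_j^{\rm up}$, which makes sense because Proposition \ref{eq:decomposition-1} asserts that $\Sigma_j^{\rm up}$ is an $\ee_1$-graph over $U_j^{\rm up}$. Proposition \ref{cl:Up} then supplies an angle $\theta > 0$ such that $\Wedge(q,\theta) \subset U_j^{\rm up}$. Restricting the graphical description of $\Sigma_j^{\rm up}$ to this wedge produces the smooth function $\xi_j : \Wedge(q,\theta) \to \R$ of \eqref{def:xip}, and $\Graph(\xi_j) \subset \Sigma_j^{\rm up}$ is automatically a translating graph.

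For item (2), the function $\kappa_i$ is obtained by enlarging $\Sigma_i^{\rm down}$ to include the two adjacent planar wings, forming $\widetilde{\Sigma_i} = \Sigma_i^{\rm down} \cup W_{i_0}^P \cup W_{i_1}^P$. Since planar wings are $\ee_1$-graphs over half-planes by \cite[Proposition 7.4]{entropy}, while $\Sigma_i^{\rm down}$ is an $\ee_1$-graph over $U_i^{\rm down}$, I must verify that the gluing is genuinely graphical over all of $H$, i.e. that $\widetilde{\Sigma_i}$ contains no vertical tangencies and projects bijectively to $H$. This follows from the fact that the choice of $s$ in Proposition \ref{eq:decomposition-1} was designed precisely to exclude normals parallel to $\pm\ee_1$ on $\Sigma \cap \{|x_2| < t, |x_3| < s\}$, so that $\Sigma_i^{\rm down}$ and the two adjoining planar wings can be consistently expressed as a single $\ee_1$-graph over $H$.

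To identify the side boundary curves $\sigma_\pm$ with subsets of upward graphs, I would invoke the remark immediately preceding the theorem statement: $W^P_{i_0} \cap \{x_3 > 0\} \subset \Sigma_{i_0}^{\rm up}$ and $W^P_{i_1} \cap \{x_3 > 0\} \subset \Sigma_{i_1}^{\rm up}$ for suitable indices $i_0, i_1$ in the decomposition of Proposition \ref{eq:decomposition-1}. Choosing $a' > 0$ large enough that the half-lines $\{(\pm t, x_3) : x_3 > a'\}$ avoid the removed central block $\{|x_2|<t, |x_3|<s\}$ then places $\sigma_\pm$ inside the corresponding upward graphs, at which point item (1) applied at any $p \in \sigma_\pm$ embeds these curves into wedge graphs as claimed. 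The only (mild) obstacle throughout is purely combinatorial bookkeeping of the constants $t, s, a'$ so that the overlaps of the decomposition pieces are simultaneously consistent; beyond this there is no analytic difficulty once the geometric structure from \cite{entropy} is in hand.
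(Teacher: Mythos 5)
Your proposal is correct and takes essentially the same route as the paper: the paper presents Theorem \ref{structure-thm} as a recapitulation of the decomposition built up earlier in Section \ref{preliminaries} (no separate proof is given), and your argument -- invoking Proposition \ref{cl:Up} for the upward wedges and the definition of $\xi_j$ in \eqref{def:xip}, the enlargement $\widetilde{\Sigma_i}=\Sigma_i^{\rm down}\cup W_{i_0}^P\cup W_{i_1}^P$ graphical over $H$ for item (2), and the Remark preceding the theorem to place $\sigma_\pm$ inside $\Sigma_{j_0}^{\rm up},\Sigma_{j_1}^{\rm up}$ -- is exactly the assembly the paper relies on.
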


\begin{figure}[htbp]
\begin{center}
\includegraphics[width=.4\textwidth]{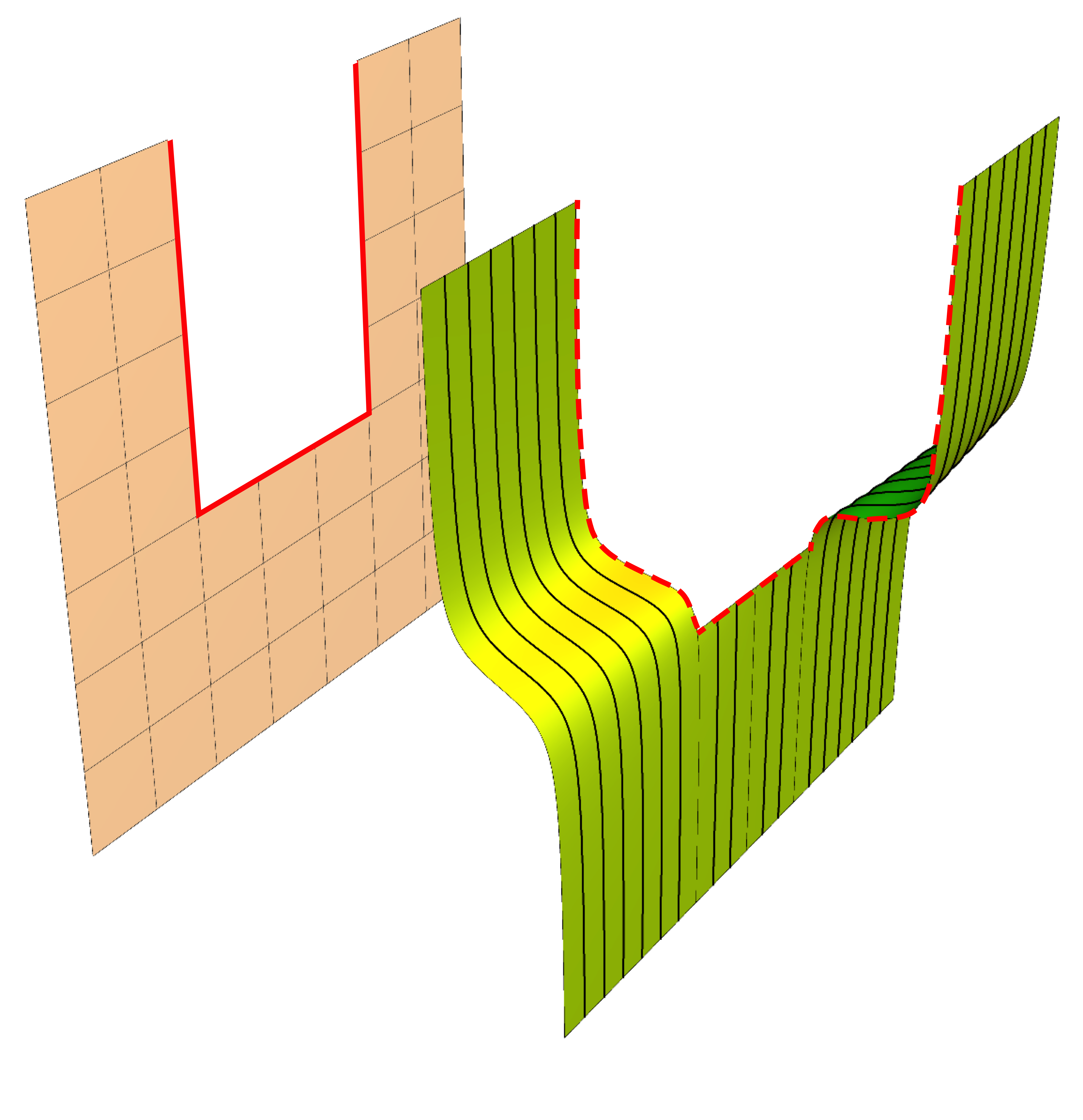}
\includegraphics[width=.4\textwidth]{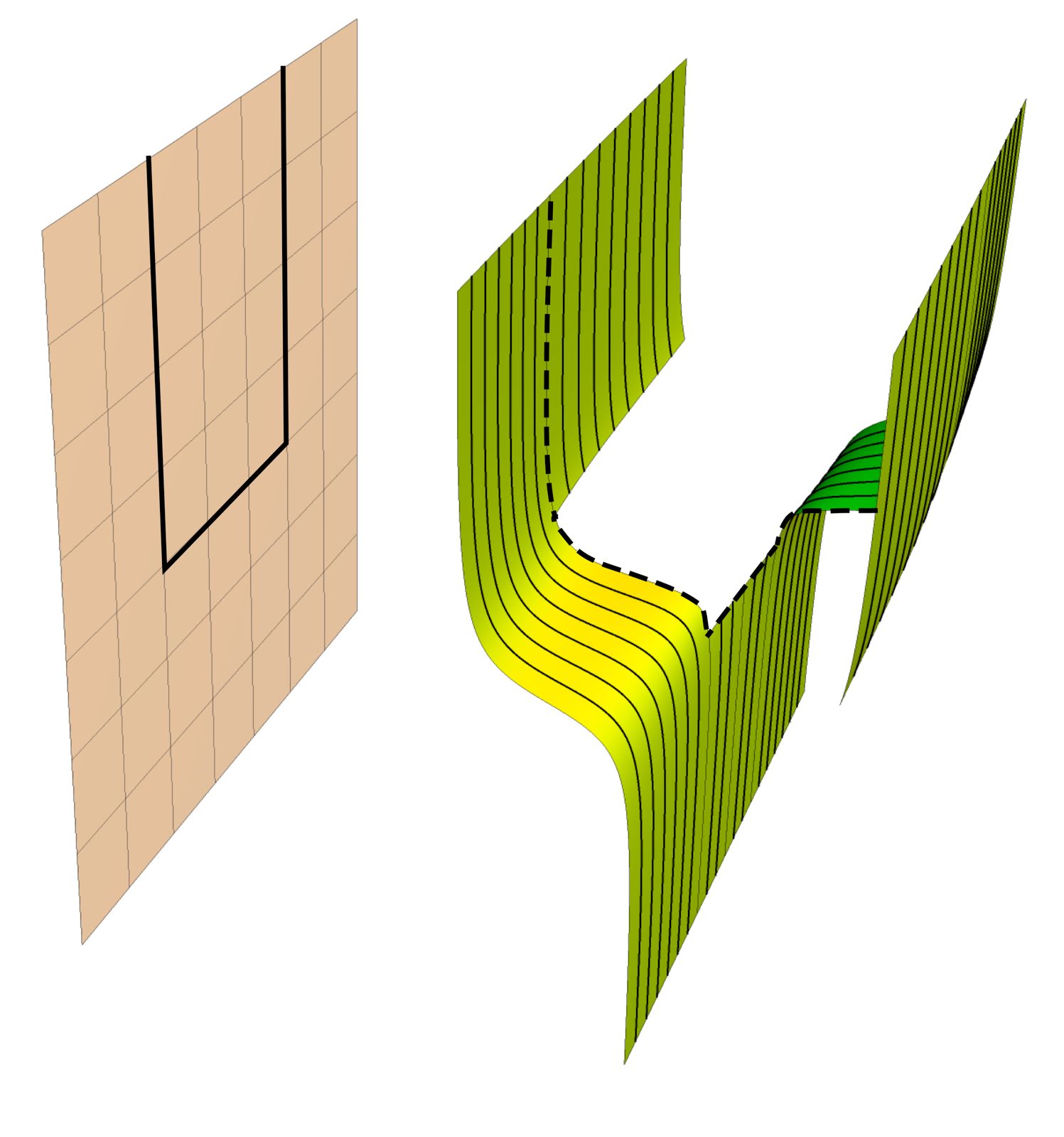}
\end{center}
\caption{Left: The graph of a $\kappa_j$ function in Theorem \ref{structure-thm}. Right: The graph of a $\kappa_j$ function with an extension of it so that the projection is the whole plane.} \label{fig:Xi1}
\end{figure}

\section{A priori derivative decay estimates for PDEs with drift}\label{sec:PDEs_drift}

In this section we will derive the needed $C^k$ a priori derivative decay estimates for the study of our PDEs with drift terms, including first the case of $L$-harmonics, when $L$ is the drift Laplacian, and then for the quasilinear translating soliton equation.

\subsection{The translating horizontal graphs equation and its linearization $L$}

As we saw from the geometric setup in Section \ref{sector}, the question of uniqueness of the limit of the sequences \(\Sigma\pm t_k\ee_3\), for different choices of sequence $\{t_k\}$, is the question of the existence of limits (for fixed $x_2$) of functions $u:\Omega\to\R$ defining $\ee_1$-graphs as in Theorem \ref{structure-thm}:
\[
``\mbox{\emph{Do the limits}} \hspace{-1pt}\lim_{x_3\to \pm\infty} u(x_2,x_3)\:\; \mbox{\emph{exist?}}".
\]

To investigate the existence or non-existence of true limits, i.e. uniqueness of limits independently of subsequence, in our global geometric setting, we will need to first carefully analyze the different types of graphical pieces, and then later combine this information via taking into account the overlaps of the constituent graphical pieces.

Note firstly that if we consider a function $u:\Omega\to\R$, considered as a graph over a vertical plane, then the self-translating soliton equation \eqref{TSE-geo} means that $u$ satisfies the following quasilinear equation
\begin{equation}\label{TSE}
    \diverg\left(\frac{\nabla u}{\sqrt{1+|\nabla u|^2}}\right)=-\frac{\partial _{3} u}{\sqrt{1+|\nabla u|^2}},
\end{equation}
where $\nabla u(x_2,x_3) = (\partial_2 u, \partial_3 u)$. We will refer to Equation \eqref{TSE}, that this paper is largely devoted to the study of, as the translating soliton equation (or translating horizontal graphs equation), whose solutions include the vertical planes $u(x_2,x_3) = ax_2 +b, a,b\in\R$, i.e. the planes that contain the translation direction. (See also Remark \ref{GoHorizontal}.)

Equation \eqref{TSE} is equivalent to the following equation
\begin{equation}\label{drift-Laplacian}
Lu=P(\nabla u, \Hess u),    
\end{equation}
where the linearization of \eqref{TSE} is the drift Laplacian
\begin{equation}\label{def:L}
L=\Delta + e_3\cdot \nabla,
\end{equation}
while the higher order term reads
\begin{equation}\label{RHS}
P(\nabla u, \Hess u) := \frac{\Hess u(\nabla u, \nabla u)}{1+|\nabla u|^2} = \frac{\sum_{i,j\in{2,3}}\partial^2_{ij}u\,\partial_iu\,\partial_j u}{1 + (\partial_2 u)^2 + (\partial_3 u)^2},
\end{equation}
denoting the Hessian by $(\Hess u)_{ij} = \partial_{ij}^2 u$.

\begin{rem}\label{GoHorizontal}
Equations \eqref{TSE}-\eqref{RHS} should be contrasted with the vertical translating graphs equation, for graphs into the direction of translation, which is likely more well-known to the reader, and has been studied extensively \cite{AW,CHH2,CHH1,HIMW,HIMW-2,Spruck-Xiao,Wang},
\begin{equation}\label{OldGraphs}
\diverg\left(\tfrac{\nabla v}{\sqrt{1+|\nabla v|^2}}\right)=\tfrac{1}{\sqrt{1+|\nabla v|^2}}.
\end{equation}
While locally of course also describing patches of translators, equation \eqref{OldGraphs} cannot describe surfaces that are (nearly) vertical on large scales, in a way suitable for the analytical details that we need. The drift term being exposed so clearly from the viewpoint of the horizontal graphs Equations \eqref{drift-Laplacian}-\eqref{def:L}, in the operator $L$, an operator which will be very crucial to our analysis in this section, is less apparent in \eqref{OldGraphs}.

Note also that the geometries that the two equations are used to describe are also fundamentally different, as the surfaces we consider via \eqref{TSE} are not naturally viewed as cylindrical over compact links (such as e.g. a bowl translator would be).
\end{rem}

\subsection{Green's function for $L$ via the Yukawa equation}
The drift Laplacian $L = \DriftL$ is conjugate via multiplication by appropriate exponentials, to the operator $\hat{L}$:
\begin{equation}\label{KonjugTrick}
\hat{L}v = \Delta v - \tfrac{1}{4}v = e^{\frac{x_3}{2}} L\: (e^{\frac{-x_3}{2}}v).
\end{equation}
Thus, up to these exponential multiplicative factors, which certainly have large global effects on growth/decay, what we study here at the linear level is in fact the Yukawa equation (also known in the literature as the screened Poisson equation, or the modified Helmholtz equation), which is famous for its appearance in particle physics.

This also means that, using the classical expression for the Green's function (or fundamental solution) for $\hat{L}$, namely $G_{\hat{L}}(r) = \frac{1}{2\pi}K_0(\frac{r}{2})$, where $K_0$ denotes the modified Bessel function of the second kind, we easily derive that
\begin{equation}\label{GreensL}
G_L(x,x') = \frac{1}{2\pi}K_0\left(\frac{|x-x'|}{2}\right)e^{\frac{x_3' - x_3}{2}}
\end{equation}
is a Green's function for the drift Laplacian $L$.

This Green's function is, of course, not symmetric under $x' \leftrightarrow x$, as $L$ is not a formally self-adjoint operator. It leads to a Poisson-Duffin formula for half-spaces, Equation \eqref{L_Poisson-Duffin}, which will be used below to provide a finely detailed analysis of the behavior of the translating soliton equation at infinity (opposite the translation direction), namely for the proofs of Theorem \ref{thm-A} (proof of uniqueness of tangent planes at infinite time $t\to +\infty$) and Theorem \ref{WobbleEksempel} (counterexamples to uniqueness of tangent planes at infinite time $t\to +\infty$).

Letting in \eqref{GreensL} the variable $x' = 0$, and scaling by a constant, we also get a useful $L$-harmonic function, $Lu_K = 0$, for
\begin{equation}\label{u_K}
 u_K = e^{\frac{- x_3}{2}}K_0\left(\tfrac{1}{2}|x|\right),
\end{equation}
which below will be used to inform the analysis (sharpness of the gradient estimates in Lemma \ref{varme_gradient_inhomogent}), as well as its anti-barrier cousin,
\begin{equation}\label{u_I}
 u_I = e^{\frac{- x_3}{2}}I_0\left(\tfrac{1}{2}|x|\right),
\end{equation}
where $I_0$ denotes the modified Bessel function of the first kind, and satisfying likewise $Lu_I = 0$. $I_0$ is used to prove maximum principles on noncompact domains for the drift Laplacian in Section \ref{subsec:MPL}.

\begin{rem}
The function $u_K$ in Equation \eqref{u_K} also showed up within the analysis of \cite{Ilyas}, or rather the more slowly decaying $\sqrt{u_K}$, (see Remark \ref{rem:sq_root_barrier} for a longer discussion) but not, it appears, $u_I$ from \eqref{u_I}, or $G_L$ in \eqref{GreensL}. This explicit connection to the Green's function of $L$ in \eqref{GreensL}, and the relation to Yukawa's equation via \eqref{KonjugTrick}, which we have explained here, will also turn out to be central in our further analysis.
\end{rem}

\subsection{Sausage-shaped subdomains}
In the below proofs, we will be making use of subdomains of a special form, which we define and introduce notation for here. By an upward-directed $\rho$-sausage in space, $S^+_\rho(p)\subseteq\mathbb{R}^{n+1}$, we mean domains of the form, for $p\in\mathbb{R}^{n+1}$:
\begin{equation}\label{def:poelse}
S^+_\rho(p) := \cup_{t\in [0,\rho^2)} \left(B_\rho(p) + t\ee_{n+1}\right),
\end{equation}
and refer to $\rho > 0$ as the width of the sausage. In our setup of this paper, we will always take $n+1 = 2$, although the following of course also applies with appropriate modifications for arbitrary dimensions $n$.

\begin{dfn}[Domains with the upward sausages property]\label{dfn:sausage_property}
Suppose that $\Omega\subseteq \mathbb{R}^n$ is an open domain. We say that $\Omega$ has the upward sausages property if either $\partial \Omega = \emptyset$ or, when $\partial \Omega \neq\emptyset$,
\begin{equation}\label{eq:upward_sausages_cond}
\forall p\in\Omega:\quad S^+_\rho(p) \subseteq \Omega \quad\mathrm{for}\quad\rho := \dist(p,\partial\Omega).
\end{equation}
\end{dfn}

\begin{rem}\label{sausage_sampler}
For later reference, we note some explicit examples of domains which (do not) satisfy the upward sausages property:
\begin{itemize}
\item  Examples: all of space, wedges, any slanted upper half-plane (incl. left and right half-planes), all satisfy the upward sausages condition.

\item  Non-examples: punctured planes, exterior domains $\Omega = \mathbb{R}^{n+1}\setminus K$ (for $K\neq\empty$ compact), U-shaped domains (plane minus upward half-infinite rectangle), slanted lower half-planes (apart from left and right half-planes) do not satisfy the conditions of the upward sausages property.
\end{itemize}
\end{rem}

\subsection{Gradient and Hessian estimates for the translating soliton equation}
In order to make use of the properties of solutions to Poisson problems $Lu=f$ for the drift Laplacian $L$, in the study of the translating soliton equation \eqref{TSE}, we need to ensure that we have suitably strong decay estimates for the right-hand side (non-homogeneous) term $P$. This section will begin by recording the weaker standard estimates for quasilinear problems holding in larger generality. In Section \ref{sec:sharp_linear}, we will show the crucial sharp linear estimates for $L$, and then finally in Section \ref{sec:sharp_nonlinear} combine this sharpened linear theory with information about the non-homogeneous term, to get the new upgraded quasilinear estimates, which we will then be relying on throughout the rest of the paper.

We start by recalling that Gama et. al \cite[Proposition  3.5]{GHLM} (or see Evans-Spruck, \cite[Corollary 5.3]{Evans-Spruck}) implies that for the quasilinear Equation \eqref{TSE}, holds:

\begin{prop}[Classical quasilinear gradient estimates \cite{Evans-Spruck,GHLM}]\label{grad_bounded}
Let $u: \Omega \rightarrow \R$ solve the translating soliton equation \eqref{TSE}. Then $u$ satisfies the following a priori gradient estimate:
    \begin{equation}\label{grad_bound_basic}
    |\nabla u| \leq K\quad {\rm on}\quad \Omega',
    \end{equation}
    for any $\Omega'\subsetneq\Omega$ such that $\dist(\partial\Omega',\partial\Omega)\geq 1$,
    where $K = K(\|u\|_{\infty})>0$ is a universal constant depending only on the supremum norm of $u$.
\end{prop}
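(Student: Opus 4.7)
The plan is to follow the standard Bernstein-type interior gradient estimate for minimal graphs, adapted to the translator setting as in Evans-Spruck \cite{Evans-Spruck} and Gama et al. \cite{GHLM}. First, I would reinterpret the desired bound geometrically. Setting $\Sigma := \mathrm{Graph}(u)$ with unit normal $\nu = (1, -\partial_2 u, -\partial_3 u)/\sqrt{1+|\nabla u|^2}$, the tilt $\theta := \langle \nu, \ee_1\rangle$ satisfies $|\nabla u|^2 = (1-\theta^2)/\theta^2$, so the desired estimate $|\nabla u| \leq K$ on $\Omega'$ is equivalent to a uniform lower bound $|\theta| \geq 1/K'$ on $\Omega'$. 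The remaining task is to extract such a lower bound from the translator PDE.

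Second, I would derive and then exploit the intrinsic PDE for $\theta$ on $\Sigma$. Because $\Sigma$ is a translator, equivalently a minimal surface in the Ilmanen ambient metric $e^{x_3}\delta_{ij}$, and because translation in $\ee_1$ generates a Jacobi field on any translator, the tilt $\theta$ solves the elliptic equation
\begin{equation*}
\Delta_\Sigma \theta + \langle \ee_3^T, \nabla_\Sigma \theta\rangle + |A|^2 \theta = 0
\end{equation*}
on $\Sigma$, where $\ee_3^T$ denotes the tangential projection of $\ee_3$ to $\Sigma$. With this in hand, I would form an auxiliary Bernstein function of the type $\Phi := \eta(x)\,(-\log \theta) + \alpha\, u$, for a smooth cutoff $\eta$ equal to $1$ on $\Omega'$ and supported in a $1$-neighborhood inside $\Omega$, and a constant $\alpha > 0$ to be chosen. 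Applying the operator $\Delta_\Sigma + \langle \ee_3^T, \nabla_\Sigma \cdot\rangle$ to $\Phi$, and combining the equation for $\theta$ with the translator equation \eqref{TSE} for $u$, a sufficiently large choice of $\alpha$ (depending only on $\|u\|_\infty$ and $\|\eta\|_{C^2}$) makes the maximum principle rule out any interior maximum of $\Phi$ at a point where $\theta$ is too small. This yields the desired pointwise lower bound on $|\theta|$, and hence the claimed upper bound on $|\nabla u|$.

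The main technical obstacle, distinguishing the translator case from the classical Bombieri-De Giorgi-Miranda gradient estimate for minimal graphs, is the non-vanishing drift term $\langle \ee_3^T, \nabla_\Sigma \theta\rangle$ in the PDE for $\theta$. This term is neither decaying nor perturbative at infinity, which is precisely the recurring difficulty highlighted in Section \ref{subsec:PDE_tech}; however, at the level of the present interior estimate it is still pointwise bounded by $|\nabla_\Sigma \theta|$ since $|\ee_3^T|\leq 1$, and can therefore be absorbed into the principal elliptic term via Cauchy-Schwarz at the cost of enlarging $\alpha$ in terms of $\|u\|_\infty$. This absorption is also the reason why the hypothesis fixes the distance to be at least $1$ rather than being scale-free: the translator equation carries an intrinsic length scale set by the drift, so one cannot rescale to a smaller distance without altering the equation, and the constant $K$ must accordingly depend on $\|u\|_\infty$ through $\alpha$.
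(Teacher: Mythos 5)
The paper does not actually prove Proposition \ref{grad_bounded} — it states it as a recollection and refers the reader to \cite[Proposition 3.5]{GHLM} (or \cite[Corollary 5.3]{Evans-Spruck}) for the proof. So there is no internal argument in the paper to compare against; what you have done is reconstruct a proof of the cited result.

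Your reconstruction is in the right spirit: recasting the bound on $|\nabla u|$ as a lower bound on the tilt $\theta = \langle\nu,\ee_1\rangle$, using the drift Jacobi equation $L_\Sigma\theta + |A|^2\theta = 0$ (with $L_\Sigma = \Delta_\Sigma + \langle\ee_3^T,\nabla_\Sigma\cdot\rangle$), and running a Bernstein/maximum-principle argument on a cutoff version of $-\log\theta$ is exactly the strategy behind the Korevaar--Bao--Shi style interior gradient estimates for translators, which is what \cite{GHLM} carries out for the horizontal-graph equation \eqref{TSE}. Your remark about why the drift is absorbable here (it is pointwise bounded even though it does not decay) and why the distance-$\ge 1$ normalization enters is also apt.

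One technical point deserves more care. In your auxiliary function $\Phi = \eta\,(-\log\theta) + \alpha u$, the additive term $\alpha u$ does not contribute a useful sign at a maximum: the restriction of the ambient coordinate $x_1$ to a translator is $L_\Sigma$-harmonic (one checks $\Delta_\Sigma x_1 = \langle\ee_3,\nu\rangle\langle\ee_1,\nu\rangle$ while $\langle\ee_3^T,\nabla_\Sigma x_1\rangle = -\langle\ee_3,\nu\rangle\langle\ee_1,\nu\rangle$), so $L_\Sigma(\alpha u)=0$ and the claim that ``a sufficiently large $\alpha$'' produces the needed contradiction does not follow from the second-order inequality alone. The standard device, as in Korevaar's argument and in \cite{GHLM}, is to build the $u$-dependence into the cutoff itself, e.g. $\eta = \eta\bigl(|x-x_0|^2,\,u-u(x_0)\bigr)$, so that at a max of $\eta\,(-\log\theta)$ the first-order condition couples $\nabla_\Sigma\theta$ with $\nabla_\Sigma u$ and produces a favorable term proportional to $|\nabla_\Sigma u|^2$. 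With that modification the argument closes in the usual way; as written, the step ``enlarge $\alpha$'' is a gap.
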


For $\Omega$ taken to be a domain as in the geometric structural result, Theorem \ref{structure-thm}, note that the type of domain is not changed if we shrink it into a unity-width tubular neighborhood of its boundary. Thus, we will in the below always be able to easily arrange that \eqref{grad_bound_basic} holds for the domains we consider. Next, we record the classical curvature estimates by Ecker-Huisken:

\begin{prop}[Classical quasilinear Hessian estimates \cite{Ecker-Huisken}]\label{sec-decay}
 For $u:\Omega\to\R$ any bounded solution of the translating soliton equation \eqref{TSE},
 \begin{equation}\label{Hess_down}
|\Hess u\,|(p)\leq \frac{K}{\dist(p,\partial \Omega)^{\frac{1}{2}}},\quad\mathrm{on}\quad \Omega',
\end{equation}
for any $\Omega'\subsetneq\Omega$ such that $\dist(\partial\Omega',\partial\Omega)\geq 1$, where $K = K(\|u\|_{\infty})>0$ is a universal constant depending only on the supremum norm of $u$.

In the case of the domains satisfying the upward sausages condition, this can be strengthened to:
 \begin{equation}\label{Hess_up}
|\Hess u\,|(p)\leq \frac{K}{\dist(p,\partial \Omega)},\quad\mathrm{on}\quad \Omega'.
 \end{equation}
\end{prop}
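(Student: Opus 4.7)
The plan is to view the translator $\Sigma$ as an eternal smooth mean curvature flow via $\Sigma_t := \Sigma + t\ee_3$, apply the classical interior curvature estimates of Ecker--Huisken to bound the second fundamental form $|A|$, and then convert to a bound on $|\Hess u|$ using the a priori gradient control of Proposition \ref{grad_bounded}. As a graph in the $\ee_1$-direction over $\Omega\subset \R^2$, the MCF $\Sigma_t$ is the graph of $u_t(y_2,y_3):=u(y_2,y_3-t)$ over the shifted domain $\Omega_t=\Omega + t\,\ee_3$. Hence, for the backward flow $\tau\in[0,T]$ (i.e.\ $t=-\tau$) to be smoothly defined above a ball $B_R(p)\subset\R^2$, one needs
\[
B_R(p+\tau\ee_3)\subset\Omega\quad\text{for all }\tau\in[0,T],
\]
i.e.\ the parabolic cylinder $B_R(p)\times[-T,0]$ must fit into $\Omega$ with sufficient upward room. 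A gradient bound $|\nabla u_t|\leq K(\|u\|_\infty)$ on this cylinder would then come from Proposition \ref{grad_bounded} applied to a slightly enlarged subdomain $\Omega''$ with $\dist(\partial\Omega'',\partial\Omega)\geq 1/2$, and the Ecker--Huisken interior curvature estimate would deliver
\[
|A|(p)\leq \frac{C(K)}{\sqrt{\min(T,R^2)}}.
\]

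I would then carry out the optimization separately in the two cases. In the general case, with no structural assumption on $\Omega$, one can only guarantee $B_R(p+\tau\ee_3)\subset B_\rho(p)\subset\Omega$ provided $R+\tau\leq \rho:=\dist(p,\partial\Omega)$, which forces $T\leq \rho-R$. Optimizing $\min(T,R^2)=\min(\rho-R,R^2)$ over $R\in(0,\rho)$ yields $R\asymp \sqrt{\rho}$ and $\min(T,R^2)\asymp \rho$, whence $|A|(p)\leq C/\sqrt{\rho}$, which is exactly \eqref{Hess_down}. In the sausage case, the upward sausages property $S^+_\rho(p)\subset\Omega$ gives $B_\rho(p+\tau\ee_3)\subset\Omega$ for every $\tau\in[0,\rho^2)$, so one may take $R=\rho$ and $T=\rho^2$, producing the sharper $|A|(p)\leq C/\rho$ claimed in \eqref{Hess_up}.

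The conversion from $|A|$ to $|\Hess u|$ would then use the standard identity $|\Hess u|\leq (1+|\nabla u|^2)^{3/2}\,|A|$ for graph functions, together with the gradient bound $|\nabla u|\leq K(\|u\|_\infty)$ on $\Omega'$ from Proposition \ref{grad_bounded}; all constants depend only on $\|u\|_\infty$, as required. The main obstacle is the scaling optimization in the general case: the square-root decay rate emerges naturally from the parabolic scaling of MCF once one observes that, without upward room, only a cylinder of approximate parabolic proportions $\sqrt{\rho}\times\rho$ can be fit inside $\Omega$, whereas the upward sausages condition supplies exactly the geometric room needed to host a full $\rho\times\rho^2$ parabolic cylinder and thereby unlock the linear rate. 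A secondary (routine) care point is verifying that the ambient gradient bound needed by Ecker--Huisken on the full parabolic region is indeed provided by Proposition \ref{grad_bounded} on a slightly enlarged subdomain, so that the final constants remain functions of $\|u\|_\infty$ alone.
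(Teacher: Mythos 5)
Your proposal is correct and is essentially the same argument as the paper's: both rewrite the translator as an eternal mean curvature flow of graphs via $v(x_2,x_3,t)=u(x_2,x_3-t)$, use Proposition \ref{grad_bounded} to supply the uniform gradient bound on the parabolic cylinder, invoke the Ecker--Huisken interior curvature estimate, and observe that the sausage condition $S^+_\rho(p)\subseteq\Omega$ is exactly the geometric statement that the full parabolic cylinder $B_\rho(p)\times(-\rho^2,0]$ sits inside the domain, whereas a general domain only accommodates a cylinder at the parabolically undersized scale $R\asymp\sqrt{\rho}$. The only cosmetic difference is that you phrase the scale choice as an explicit optimization of $\min(T,R^2)$ over $R$, while the paper simply picks $\rho=\sqrt{\dist(p_0,\partial\Omega)-1}$ (resp. $\rho=\dist(p_0,\partial\Omega)-1$) at the outset.
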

In other words, if $\Omega$ is one of the domains in Theorem \ref{structure-thm}, then in case of the downward pointing domains \eqref{Hess_down} holds, while for the upwards pointing \eqref{Hess_up} holds.

\begin{proof}
We consider a solution of $L u=P,$ where $P= P(\nabla u,\Hess u)$. Then, we define
\begin{equation}\label{eq:def-v}
    v(x_2,x_3,t):= u(x_2,x_3-t), 
\end{equation}
which is a solution to the mean curvature flow equation
\begin{equation}
    \partial_t v -\Delta v=-\partial_{3} u -\Delta u=-P.
\end{equation}

We consider $p_0=(x_2^0,x_3^0)$ in $\Omega'$ and let $\rho= \sqrt{\dist (p_0,\partial \Omega) - 1}> 0$ (respectively $\rho= \dist (p_0,\partial \Omega) - 1$ in the case of the upwards pointing domains).

From Proposition  \ref{grad_bounded}, we have that $\| \nabla v\| \leq K$
on $B_\rho(p_0) \times (- \rho^2, 0]$, by the definition of the domains $S^+_\rho(p_0)$ in \eqref{def:poelse}. Then we can apply \cite[Proposition  3.21]{Ecker} to deduce that there exists a constant $K$ (depending only on the old constant $K$, which in turn only depends on $\|u\|_{\infty}$), so that
\begin{equation}
    \label{eq:2ff} |A(p_0,0)|^2 \leq \frac{K}{\rho^2},
\end{equation}
where $A(\cdot, t)$ represents the second fundamental form of the $e_1$-directed graph of $v(\cdot,t))$.
Thus:
\begin{equation}
    \label{eq:2ff-1} |A(p_0)|^2 \leq \frac{K}{\rho^2},
\end{equation}
where $A(p_0)$ is the second fundamental form of ${\rm graph}(u).$ This provides the desired estimate for the Hessian of $u$ (for possibly a different $K$):
\begin{equation} \label{eq:Hess}
    \forall p_0\in\Omega':\quad |\Hess u|(p_0) \leq \frac{K}{\dist(p_0 ,\partial \Omega)}.
\end{equation}
\end{proof}

These standard estimates in Proposition \ref{grad_bound_basic} and Proposition \ref{sec-decay} unfortunately do not give strong enough decay for our purposes, which include to show a removable singularity theorem at the various types of infinities, for the self-translating soliton equation, and hence later be able prove the existence of true limits at infinity.

Therefore, in the next section, we will be upgrading the estimates to the needed strength, by detailed analysis of the properties of solutions to the Poisson problem for the drift Laplacian.

\subsection{Sharp elliptic-parabolic gradient estimates, for the drift Laplace-Poisson equation}\label{sec:sharp_linear}

We will need the following crucial non-standard elliptic gradient estimates with explicit dependencies of the constants on the domain sizes and directions, which we note have an ``elliptic-parabolic'' nature, in that they superficially may look like classical elliptic estimates for the operator $L$. They are therefore in fact not statements about round balls in space, rather instead about directed elongated domains in space the form of the $\rho$-sausages from \eqref{def:poelse}.

These estimates can thus be thought of as stemming from the parabolic origins of the elliptic operator, and we will need them with sharp domain-dependency of the constants reflecting this fact and depending on the geometry, incl. direction, of the domains in question:

\begin{lem}[Gradient estimates for the drift Laplacian on $\rho$-sausages]\label{varme_gradient_inhomogent}
There exists a numerical constant $K$ such that for any $u:S^+_\rho(p)\to \mathbb{R}$, where $S^+_\rho$ is an upward sausage as in \eqref{def:poelse} of arbitrary width $\rho > 0$, the following gradient estimate holds for $L=\Delta + e_3\cdot \nabla$ the drift Laplacian:
\begin{equation}\label{drift_poelse}
 |\nabla u|(p)\leq K\left(\frac{1}{\rho}\left \|u\right\|_{L^\infty(S^+_\rho(p))} + \rho\left\|Lu\right\|_{L^\infty(S^+_\rho(p))}\right).
\end{equation}

\noindent{}For $p\in\Omega$ a general domain, with $\rho := \dist(p,\partial\Omega)$:
\begin{equation}\label{sharp_down_est}
|\nabla u|(p)\leq K\left(\rho^{-\frac{1}{2}}\left \|u\right\|_{L^\infty(B_\rho(p))} + \rho^\frac{1}{2}\left\|Lu\right\|_{L^\infty(B_\rho(p))}\right).
\end{equation}

\noindent{}For $p\in\Omega$ a domain satisfying the upward sausages condition \eqref{eq:upward_sausages_cond}, we have, with $\rho := \dist(p,\partial\Omega)$ the stronger estimate:
\begin{equation}
|\nabla u|(p)\leq K\left(\frac{1}{\rho}\left \|u\right\|_{L^\infty(B_\rho(p))} + \rho\left\|Lu\right\|_{L^\infty(B_\rho(p))}\right).
\end{equation}

\noindent{}In particular, bounded $L$-harmonic functions $u$ in general satisfy $|\partial_{2} u|(p) \leq K \rho^{-\tfrac{1}{2}}\|u\|_\infty$, where the exponent ``$-\frac{1}{2}$'' on $\rho$ is sharp.
\end{lem}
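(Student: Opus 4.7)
My plan is to reduce all three inequalities to the classical interior gradient estimate for the inhomogeneous heat equation, via the Ecker--Huisken space-time translation already used in the proof of Proposition~\ref{sec-decay}. Setting $v(x,t):=u(x-t\ee_3)$, a direct calculation gives $\partial_t v-\Delta v = -Lu$, so $u$ solves $Lu=g$ on $\Omega$ iff $v$ solves the inhomogeneous heat equation with forcing $-g(\cdot-t\ee_3)$ on the twisted space-time domain $\{(x,t):x-t\ee_3\in\Omega\}$. The essential geometric observation is that the upward sausage $S^+_\rho(p)$ defined in \eqref{def:poelse} is \emph{exactly} the spatial footprint of the parabolic cylinder $Q_\rho(p,0):=B_\rho(p)\times(-\rho^2,0]$ under $(q,s)\mapsto q-s\ee_3$; thus $S^+_\rho(p)\subseteq\Omega$ if and only if $v$ is defined on all of $Q_\rho(p,0)$.

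First I would handle the sausage case. On $Q_\rho(p,0)$, the classical interior gradient estimate for the inhomogeneous heat equation---derivable from the Duhamel formula with the Gaussian kernel, or by a Bernstein argument---gives
\[
|\nabla v|(p,0)\;\leq\; C\bigl(\rho^{-1}\|v\|_{L^\infty(Q_\rho)} + \rho\,\|\partial_t v-\Delta v\|_{L^\infty(Q_\rho)}\bigr).
\]
Undoing the translation yields \eqref{drift_poelse}, and the version for domains satisfying \eqref{eq:upward_sausages_cond} is the special case $\rho=\dist(p,\partial\Omega)$. For a \emph{general} domain, only $B_\rho(p)\subseteq\Omega$ is guaranteed and the full sausage $S^+_\rho(p)$ typically spills outside $\Omega$; my remedy is to inscribe a smaller upward sausage inside the ball. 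Since any point of $S^+_R(p)$ lies within distance $R+R^2$ of $p$, the inclusion $S^+_R(p)\subseteq B_\rho(p)$ holds as soon as $R+R^2\leq\rho$, and choosing $R=c\sqrt\rho$ for an absolute $c>0$ (and $\rho\geq 1$) and applying the sausage estimate with this $R$ yields $R^{-1}\leq C\rho^{-1/2}$, giving \eqref{sharp_down_est}; the small-$\rho$ regime reduces to the classical elliptic estimate and is absorbed into the constant.

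Next I would verify sharpness of the exponent $-\tfrac12$ via an explicit family. Motivated by the Yukawa conjugation \eqref{KonjugTrick}, I take
\[
u_\lambda(x_2,x_3):=\cos(\lambda x_2)\,e^{\alpha(\lambda) x_3},\qquad \alpha(\lambda):=\tfrac{1}{2}\bigl(-1+\sqrt{1+4\lambda^2}\bigr),
\]
which is $L$-harmonic (since $\alpha^2+\alpha-\lambda^2=0$) and bounded by $1$ on the half-plane $\Omega_-:=\{x_3<0\}$, a domain failing the upward-sausages condition. Setting $\lambda:=\rho^{-1/2}$ and $p:=(\tfrac{\pi}{2\lambda},-\rho)$, so that $\dist(p,\partial\Omega_-)=\rho$, and using $\alpha(\lambda)=\lambda^2+O(\lambda^4)$, a direct computation gives
\[
|\partial_2 u_\lambda|(p)\;=\;\lambda\,e^{-\alpha(\lambda)\rho}\;=\;\rho^{-1/2}\,e^{-1+o(1)}\qquad (\rho\to\infty),
\]
ruling out any bound of the form $|\nabla u|(p)\leq C\rho^{-\gamma}\|u\|_\infty$ with $\gamma>\tfrac12$.

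The main (conceptual rather than technical) obstacle is to see $L$ as parabolic in disguise: the natural length scale governing a gradient estimate at $p$ is not the radius of a round ball centred at $p$ but the length of the tallest upward sausage that can be inscribed in $\Omega$ with tip at $p$. The loss from $\rho^{-1}$ to $\rho^{-1/2}$ in the general-domain estimate is precisely the parabolic scaling $\mathrm{time}\sim(\mathrm{space})^2$, encoded geometrically as the quadratic constraint $R+R^2\leq\rho$ that forces $R\sim\sqrt\rho$ whenever only a round ball is available.
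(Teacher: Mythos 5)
Your argument is correct and, for the main estimate \eqref{drift_poelse}, follows the same route as the paper: translate in time via $v(x,t)=u(x-t\ee_3)$ to turn $Lu=g$ into the inhomogeneous heat equation $\partial_t v-\Delta v=-g(\cdot-t\ee_3)$, observe that $S^+_\rho(p)$ is precisely the spatial shadow of the parabolic cylinder $B_\rho(p)\times(-\rho^2,0]$, and invoke the classical parabolic interior gradient estimate (the paper proves the latter via a cut-off plus Duhamel argument; you cite it). Your derivation of \eqref{sharp_down_est} by inscribing a sausage of width $R\sim\sqrt{\rho}$ inside $B_\rho(p)$ (using $R+R^2\le\rho$) makes explicit a step the paper leaves implicit, and is the natural parabolic-scaling explanation for the loss from $\rho^{-1}$ to $\rho^{-1/2}$. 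Where you genuinely diverge from the paper is the sharpness claim: the paper appeals to the $L$-harmonic Green's function $u_K=e^{-x_3/2}K_0(|x|/2)$ from \eqref{u_K} (which is singular at the origin and requires an exterior domain, so the verification is somewhat less transparent), whereas you construct the separated family $u_\lambda(x_2,x_3)=\cos(\lambda x_2)\,e^{\alpha(\lambda)x_3}$ with $\alpha^2+\alpha=\lambda^2$, bounded on the lower half-plane, and show $|\partial_2 u_\lambda|(\tfrac{\pi}{2\lambda},-\rho)=\lambda e^{-\alpha\rho}\sim e^{-1}\rho^{-1/2}$ at the resonant scale $\lambda=\rho^{-1/2}$. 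This is a clean and self-contained alternative that avoids Bessel asymptotics entirely; it buys you a fully explicit certificate of sharpness at the cost of needing a different $u$ for each $\rho$. One small caveat: your parenthetical that ``the small-$\rho$ regime reduces to the classical elliptic estimate'' is not literally right, since the classical bound scales like $\rho^{-1}$, which for $\rho<1$ is \emph{weaker} than the $\rho^{-1/2}$ you want -- but this does not matter because (as in the paper) the lemma is only used for $\rho\ge 1$, and for $\rho$ in a bounded range the two powers differ only by a constant.
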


\begin{proof}

We assume WLOG that $p=(0,0)$. Let $f: = Lu$, and furthermore define $\varphi(x_2,x_3,t) : = -f(x_2,x_3 - t)$ as well as $v(x_2,x_3,t) := u(x_2,x_3 - t)$. The equality \eqref{drift_poelse} will now be a direct consequence of the following gradient estimate for solutions to the inhomogeneous heat equation $\partial_tv - \Delta v = \varphi$ on $C_\rho := B_\rho(0,0) \times (-\rho^2,0]$:
\begin{equation}\label{varme_gradient}
|\partial_{i} v(0,0,0)| \leq C\left(\frac{1}{\rho}\left \|v\right\|_{L^\infty(C_\rho)} + \rho\left\|\varphi\right\|_{L^\infty(C_\rho)}\right).
\end{equation}
The estimate \eqref{varme_gradient} is surely standard folklore, although as we were not able to pinpoint a self-contained reference to this precise form, we will briefly explain its proof here.

We first note that by parabolic re-scaling around $(0,0,0)$ via $v_\rho (x,t) := v(\rho x, \rho^2 t)$,
it is enough to prove the estimate \eqref{varme_gradient} for $\rho = 1$.

The case $\varphi\equiv 0$ now firstly follows from the standard cut-off argument written out in e.g. in \cite{Evans}[p. 59-63]. Namely, the standard interior gradient estimates for the homogeneous heat equation show that for $(x^0_2,x^0_3)\in V_{\alpha}$,
\begin{align}
&|\partial_{i}v(x^0_2,x^0_3, 0)|\leq \|\partial_{i}v\|_{L^\infty(C_{\rho/2}((x^0_2,x^0_3)))}\leq\frac{C}{\rho}\|v\|_{L^\infty(C_\rho(x^0_2,x^0_3))}\\
&|\partial_{ij}v(x^0_2,x^0_3, 0)|\leq \|\partial_{ij}v\|_{L^\infty(C_{\rho/2}((x^0_2,x^0_3)))}\leq\frac{C}{\rho^2}\|v\|_{L^\infty(C_\rho(x^0_2,x^0_3))},
\end{align}
provided $\rho>0$ is chosen small enough that all points where $u$ is evaluated remain within $V_{\alpha}$. For this, note that by definition
\begin{align*}
\|v\|_{L^\infty(C_\rho(x^0_2,x^0_3))} & = \sup\{u(x_2,x_3 - t)\mid (x_2,x_3)\in B_\rho(x^0_2,x^0_3),\: -\rho^2 < t\leq 0\}\\
& = \sup\{u(x_2,x_3)\mid (x_2,x_3) \in S^+_\rho(x^0_2,x^0_3)\}\\
&=\|u\|_{L^\infty(S^+_\rho(x^0_2,x^0_3))},
\end{align*}
where the set over which the supremum is taken is the natural sausage-shaped region $S^+_\rho(x^0_2,x^0_3)$ in \eqref{def:poelse}.

Next, the introduction of a non-zero source term $\varphi$ gives an extra contribution from the additional term $\tilde{\varphi} : = \zeta \varphi$ (where $\zeta$ is a cut-off function to a smaller parabolic cylinder still with apex at (0,0,0)), which to the gradient of the solution adds, from Duhamel's formula and the parabolic maximum principle (to establish the representation formula, and which is where the ``$-1$'' time-shift comes in, to ensure that the solutions agree at the bottom of the parabolic cylinder $C_1$) a term given by the following, for $-1 < t \leq 0$, where here $x = (x_2,x_3)$ and $y = (y_2,y_3)$:
\begin{equation}
I = \partial_{i} \left[ \int_{0}^{t+1} \int_{\mathbb{R}^2} \Phi(x-y, t + 1 - s)\tilde{\varphi}(y, s - 1) dy_2dy_3 ds\right],
\end{equation}
where $\Phi(x-y,t+1-s) = \frac{1}{4\pi(t+1-s)}\exp\left({-\frac{|x-y|^2}{4(t+1-s)}}\right)$ is the standard heat kernel. Noting $\partial_{i}\Phi =\frac{y_i - x_i}{8\pi(t+1-s)^2}\exp\left({-\frac{|x-y|^2}{4(t+1-s)}}\right)$, and using the substitution $\eta := (x-y)/(2\sqrt{t+1-s})$, we estimate:
\begin{equation}
\begin{split}
|I| &\leq \frac{1}{\pi} \int_0^{t+1} (t+1-s)^{-\frac{1}{2}}\int_{\mathbb{R}^2} |\eta_i| e^{-|\eta|^2}\tilde{\varphi}(x - 2\sqrt{t+1-s},s)d\eta ds\\
& \leq C_1\|\tilde{\varphi}\|_\infty \int_0^{t+1} (t+1-s)^{-\frac{1}{2}} ds = C_2\sqrt{t+1} \|\tilde{\varphi}\|_\infty\leq C_2\|\varphi\|_\infty,
\end{split}
\end{equation}
when $-1 < t \leq 0$, which also justifies having taken the derivative inside the integral. This finishes the proof of \eqref{varme_gradient}, and hence of \eqref{drift_poelse}.

Sharpness of the exponent for general domains \eqref{sharp_down_est} follows as claimed by inserting the $L$-harmonic function $u_K = e^{-x_3/2}K_0(|(x_2,x_3)|/2)$, defined on $\Omega\setminus\{0\}$, where $K_0$ denotes the modified Bessel function of the second kind,  which is essentially the Green's function for the drift Laplacian $L$ (see \eqref{GreensL}-\eqref{u_K}).

\end{proof}
As one simple corollary to Lemma \ref{varme_gradient_inhomogent} we conclude the following:
\begin{cor}[Liouville property for the drift Laplacian]\label{cor:liouville}
Let $u:\mathbb{R}^{n+1} \to \mathbb{R}$ be a bounded solution to $Lu = 0$, where $L = \Delta + \ee_3\cdot \nabla$. Then $u = \mathrm{const}$.
\end{cor}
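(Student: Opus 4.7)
The Liouville property will follow as a direct consequence of the sharp gradient estimate on $\rho$-sausages, inequality \eqref{drift_poelse} from Lemma \ref{varme_gradient_inhomogent}, by sending the width of the sausage to infinity.

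The plan is as follows. Fix an arbitrary point $p \in \mathbb{R}^{n+1}$ and an arbitrary width $\rho > 0$. Since $\mathbb{R}^{n+1}$ has empty boundary, the upward sausage $S^+_\rho(p)$ is automatically contained in the domain of $u$, so the first estimate of Lemma \ref{varme_gradient_inhomogent} applies and yields
\begin{equation}
|\nabla u|(p) \leq K\left(\frac{1}{\rho}\|u\|_{L^\infty(S^+_\rho(p))} + \rho\|Lu\|_{L^\infty(S^+_\rho(p))}\right).
\end{equation}
The hypothesis $Lu \equiv 0$ eliminates the second term on the right-hand side entirely, and the global boundedness of $u$ allows us to replace the $L^\infty$ norm over the sausage by $\|u\|_{L^\infty(\mathbb{R}^{n+1})} < \infty$. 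Thus
\begin{equation}
|\nabla u|(p) \leq \frac{K\|u\|_{L^\infty(\mathbb{R}^{n+1})}}{\rho}.
\end{equation}

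Since $\rho > 0$ was arbitrary and the right-hand side is independent of anything other than $\rho$ and the global sup-norm of $u$, letting $\rho \to +\infty$ forces $|\nabla u|(p) = 0$. As $p$ was arbitrary, $\nabla u \equiv 0$ on $\mathbb{R}^{n+1}$ and $u$ is constant.

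There is essentially no obstacle here, since the sharp linear estimate has already been established in Lemma \ref{varme_gradient_inhomogent}; all the work has been moved into that lemma. The only point worth emphasizing is that the asymmetry of $L$ (the drift term $\ee_3 \cdot \nabla$ points in a distinguished direction) does not obstruct the Liouville conclusion, because the gradient estimate holds on upward sausages of arbitrary width, and taking $\rho \to \infty$ exploits the fact that the whole space trivially satisfies the upward sausages property as noted in Remark \ref{sausage_sampler}.
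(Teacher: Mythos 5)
Your proof is correct and takes exactly the approach the paper intends: the paper introduces Corollary \ref{cor:liouville} with the phrase ``as one simple corollary to Lemma \ref{varme_gradient_inhomogent}'' and gives no further details, and your argument — apply estimate \eqref{drift_poelse} on upward sausages $S^+_\rho(p)$, note that $Lu\equiv 0$ kills the inhomogeneous term, bound the sup-norm globally, and send $\rho\to\infty$ — is precisely the intended one-line deduction.
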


We will however not be making use of Corollary \ref{cor:liouville} this paper, since our graphs will usually not be entire.

\subsection{Improved quasilinear gradient and Hessian estimates}\label{sec:sharp_nonlinear}

We can now use Lemma \ref{varme_gradient_inhomogent} to improve on the classical Evans-Spruck style gradient \cite{Evans-Spruck} and Ecker-Huisken style Hessian estimates \cite{Ecker-Huisken}, recorded in the previous section's Propositions \ref{grad_bounded} and \ref{sec-decay}, to gain additional orders of decay over these classical estimates:

\begin{prop}[Improved gradient decay estimates for the soliton equation] \label{grad_estimate_quasilinear_all}

For $u:\Omega\to\R$ any bounded solution of the translating soliton equation \eqref{TSE},
 \begin{equation}\label{grad_down}
|\nabla u|(p)\leq \frac{K}{\dist(p,\partial \Omega)^{\frac{1}{2}}},\quad\mathrm{on}\quad \Omega',
\end{equation}
for any $\Omega'\subsetneq\Omega$ such that $\dist(\partial\Omega',\partial\Omega)\geq 1$, where $K = K(\|u\|_{\infty})>0$ is a universal constant depending only on the supremum norm of $u$.

In the case of the domains satisfying the upward sausages condition \eqref{eq:upward_sausages_cond}, this can be strengthened to:
 \begin{equation}\label{grad_up}
|\nabla u|(p)\leq \frac{K}{\dist(p,\partial \Omega)},\quad\mathrm{on}\quad \Omega'.
 \end{equation}
\end{prop}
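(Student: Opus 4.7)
The plan is to combine the sharp linear gradient estimate for the drift Laplacian from Lemma \ref{varme_gradient_inhomogent} with the classical Hessian estimate from Proposition \ref{sec-decay}, via a two-step bootstrap that exploits the quadratic dependence of the quasilinear source term $P(\nabla u, \Hess u) = \Hess u(\nabla u, \nabla u)/(1+|\nabla u|^2)$ on $|\nabla u|$. The key elementary observation is that $|P|$ obeys two pointwise estimates, both always valid: the trivial $|P| \leq |\Hess u|$ and the quadratic $|P| \leq |\Hess u| \cdot |\nabla u|^2$. The first is used in the preliminary bootstrap round, and the second becomes advantageous once partial decay of $|\nabla u|$ has been established.

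Set $\rho := \dist(p,\partial\Omega)$. For the general (down) case, Proposition \ref{sec-decay} gives $|\Hess u|(q) \leq K/\dist(q,\partial\Omega)^{1/2}$ at each interior point, so $\|Lu\|_{L^\infty(B_{\rho/2}(p))} \leq K/\rho^{1/2}$ via the trivial bound. Applying Lemma \ref{varme_gradient_inhomogent} with an intermediate ball radius $r = \sqrt{\rho}$ (chosen so that the two terms $r^{-1/2}\|u\|_\infty$ and $r^{1/2}\|Lu\|_\infty$ balance) produces a preliminary decay estimate
\[
|\nabla u|(p) \,\leq\, K/\rho^{1/4}.
\]
Since this bound is uniform across $\Omega'$, we may bootstrap: on $B_{\rho/2}(p)$ we now have $|\nabla u|(q) \leq K/\rho^{1/4}$, and therefore the quadratic estimate gives $|P|(q) \leq (K/\rho^{1/2})(K/\rho^{1/4})^2 = K/\rho$. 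Reapplying Lemma \ref{varme_gradient_inhomogent} at the natural radius $r = \rho/2$ yields
\[
|\nabla u|(p) \,\leq\, K\bigl(\rho^{-1/2}\|u\|_\infty + \rho^{1/2}\cdot K/\rho\bigr) \,\leq\, K/\sqrt{\rho},
\]
which is the claimed bound in the general case.

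The upward sausages case follows by identical bootstrap on upward sausages $S^+_r(p) \subseteq \Omega$ rather than balls, using Lemma \ref{varme_gradient_inhomogent}'s stronger sausage estimate (with the factors $\rho^{-1}$ and $\rho$ replacing $\rho^{-1/2}$ and $\rho^{1/2}$) together with Proposition \ref{sec-decay}'s stronger Hessian bound $|\Hess u| \leq K/\rho$. A preliminary round with $r = \sqrt{\rho}$ delivers $|\nabla u| \leq K/\sqrt{\rho}$, which fed into $|P| \leq |\Hess u|\cdot |\nabla u|^2 \leq K/\rho^2$ and reinserted into the sausage estimate at $r = \rho/2$ delivers the sharp bound $|\nabla u|(p) \leq K/\rho$.

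The main obstacle is recognizing that a direct single application of Lemma \ref{varme_gradient_inhomogent} at the natural radius $r = \rho/2$ in the down case yields only an $O(1)$ bound, since the inhomogeneous contribution $r^{1/2}\|P\|_\infty \leq r^{1/2} \cdot K/\rho^{1/2}$ does not itself decay under the trivial estimate $|P| \leq |\Hess u|$. The additional orders of decay emerge solely from the interplay between the linear drift-Laplace theory and the quadratic nonlinearity of $P$: one must first extract a preliminary weak decay via an intermediate-radius choice, use it to shrink $\|P\|_\infty$ through the quadratic bound, and only then reapply the linear estimate at the natural radius to unlock the sharp rate.
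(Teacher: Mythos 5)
Your proof is correct and follows essentially the same route as the paper's. The paper's bootstrap proceeds identically: first the trivial bound $|P|\leq|\Hess u|\leq K/\rho^{1/2}$, then the intermediate-scale application of Lemma~\ref{varme_gradient_inhomogent} (the paper phrases it as inserting a smaller sausage of width $\rho^{1/4}$, which is equivalent to your ball of radius $\sqrt\rho$) to get $|\nabla u|\leq K/\rho^{1/4}$, then using this with Proposition~\ref{sec-decay} to improve $|P|$ to $K/\rho$, and finally one more application of the linear estimate to reach $K/\rho^{1/2}$; the upward case is then declared analogous. You have, if anything, made the mechanism more transparent than the paper does, by explicitly identifying the two pointwise estimates $|P|\leq|\Hess u|$ and $|P|\leq|\Hess u|\,|\nabla u|^2$, whose interplay is precisely what drives the bootstrap but is left implicit in the paper's phrase ``use \eqref{quarter_decay} together with Proposition~\ref{sec-decay}.'' Your concluding paragraph correctly pinpoints the reason a direct single application at the full radius fails in the down case and why the intermediate scale is necessary.
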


\begin{proof}[Proof of Proposition \ref{grad_estimate_quasilinear_all}]
From \eqref{drift-Laplacian}, we have $Lu=P(\nabla u,\Hess u)$, so using Propositions \ref{grad_bounded} and \ref{sec-decay}, one gets
\[
|P(\nabla u,\Hess u)|_{L^\infty(B_{\rho^{\frac{1}{2}}}(p))}\leq \frac{K}{\rho^{\frac{1}{2}}},
\]
where $\rho=\dist(p,\partial \Omega).$ At this moment, we bootstrap on the order of decay, by using first the estimate \eqref{sharp_down_est} in Lemma \ref{varme_gradient_inhomogent} when inserting a smaller sausage, of width $\rho^{\frac{1}{4}}$, into the width $\frac{1}{2}$ sausage, which can be done when $\rho >1$ (and otherwise the estimate is trivial), to infer that
\begin{equation}\label{quarter_decay}
|\nabla u|(p)\leq \frac{K}{\rho^{\frac{1}{4}}}.
\end{equation}

Now, in order to bootstrap this decay estimate to the power $\frac{1}{2}$, we first use \eqref{quarter_decay} together with Proposition \ref{sec-decay} to improve the previously shown decay of the nonlinear term $P$ to
\[
|P(\nabla u,\Hess u)|_{L^\infty(B_\rho(p))}\leq \frac{K}{\rho}.
\]
But then, invoking \eqref{sharp_down_est} in Lemma \ref{varme_gradient_inhomogent} again gives us the desired estimate
\begin{equation}
|\nabla u|(p)\leq \frac{K}{\rho^{\frac{1}{2}}}.
\end{equation}
This finishes the proof of the estimate for general domains \eqref{grad_down}, and the proof of \eqref{grad_up} is an analogous bootstrap on decay, by using the appropriate improved estimates throughout.
\end{proof}

\begin{prop}[Improved Hessian decay estimates for the soliton equation] \label{ImprovedHessian}
For $u:\Omega\to\R$ any bounded solution of the translating soliton equation \eqref{TSE},
\begin{equation}\label{Hess_down_improved}
|\Hess u|(p)\leq \frac{K}{\dist(p,\partial \Omega)},\quad\mathrm{on}\quad \Omega',
\end{equation}
for any $\Omega'\subsetneq\Omega$ such that $\dist(\partial\Omega',\partial\Omega)\geq 1$, where $K = K(\|u\|_{\infty})>0$ is a universal constant depending only on the supremum norm of $u$.

In the case of the domains satisfying the upward sausages condition \eqref{eq:upward_sausages_cond}, this can be strengthened to:
 \begin{equation}\label{Hess_up_improved}
|\Hess u|(p)\leq \frac{K}{(\dist(p,\partial \Omega))^2},\quad\mathrm{on}\quad \Omega'.
 \end{equation}
\end{prop}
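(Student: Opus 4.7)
I would mirror the bootstrap strategy of Proposition \ref{grad_estimate_quasilinear_all}, applied one derivative order higher. Since $L = \Delta + \ee_3\cdot\nabla$ has constant coefficients, differentiating \eqref{drift-Laplacian} shows that $w_i := \partial_i u$ satisfies $L w_i = \partial_i P$ for each $i\in\{2,3\}$. Applying Lemma \ref{varme_gradient_inhomogent} to $w_i$ then yields
\[
|\Hess u|(p) \leq K\bigl(\rho^{-1/2}\|\nabla u\|_{L^\infty(B_\rho(p))} + \rho^{1/2}\|\partial_i P\|_{L^\infty(B_\rho(p))}\bigr)
\]
on a general domain, and the analogous statement with exponents $(-1,+1)$ on domains satisfying the upward-sausages condition. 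Inserting the improved gradient bound $\|\nabla u\|_\infty \leq K\rho^{-1/2}$ (respectively $K\rho^{-1}$) from Proposition \ref{grad_estimate_quasilinear_all}, the leading term already produces exactly the claimed decay $K/\rho$ (respectively $K/\rho^2$). The entire proof thus reduces to showing $\|\partial_i P\|_\infty \leq K\rho^{-3/2}$ in the general case and $\leq K\rho^{-3}$ in the upward-sausage case.

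\textbf{Controlling $\partial_i P$.} From the explicit form \eqref{RHS} and the chain rule, using that $1+|\nabla u|^2\sim 1$ on the relevant sausage by the improved gradient estimate, one has
\[
|\partial_i P| \leq C\bigl(|\nabla^3 u|\cdot|\nabla u|^2 + |\Hess u|^2\cdot|\nabla u|\bigr).
\]
The second summand is dispatched directly by Propositions \ref{sec-decay} and \ref{grad_estimate_quasilinear_all}, giving $K\rho^{-3/2}$ on general domains and $K\rho^{-3}$ on upward-sausage domains. For the first, third-derivative, summand, I would invoke Ecker's higher-order interior curvature estimates $|\nabla^k A|^2 \lesssim \rho^{-2k-2}$ via the graphical MCF correspondence $v(x_2,x_3,t):=u(x_2, x_3-t)$ already used in the proof of Proposition \ref{sec-decay}. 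At $k=1$ these yield $|\nabla^3 u| \leq K/\rho$ on general domains (using a parabolic cylinder of spatial radius $\sim\sqrt\rho$) and $|\nabla^3 u| \leq K/\rho^2$ on upward-sausage domains (using a full parabolic cylinder of height $\sim\rho^2$), so the first summand also respects the target bound, closing the bootstrap.

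\textbf{Main obstacle.} The delicate step is securing the third-derivative bound with the correct scaling in \emph{both} domain types, since one must track how Ecker's parabolic cylinders transfer through the MCF correspondence, particularly in the general case where only a sausage of width $\sqrt\rho$ fits inside $B_\rho(p)$. Once this bound is in hand, the remaining combination is immediate. The conceptual core, already present in Proposition \ref{grad_estimate_quasilinear_all}, is that the anisotropic sausage estimate for $L$ from Lemma \ref{varme_gradient_inhomogent}, coupled with the smallness of the source $P$ forced by the previously established decay of $\nabla u$ and $\Hess u$, produces decay strictly sharper than what the classical Ecker-Huisken curvature estimates can give on their own.
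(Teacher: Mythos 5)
Your proposal is correct and follows essentially the same route as the paper's own proof: differentiate the equation to get $L(\partial_k u)=\partial_k P$, apply the anisotropic gradient estimate of Lemma~\ref{varme_gradient_inhomogent} to $\partial_k u$ (with $\rho=\dist$ on upward-sausage domains and $\rho=\sqrt{\dist}$ otherwise), and control $\partial_k P$ by combining Ecker's third-order interior curvature bound $|\partial^3 u|\leq C\rho^{-2}$ on $\rho$-sausages (\cite[Proposition~3.22]{Ecker}) with the previously established gradient and Hessian decay. The paper writes out the derivative $\partial_k P =: Q_k$ in one piece rather than splitting into two structural summands, but the estimates and the choice of $\rho$ are identical, so this is the same argument with a slightly different presentation.
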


\begin{proof}
Differentiating \eqref{RHS}, we get:
\begin{equation}
L(\partial_k u) = \partial_k (L u) = \partial_kP = Q_k(\nabla u, \Hess u, \partial^3_{ijk} u),
\end{equation}
where
\begin{equation*}
Q_k = -\frac{2}{\left(1 + |\nabla u|^2\right)^{2}}\left[\sum_{i,j,l\in{2,3}}\partial^2_{ij}u\,\partial_iu\,\partial_ju\,\partial_ku\,\partial_lu + \left(1 + |\nabla u|^2\right)\sum_{i,j\in{2,3}}\partial_k\hspace{-2pt}\left(\partial^2_{ij}u\,\partial_iu\,\partial_ju\right) \right].
\end{equation*}
From the standard general quasilinear estimates, see Ecker-Huisken \cite[Proposition  3.22]{Ecker}, \cite{Ecker-Huisken}, we get that $|\partial^3_{ijk}u|\leq C\rho^{-2}$ on any $\rho$-sausage. Taking $\rho = \dist(\cdot,\partial\Omega)$ on domains with the upward sausages condition (resp. $\rho = \sqrt{\dist(\cdot,\partial\Omega)}$ on general domains), we see that $|\partial^3_{ijk}u|\leq C\dist(\cdot,\partial\Omega)^{-2}$ (resp. same with power "-1" for down).

Thus, $|L(\partial_k u)| = |Q_K| \leq C\dist(\cdot,\partial\Omega)^{-3}$ (resp
 power "-3/2"). Together with \eqref{drift_poelse} from Lemma \ref{varme_gradient_inhomogent} applied to $\partial_k u$, using that $|\partial_k u|\leq C\dist(\cdot,\partial\Omega)^{-1}$ (resp. power "-1/2"), and with the same choices of $\rho$, respectively, we get from \eqref{grad_down}-\eqref{grad_up} the Hessian bounds in Equations \eqref{Hess_down_improved}-\eqref{Hess_up_improved}.
\end{proof}

Pulling all this together we have, via \eqref{Hess_down} and \eqref{grad_down}, shown the following improved decay for the nonlinear term $P$:

\begin{prop}[Higher order term decay estimates for the soliton equation]\label{P_decay_estimates}
 Let $u:\Omega\to\R$ be any bounded solution of the translating soliton equation \eqref{TSE}. Then
 \begin{equation}\label{down_RHS}
|P(\nabla u,\Hess u)|(p)\leq \frac{K}{\left(\dist(p,\partial \Omega)\right)^{2}},\quad p\in\Omega',
 \end{equation}
 for any $\Omega'\subsetneq\Omega$ such that $\dist(\partial\Omega',\partial\Omega)\geq 1$,
    where $K = K(\|u\|_{\infty})>0$ is a universal constant depending only on the supremum norm of $u$.

In the case of the domains satisfying the upward sausages condition \eqref{eq:upward_sausages_cond}, this can be strengthened to:
\begin{equation}\label{up_RHS}
|P(\nabla u,\Hess u)|(p)\leq \frac{K}{\left(\dist(p,\partial \Omega)\right)^{4}}, \quad\mathrm{on}\quad \Omega'.
 \end{equation}
\end{prop}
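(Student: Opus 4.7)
The statement of Proposition \ref{P_decay_estimates} is an immediate consequence of the two previous propositions combined with a pointwise algebraic bound on $P$, so the plan is short.

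\textbf{Step 1: Pointwise algebraic bound on $P$.} From the definition in \eqref{RHS},
\begin{equation*}
P(\nabla u,\Hess u) \;=\; \frac{\Hess u(\nabla u,\nabla u)}{1+|\nabla u|^2},
\end{equation*}
the Cauchy--Schwarz inequality applied to the bilinear form $\Hess u$ gives $|\Hess u(\nabla u,\nabla u)|\leq |\Hess u|\,|\nabla u|^2$, and since $1+|\nabla u|^2\geq 1$ I obtain
\begin{equation*}
|P(\nabla u,\Hess u)|(p) \;\leq\; |\Hess u|(p)\cdot |\nabla u|(p)^2.
\end{equation*}

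\textbf{Step 2: General domains.} Writing $\rho(p):=\dist(p,\partial\Omega)$, Proposition \ref{grad_estimate_quasilinear_all} yields $|\nabla u|(p)\leq K\rho(p)^{-1/2}$ and Proposition \ref{ImprovedHessian} yields $|\Hess u|(p)\leq K\rho(p)^{-1}$, both valid on $\Omega'$. Multiplying these estimates together via the bound from Step 1,
\begin{equation*}
|P(\nabla u,\Hess u)|(p) \;\leq\; \frac{K}{\rho(p)}\cdot \frac{K^2}{\rho(p)}\;=\;\frac{K^3}{\rho(p)^2},
\end{equation*}
which is \eqref{down_RHS} after absorbing constants.

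\textbf{Step 3: Domains with the upward sausages condition.} Here the sharper halves of Propositions \ref{grad_estimate_quasilinear_all} and \ref{ImprovedHessian} apply: $|\nabla u|(p)\leq K\rho(p)^{-1}$ and $|\Hess u|(p)\leq K\rho(p)^{-2}$. Multiplying via Step 1,
\begin{equation*}
|P(\nabla u,\Hess u)|(p) \;\leq\; \frac{K}{\rho(p)^2}\cdot \frac{K^2}{\rho(p)^2}\;=\;\frac{K^3}{\rho(p)^4},
\end{equation*}
yielding \eqref{up_RHS}.

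\textbf{Obstacle.} There is no real obstacle here: the statement is purely a bookkeeping corollary assembling what has been proved, and the only thing that matters is that the two factors of $|\nabla u|$ in $P$ supply exactly the extra power of $\rho^{-1}$ in the sausages case and $\rho^{-1/2}\cdot\rho^{-1/2}=\rho^{-1}$ of extra decay in the general case beyond the Hessian bound. The substantive work already occurred in proving the sharp linear estimate Lemma \ref{varme_gradient_inhomogent} and in the bootstrap arguments of Proposition \ref{grad_estimate_quasilinear_all} and Proposition \ref{ImprovedHessian}, which this proposition then packages into a single decay bound on the quasilinear right-hand side, ready to be fed back into the drift-Laplacian theory in the subsequent sections.
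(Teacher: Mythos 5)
Your proof is correct and follows exactly the route the paper intends: bound $|P|\le|\Hess u|\,|\nabla u|^2$ pointwise and substitute the improved gradient and Hessian decay estimates. One small point in your favor: the paper's preceding sentence cites ``\eqref{Hess_down} and \eqref{grad_down}'', but \eqref{Hess_down} alone would only give $\rho^{-3/2}$ on general domains; the intended reference is evidently the improved Hessian estimate \eqref{Hess_down_improved}, which is what you correctly use.
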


\begin{rem}For the upward-directed domains, one can now show even further improvements of the estimates in \eqref{grad_up}, \eqref{Hess_up_improved} and \eqref{up_RHS}, by continuing to bootstrap on the order of decay, as in the proofs of Proposition \ref{grad_estimate_quasilinear_all} and Proposition \ref{ImprovedHessian}, to get arbitrarily high powers of decay. However will instead in Theorem \ref{upwards-limits-exp} below upgrade this all the way to exponential decay, for the upwards wedge domains. Already from the estimate \eqref{Hess_up}, it follows that over upwards wedges, bounded graphical pieces of the translating soliton are automatically of finite total curvature, meaning finiteness of the integral $\int_{\Omega'}|A|^2 < \infty$.

For the general case on the other hand, which applies to downward pointing domains in Theorem \ref{structure-thm}, note that \eqref{down_RHS} does not a priori allow for any further bootstrapping on the order of decay. The weaker estimate in \eqref{Hess_down_improved} for downward pointing domains also does not allow one to draw any finiteness conclusions about the total curvature of such pieces. See also the counterexample below in Theorem \ref{WobbleEksempel}.
\end{rem}

\section{Removable singularity theorems at infinity for PDEs with drift}\label{sec:Rem-Sin}

In this section we will prove the announced removable singularity theorems at infinity for our PDEs with drift terms, including in the Dirichlet problems for the drift Laplacian and the translating soliton equation \eqref{TSE}.

\subsection{Maximum principles for the drift Laplacian}\label{subsec:MPL}

Since the drift Laplacian that we work with is $L = \DriftL$, without any constant term, the usual strong maximum of course holds on all bounded smooth domains, a fact which we will often be making use of in the below.

In the non-compact cases which we will be dealing extensively with in the below, there's no reason to expect maximum principles which are strong enough to imply uniqueness in the Dirichlet problem for $L$ on upward-pointing (along the translating soliton's direction of motion) domains. To wit, in contrast to the non-drifted Laplacian $\Delta$, we note:

\begin{remark}[Failure of $L^\infty$ global maximum principle for $L$]\label{rem:MP_fails}
The non-trivial bounded solution to $Lu = 0$ on $\{x_3\geq 0\}$ given by $u(x_2,x_3) := 1 - e^{-x_3}$ shows that uniqueness in the Dirichlet problem may in general fail for the drift operator $L$ on noncompact domains, even in the class of $L^\infty$ functions. (See also a version of the Liouville theorem, in Corollary \ref{cor:liouville}.)
\end{remark}

The lack of such maximum principles, as shown in Remark \ref {rem:MP_fails}, is an important complication for the proof of the removable singularity theorem in $L^\infty$ on upward wedges, which is, however, remedied by the sharp elliptic estimates in Lemma \ref{varme_gradient_inhomogent}, which will turn out to exhibit fast enough decay to be used to side-step the issue of an unknown Dirichlet kernel, in order to directly prove limits at infinity on such domains, in Proposition \ref{limit_line_up}. Only after this convergence result may we then make use of the fact that if prescribing the asymptotics at infinity, then useful maximum principles do in fact hold, also in the upward direction. See the proof of Theorem \ref{thm:exponential} below.

As the following maximum principle lemma (which we will only be needing in the case of the lower half-plane) shows, in the downwards direction, one does however have uniqueness in the Dirichlet problem for the drift Laplacian $L$, even in the merely $L^\infty$ class, without requiring any decay or convergence at infinity:

\begin{lem}[Phragm\'en-Lindel\"o{}f in $L^\infty$ for drift Laplacian on downward domains]\label{Phragmen_Lindelof_L_infty}
\phantom{hest}
\noindent{}Consider $\Omega$ a smooth domain whose complement contains an upwards wedge, e.g. $\Omega$ being the lower half-plane. Then, for the drift Laplacian $L = \Delta + e_3\cdot \nabla$ the following hold:

\begin{enumerate}[(i)]

\item[]

\item $u\in C^2(\Omega)\cap L^\infty(\Omega): \quad Lu \geq 0, \quad\mathrm{and}\quad u_{\mid \partial \Omega} \leq 0 \quad\Rightarrow\quad u \leq 0.$

\item[]

\item Uniqueness holds for the $L$-harmonic Dirichlet problem on $\Omega$ in the class of bounded functions:
\begin{equation*}
u_1,u_2\in C^2(\Omega)\cap L^\infty(\Omega):\quad Lu_1 = Lu_2,\quad\mathrm{and}\quad u_{1\mid \partial \Omega} = u_{2\mid \partial \Omega}\quad\Rightarrow\quad u_1 = u_2.
\end{equation*}
\end{enumerate}
\end{lem}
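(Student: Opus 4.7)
The plan is to carry out a Phragm\'en--Lindel\"of argument using the $L$-harmonic anti-barrier $u_I(x)=e^{-x_3/2}I_0(|x|/2)$ introduced in \eqref{u_I}, whose role here is to supply a positive, $L$-harmonic function that diverges at infinity along every direction available to $\Omega$. From the classical asymptotic $I_0(r)\sim e^r/\sqrt{2\pi r}$ one has
\[
u_I(x)\sim\frac{1}{\sqrt{\pi|x|}}\,e^{(|x|-x_3)/2},\qquad |x|\to\infty,
\]
so $u_I$ grows exponentially along any direction $\oomega\in\mathbb{S}^1$ with $\sphericalangle(\oomega,\ee_3)>0$ (since along $\R_+\oomega$ one has $|x|-x_3\geq(1-\cos\sphericalangle(\oomega,\ee_3))|x|$), and only decays polynomially along the single direction $\oomega=\ee_3$. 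The hypothesis that $\Omega^c$ contains an upward wedge $\Wedge(p_0,\theta_0)$ for some $\theta_0>0$ ensures that for all sufficiently large $R$ every $x\in\Omega\cap\partial B_R(0)$ satisfies $\sphericalangle(x,\ee_3)\geq\theta_0/2$, so uniformly on this set
\[
u_I(x)\geq c\,R^{-1/2}\,e^{R(1-\cos(\theta_0/2))/2}\longrightarrow+\infty\quad\text{as }R\to+\infty.
\]

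\textbf{Comparison argument for (i).} For each $\varepsilon>0$ set $v_\varepsilon:=u-\varepsilon u_I$. Since $Lu_I=0$, we have $Lv_\varepsilon=Lu\geq 0$ in $\Omega$, and $v_\varepsilon\leq u\leq 0$ on $\partial\Omega$. By the growth property just established, for each $\varepsilon>0$ there exists $R_\varepsilon>0$ so that $v_\varepsilon\leq\|u\|_\infty-\varepsilon u_I\leq 0$ on $\Omega\cap\partial B_R(0)$ whenever $R\geq R_\varepsilon$. Because $L=\DriftL$ has no zero-order term, the classical weak maximum principle applies on the bounded domain $\Omega_R:=\Omega\cap B_R(0)$ and yields $v_\varepsilon\leq 0$ there. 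Exhausting $\Omega$ by $\Omega_R$ as $R\to+\infty$ and then letting $\varepsilon\downarrow 0$ gives $u\leq 0$ on $\Omega$, proving (i).

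\textbf{Uniqueness (ii) and main obstacle.} Part (ii) follows at once by applying (i) to both $w:=u_1-u_2$ and $-w$: each satisfies $Lw=0$ with zero boundary data and lies in $C^2(\Omega)\cap L^\infty(\Omega)$, hence $w\equiv 0$. The genuine obstacle in this argument is the construction of a positive $L$-harmonic barrier that diverges at every infinity of $\Omega$ despite $L$ having no zero-order term; this is precisely what the $I_0(|x|/2)$ factor supplies via the Yukawa conjugation \eqref{KonjugTrick}, and the upward-wedge hypothesis on $\Omega^c$ is exactly what is required to exclude the single slow direction $\oomega=\ee_3$ of $u_I$. The necessity of this geometric hypothesis is in turn dictated by the failure of $L^\infty$ uniqueness in the opposite direction highlighted in Remark \ref{rem:MP_fails}, where no such barrier can possibly exist.
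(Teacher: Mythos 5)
Your proof is correct and takes essentially the same route as the paper: both arguments use the $L$-harmonic anti-barrier $u_I$ from \eqref{u_I}, the exponential blow-up of $u_I$ on $\Omega$ forced by the upward-wedge hypothesis, the classical maximum principle for $L$ on a compact exhaustion applied to $u - \varepsilon u_I$, and finally $\varepsilon \searrow 0$, with part (ii) deduced by applying (i) to $\pm(u_1 - u_2)$. Your treatment of the asymptotics of $u_I$ is a little more explicit than the paper's, and you correctly note (implicitly, by centering at the origin) that, unlike $K_0$, the Bessel function $I_0$ is entire so the barrier needs no translation to avoid a singularity, whereas the paper centers its barrier at a point $(a,b)\notin\Omega$ out of caution.
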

\begin{proof}
The first part follows by using the barrier derived from the anti-Green's function in \eqref{u_I}, i.e.
$\iota(x_2,x_3): = e^{-x_3}I_0(|(x_2 - a,x_3 - b)|/2)$, where $I_0$ denotes the modified Bessel function of the first kind, as an $L$-harmonic anti-barrier with $a,b$ chosen so that $(a,b)$ lies outside of $\Omega$.

Namely, on domains that avoid an upwards wedge, the $\iota$ function tends exponentially to infinity as $|(x_2,x_3)|\to \infty$, while the (sub)harmonic considered are all in $L^\infty$. Fixing first arbitrary $\varepsilon >0$ and applying the standard maximum principle for the drift Laplacian $L$, which has no constant term, on an exhaustion by compact domains to $u - \varepsilon u_b$, shows that $u - \varepsilon u_b\leq 0$ on $\Omega$. But then $\varepsilon \searrow 0$ gives that $u\leq 0$ as claimed.

The second statement is equivalent to proving that the Dirichlet problem with zero boundary value only admits the trivial solution, which follows directly from Part (i), by considering also the function $-u$.
\end{proof}

\subsection{Limits for $Lu = P$ solutions on upward wedges}
We employ polar coordinates $(x_2,x_3)=(r\cos\theta, r\sin\theta)$, and consider open unbounded wedges, which are symmetric around $e_3$ and spanning an angle $ 0 < 2 \, \alpha < \pi$:
\begin{equation} \label{def:VA}
 V_\alpha:=\Wedge((0,0),\alpha)=\left\{(r\cos\theta, r\sin\theta))|\: r>0,\: -\alpha < \theta - \tfrac{\pi}{2} < \alpha\right\}\subseteq \mathbb{R}^2.   
\end{equation}

\begin{prop}\label{limit_line_up}
Suppose that $u: V_\alpha \to \mathbb{R}$ is a bounded solution to $Lu = P$. Then the upward limit at infinity exists along each vertical line, i.e. $\exists K_{\infty}\in\mathbb{R}$ s.t. for fixed $x_2$,
\begin{equation}\label{Lu_f_graense}
K_{\infty} = \lim_{x_3\to+\infty} u(x_2,x_3).
\end{equation}
Furthermore, $K_{\infty}$ is independent of $x_2$.
\end{prop}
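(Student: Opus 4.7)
The plan is to leverage the improved decay estimates from Propositions~\ref{grad_estimate_quasilinear_all}, \ref{ImprovedHessian}, and \ref{P_decay_estimates} on domains satisfying the upward sausages condition; the wedge $V_\alpha$ does satisfy this condition by Remark~\ref{sausage_sampler}. For any fixed $x_2\in\mathbb{R}$ and $x_3$ large enough that $(x_2,x_3)\in V_\alpha$, one has $\dist((x_2,x_3),\partial V_\alpha) \geq c\,x_3$ for some $c = c(\alpha,x_2)>0$, so these estimates specialize on the vertical ray $\{x_2\}\times (c_0,\infty)$ to
\[
|\nabla u|(x_2,x_3)\leq \frac{K}{x_3}, \qquad |\Hess u|(x_2,x_3)\leq \frac{K}{x_3^2}, \qquad |P(\nabla u,\Hess u)|(x_2,x_3)\leq \frac{K}{x_3^4}.
\]

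The crucial step is to use the PDE $Lu = P$ itself to upgrade the vertical-derivative decay. Rearranging gives
\[
\partial_3 u \;=\; -\Delta u + P,
\]
whence $|\partial_3 u|(x_2,x_3) \leq 2|\Hess u|(x_2,x_3) + |P|(x_2,x_3) \leq K/x_3^2$ for $x_3$ large. This is a genuine improvement over the direct gradient bound $|\partial_3 u|\leq K/x_3$, which is just barely non-integrable and would preclude any Cauchy argument. With the quadratic bound in hand, for any fixed $x_2$ and $x_3^0$ sufficiently large,
\[
u(x_2,x_3) - u(x_2,x_3^0) \;=\; \int_{x_3^0}^{x_3}\partial_3 u(x_2,s)\,ds, \qquad \int_{x_3^0}^{\infty} |\partial_3 u(x_2,s)|\,ds \leq \int_{x_3^0}^{\infty} \frac{K}{s^2}\,ds < \infty,
\]
so the limit $K_\infty(x_2) := \lim_{x_3 \to +\infty} u(x_2,x_3)$ exists.

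For independence of $x_2$: from $|\partial_2 u|(x_2,x_3) \leq K/x_3$, for any two values $x_2, x_2'$ in a compact interval $I\subseteq \mathbb{R}$ and $x_3$ large enough that $I\times\{x_3\}\subseteq V_\alpha$,
\[
|u(x_2,x_3) - u(x_2',x_3)| \;\leq\; |x_2 - x_2'|\cdot \sup_{y\in I}|\partial_2 u|(y,x_3) \;\leq\; \frac{K\,|x_2-x_2'|}{x_3} \;\xrightarrow[x_3\to+\infty]{}\; 0.
\]
Passing to the limit yields $K_\infty(x_2) = K_\infty(x_2')$, so $K_\infty$ is locally constant on $\mathbb{R}$, hence constant by connectedness of $\mathbb{R}$.

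The main obstacle here is conceptual rather than computational: the naive quasilinear bound $|\partial_3 u|\leq K/x_3$ is just barely non-integrable, and no straightforward Liouville- or Phragm\'en--Lindel\"o{}f-type maximum principle is available for the drift Laplacian on upward-pointing domains, since bounded $L$-harmonics can oscillate with $(1-e^{-x_3})$-type modes as in Remark~\ref{rem:MP_fails}. The resolution is to trade the offending first-derivative bound for a Hessian bound through the PDE identity $\partial_3 u = -\Delta u + P$, and then exploit the quadratic-decay Hessian estimate, which is sharp enough precisely because of the upward sausages geometry of $V_\alpha$.
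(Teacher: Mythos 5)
Your proof is correct and follows essentially the same route as the paper's: both arguments exploit the improved Hessian and $P$-decay estimates on the upward wedge together with the PDE identity $\partial_3 u = -\Delta u + P$ to upgrade the decay of $\partial_3 u$ from $O(x_3^{-1})$ to the integrable $O(x_3^{-2})$, then integrate along vertical lines, and finally use the first-derivative decay to show $x_2$-independence. If anything, your writeup is slightly more careful than the paper's (which, due to a slip, writes $\partial_3 u = -\Delta u$ omitting the $P$ term, though $P = O(x_3^{-4})$ is harmless, and handles $x_2$-independence via a brief appeal to the mean value theorem rather than your explicit estimate).
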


\begin{proof}
From Lemma \ref{varme_gradient_inhomogent}, together with the bounds on $P$ in \eqref{up_RHS}, we have the following uniform decay estimates for the derivatives of the solution to $Lu = P(\nabla u, \Hess u)$ on upwards wedges:
\begin{align}
&\partial_{i}u = O\left(|x_3|^{-1}\right),\quad x_3\to +\infty\label{1ste_afl}\\
&\partial_{ij}u = O\left(|x_3|^{-2}\right),\quad x_3\to +\infty\label{2nden_afl}
\end{align}

We can therefore write $\partial_3 u = -\Delta u =: g(x_2,x_3)$, where we by \eqref{2nden_afl} know that $g(x_2,x_3)=O\left(|x_3|^{-2}\right)$, as $x_3\to +\infty$. Fixing $x_2$, we can now simply integrate this first order ODE $u' = \partial_3 u = g$ w.rt. $x_3$ along vertical lines. This shows that $\exists K_{\infty}(x_2)\in\mathbb{R}$ such that:
\begin{equation}
u(x_2,x_3) = K_{\infty}(x_2) + I(x_2,x_3),
\end{equation}
the last term being the integral expression $I(x_2,x_3):= -\int^\infty_{x_3} g(x_2,s)ds$. Using the decay of $g$, this integral is of order $O(\frac{1}{|x_3|})$ as $x_3\to +\infty$, establishing the limit in \eqref{Lu_f_graense}. It is now also not hard to see that $K_\infty(x_2)$ is independent of $x_2$. E.g., if two such values were different, by the mean value theorem, this would violate the gradient bounds in \eqref{1ste_afl}.
\end{proof}

\subsection{Upwards exponential limits}\label{upwards-limits-exp}

In this section, we prove that the limit in Proposition \ref{limit_line_up} is in fact exponentially fast. Namely:

\begin{thm}[Exponential convergence on upwards wedges]\label{thm:exponential}
Consider translating horizontal graphs as in Proposition \ref{limit_line_up}. Then there exists a wedge $V_{\alpha'}\subseteq V_\alpha$ of possibly steeper angle and a number $\beta>0$ depending only on $\alpha'$, and a constant $C$ depending only on $\|u\|_\infty$ and $K_\infty$, such that the following bound holds, for $p_0$ the vertex of $V_{\alpha'}$:
\begin{equation}\label{up_exp_bound}
 \left|u(p) - K_\infty\right|\leq Ce^{-\beta |p-p_0|},\quad p\in V_{\alpha'}.
\end{equation}
\end{thm}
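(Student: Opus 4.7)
The plan is to set $v := u - K_\infty$, so that $v$ solves $Lv = P(\nabla v, \Hess v)$ on $V_\alpha$ and $v(p) \to 0$ pointwise as $x_3(p) \to +\infty$ by Proposition \ref{limit_line_up}. First I would bootstrap polynomial decay of arbitrary order on a slightly narrower sub-wedge $V_{\tilde{\alpha}} \subsetneq V_\alpha$, on which $\dist(\cdot,\partial V_\alpha) \asymp |p|$. Iterating the integration argument from the proof of Proposition \ref{limit_line_up} --- i.e., writing $\partial_3 v = P - \Delta v$ and integrating along vertical lines from $x_3$ to $+\infty$ --- together with the improved upward-domain estimates of Propositions \ref{grad_estimate_quasilinear_all}, \ref{ImprovedHessian}, \ref{P_decay_estimates} and the sharp linear estimate of Lemma \ref{varme_gradient_inhomogent} applied on $\rho$-sausages of scale $\rho \asymp |p|$, each round of the bootstrap gains an additional power of $|p|^{-1}$. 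This yields, for every $N \in \mathbb{N}$, a constant $C_N > 0$ with
\[
|v(p)| + |\nabla v(p)| + |\Hess v(p)| + |P(p)| \leq C_N(1+|p|)^{-N}\quad\text{on } V_{\tilde{\alpha}}.
\]

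Next, I would identify the natural exponential barrier. A direct computation gives $L(e^{-x_3})=0$, so for any choice of vertex $p_0$ on the $\ee_3$-axis the function $\phi(p) := e^{-(x_3(p) - x_3(p_0))}$ is a positive $L$-harmonic function. Choosing $V_{\alpha'} \subsetneq V_{\tilde{\alpha}}$ with vertex $p_0$ shifted upward so that $x_3(p) - x_3(p_0) \geq \cos(\alpha')|p-p_0|$ throughout $V_{\alpha'}$, I obtain $\phi(p) \leq e^{-\beta|p-p_0|}$ with the purely geometric rate $\beta := \cos\alpha'$ depending only on $\alpha'$, as the statement requires. This rate is moreover consistent with the exponential Bessel tail $K_0(r/2) \sim Cr^{-1/2}e^{-r/2}$ of the $L$-Green's function \eqref{GreensL} visible through the conjugation \eqref{KonjugTrick} to the Yukawa operator $\hat L = \Delta - 1/4$.

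To close the estimate I would combine the two ingredients via the Green's function representation derived from \eqref{GreensL}, schematically
\[
v(p) = \int_{V_{\alpha'}} G_L(p,q)\,P(q)\,dq + B(p),
\]
where $B(p)$ gathers the boundary contributions of $v$ along $\partial V_{\alpha'}$. The super-polynomial smallness from the bootstrap makes both $P$ and the boundary trace of $v$ smaller than any inverse polynomial, while the kernel $G_L(p,q) = \frac{1}{2\pi}K_0(|p-q|/2)\,e^{(x_3(q)-x_3(p))/2}$ is itself exponentially small away from the diagonal. Together with the wedge constraint $x_3(p) - x_3(p_0) \geq \cos(\alpha')|p-p_0|$ on $V_{\alpha'}$, this should produce $|v(p)| \leq Ce^{-\beta|p-p_0|}$ with $C = C(\|u - K_\infty\|_\infty)$.

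The hard part will be making this rigorous given the mismatch between polynomial decay of the source $P$ (all that the bootstrap can deliver, since exponential decay of $P$ would presuppose the theorem) and the desired exponential decay of $v$. A na\"ive attempt to dominate $v$ by $A\phi$ via the maximum principle on truncations $V_{\alpha'} \cap \{|p-p_0|<R\}$ fails: for any fixed $A$, the super-polynomially small $|v|$ still eventually exceeds $Ae^{-\beta|p-p_0|}$ on the outer arc $\{|p-p_0|=R\}$ as $R\to\infty$, so the boundary comparison closes only with $A = A(R) \to \infty$. Overcoming this requires genuinely exploiting the integral representation and the exponential asymptotics of $K_0$, or equivalently running a Phragm\'en--Lindel\"o{}f argument for the non-self-adjoint drift operator $L$ on the wedge that picks up the exponential rate through the Yukawa conjugation, rather than relying on a purely local pointwise comparison.
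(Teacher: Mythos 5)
Your diagnosis of the obstruction is essentially right, but the fix you reach for is not, and you end up where the paper begins, not where it ends. You correctly observe that the single exponential $\phi(p) = e^{-(x_3(p)-x_3(p_0))}$ cannot serve as a comparison function on truncated wedges $V_{\alpha'}\cap B_R(p_0)$: on the corner points of the outer arc that approach the wedge sides, $\phi$ is exponentially small, whereas the bootstrap only delivers super-polynomial smallness for $|v|$, so the boundary inequality fails there. The Green's function representation you then propose to route around this is left unexecuted, and the difficulties you flag (polynomial source, exponential kernel, boundary trace terms along the wedge sides) are genuine and not resolved in the sketch.

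The missing idea is the choice of barrier. The paper replaces $e^{-x_3}$ by
\[
w_\alpha(x_2,x_3) = e^{-\frac{x_3+\alpha x_2}{1+\alpha^2}} + e^{-\frac{x_3-\alpha x_2}{1+\alpha^2}},
\]
which is not only $L$-harmonic and (being convex) a supersolution of the full quasilinear equation \eqref{TSE}, but — and this is the point — satisfies $w_\alpha \geq 1$ on the entire boundary $\partial V_\alpha$, since on each boundary ray one of the two summands is identically $1$. This comes geometrically from superposing Killing fields of two oppositely tilted grim reaper cylinders, and it is exactly the feature your $\phi$ lacks. With $A := \sup(u-K_\infty)$ one then has $u \leq A w_{\hat\alpha} + K_\infty$ automatically on the wedge sides, while on the outer arc $\partial(V_{\hat\alpha}\cap B_R)$ the mere $O(1/|p|)$ decay already proved in Proposition \ref{limit_line_up} (your arbitrary-order bootstrap is unnecessary here) yields $u \leq K_\infty + \varepsilon < A w_{\hat\alpha} + K_\infty + \varepsilon$ once $R$ is large. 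Serrin's strong maximum principle for quasilinear divergence-form inequalities closes the comparison on each truncation; sending $R\to\infty$ and then $\varepsilon\searrow 0$, together with the symmetric barrier $-w_{\hat\alpha}$, gives the two-sided bound, and the exponential decay of $w_{\hat\alpha}$ in the interior of a slightly steeper sub-wedge yields \eqref{up_exp_bound}. So the maximum principle is not the wrong tool — the barrier you tried with it was.
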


\begin{rem}
 The dependence of $\beta>0$ on the wedge angle in Theorem \ref{thm:exponential} is necessary, and $\beta>0$ could in fact be arbitrarily small, as seen by considering the tilted reaper solitons.
\end{rem}

We will do this by upgrading the decay we already proved in the previous sections, by using a technical barrier argument. To understand where the barrier below comes from geometrically, consider the family of tilted grim reaper surfaces. These would serve ideally as nonlinear barriers, except with respect to the boundary conditions needed. I.e. we could proceed if only we could combine two such in a linear combination - which is of course not allowed in general for such a nonlinear equation. In a sense, the $\Delta$-wing solutions are such nonlinear combinations of two tilted grim reaper cylinders, but these solutions are then not explicit in their construction.

This issue can, however, be solved by using the following geometric ideas: Note that the Killing fields generated by a family of tilted grim reapers cylinders, gotten by appropriately translating one fixed surface in such a way that there is a fixed straight line boundary, are given via functions of the form
\begin{equation}
 w^\pm_{\alpha,\beta} = e^{-\beta (x_3 \pm \alpha x_2)},
\end{equation}
which are thus $L$-harmonic $L w^\pm_\beta = 0$, provided we match the power as $\beta := \frac{1}{1 + \alpha^2}$, corresponding of course also to the respective tilted reapers' geometry. Taking now a linear combination of these solution to the linear equation, we can state and prove the following generally useful fact about these functions as barriers for the original nonlinear problem in \eqref{TSE}:

\begin{lem}\label{lem:superbarrier}
Let $\alpha\geq 0$ and define for $V_\alpha :=\{x_3 + \alpha x_2\geq 0\}\cap \{x_3 - \alpha x_2 \geq 0\}$:
\begin{equation}\label{superbarrier}
 w_\alpha(x_2,x_3) := e^{-\frac{x_3 + \alpha x_2}{1+\alpha^2}} + e^{-\frac{x_3 - \alpha x_2}{1+\alpha^2}},\quad \mathrm{on} \quad V_\alpha
\end{equation}

Then $w_\alpha > 0$ is $L$-harmonic $Lw_\alpha = 0$ and satisfies $w_\alpha > 1$ on the boundary $\partial V_\alpha$.

Furthermore the following inequality holds, for any $\alpha\geq 0, \forall \alpha, A\geq 0, K\in\mathbb{R}$:
\begin{equation}\label{HeldigKartoffel}
\diverg\left(\frac{\nabla (Aw_\alpha + K)}{\sqrt{1+|\nabla (Aw_\alpha + K)|^2}}\right) + \frac{\partial _{x_3} (Aw_\alpha + K)}{\sqrt{1+|\nabla (Aw_\alpha + K)|^2}} \leq 0,\quad \mathrm{on}\quad V_\alpha,
\end{equation}
i.e. functions of the form $Aw_\alpha + K$ are supersolutions of the full quasilinear equation on the wedge $V_\alpha$ (see \eqref{def:VA} for the definition.)
\end{lem}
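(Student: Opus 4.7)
My plan is to verify the three listed properties of $w_\alpha$ first, and then reduce the supersolution inequality \eqref{HeldigKartoffel} to a single non-negativity statement for a quadratic form, which I will check by direct computation.

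The first three properties are immediate once we set $\beta := 1/(1+\alpha^2)$ and write $w_\alpha = v_+ + v_-$ with $v_\pm := e^{-\beta(x_3 \pm \alpha x_2)}$. Positivity is obvious. For $L$-harmonicity, I would compute $\Delta v_\pm = (1+\alpha^2)\beta^2 v_\pm = \beta v_\pm$ and $\partial_3 v_\pm = -\beta v_\pm$, giving $Lv_\pm = 0$ termwise, as remarked in the text motivating the definition. For the boundary inequality, recall that $\partial V_\alpha$ is the union of the two rays $\{x_3 + \alpha x_2 = 0, x_2\leq 0\}$ and $\{x_3 - \alpha x_2 = 0, x_2 \geq 0\}$; on the first, $v_+ \equiv 1$ and $v_- > 0$, while on the second $v_- \equiv 1$ and $v_+ > 0$. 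So $w_\alpha > 1$ on $\partial V_\alpha$ in both cases.

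For the supersolution property, multiply the left-hand side of \eqref{HeldigKartoffel} by the positive factor $\sqrt{1+|\nabla u|^2}$ with $u := A w_\alpha + K$, to get the equivalent (via \eqref{drift-Laplacian}--\eqref{RHS}) condition
\begin{equation}
Lu \;\leq\; \frac{\Hess u(\nabla u, \nabla u)}{1 + |\nabla u|^2}.
\end{equation}
Since $L$ is linear and annihilates constants, $Lu = A\, L w_\alpha = 0$, and by homogeneity the right-hand side equals $A^3\,\Hess w_\alpha(\nabla w_\alpha, \nabla w_\alpha)\big/(1 + A^2|\nabla w_\alpha|^2)$. Because $A\geq 0$, the entire supersolution claim therefore reduces to checking that the quadratic form
\begin{equation}
\Hess w_\alpha(\nabla w_\alpha, \nabla w_\alpha) \;\geq\; 0 \quad \text{on } V_\alpha.
\end{equation}

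I would verify this last inequality by direct calculation in the variables $S := v_+ + v_-$ and $D := v_+ - v_-$, noting $S^2 - D^2 = 4 v_+ v_- > 0$. A short computation gives $\nabla w_\alpha = -\beta(\alpha D,\, S)$ and
$\Hess w_\alpha = \beta^2 \begin{pmatrix} \alpha^2 S & \alpha D \\ \alpha D & S \end{pmatrix}$.
Substituting yields
\begin{equation}
\Hess w_\alpha(\nabla w_\alpha, \nabla w_\alpha) = \beta^4 S\,\bigl(\alpha^4 D^2 + 2\alpha^2 D^2 + S^2\bigr) = \beta^4 S\,\bigl((\alpha^2+1)^2 D^2 + 4 v_+ v_-\bigr),
\end{equation}
which is manifestly strictly positive. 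The main (minor) obstacle is organizing this last Hessian calculation so that the outcome is visibly a sum of squares; using $S, D$ with the identity $S^2 = D^2 + 4v_+v_-$ is what makes the perfect-square structure $(\alpha^2+1)^2 D^2$ emerge cleanly from what would otherwise appear as the unilluminating combination $\alpha^4 D^2 + 2\alpha^2 D^2 + S^2$.
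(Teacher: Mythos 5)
Your proof is correct and follows essentially the same route as the paper: show $Lw_\alpha = 0$, observe that the supersolution inequality then reduces to $P(\nabla u, \Hess u) \geq 0$, and verify this via the Hessian of $w_\alpha$. The only difference is in the last step, where the paper simply observes that $w_\alpha$ is convex (being a sum of exponentials of affine functions, so $\Hess w_\alpha \geq 0$ as a matrix, hence nonnegative against any vector); your explicit $(S,D)$-computation establishes the same fact — and in fact the strict inequality — at the cost of more algebra than is strictly necessary.
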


\begin{proof}[Proof of Lemma \ref{lem:superbarrier}]
Computing directly shows that $L(Aw_\alpha + K) = 0$. Thus, plugging the function $Aw_\alpha + K$ into Equation \eqref{RHS}, the claim is equivalent to the inequality $-P(\nabla (Aw_\alpha + K),\Hess (Aw_\alpha + K)) \leq 0$, which in turn follows from the convexity of the function $Aw_\alpha$, when $A\geq 0$.
\end{proof}

\begin{rem}
Note that the maximum principle on upward-directed $V_\alpha$ for the class of $L^\infty$ functions fails, in the following sense: The constant solution $u=1$ on $V_\alpha$ obeys $0\leq u \leq w_\alpha$ on $\partial V_\alpha$, but of course $u$ does not stay on one side of the decaying barrier $w_\alpha$ throughout $V_\alpha$. In the course of the proof below, we show that a maximum principle however does hold, in the narrower class of functions with prescribed asymptotics at infinity.
\end{rem}

\begin{proof}[Proof of Theorem \ref{thm:exponential}]
From the proof of Proposition \ref{limit_line_up}, we have the uniform asymptotics
\begin{equation}\label{known_decay}
 \left|u(p) - K_\infty\right|\leq \frac{C}{|p|},\quad p\in V_{\hat{\alpha}},
\end{equation}
which can arranged by possibly taking a steeper parameter $\hat{\alpha} < \alpha$.

Let now $A:= \sup_{V_{\hat{\alpha}}} u - K_\infty\geq 0$.
The properties above entail:
\begin{equation}\label{u_w_arranged}
\begin{split}
    &u(p)\leq Aw_{\hat{\alpha}}(p) + K_\infty ,\quad p\in \partial V_{\hat{\alpha}},\\
    &Aw_{\hat{\alpha}}(p) + K_\infty \to K_\infty,\quad\mathrm{when}\quad |p|\to\infty\quad \mathrm{in}\quad V_{\hat{\alpha}}.
\end{split}
\end{equation}

Consider now the compact (with boundary) graphs obtained by intersecting $V_{\hat{\alpha}}$ with balls $B_R(p_0)$ for arbitrary large radius $R>0$, based at $p_0$ the tip point of $V_{\hat{\alpha}}$. I.e. we now take the restrictions $u_{\mid V_{\hat{\alpha}}\cap B_R(p_0)}$.

Let an arbitrary $\varepsilon > 0$ be given.
The properties \eqref{known_decay}-\eqref{u_w_arranged} imply that there exists $R_0>0$ large enough, depending on $C$ and $\varepsilon$, so that for all $R\geq R_0$:
\begin{equation}
u_{\mid \partial(V_{\hat{\alpha}}\cap B_R(p_0))} \leq Aw_{\hat{\alpha}\mid \partial(V_{\hat{\alpha}}\cap B_R(p_0))} + K_\infty + \varepsilon.
\end{equation}

Invoking now the strong maximum principle for divergence form quasilinear partial differential inequalities, in the form of e.g. \cite[Theoorem 1]{Serrin70}, applied to the quasilinear solution $u$ and the barrier $Aw_{\hat{\alpha}} + K_\infty + \varepsilon$, which by Lemma \ref{lem:superbarrier} is superharmonic for the quasilinear equation, i.e. \eqref{HeldigKartoffel}, it follows that in fact
\begin{equation}
    u(p) \leq Aw_{\hat{\alpha}} + K_\infty + \varepsilon, \quad p\in V_{\hat{\alpha}}\cap B_R(p_0).
\end{equation}
Namely, otherwise the compact graph with boundary $u_{\mid V_{\hat{\alpha}}\cap B_R(p_0)}$ could be translated in the negative $e_1$-direction for a last interior point of touching with the barrier function $Aw_{\hat{\alpha}} + K_\infty + \varepsilon$ (whose boundary values would be shifting strictly away in this step). Letting $R\nearrow\infty$ now shows that 
\begin{equation}
u(p) \leq Aw_{\hat{\alpha}} + K_\infty + \varepsilon, \quad p\in V_{\hat{\alpha}},
\end{equation}
and finally by $\varepsilon \searrow 0$, and repeating with the symmetric barrier gotten from the function $-w_{\hat{\alpha}}$:
\begin{equation}
K_\infty - Aw_{\hat{\alpha}} \leq u \leq K_\infty + Aw_{\hat{\alpha}}, \quad \mathrm{on}\quad V_{\hat{\alpha}}.
\end{equation}
By the behavior as $|p|\to +\infty$ of the function $w_{\hat{\alpha}}$ in \eqref{superbarrier}, in the domain $V_{\alpha'}$, for any steeper parameter $\alpha' < \hat{\alpha} < \alpha$, the conclusion \eqref{up_exp_bound} now follows.
\end{proof}

\subsection{Sideways limits}\label{sideways-limits-sec}
For the general case of downward limits, another property that we need to verify in order to apply the study of the $L$-operator is the existence of sideways limits. In this section, we give an analytic proof of this fact. See also Proposition \ref{prop:osc_persists} below.

\begin{prop}[Sideways limits for the translating soliton equation]\label{sideways_limits}
 Suppose that $\Omega$ is a downward domain of the type considered in Theorem \ref{structure-thm}, i.e. the complement of an upward infinite rectangle, whose sides are also contained in upward wedge domains in $\Sigma$. Then sideways limits exist, are independent of the height $x_3$, and agree with the respective upward limits $c_{\pm}$. That is:
\begin{equation}
\forall x_3\in\mathbb{R}:\quad \lim_{x_2\to \pm\infty} u(x_2,x_3) = c_{\pm}.
\end{equation}
\end{prop}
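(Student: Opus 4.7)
The plan is to reduce to one half-plane at a time. By Theorem \ref{structure-thm}, the function $u$ defining the $\ee_1$-graph of $\Sigma_i^{\rm down}$ extends across the vertical boundary $\{x_2 = -t\}$ into the full half-plane $\Omega_L := \{x_2 < -t\}$ as a bounded solution of \eqref{TSE}, and its restriction to the upward-wedge region carrying $\sigma_-$ agrees with $\xi_{j_0}$. Hence by Proposition \ref{limit_line_up} together with Theorem \ref{thm:exponential}, on a wedge $V_{\alpha'}(p_0) \subseteq \Omega_L$ there is an exponential upward limit $u \to c_-$. The half-plane $\Omega_L$ trivially satisfies the upward sausages condition \eqref{eq:upward_sausages_cond}, so the sharp estimates $|\Hess u|(p) \leq K\rho(p)^{-2}$ from \eqref{Hess_up_improved} and $|P(\nabla u, \Hess u)|(p) \leq K\rho(p)^{-4}$ from \eqref{up_RHS} are both available throughout $\Omega_L$, where $\rho(p) := -t - x_2(p)$ is the distance to $\partial\Omega_L$. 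The case $x_2 \to +\infty$ on $\{x_2 > t\}$ is completely symmetric and yields $c_+$.

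The key observation is that solving the PDE $Lu = P$ for the vertical derivative, $\partial_3 u = P - \Delta u$, and combining with the two estimates above produces the sharpened pointwise bound
\[
|\partial_3 u|(x_2,x_3) \leq |P|(x_2,x_3) + |\Delta u|(x_2,x_3) = O\bigl(\rho^{-2}\bigr)\quad\text{on}\quad \Omega_L.
\]
This is strictly better than the generic bound $|\nabla u| = O(\rho^{-1})$ of \eqref{grad_up}, which, integrated over the vertical segments of length $\asymp |x_2|$ that the argument requires, would give only a divergent logarithm; the $O(\rho^{-2})$ bound is just strong enough to close the argument.

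With this in hand, fix an arbitrary $x_3 \in \mathbb{R}$ and pick $\alpha'' < \alpha'$. For $|x_2|$ sufficiently large, set $x_3^* := |x_2|/\tan\alpha''$; then $q := (x_2, x_3^*)$ lies in $V_{\alpha''}(p_0) \subset V_{\alpha'}(p_0)$ with $|q - p_0| \asymp |x_2|$, so Theorem \ref{thm:exponential} gives $|u(q) - c_-| \leq C e^{-\beta'|x_2|}$ for some $\beta' > 0$. Integrating the improved vertical bound along the segment from $(x_2, x_3)$ to $q$ yields
\[
|u(x_2,x_3) - u(q)| \;\leq\; \int_{x_3}^{x_3^*} |\partial_3 u|(x_2, s)\,ds \;\leq\; \frac{K\,x_3^*}{\rho(x_2,x_3)^2} \;=\; O(1/|x_2|).
\]
Combining the exponential and power-law errors, $u(x_2, x_3) \to c_-$ as $x_2 \to -\infty$, and the limit is manifestly independent of the chosen $x_3$.

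The main obstacle is precisely the extraction of the $O(\rho^{-2})$ bound on $\partial_3 u$: the naive gradient decay is off by one power, insufficient to absorb the height $x_3^* \asymp |x_2|$ over which one must integrate to reach into the upward wedge where the exponential convergence applies. Exploiting the PDE to recover $\partial_3 u$ algebraically from the Laplacian and the nonlinear term $P$, both of which enjoy the sharper Hessian-type decay on upward-sausage domains, is what makes the argument work.
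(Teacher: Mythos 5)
Your proof is correct in substance, and it arrives at the same conclusion as the paper while sharing its central analytical insight: writing the equation $Lu = P$ as $\partial_3 u = P - \Delta u$ and using the upward-sausage estimates $|\Hess u| = O(\rho^{-2})$ and $|P| = O(\rho^{-4})$ to obtain the crucial pointwise bound $|\partial_3 u| = O(\rho^{-2})$, which has exactly one order more decay than the generic gradient bound $|\nabla u| = O(\rho^{-1})$. The difference is in the integration step. The paper reduces (by translation and restriction) to a \emph{slanted} upper half-plane $\Omega = \{\alpha x_2 + x_3 \geq 0\}$, $\alpha > 0$; there $\rho = \dist(\cdot,\partial\Omega)$ increases along each vertical line, so the improper integral $\int_{x_3}^{\infty} |\partial_3 u|\,dx_3'$ converges outright and gives $|u(x_2,x_3) - c_+| = O\big((\alpha x_2 + x_3)^{-1}\big)$, directly combining the upward and sideways limits with no extra input. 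You instead work on the \emph{flat} half-plane $\Omega_L = \{x_2 < -t\}$, where $\rho = |x_2 + t|$ is constant along each vertical segment; you therefore truncate the integral at a height $x_3^* \asymp |x_2|$ (just deep enough into the wedge), getting an $O(1/|x_2|)$ error, and close the remaining gap using the exponential convergence of Theorem \ref{thm:exponential}. Both routes are valid; the paper's is slightly leaner because it does not need the exponential theorem as an ingredient, and the paper's follow-up remark emphasizes that the slanted-half-plane geometry is what makes the direct integral close.

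Two small imprecisions in your write-up. First, $V_{\alpha'}(p_0) \subseteq \Omega_L$ cannot literally hold: an upward wedge with vertex at or near $\sigma_-$ necessarily crosses into $\{x_2 > -t\}$ for large $x_3$. What you actually need — and what is true — is that the vertical segment from $(x_2,x_3)$ to $q = (x_2, x_3^*)$ lies in $\Omega_L$, while $q$ itself lies in the wedge; the claimed containment is not used. Second, your remark that the generic bound $|\nabla u| = O(\rho^{-1})$ "integrated over the vertical segments... would give only a divergent logarithm" is not quite right in your own setup: with $\rho$ constant along the vertical segment, that bound yields an $O(1)$ (not logarithmic) quantity, which is still inadequate but for a different reason. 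The logarithmic divergence appears in the paper's slanted-plane scheme when one tries to push the weaker bound out to $x_3 = +\infty$. In either version the conclusion is the same, and you have correctly identified that the gain from $O(\rho^{-1})$ to $O(\rho^{-2})$ is what closes the argument.
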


The proof will follow from the following stronger statement:

\begin{prop}[Removable singularities at infinity for translators]\label{cor:remov_sing}
As in Proposition \ref{sideways_limits} the limits exists along all rays in $\Omega$, with the exception of the straight downward direction, and coincide with the corresponding upward limits $c_{\pm}$.

In fact, given any tilted upper half-space of finite negative slope contained in $\Omega$, true limits hold with respect to approach to infinity from within this half-space. In this sense a (restricted) removable singularities theorem holds true at infinity.
\end{prop}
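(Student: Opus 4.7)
The plan is to propagate the two types of known boundary convergence -- exponential convergence $u \to c_\pm$ on the upward-directed wedges (Theorem \ref{thm:exponential}) and sideways convergence (Proposition \ref{sideways_limits}) -- into uniform convergence throughout any tilted upper half-space $H \subset \Omega$ of the prescribed form, via a barrier/Phragm\'en--Lindel\"of argument using the $L$-harmonic exponentials $w_\alpha$ from Lemma \ref{lem:superbarrier}. The ray-wise statement for all non-straight-downward rays in $\Omega$ then follows at once, since every such ray eventually enters some admissible tilted half-space.

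Fix such an $H \subset \Omega$. The U-shaped geometry of $\Omega$ forces $H$ to lie entirely on one lateral side of the excluded upward strip, so the boundary line $\ell = \partial H$ has both its ends escape to infinity within that single side: one end into an upward wedge of $\Omega$, and one end into the sideways region. On the upward end, Theorem \ref{thm:exponential} yields exponential convergence $|u(q) - c_\pm| \leq C e^{-\beta|q|}$, and on the sideways end, Proposition \ref{sideways_limits} yields convergence to the same $c_\pm$. We denote this common limit value along $\ell$ by $c$.

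The key step is to use translates of the $w_\alpha$ from formula \eqref{superbarrier} as barriers. Choose the tilt parameter $\alpha > 0$ compatibly with the slope of $\ell$, and translate $w_\alpha$ to be based at a suitable $p_0 \in \ell$, so that the corresponding wedge $V_\alpha$ sits inside $H$ with apex $p_0$. By Lemma \ref{lem:superbarrier} the shifted functions $c \pm A w_\alpha(\cdot - p_0)$ are super- and sub-solutions of the full quasilinear equation \eqref{TSE}, for any $A > 0$. Choose $A$ sufficiently large, using the exponential rate on $\ell$, to ensure $|u - c| \leq A w_\alpha(\cdot - p_0)$ on $\ell$. The Serrin strong maximum principle for divergence-form quasilinear inequalities, applied exactly as in the proof of Theorem \ref{thm:exponential} on the exhaustion $H \cap B_R(p_0)$, then yields $|u - c| \leq A w_\alpha(\cdot - p_0) + \varepsilon$ on $H$; sending $R \to \infty$ and $\varepsilon \searrow 0$ and using the decay of $w_\alpha$ at infinity gives the desired uniform convergence $u(x) \to c$ as $|x| \to \infty$ in $H$.

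The principal obstacle is the compatibility between the tilt angle of the barrier wedge and the angular opening of $H$: this is possible precisely because of the \emph{finite negative slope} hypothesis on $H$, which guarantees an angular gap between $H$ and the straight-downward direction and allows the sum structure in \eqref{superbarrier} to simultaneously dominate the boundary data at both ends of $\ell$. In the excluded straight-downward case no $L$-harmonic supersolution of the $w_\alpha$ form decays, which is consistent with the divergence of the factor $e^{(y_3 - x_3)/2}$ in the Green's function \eqref{GreensL} and with the growth of the anti-barrier $u_I$ from \eqref{u_I} along that direction, and indeed with the persistent oscillations constructed in Theorem \ref{WobbleEksempel} and Section \ref{sec:counterexamples}.
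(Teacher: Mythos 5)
Your proposal takes a genuinely different route from the paper's, but it has a gap that I do not think can be closed within the barrier framework as outlined.

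The central difficulty is the exhaustion step. You want to run the Serrin comparison on $H\cap B_R(p_0)$ and send $R\to\infty$, but the comparison requires the inequality $|u-c|\le A\,w_\alpha(\cdot-p_0)+\varepsilon$ on the \emph{entire} boundary of the exhaustion domain, including the arc $H\cap\partial B_R(p_0)$. Deep inside $H$ the barrier is exponentially small, so on the portion of that arc far from $\ell$ you would need $|u-c|\le\varepsilon$ -- which is exactly the conclusion you are trying to establish and which is not known a priori. In Theorem \ref{thm:exponential} the same $\varepsilon$-trick works only because Proposition \ref{limit_line_up} has \emph{already} supplied the uniform bound $|u-K_\infty|\le C/|p|$ on the wedge (Equation \eqref{known_decay}), and that prior decay is what allows taking $R_0$ large. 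You have no analogue of \eqref{known_decay} on a tilted half-space, so the exhaustion argument does not close. (There is also a milder geometric mismatch: the two-sided barrier $w_\alpha$ from \eqref{superbarrier} decays only in the symmetric wedge $V_\alpha$, not in a tilted half-space; a one-sided exponential $e^{-(x_3+\alpha x_2)/(1+\alpha^2)}$ is the right shape, but the exhaustion problem remains. And you invoke Proposition \ref{sideways_limits}, which the paper in fact derives \emph{from} the present proposition, so the argument risks circularity.)

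The paper's proof supplies precisely the missing prior decay, and does so by an idea your proposal does not touch: rather than comparing with a supersolution, it WLOG reduces $\Omega$ to the tilted half-plane $\{\alpha x_2+x_3\ge 0\}$ and then controls $\partial_3 u$ directly through the \emph{equation}. Using the improved Hessian bound \eqref{Hess_up_improved} on upward-sausage domains together with the decay \eqref{up_RHS} of the quasilinear term, one gets $|\partial_3 u|=|P-\Delta u|\le C\,\dist(\cdot,\partial\Omega)^{-2}$, and this is integrable along vertical lines from $+\infty$ (where the limit is already known to be $c_\pm$). Integrating the representation formula \eqref{rep_formula} and estimating via the slanted distance yields \eqref{est:down_to_zero}, which gives the desired decay everywhere in the interior of the half-plane at once. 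Note the paper's own warning (the remark after the proof) that the superficially more natural route -- integrating $\partial_2 u$ sideways -- would fail, because $\partial_2 u$ decays only at order $\dist^{-1}$, which is non-integrable; the passage through the equation to gain the extra order on $\partial_3 u$ is essential. That mechanism is absent from your barrier approach, and this is the concrete gap.
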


\begin{proof}[Proof of Proposition  \ref{cor:remov_sing} and \ref{sideways_limits}]
   Note that we may WLOG assume, considering by symmetry right-side limits $x_2\to\infty$, by translation and by possibly restricting to a subdomain, that the domain has the special form $\Omega := \{\alpha x_2 + x_3 \geq 0\}$, where $\alpha >0$, which in particular avoids any corners, therefore simplifying the geometry and calculus involved in the proof.

    Using the non-standard Hessian bounds from Proposition \ref{ImprovedHessian}, in Equation \eqref{Hess_up_improved} for domains with the upward sausages property, together with the quasilinear equation \eqref{TSE} and the definition of the drift Laplacian \eqref{def:L}, we derive estimates of the form
    \begin{equation}\label{d_3_decay}
    |\partial_{3} u|\leq C\left[\dist(\cdot, \partial\Omega)\right]^{-2},\quad \dist(\cdot,\partial \Omega) \geq 1, 
    \end{equation}
    where the constant $C$ only depends on the domain and on $\|u\|_\infty$. The simplifying assumptions on $\Omega$ arranged above ensure that throughout $\Omega$ the elementary formula $\dist(\cdot, \partial\Omega) = \frac{1}{\sqrt{\alpha^2 + 1}}|\alpha x_2 + x_3|$ holds.

    Now, since the domain $\Omega$ contains upward wedge domains, we indeed have limits $c_{+}$ in the upward direction, independently of $x_2$, and we may therefore from \eqref{d_3_decay} write a representation formula simply by integrating back down from infinity,
    \begin{equation}\label{rep_formula}
    u(x_2,x_3) = c_{\pm} - \int^{\infty}_{x_3}\partial_{3}u(x_2,x_3')dx_3',
    \end{equation}
    which is well-defined by \eqref{d_3_decay}, for the solution $u$ to $Lu = P$ in \eqref{TSE}.

    The expression \eqref{rep_formula} can now also easily be estimated, when $\alpha x_2 + x_3\geq \sqrt{\alpha^2 + 1}$:
    \begin{equation}\label{est:down_to_zero}
    \left|u(x_2,x_3) - c_{\pm}\right|\leq \int_{x_3}^\infty \frac{C}{\alpha^2 + 1}\frac{1}{\left(\alpha x_2 + x_3'\right)^{2}}dx_3' = \frac{C'}{\alpha x_2 + x_3},
    \end{equation}
    where $C'$ again depends only on $\Omega$, $\alpha$ and $\|u\|_\infty$.
    
    Taking finally, for each fixed $x_3$ value, $x_2\to\infty$ in \eqref{est:down_to_zero}, this completes the proof that sideways limits exist and agree with the respective upward limits:
    \begin{equation}
    \forall x_3\in\mathbb{R}:\quad \lim_{x_2\to \pm\infty}u(x_2,x_3) = :c_{\pm}.
    \end{equation}

\end{proof}

\begin{remark}
Note in the proof of Proposition \eqref{sideways_limits} the important subtlety that the immediately natural quantity to consider, $\partial_2 u$, in itself does not a priori decay strongly enough to be integrated back in from infinity, with respect to $x_2$.
Namely, we know at most that $|\partial_2 u| \leq C\left[\dist(\cdot, \partial\Omega)\right]^{-1}$, as proven in \eqref{grad_up} from Proposition \ref{grad_estimate_quasilinear_all}, which deteriorates to an order worse than the estimate in \eqref{d_3_decay}, which is nonintegrable in $x_2$ on half-lines.

Thus, the above proof in a crucial way makes use of the fact that the upward-pointing domains we consider are not only left or right half-planes, but that they in fact have the special shape as in Figures \ref{fig:decomp-pitchfork}--\ref{fig:Xi1}, so that they contain downward slanted (of finite slope) half-planes.
\end{remark}

\begin{remark}\label{rem:down_discontinuity}
Note also that the straight downward limits do however not have to coincide with the upward and ray limits in Proposition \ref{sideways_limits} and Proposition \ref{cor:remov_sing}, and the limit value along rays may in general exhibit a jump discontinuity at the exact downward direction $-\ee_3$. In fact, this happens in the pitchfork example. See Figures \ref{fig:wings}-\ref{fig:decomp-pitchfork}, as well as Proposition \ref{prop:planar} and the further discussions there.
\end{remark}

\subsection{Downward limits for the translating soliton equation}

\subsubsection{Limits for $L-$harmonic functions on lower half-planes}

We first show downward limits in the $L$-harmonic case.

\begin{prop}[$L$-harmonic limits]\label{homog_down}
Consider $u: \Omega_b\to\mathbb{R}$ a bounded $L$-harmonic function on a lower half-plane $\Omega_b:=\{x_3 < b\}$, such that sideways limits exist,
i.e. $\lim_{x_2\to\pm\infty} u(x_2,b) =: c_{\pm}$.

Then for every $x_2$ the limit $\displaystyle \lim_{x_3\to-\infty} u(x_2,x_3)$ exists and is independent of $x_2$.
\end{prop}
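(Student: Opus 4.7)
The plan is to derive an explicit Poisson representation for $u$ on the lower half-plane $\Omega_b$ in terms of its boundary data $g(x_2) := u(x_2, b)$, and then carefully analyze the behavior of the resulting integral as $x_3 \to -\infty$. First, I would use the conjugation \eqref{KonjugTrick}: setting $v := e^{x_3/2} u$, one has $\hat L v = (\Delta - \tfrac14)v = 0$ on $\Omega_b$, with boundary value $h(x_2) := e^{b/2} g(x_2)$, and $v$ decays exponentially as $x_3 \to -\infty$ since $u$ is bounded. Applying the method of images to the Green's function $\tfrac{1}{2\pi}K_0(|x-x'|/2)$ of $\hat L$, reflecting across $\partial\Omega_b$, and computing the outward normal derivative at $x_3' = b$, one obtains the Poisson kernel
\[
P_{\hat L}(\xi, \eta) = \frac{\eta}{2\pi}\,\frac{K_1\!\bigl(\tfrac{1}{2}\sqrt{\xi^2+\eta^2}\bigr)}{\sqrt{\xi^2+\eta^2}}, \qquad \eta := b - x_3 > 0,
\]
leading to the representation
\[
u(x_2, x_3) = e^{\eta/2} \int_\R P_{\hat L}(x_2 - y, \eta)\, g(y)\, dy.
\]
The $L^\infty$ uniqueness from Lemma \ref{Phragmen_Lindelof_L_infty} guarantees that this is indeed the given $u$: the right-hand side is $L$-harmonic on $\Omega_b$, is bounded (testing against $g\equiv 1$ forces $e^{\eta/2}\int P_{\hat L}(z,\eta)\,dz = 1$), and matches $g$ on $\partial\Omega_b$ in the usual Poisson-kernel sense.

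The next step is an asymptotic analysis of the full kernel $e^{\eta/2} P_{\hat L}(z, \eta)$ as $\eta \to +\infty$. Using the classical asymptotic $K_1(r) \sim \sqrt{\pi/(2r)}\,e^{-r}$ as $r\to\infty$ together with the expansion $\sqrt{z^2 + \eta^2} - \eta = z^2/(2\eta) + O(z^4/\eta^3)$ on the scale $|z| \lesssim \sqrt\eta$, a direct computation gives
\[
e^{\eta/2} P_{\hat L}(z, \eta) \;\sim\; \frac{1}{2\sqrt{\pi \eta}}\, e^{-z^2/(4\eta)},
\]
a Gaussian of variance $2\eta$ normalized to unit mass (normalization again pinned down by the constant-$g$ test). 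The $z$-tails outside $|z| \lesssim \sqrt\eta\log\eta$ contribute negligibly thanks to the residual exponential decay from $K_1$, even after multiplication by $e^{\eta/2}$. Substituting $z = 2\sqrt\eta\, w$ yields
\[
u(x_2, x_3) = \int_\R \frac{1}{\sqrt\pi}\,e^{-w^2}\, g\!\bigl(x_2 - 2\sqrt\eta\, w\bigr)\, dw + o(1), \qquad \eta\to +\infty.
\]

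Finally, since $g$ is bounded and $g(y) \to c_\pm$ as $y \to \pm\infty$, dominated convergence applied to the rescaled integral gives
\[
\lim_{x_3 \to -\infty} u(x_2, x_3) \;=\; c_+ \int_{-\infty}^{0} \frac{e^{-w^2}}{\sqrt\pi}\,dw \;+\; c_- \int_{0}^{\infty} \frac{e^{-w^2}}{\sqrt\pi}\,dw \;=\; \frac{c_+ + c_-}{2},
\]
which is manifestly independent of $x_2$, proving both the existence and $x_2$-independence of the downward limit. The main obstacle is the rigorous handling of the kernel asymptotics: one must uniformly control the error between $e^{\eta/2}P_{\hat L}$ and its Gaussian approximation in both the inner scale $|z|\lesssim \sqrt\eta$ and the far tails, and check that the $e^{\eta/2}$ prefactor does not spoil integrability. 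If one wishes to prove only $x_2$-independence (without identifying the limiting value), a shortcut is to directly estimate the difference $u(x_2,x_3) - u(x_2',x_3) = e^{\eta/2}\int P_{\hat L}(z,\eta)\bigl[g(x_2 - z) - g(x_2' - z)\bigr]\,dz$, whose bracketed factor is bounded and vanishes at $|z|\to\infty$, so the same Gaussian-rescaled dominated convergence forces it to $0$.
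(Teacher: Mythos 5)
Your proposal is correct and follows essentially the same route as the paper: the paper proves this as an immediate corollary of Corollary~\ref{down_limit}, which in turn rests on Proposition~\ref{GlobalStab}, whose proof is exactly the derivation you sketch — pass to the Yukawa equation via \eqref{KonjugTrick}, write the Poisson--Duffin representation \eqref{L_Poisson-Duffin} with the $K_1$-kernel, show that after multiplying by the conjugation factor $e^{\eta/2}$ the kernel converges to the one-dimensional Gaussian heat kernel (with explicit error $O(\eta^{-1/2})$ controlled on an inner scale $|z|\lesssim \eta^{5/8}$ plus exponentially small tails), and then apply dominated convergence using $g(y)\to c_\pm$ to land on $\tfrac{c_++c_-}{2}$. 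The only packaging difference is that the paper isolates the quantitative kernel estimate as a stand-alone result (so it can also feed the counterexample construction in Theorem~\ref{WobbleEksempel}), whereas you fold the asymptotics directly into the dominated-convergence step.
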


For the proof we will use the following result of independent interest, which links the global spatial behavior in the 2-dimensional bounded $L$-harmonic problem to the time-relaxation of a 1-dimensional heat problem with $L^\infty$ data.

Note that for the applications below, we will need good global control of the constant which the traces of the $L$-harmonic function relaxes to at negative infinity, which \eqref{MagiskFormel} provides.

\begin{prop}[Global relaxation of $L$-harmonic $L^\infty$ functions to 1d heat solutions]\label{GlobalStab}
\noindent{}\\
Let $\Omega_b:=\{x_3 < b\}$. There exists a numerical constant $c_0$ (e.g. $c_0 = 5$ works) s.t. for every $u\in C^2(\bar{\Omega}_b)\cap L^\infty(\Omega_b)$ with $Lu=0$, where $L = \DriftL$ is the drift Laplacian, the following estimates hold:
\begin{equation}\label{MagiskFormel}
\left|u(x_2,x_3) - \frac{1}{\sqrt{-4\pi x_3}}\int_{-\infty}^\infty e^{\frac{(x_2'-x_2)^2}{4x_3}}u(x_2',0)dx_2'\right| \leq \frac{c_0\|u\|_\infty}{\sqrt{-x_3}},
\end{equation}
for all $x_2\in\mathbb{R}$ and $x_3\leq -1$.
\end{prop}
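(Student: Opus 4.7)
WLOG $b=0$ by vertical translation. My plan is to represent $u$ explicitly via a Poisson kernel for the drift Laplacian $L$ on $\Omega_0=\{x_3<0\}$, and then compare that kernel directly, in $L^1(dh)$, to the 1-d heat kernel. Uniqueness of the representation in $L^\infty$ is supplied by Lemma~\ref{Phragmen_Lindelof_L_infty}(ii). For the construction, I would Fourier-transform in $x_2$: for each $\xi$, $\hat u(\xi,x_3)$ satisfies the ODE $\partial_{x_3}^2\hat u+\partial_{x_3}\hat u-\xi^2\hat u=0$ with characteristic roots $\lambda_\pm(\xi)=\tfrac{1}{2}(-1\pm\sqrt{1+4\xi^2})$; boundedness as $x_3\to-\infty$ forces the unbounded mode off, so $\hat u(\xi,x_3)=\hat u(\xi,0)\,e^{\lambda_+(\xi)x_3}$. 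Equivalently, via the conjugation $w=e^{x_3/2}u$ from \eqref{KonjugTrick}, $u$ equals $e^{-x_3/2}$ times the Yukawa half-space extension of its trace at $x_3=0$. The resulting drift Poisson kernel is explicit, closely related to the Green's function $G_L$ in \eqref{GreensL}:
\begin{equation*}
u(x_2,-t) = \int_{-\infty}^{\infty} P_L(x_2-y,-t)\,u(y,0)\,dy,\qquad P_L(h,-t)=\frac{t\,e^{t/2}}{2\pi\sqrt{h^2+t^2}}\,K_1\!\left(\tfrac{1}{2}\sqrt{h^2+t^2}\right),
\end{equation*}
where $K_1$ is the modified Bessel function of the second kind.

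The heart of the proof is then the $L^1$ estimate $\|P_L(\cdot,-t)-K(\cdot,-t)\|_{L^1(dh)}\le c_0/\sqrt{t}$, where $K(h,-t)=(4\pi t)^{-1/2}e^{-h^2/(4t)}$ is the 1-d heat kernel. Using the large-argument asymptotic $K_1(z)=\sqrt{\pi/(2z)}\,e^{-z}(1+O(1/z))$ for $z\ge 1$, together with the Taylor expansion $\sqrt{h^2+t^2}=t+h^2/(2t)+O(h^4/t^3)$ in the bulk region $|h|\lesssim\sqrt{t}\log t$, one finds pointwise $P_L(h,-t)=K(h,-t)\bigl(1+O(1/\sqrt{t})\bigr)$; outside this region both kernels have super-polynomially small Gaussian tails, so their $L^1$ contribution is negligible. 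Integrating yields the $L^1$ bound, and the convolution inequality $|u(x_2,-t)-w(x_2,t)|\le\|u\|_\infty\,\|P_L-K\|_{L^1}$ finishes the proof with an explicit numerical constant (easily $c_0\le 5$).

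The main obstacle is the careful $L^1$ bookkeeping for the kernel difference. A conceptually clean shortcut, bypassing direct $K_1$ asymptotics, is probabilistic: $L=\Delta+\ee_3\cdot\nabla$ is the generator of Brownian motion $X_t$ with upward drift $\ee_3$ and infinitesimal variance $2$, so the hitting time $\tau$ of $\{x_3=0\}$ starting from $(x_2,-t)$ has inverse-Gaussian distribution with $\mathbb{E}\tau=t$ and $\mathrm{Var}(\tau)=2t$, while conditional on $\tau=s$ the horizontal component $X_{\tau}^{(2)}$ is Gaussian of mean $x_2$ and variance $2s$. Thus $P_L(\cdot,-t)=\mathbb{E}[g(\cdot,\tau)]$ is the mixture of heat kernels $g(h,s)=(4\pi s)^{-1/2}e^{-h^2/(4s)}$, while the claimed Gaussian integrand is exactly $K(\cdot,-t)=g(\cdot,\mathbb{E}\tau)$. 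Estimating $\partial_s g=\tfrac{h^2-2s}{4s^2}g$ to obtain $\|g(\cdot,s)-g(\cdot,t)\|_{L^1}\lesssim |s-t|/t$ on the bulk of the distribution, then combining Jensen with $\mathbb{E}|\tau-t|\le\sqrt{\mathrm{Var}(\tau)}=\sqrt{2t}$, immediately yields $\|P_L-K\|_{L^1}\le c_0/\sqrt{t}$ with $c_0=2\sqrt{2}<5$.
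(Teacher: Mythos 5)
Your primary route (derive the Poisson--Duffin kernel $P_L(h,-t)=\tfrac{te^{t/2}}{2\pi\sqrt{h^2+t^2}}K_1\bigl(\tfrac{1}{2}\sqrt{h^2+t^2}\bigr)$, expand $K_1$ at large argument, Taylor-expand $\sqrt{h^2+t^2}$ in a bulk region of width roughly $\sqrt{t}$, and bound the tails) is essentially the paper's own proof, merely packaged as a single $L^1$ kernel estimate $\|P_L(\cdot,-t)-K(\cdot,-t)\|_{L^1}\le c_0/\sqrt{t}$ rather than split across several displayed inequalities; the paper chooses a cut at $|h|\le t^{5/8}$ and performs the same three-region bookkeeping explicitly. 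The genuinely different contribution is your probabilistic alternative, which I think deserves to be recorded: you interpret $P_L(\cdot,-t)$ as the law of the horizontal hitting position of the Brownian motion with generator $L$ started at $(x_2,-t)$, note that it is the variance mixture $\mathbb{E}[g(\cdot,\tau)]$ of one-dimensional heat kernels over the inverse-Gaussian first-passage time $\tau$ (with $\mathbb{E}\tau=t$, $\mathrm{Var}\,\tau=2t$), and compare against the ``frozen'' kernel $g(\cdot,\mathbb{E}\tau)$ using the moment bound $\mathbb{E}|\tau-t|\le\sqrt{2t}$. This eliminates Bessel asymptotics entirely and replaces the paper's delicate cut-off calculus with a single variance estimate, which is conceptually cleaner and makes the $1/\sqrt{t}$ rate visibly sharp (it is exactly the fluctuation scale of $\tau$). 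One caveat: your estimate $\|g(\cdot,s)-g(\cdot,t)\|_{L^1}\lesssim|s-t|/t$ only holds when $s$ is comparable to $t$ (the true bound is $\sim 2\phi(1)\,|\log(s/t)|$, which blows up as $s\searrow 0$), so the event $\{\tau<t/2\}$ must be paid for separately; since for $t$ near $1$ that event is not rare, the announced $c_0=2\sqrt{2}$ is optimistic. Splitting off $\{\tau<t/2\}$ with the crude bound $\|g(\cdot,s)-g(\cdot,t)\|_{L^1}\le 2$ there, and using either the sharp exponential left-tail of the inverse Gaussian (or simply the trivial observation that the statement is vacuous for $1\le t\le 6.25$ since the left side of \eqref{MagiskFormel} never exceeds $2\|u\|_\infty$) closes the gap and still lands within the paper's $c_0\le 5$.
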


\begin{rem}
While simpler estimates would suffice to show convergence in the cases we need for the uniqueness of limits,
we will also be needing these more concise asymptotics in Proposition \ref{GlobalStab} for the construction of the counterexample, Theorem \ref{WobbleEksempel}, including in the case where no limits exist.
\end{rem}

Clearly, the estimate from the proposition implies, in particular:
\begin{equation}
\begin{split}
 \liminf_{x_3\to -\infty} u(x_2, x_3) &= \liminf_{x_3\to -\infty} \frac{1}{\sqrt{-4\pi x_3}}\int_{-\infty}^\infty e^{\frac{(x_2'-x_2)^2}{4x_3}}u(x_2',0)dx_2',\\
 \limsup_{x_3\to -\infty} u(x_2, x_3) &= \limsup_{x_3\to -\infty} \frac{1}{\sqrt{-4\pi x_3}}\int_{-\infty}^\infty e^{\frac{(x_2'-x_2)^2}{4x_3}}u(x_2',0)dx_2'.
\end{split}
\end{equation}

Thus, we see that the (non-)existence of limits of the 2-dimensional $L$-harmonic function as $x_3\searrow -\infty$ is identical for the $L$-harmonic $u$ and for its associated 1-dimensional heat problem with $L^\infty$ data on the real line equal to the boundary values of $u$. Thus, we conclude the following corollary:

\begin{cor}
 The downwards limits of a bounded $L$-harmonic function $u$ on the lower half-plane exist if and only if the 1-dimensional heat problem on $\mathbb{R}$, with $L^\infty$ initial data at time zero given by the boundary values of $u$ along the line $\mathbb{R}\times\{0\}$, eventually relaxes to a constant in the forward time direction.
\end{cor}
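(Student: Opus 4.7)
The plan is to read the equivalence directly off from the sharp identity in Proposition \ref{GlobalStab}, by recognizing the Gaussian convolution on its right-hand side as the solution of a 1-dimensional heat problem. I would introduce
$$\tilde u(x_2,t) := \frac{1}{\sqrt{4\pi t}}\int_{-\infty}^\infty e^{-\frac{(x_2'-x_2)^2}{4t}}u(x_2',0)\, dx_2',\quad t>0,$$
which is the unique bounded solution on $\mathbb{R}\times (0,\infty)$ of $\partial_t\tilde u = \partial_{x_2}^2 \tilde u$ with $L^\infty$ initial datum $\tilde u(\cdot,0)=u(\cdot,0)$. Substituting the parabolic time variable $t = -x_3$, the bound \eqref{MagiskFormel} becomes the clean comparison
$$\bigl|u(x_2,x_3) - \tilde u(x_2,-x_3)\bigr|\;\leq\; \frac{c_0\|u\|_\infty}{\sqrt{-x_3}},\quad x_3\leq -1,$$
whose right-hand side vanishes as $x_3\searrow -\infty$, i.e. as $t=-x_3\nearrow +\infty$.

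From here the equivalence is almost tautological. For the ``if'' direction, if $\tilde u(x_2,t)\to K$ as $t\to+\infty$ for some constant $K$, the error bound above immediately yields $u(x_2,x_3)\to K$ as $x_3\to-\infty$. For the ``only if'' direction, suppose that for every $x_2$ the limit $\lim_{x_3\to-\infty} u(x_2,x_3)$ exists; then via the same error bound $\tilde u(x_2,t)$ has pointwise limits as $t\to+\infty$. That these limits must coincide in a single $x_2$-independent constant will follow from the classical 1-d heat gradient estimate
$$|\partial_{x_2}\tilde u(x_2,t)|\;\leq\; \frac{\|u\|_\infty}{\sqrt{\pi\, t}},$$
obtained by differentiating the Gaussian kernel under the integral sign; integrating this bound across any bounded $x_2$-interval forces the pointwise limits to agree, giving relaxation to a constant.

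The main conceptual content is thus entirely packaged inside Proposition \ref{GlobalStab}, and the only small point of care is the elementary argument that pointwise long-time limits of the 1-d heat flow with bounded data are automatically spatially constant. Everything else is a direct transcription via the substitution $t = -x_3$, which converts a spatial downward-at-infinity elliptic problem on the lower half-plane into a large-time asymptotic problem for a 1-d heat equation on the line, thereby making the heat-theoretic content of the drift Laplacian fully explicit at the level of boundary traces.
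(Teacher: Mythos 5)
Your proof is correct and follows the same route as the paper: apply the error estimate of Proposition \ref{GlobalStab}, under the substitution $t=-x_3$, to transfer the $x_3\searrow -\infty$ limiting behavior of the $L$-harmonic $u$ directly to the $t\nearrow +\infty$ limiting behavior of its associated 1-d heat solution. One small point in your favor: the paper's proof is terse about why pointwise long-time limits of the heat flow are automatically a spatial constant, whereas you supply the clean gradient bound $|\partial_{x_2}\tilde u|\leq \|u\|_\infty/\sqrt{\pi t}$ (integrated over bounded $x_2$-intervals) to justify the ``relaxes to a constant'' phrasing; this is accurate and makes the ``only if'' direction fully explicit.
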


Consulting now the classical literature (e.g., Denisov-Zhikov \cite{DZ}) on relaxation of the Euclidean heat equation (with not necessarily decaying data, but e.g. $L^\infty$), we then conclude the following sharp criterion for existence of limits:

\begin{cor}\label{down_limit}
 Given a bounded $L$-harmonic function $u$ on the lower half-plane.
 
 Then downward limits $\displaystyle \lim_{x_3\to-\infty} u(x_2,x_3)$ exist if and only if the interval averages of the boundary data, defined by
 \begin{equation}\label{interval_averages}
   I_a := \frac{1}{2a}\int_{-a}^a u(x_2',0)dx_2',
 \end{equation}
have a limit at infinity, say $\displaystyle \ell:=\lim_{a\to+\infty}I_a$. In such case, also:
\begin{equation}
\lim_{x_3\to-\infty} u(x_2,x_3) = \ell.
\end{equation}
Thus, if downward limits exist, they are independent of $x_2$.

Furthermore, note that the limiting condition in \eqref{interval_averages} is in particular satisfied if sideways limits are known to exist, where we can set
\begin{equation}
 c_{\pm} := \lim_{x_2\pm\infty} u(x_2,0),
\end{equation}
to conclude that all downward limits exist and are equal to the average:
\begin{equation}\label{gennemsnit}
 \forall x_2\in\mathbb{R}:\: \lim_{x_3\to-\infty} u(x_2,x_3) = \frac{c_+ + c_-}{2}.
\end{equation}
\end{cor}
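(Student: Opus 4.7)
The plan is to reduce the corollary to the classical long-time relaxation theory for the 1-dimensional Euclidean heat equation with $L^\infty$ initial data, by using the sharp estimate \eqref{MagiskFormel} from Proposition \ref{GlobalStab} as a bridge. Setting $t:=-x_3>0$, the Gaussian convolution appearing in \eqref{MagiskFormel} is precisely the value $v(x_2,t)$ at time $t$ of the solution to the 1-dimensional heat problem $\partial_tv=\partial_{x_2}^2 v$ with $L^\infty$ initial data $v(\cdot,0):=u(\cdot,0)$. Since the error term in \eqref{MagiskFormel} is of order $(-x_3)^{-1/2}=t^{-1/2}\to 0$, the existence and value of $\lim_{x_3\to-\infty} u(x_2,x_3)$ is equivalent to the existence and value of $\lim_{t\to+\infty}v(x_2,t)$.

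First I would record this equivalence explicitly, then invoke the Denisov--Zhikov stabilization theorem \cite{DZ}: for bounded measurable $f$ on $\mathbb{R}$, one has pointwise convergence $(e^{t\Delta}f)(x)\to\ell$ as $t\to+\infty$ at some (equivalently, every) $x$, with the same value $\ell$ at every $x$, if and only if the interval averages $\tfrac{1}{2a}\int_{-a}^a f\,dx' \to \ell$ as $a\to+\infty$. Applied to $f:=u(\cdot,0)$, this yields the first equivalence of the corollary together with the independence of $x_2$ of the downward limit (the latter being a consequence of translation-invariance of the heat semigroup combined with boundedness of the data, so that $|e^{t\Delta}f(x_1)-e^{t\Delta}f(x_2)|\to 0$ as $t\to+\infty$).

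For the final assertion, I would verify that the existence of sideways limits $c_{\pm}$ forces $I_a\to (c_++c_-)/2$ by a direct Ces\`aro-type computation: decompose
\[
I_a=\tfrac{1}{2a}\int_0^a u(x_2',0)\,dx_2'+\tfrac{1}{2a}\int_{-a}^0 u(x_2',0)\,dx_2',
\]
and use the elementary fact that if $g(s)\to c$ as $s\to+\infty$ (resp. $s\to-\infty$), then $\tfrac{1}{a}\int_0^a g\to c$ (resp. $\tfrac{1}{a}\int_{-a}^0 g\to c$). Each half-average then contributes $c_\pm/2$, giving the claimed formula $\lim_{x_3\to-\infty}u(x_2,x_3)=(c_++c_-)/2$.

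The main obstacle I anticipate is not a substantial one: it lies in correctly invoking the classical stabilization criterion in exactly the form above. One should take care to reference a version that ensures $x$-independence of the limit and that matches the definition of $\ell$ through the unweighted interval averages rather than some other averaging procedure (Abel, logarithmic, etc.), as these notions genuinely differ for oscillatory boundary data --- a distinction that will in fact be exploited later in constructing the counterexample in Theorem \ref{WobbleEksempel}. Once the correct version of Denisov--Zhikov is in hand, the rest is simply a clean packaging of Proposition \ref{GlobalStab} with that classical relaxation result.
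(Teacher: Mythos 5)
Your proposal is correct and follows essentially the same route as the paper: Proposition \ref{GlobalStab} is used to identify the downward behavior of the $L$-harmonic function with the long-time behavior of the $1$-dimensional heat semigroup applied to the boundary trace, the Denisov--Zhikov stabilization criterion then gives the interval-average characterization and $x_2$-independence, and the sideways-limits case reduces to the Ces\`aro computation you describe. Your cautionary remark about distinguishing unweighted interval averages from Abel or logarithmic means is a fair point, and it aligns with the fact that the paper later exploits exactly this kind of oscillatory non-relaxation when constructing the counterexample in Theorem \ref{WobbleEksempel}.
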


\begin{proof}[Proof of Proposition \ref{homog_down}]
The proof now follows immediately from Corollary \ref{down_limit}.
\end{proof}

The last corollary also gives rigorous counterexamples, showing that $L^\infty$ smooth solutions to $Lu = f$, even when $f=0$ and with, say, gradient and higher derivative decay estimates on the boundary data, is not sufficient to ensure that limits exist along downward lines.
For a counterexample in the full quasilinear case, which we relegate to the below Theorem \ref{WobbleEksempel}.

\begin{proof}[Proof of \ref{GlobalStab}]
To obtain a representation formula for the $L$-harmonic functions on a halfspace,
one can work with the Green's function directly, or refer to the known formula for the Yukawa equation (also known as the screened Poisson equation) derived by Duffin \cite{Duffin}, using that $L$ is conjugate to $\hat{L} = \Delta - \frac{1}{4}$, see \eqref{KonjugTrick}. I.e. it is the $\mu = \frac{1}{2}$ case of $(\Delta - \mu^2)v = 0$, meaning that
\begin{equation}
 v(x_2,x_3) = -\frac{1}{\pi}\int_{-\infty}^\infty x_3\: p_\mu\hspace{-3pt}\left[(x_2 - x_2')^2 + x_3^2\right] v(x_2',b) dx_2',
\end{equation}
where the Duffin kernel is defined as:
\begin{equation}
 p_\mu[\xi] := \int_ 0^\infty \cos(\mu z)(z^2 + \xi)^{-3/2}dz.
\end{equation}
Note that in fact, by standard formulae, the integration in Duffin's kernel can be performed in terms of Bessel function, as:
\begin{equation}
 p_\mu[\xi] = \frac{\mu K_1\left(\mu\sqrt{\xi}\right)}{\sqrt{\xi}}.
\end{equation}

Thus, we here derive the following general explicit Poisson-Duffin representation formula for bounded $L$-harmonic functions on the lower half-plane $\Omega_b$:
\begin{equation}\label{L_Poisson-Duffin}
 u(x_2,x_3) = -\frac{e^{\frac{b-x_3}{2}}}{2\pi}\int_{-\infty}^\infty \frac{x_3 K_1\left(\frac{\sqrt{(x_2 - x_2')^2 + x_3^2}}{2}\right)}{\sqrt{(x_2 - x_2')^2 + x_3^2}} u(x_2',b) dx_2',
\end{equation}
Uniqueness of this representation follows either from an argument of Duffin \cite{Duffin} or from an application of the maximum principle, which does hold for $L$ on lower half-planes in the class of bounded Dirichlet solutions, namely Lemma \ref{Phragmen_Lindelof_L_infty}. Existence also follows from Duffin, with the caveat that, after conjugating back to get an $L$-harmonic solution $u$, the factor $e^{-x_3}$ prevents us from knowing that $u$ is in $L^\infty$, before an additional estimation is performed. Alternatively, using constant solutions as barriers with a standard (boundary) Schauder estimates and an exhaustion argument, existence of a bounded Dirichlet solution is easily seen, which is then unique by Lemma \ref{Phragmen_Lindelof_L_infty}.

Let us use the suggestive notation $t:= -x_3$, which will be useful below when connecting the $L$-harmonic functions to heat solutions in one dimension lower, and also WLOG take $b=0$. I.e. we are considering the $t\to+\infty$ limits of the following:
\begin{equation}
 u(x_2,-t) = \frac{te^{t/2}}{2\pi}\int_{-\infty}^\infty \frac{K_1\left(\frac{\sqrt{(x_2 - x_2')^2 + t^2}}{2}\right)}{\sqrt{(x_2 - x_2')^2 + t^2}} u(x_2',0) dx_2',
\end{equation}

Using now the standard asymptotics for the Bessel function $K_1$ (see e.g. NIST's Digital Library of Mathematical Functions, §10.25.3), that
$$K_1(z) = \sqrt\frac{\pi}{2z}e^{-z} + \ldots,
$$
we see that:
\begin{equation}\label{Psi}
F:=\frac{te^{t/2}}{2\pi}\frac{K_1\left(\frac{\sqrt{(x_2 - x_2')^2 + t^2}}{2}\right)}{\sqrt{(x_2 - x_2')^2 + t^2}} = J + \Psi,
\end{equation}
where $\Psi$ is lower order as $t\to \infty$ (made precise below), and
\begin{equation}
J := \frac{t}{2\sqrt{\pi}}\left[(x_2 - x_2')^2 + t^2\right]^{-3/4}\exp\left(\frac{t-\sqrt{(x_2 - x_2')^2 + t^2}}{2}\right).
\end{equation}

To estimate the error in \eqref{Psi},
recall the classical bound $$\left|K_1(z) - \sqrt\frac{\pi}{2z}e^{-z}\right|\leq \frac{3}{8}\frac{1}{z}\sqrt\frac{\pi}{2z}e^{-z}, \quad \mbox{for $z>0$.}$$  Note also the elementary integral bounds, for $\beta > 1$:
\begin{equation}\label{poly_integrals}
\int_{-\infty}^{\infty} \left(s^2 + t^2\right)^{-\beta/2} ds = \sqrt{\pi}\frac{\Gamma(\frac{\beta - 1}{2})}{\Gamma(\frac{\beta}{2})}t^{1-\beta} =: c_\beta t^{1-\beta},
\end{equation}

Using \eqref{poly_integrals} find the estimates:
\begin{equation}
\begin{split}
 \int_{-\infty}^{\infty} |\Psi||u(x_2',0)|dx_2' &\leq  \frac{3}{8\sqrt{\pi}} \|u\|_{\infty} \int_{-\infty}^{\infty} \frac{t e^{t/2}e^{-\frac{1}{2}\sqrt{(x_2 - x_2')^2 + t^2}}}{\left[(x_2 - x_2')^2 + t^2\right]^{5/4}}dx_ 2'\\
 &\leq \frac{3c_{5/2}}{8\sqrt{\pi}} \|u\|_{\infty}\frac{1}{\sqrt{t}} = \frac{3\Gamma(\tfrac{3}{4})}{8\Gamma(\tfrac{5}{4})} \|u\|_{\infty}\frac{1}{\sqrt{t}}.
\end{split}
\end{equation}
We now cut up the integration based on the value of $t$, e.g. at $x_2' = x_2 \pm t^{\alpha}$, for an arbitrary $\alpha\in (\frac{1}{2}, \frac{5}{8}]$, where $N(\cdot)$ is as in \eqref{Psi}:

\begin{equation}\label{cut_integralet}
 \int_{-\infty}^\infty N\:u(x_2',0) dx_2' =: \int_{x_2-t^{\alpha}}^{x_2+t^{\alpha}} N\: u(x_2',0)dx_2' + \mathcal{R}.
\end{equation}

In the first term we may further approximate the function $J$, using that $(x_2 - x_2')^2$ is of order at most $t^{2\alpha}$ and hence small compared to $t^2$, and to leading order we now see that the 1-dimensional heat kernel appears, in the variable $t = -x_3$, i.e.:
\begin{equation}
 J = \frac{1}{\sqrt{4\pi t}}e^{-\frac{(x_2 - x_2')^2}{4t}} + \Upsilon.
\end{equation}
Here we of course expect, and will soon show, that the integral of $\Upsilon$ decays suitably.

Note that, by simply bounding by the first omitted term in the binomial expansion, we have the following bound, for $t>0$:
\begin{equation}\label{heat_indmad}
 \left|\frac{t-\sqrt{(x_2 - x_2')^2 + t^2}}{2} - \left(-\frac{(x_2-x_2')^2}{4t}\right)\right|\leq \frac{(x_2-x_2')^4}{16t^3},
\end{equation}
as well as as the elementary inequality, for arbitrary $x,y\in\mathbb{R}$ s.t. $|x-y|<1$
\begin{equation}\label{Kalkulus}
\left|e^x - e^y\right|\leq 2|x-y|e^y.
\end{equation}
Letting now
\begin{equation}\label{halvvejs}
J_1 := \frac{t}{2\sqrt{\pi}}\left[(x_2 - x_2')^2 + t^2\right]^{-3/4}e^{-\frac{(x_2 - x_2')^2}{4t}},
\end{equation}
we estimate, using \eqref{heat_indmad} and \eqref{Kalkulus}, that for $t\geq1$, and using $\alpha \leq \tfrac{5}{8}$:
\begin{equation}
\int_{x_2 - t^\alpha}^{x_2 + t^\alpha} |J - J_1||u(x_2',0)|dx_2' \leq t^{4\alpha - 3}\|u\|_\infty \int_{-t^\alpha}^{t^\alpha} \frac{1}{\sqrt{4\pi t}}e^{-\frac{s^2}{4t}} ds \leq \frac{\|u\|_\infty}{\sqrt{t}}.
\end{equation}

Using now the elementary bounds, for $t>0$:
\begin{equation}
 \left|\frac{t}{2\sqrt{\pi}}\left[(x_2 - x_2')^2 + t^2\right]^{-3/4} - \frac{1}{\sqrt{4\pi t}}\right|\leq \frac{3}{4}\left(\frac{x_2-x_2'}{t}\right)^2\frac{1}{\sqrt{4\pi t}},
\end{equation}
we may estimate the difference from \eqref{halvvejs} to the heat solution as follows, for $t\geq1$:
\begin{equation}
 \int_{x_2 - t^\alpha}^{x_2 + t^\alpha} \left|J_1 - \frac{1}{\sqrt{4\pi t}}e^{-\frac{(x_2 - x_2')^2}{4t}}\right||u(x_2',0)|dx_2' \leq \tfrac{3}{4} t^{2\alpha - 2} \|u\|_{\infty}\int_{- t^\alpha}^{t^\alpha}\frac{1}{\sqrt{4\pi t}}e^{-\frac{s^2}{4t}}ds\leq \tfrac{3}{4} \|u\|_{\infty}t^{-\tfrac{3}{4}}.
\end{equation}

Now for the final error, from cutting off in \eqref{cut_integralet}. Seeing as the best possible bound in terms of decay rates will come from the largest permissible value of $\alpha$, we might as well now fix $\alpha = \frac{5}{8}$. Then we estimate the size of $\mathcal{R}$ \eqref{cut_integralet} as follows (bounding $K_1$ by its leading order term):
\begin{equation}
\begin{split}
 |\mathcal{R}| \leq & \frac{te^{t/2}}{2\pi}\|u\|_{\infty}\int_{[-t^{\alpha}, t^{\alpha}]^c} \frac{K_1\left(\frac{\sqrt{(x_2 - x_2')^2 + t^2}}{2}\right)}{\sqrt{(x_2 - x_2')^2 + t^2}}dx_2'\\
 \leq & 2\|u\|_{\infty}\int_{t^{5/8}}^{\infty}J dx_2' \leq \frac{2}{\sqrt{\pi}}\|u\|_{\infty}\int_{t^{5/8}}^{\infty} t(s^2 + t^2)^{3/2} ds\\
 & =    \frac{2\|u\|_{\infty}}{\sqrt{\pi}}\frac{1}{t}\left(1 - \frac{t^{5/8}}{\sqrt{t^2 + t^{5/4}}}\right)\\
 & \leq \frac{2\|u\|_{\infty}}{\sqrt{\pi}}\frac{1}{t}.
\end{split} 
\end{equation}

\end{proof}

\subsubsection{Limits for $Lu=P$ on lower half-planes}
\begin{prop}[Quasilinear case of downwards limits]\label{inhomog_down}
Downward limits exist, assuming sideways limits exist. More precisely, assume that the side limits are $\displaystyle c_{\pm} = \lim_{x_2\to \pm\infty} u(x_2,x_3)$.
Then:
\begin{equation}
\forall x_2\in\R: \quad \lim_{x_3\to -\infty} u(x_2,x_3) = \frac{c_{-} + c_{+}}{2}.
\end{equation}
\end{prop}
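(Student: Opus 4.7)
The plan is to reduce the inhomogeneous quasilinear statement to the $L$-harmonic case already treated in Corollary \ref{down_limit}. I would first split $u = u_h + w$ on the lower half-plane $\Omega_b = \{x_3 < b\}$, with $u_h$ the (unique by Lemma \ref{Phragmen_Lindelof_L_infty}) bounded $L$-harmonic extension of $u|_{x_3 = b}$, furnished by the Poisson-Duffin formula \eqref{L_Poisson-Duffin}. Localization of that kernel as $x_2 \to \pm\infty$ transfers the sideways limits $c_{\pm}$ from the boundary data to every horizontal slice of $u_h$, so Corollary \ref{down_limit} applied to $u_h$ directly delivers $u_h(x_2, x_3) \to (c_+ + c_-)/2$.

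It then remains to show that the bounded remainder $w := u - u_h$ satisfies $w \to 0$ as $x_3 \to -\infty$. Note that $w$ solves $Lw = P$ with $w|_{x_3=b} = 0$ and has zero sideways limits at each height; by Proposition \ref{P_decay_estimates}, $|P(x)| \leq K/(b-x_3)^2$ in the bulk; and interior Schauder estimates applied to the sideways relaxation $u \to c_{\pm}$ give $P(x_2, x_3) \to 0$ as $x_2 \to \pm\infty$ for each fixed $x_3$. To analyze $w$, I would use the Dirichlet Green's function representation
\[
w(x) = -\int_{\Omega_b} G^{Dir}_L(x, x')\, P(x')\, dx',
\]
with $G^{Dir}_L$ obtained by conjugating $L$ to the Yukawa operator $\hat L = \Delta - \tfrac{1}{4}$ via \eqref{KonjugTrick} and then applying the method of images:
\[
G^{Dir}_L(x, x') = \frac{e^{(x_3' - x_3)/2}}{2\pi}\bigl[K_0(|x - x'|/2) - K_0(|x - R(x')|/2)\bigr],
\]
where $R$ is the reflection across $\{x_3 = b\}$.

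For $x_3 \to -\infty$, a sharp asymptotic analysis of this representation, in the spirit of the proof of Proposition \ref{GlobalStab} and using the identity $\int_{-\infty}^\infty K_0(\sqrt{s^2 + h^2}/2)\, ds = 2\pi e^{-|h|/2}$ together with $K_1(z) \sim \sqrt{\pi/(2z)}\, e^{-z}$, should yield a Duhamel-type expansion
\[
w(x_2, x_3) = -\int_{-\infty}^b \frac{1}{\sqrt{4\pi(x_3' - x_3)}} \int_{-\infty}^\infty e^{-(x_2 - x_2')^2 / (4(x_3' - x_3))} P(x_2', x_3')\, dx_2'\, dx_3' + O\!\left(\tfrac{1}{\sqrt{-x_3}}\right),
\]
which recasts the spatial Green's function integral as a 1D heat smoothing of $P(\cdot, x_3')$ in $x_2$, integrated over heights $x_3' \in (-\infty, b)$. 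For each fixed $x_3'$, the inner heat integral tends to $0$ as $x_3 \to -\infty$, since $P(\cdot, x_3') \in L^\infty$ has zero sideways limits; dominated convergence in $x_3'$, with the integrable majorant $\min(M_1, K/(b - x_3')^2)$ for $\|P(\cdot, x_3')\|_\infty$, then produces $w \to 0$.

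The hard part will be rigorously establishing this Duhamel-type expansion with an error uniform in $x_2$, paralleling the Bessel-to-Gaussian reduction carried out for bounded $L$-harmonic functions in Proposition \ref{GlobalStab} but with the source $P$ present and the full Dirichlet Green's function (rather than just the Poisson-Duffin kernel) governing the asymptotics. Balancing the competing cancellations of the direct and reflected Bessel contributions against the refined bulk-versus-boundary behavior of $P$ is the delicate technical point.
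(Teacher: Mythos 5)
Your proposal shares the paper's first step exactly: decompose $u = u_0 + \hat u$ on $\Omega_b = \{x_3 < b\}$, with $u_0$ the bounded $L$-harmonic function matching $u$ on $\{x_3 = b\}$, apply Corollary \ref{down_limit} to $u_0$ (whose boundary data inherits the sideways limits $c_\pm$), and reduce to showing $\hat u$ is small. Where you diverge is the treatment of $\hat u$. The paper never touches the Green's function here: it writes down the explicit barrier $w_b = c_P\bigl(e^{-b}\mathrm{Ei}(b) - e^{-x_3}\mathrm{Ei}(x_3)\bigr)$, checks $Lw_b = c_P|x_3|^{-2}$, applies the Phragm\'en–Lindel\"of maximum principle (Lemma \ref{Phragmen_Lindelof_L_infty}) to $\hat u \pm w_b$, and obtains $\limsup_{x_3\to-\infty}|\hat u| \leq c_P e^{-b}|\mathrm{Ei}(b)| = O(1/|b|)$, then lets $b \searrow -\infty$. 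This only needs the global decay $|P| \leq c_P|x_3|^{-2}$ from Proposition \ref{P_decay_estimates}, not any sideways decay of $P$, and it dispenses with asymptotics entirely. Your route — a method-of-images Dirichlet Green's function $G^{Dir}_L$ and a Duhamel-type reduction to a $1$-dimensional heat smoothing of $P$ — additionally uses the sideways decay of $P$ (justified, since $u$ lives on the larger domain $H$ from Theorem \ref{structure-thm}, so $\dist(\cdot,\partial H)\to\infty$ sideways) and tries to show $\hat u \to 0$ for each \emph{fixed} $b$. That is a stronger intermediate claim and is, in principle, achievable, but the Bessel-to-Gaussian error analysis you flag as the "hard part" is a real obstacle as stated.

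A remark that would let you sidestep much of that hard part: the identity you quote, $\int_{\mathbb{R}} K_0\bigl(\sqrt{s^2+h^2}/2\bigr)\,ds = 2\pi e^{-|h|/2}$, combined with the monotonicity $K_0\bigl(|x-x'|/2\bigr) \geq K_0\bigl(|x-R(x')|/2\bigr)$ (so the bracket in $G^{Dir}_L$ is pointwise nonnegative), gives directly the \emph{exact} uniform-in-$x_3$ bound
\begin{equation*}
\int_{\mathbb{R}} \bigl|G^{Dir}_L(x,x')\bigr|\,dx_2' \;=\; 1 - e^{-(b-x_3')},
\end{equation*}
independent of $x_2$ and $x_3$. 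This yields an integrable majorant $\|P(\cdot,x_3')\|_\infty\bigl(1 - e^{-(b-x_3')}\bigr)$ on the $x_3'$-slices without any Bessel-to-Gaussian reduction; one only needs the soft pointwise statement that the normalized, spreading kernel applied to a bounded function with zero sideways limits tends to zero — which does not require the precise $O(1/\sqrt{-x_3})$ error of Proposition \ref{GlobalStab}. So your approach can be made rigorous more cheaply than you anticipated, but it remains longer than the paper's barrier argument and draws on a strictly larger set of inputs (sideways decay of $P$, the method of images, the $K_0$-integral identity), whereas the $\mathrm{Ei}$ barrier needs only the bulk decay $|P|\lesssim |x_3|^{-2}$ and the Phragm\'en–Lindel\"of lemma.
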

\begin{proof}
Assume the sideways limits $\displaystyle \lim_{x_2 \to \pm\infty}u(x_2,x_3)$ are known to exist,
and assume the bounds $|P(\nabla u, \Hess u)|\leq c_P \,|x_3|^{-2}$ for a constant $c_P$ (as in Theorem \ref{P_decay_estimates}). To treat the translating horizontal graphs equation $Lu = P(\nabla u, \Hess u):=P$, we will now combine the limit behavior for $L$-harmonic functions together with a barrier argument for the inhomogeneous term.

We therefore first solve the homogeneous Dirichlet problem for $L$ on $\Omega_b := \{x_3<b\}$, where for now we fix $b<0$: Via exhaustion by compact domains obtained by intersection with balls, $\Omega_b = \cup_{R>0} \Omega_b\cap B_R(0)$, and using constant solutions as barriers, then together with standard local (boundary) Schauder estimates and extracting a solution by a standard compactness argument, we get a smooth bounded function $u_0$ such that $Lu_0 = 0$ in $\Omega_b$ and $u_0=u$ on the boundary.
In particular:
\begin{equation}\label{L_harmonic_limits}
 \lim_{x_2\to\pm\infty} u_0(x_2,b) = c_{\pm}.
\end{equation}

Thus we have decomposed $u = u_0 + \hat{u}$, where $L\hat{u} = P(\nabla u, \Hess u)$ and $\hat{u}$ is zero on the boundary $\partial\Omega_b$.

We now consider the following barrier functions, for constants $b<0$:
\begin{equation}\label{Ei_barrier}
\begin{split}
&w_b: \Omega_b \rightarrow \R,\\
&w_b = c_P\left(e^{-b}\mathrm{Ei}(b) - e^{-x_ 3}\mathrm{Ei}(x_3)\right),
\end{split}
\end{equation}
where $\mathrm{Ei}(x_3) := \int_{-\infty}^{x_3}t^{-1}e^tdt$, for $x_3 < b < 0$, denotes the exponential integral.

Note that the barrier functions $w_b$ solve the following PDE:
\begin{equation}
 Lw_b = c_P  \,|x_3|^{-2}, \quad x_3 < b.
\end{equation}
Given the solution $\hat{u}$ to $L\hat{u} = P(\nabla u, \Hess u)$, we thus see that:
\[
L(\hat{u} + w_b) = P(\nabla u, \Hess u) + c_P \; |x_3|^{-2} \geq P + |P| \geq 0,
\]
so that $\hat{u}+w_b$ is $L$-subharmonic on $\Omega_b$.

Since $\hat{u}=0$ on $\partial \Omega_b$, we have that $\hat u+w_b=0$ along $\partial \Omega_b.$

By the maximum principle for the drift Laplacian $L$ on lower half-planes in Lemma \ref{Phragmen_Lindelof_L_infty}, noting that $\hat{u} + w_b\in L^\infty$, we conclude that the $L$-subharmonic function constructed satisfies $\hat{u} + w_b \leq 0$ in $\Omega_b$.
Thus, by properties of the $w_b$ function, which as $x_3\searrow-\infty$ decreases to the constant limit $e^{-b}\mathrm{Ei(b)} < 0$ (and by considering also $-u$):
\begin{equation}
\forall b < 0:\quad \limsup_{x_3\to -\infty} |\hat{u}(x_2,x_3)| \leq c_P e^{-b}|\mathrm{Ei}(b)|.
\end{equation}
This means also that, since $u = u_0 + \hat{u}$, via our knowledge from Corollary \ref{down_limit} and \eqref{L_harmonic_limits}, that for the $L$-harmonic function $u_0$ the limit $\lim_{x_3\to\infty} u_0(x_2,x_3) = \tfrac{c_- + c_+}{2}$ holds:
\begin{equation}\label{squoze}
\forall b < 0:\quad \limsup_{x_3\to -\infty} \left|u(x_2,x_3) - \tfrac{c_- + c_+}{2}\right| \leq c_P e^{-b}|\mathrm{Ei}(b)|.
\end{equation}

Using now the elementary limit $\lim_{b\to-\infty} \frac{b\mathrm{Ei}(b)}{e^b} = 1$, i.e. that $c_Pe^{-b}\mathrm{Ei}(b) = O\left(|b|^{-1}\right)$ holds, we finally conclude by letting $b\searrow -\infty$ in \eqref{squoze} that:
\[
\lim_{x_3\to -\infty} u(x_2,x_3) = \frac{c_{-} + c_{+}}{2}.
\]
\end{proof}

\subsubsection{Persistent oscillation for $L-$harmonic functions on lower half-planes}

One might naively have thought that the affirmative limit results in the preceding sections, e.g. Proposition \ref{inhomog_down}, entailed a more global limiting behavior, in the down direction. Locally, on any fixed compact interval $\bar{I}\subseteq\R$ one does indeed have, by simply integrating the gradient estimate \eqref{grad_down}, the following oscillation decay, for $u$ any bounded solution to the soliton equation \eqref{TSE}:
\begin{equation}\label{compact_osc_decay}
    \rho > 1:\quad \underset{x_2\in I}{\osc} u(x_2, b-\rho) \leq C\: \frac{\underset{x_2\in I}{\osc} u(x_2,b)}{\sqrt{\rho}},
\end{equation}
where the constant depends on $I$ and $\|u\|_\infty$. Note however that \eqref{compact_osc_decay} itself cannot be used to prove limits going downwards: See namely the counterexamples to the existence of limits below in Theorem \ref{WobbleEksempel}, which we work out in the full quasilinear case.
\begin{figure}
    \centering
    \includegraphics[width=.7\linewidth]{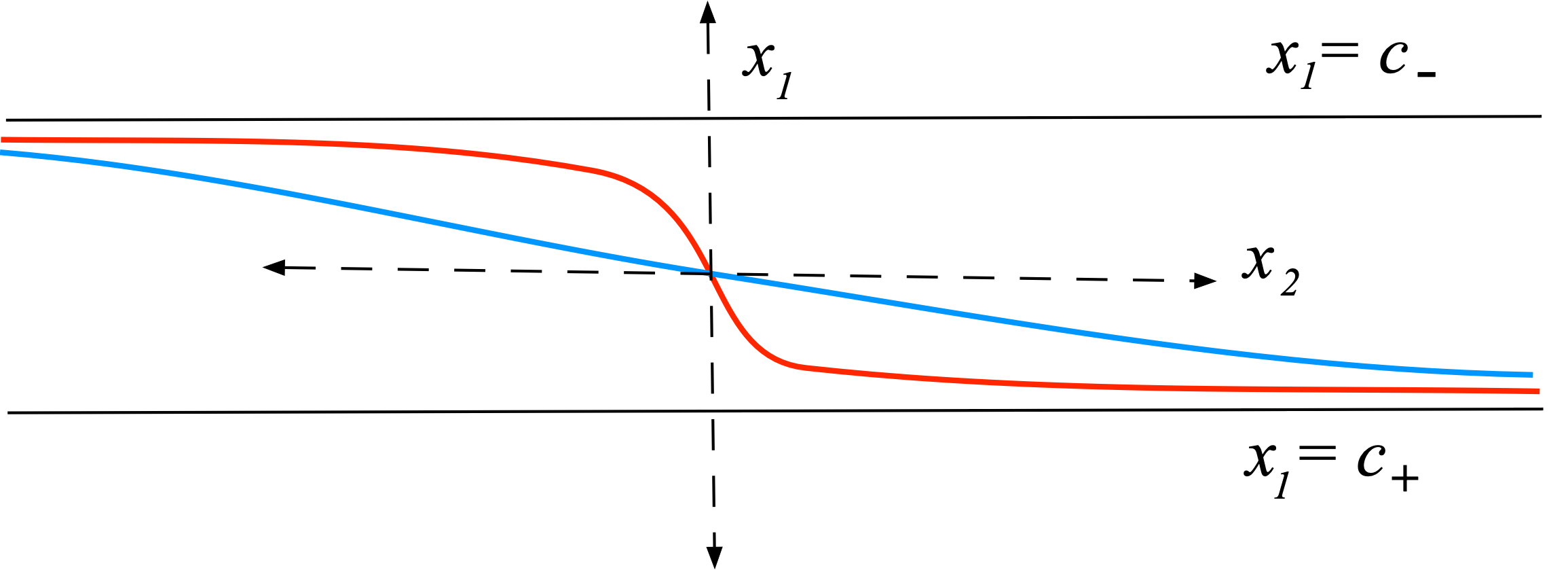}
    \caption{Two different sections of the function $u(x_2,x_3)$ in Proposition  \ref{prop:osc_persists} at different $x_3$-heights. The red curve represents $x_1=u(x_2,b)$ and the blue one represents $x_1=u(x_2,b')$, for $b'<<b$. Observe how the sideways limits remain unchanged, despite the profile flattening out and converging in $C^\infty$ to a constant on all compact scales.}
    \label{fig:osc_persists}
\end{figure}

The convergence is indeed of a more delicate nature, as oscillations do not overall attenuate at any global scale (see Figure \ref{fig:osc_persists}), in the down direction, which we emphasize with the following proposition for the $L$-harmonic case:

\begin{prop}[Failure of decay of global oscillation as $x_3\searrow -\infty$]\label{prop:osc_persists}
Let $\Omega_b: = \{x_3<b\}$ and suppose that $u:\Omega_b\to\R$ is in $L^\infty$ and satisfies $Lu = 0$, as well as
\begin{equation}\label{initial_limits}
 \lim_{x_2\to\pm \infty}u(x_2,b) = c_{\pm}.
\end{equation}
Then all other sideways limits agree, pairwise, with \eqref{initial_limits}, i.e.:
\begin{equation}\label{sideways_L_harmonic}
\forall x_3 < b:\quad \lim_{x_2\to\pm \infty}u(x_2,x_3) = c_{\pm}.
\end{equation}

In particular, if $c_-\leq u(x_2,b)\leq c^+$ holds, and if we consider the sections of $u$ at distances $\rho > 0$ below the boundary of $\{x_3 \leq b\}$:
\begin{equation}
\forall \rho >0: \quad \underset{x_2\in\R}{\osc} u(x_2,b-\rho) = \underset{x_2\in\R}{\osc} u(x_2,b)\quad \left(\: = |c_+ - c_-| \right)
\end{equation}
\end{prop}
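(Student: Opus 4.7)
The plan is to reduce the statement to a convolution-type limit, using the explicit Poisson-Duffin representation formula derived in Proposition \ref{GlobalStab}. Since the drift Laplacian $L=\Delta+\partial_3$ has constant coefficients, translation in $x_3$ preserves $L$-harmonicity, so I may assume WLOG that $b=0$. By Lemma \ref{Phragmen_Lindelof_L_infty}, bounded Dirichlet solutions on the lower half-plane $\Omega_0$ are unique; hence $u$ must coincide with its Poisson-Duffin integral
$$u(x_2,x_3)=\int_\R P(x_2-x_2',x_3)\,u(x_2',0)\,dx_2',\qquad P(y,x_3):=-\frac{e^{-x_3/2}}{2\pi}\frac{x_3\,K_1\bigl(\tfrac12\sqrt{y^2+x_3^2}\bigr)}{\sqrt{y^2+x_3^2}}.$$
The key kernel properties I would record next are: (i) $P(\cdot,x_3)>0$ since $K_1>0$ on $(0,\infty)$ and $-x_3>0$; (ii) $\int_\R P(y,x_3)\,dy=1$, obtained by feeding the constant $L$-harmonic function $u\equiv 1$ through the formula and invoking the same uniqueness; and (iii) $P(y,x_3)\le C_{x_3}\,|y|^{-3/2}e^{-|y|/2}$ for $|y|$ large, coming directly from the asymptotic $K_1(z)\sim\sqrt{\pi/(2z)}\,e^{-z}$ that was already used in the proof of Proposition \ref{GlobalStab}.

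With these properties in hand, fix $x_3<0$, set $f(s):=u(s,0)$, and substitute $y:=x_2-x_2'$ to rewrite the representation as $u(x_2,x_3)=\int_\R P(y,x_3)\,f(x_2-y)\,dy$. Subtracting $c_+=c_+\int P(y,x_3)\,dy$ gives
$$u(x_2,x_3)-c_+=\int_\R P(y,x_3)\,\bigl[f(x_2-y)-c_+\bigr]\,dy.$$
For any $\varepsilon>0$, I would pick $M>0$ so that $|f(s)-c_+|<\varepsilon$ for all $s>M$, split the integral at $y=x_2-M$, and estimate: on $\{y<x_2-M\}$ the integrand is bounded by $\varepsilon P(y,x_3)$, whose total integral is at most $\varepsilon$; while on $\{y\ge x_2-M\}$ it is bounded by $2\|f\|_\infty P(y,x_3)$, whose integral tends to $0$ as $x_2\to+\infty$ by property (iii). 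Sending first $x_2\to+\infty$ and then $\varepsilon\searrow 0$ yields $\lim_{x_2\to+\infty}u(x_2,x_3)=c_+$, and the symmetric argument handles $x_2\to-\infty$ with limit $c_-$.

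For the oscillation addendum, I would apply the $L^\infty$ maximum principle of Lemma \ref{Phragmen_Lindelof_L_infty} to the bounded $L$-harmonic functions $c_+-u$ and $u-c_-$, both non-negative on $\partial\Omega_b$ under the hypothesis $c_-\le u(\cdot,b)\le c_+$, to deduce $c_-\le u\le c_+$ throughout $\Omega_b$. Combined with the sideways limits just established, this forces $\osc_{x_2\in\R}u(x_2,b-\rho)=c_+-c_-$ exactly, independently of $\rho$. The only genuinely delicate point in the whole argument is verifying the exponential tail decay of the kernel $P(\cdot,x_3)$ at fixed $x_3<0$, which controls the outer convolution tail; but this is already encoded in the Bessel asymptotics underlying Proposition \ref{GlobalStab}, so no new analytic input is required beyond what the preceding section has developed.
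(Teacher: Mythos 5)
Your proof is correct and follows essentially the same route as the paper's: both rest on the Poisson--Duffin representation (validity via the $L^\infty$ maximum principle of Lemma~\ref{Phragmen_Lindelof_L_infty}), the normalization $\int P(\cdot,x_3)=1$, and a dominated-convergence or $\varepsilon$-splitting argument for the sideways limits, followed by the same maximum-principle sandwich $c_-\le u\le c_+$ for the oscillation addendum. The one small overshoot: your property (iii) (exponential tail decay of the kernel) is not actually needed for the outer-tail estimate—mere $L^1$-integrability, which you already have from (ii), makes $\int_{y\ge x_2-M}P(y,x_3)\,dy\to 0$ automatic—so the step you flag as ``genuinely delicate'' is in fact free.
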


\begin{rem}
 With a bit more knowledge, i.e. for larger downward pointing domains such as the ones that we consider in Theorem \ref{structure-thm}, a similar result actually has already been shown (and with a different proof in that case, using only the elliptic-parabolic estimates), in the quasilinear case, in Proposition \ref{sideways_limits}.
\end{rem}

\begin{proof}
Letting $x_3 = -\rho$ in Equation \eqref{L_Poisson-Duffin}, and changing variables to $\xi := x_2' - x_2$, we have:
\begin{equation}
 u(x_2,-\rho) = \frac{1}{2\pi}\int_{-\infty}^\infty \frac{\rho e^{\frac{\rho}{2}} K_1\left(\frac{\sqrt{\xi^2 + \rho^2}}{2}\right)}{\sqrt{\xi^2 + \rho^2}} u(x_2 + \xi,0) d\xi,
\end{equation}
Noting that the kernel in front of $u$ is uniformly in $L^1$ (in fact, has integral equal to unity), then \eqref{sideways_L_harmonic} follows simply from \eqref{initial_limits} via Lebesgue dominated convergence:
\begin{equation}\label{osc_konstant}
\lim_{x_2\to\pm\infty} u(x_2,-\rho) = \frac{c_{\pm}}{2\pi}\int_{-\infty}^\infty \frac{\rho e^{\frac{\rho}{2}} K_1\left(\frac{\sqrt{\xi^2 + \rho^2}}{2}\right)}{\sqrt{\xi^2 + \rho^2}} d\xi = c_{\pm}.
\end{equation}

To verify the second claim, note that $c_-\leq u(x_2,b)\leq c_+$ implies,
that $c_-\leq u(\cdot,\cdot) \leq c_+$ everywhere, e.g. by Lemma \ref{Phragmen_Lindelof_L_infty}, so that the conclusion \eqref{osc_konstant} now follows from \eqref{sideways_L_harmonic}.
\end{proof}

The basic example to have in mind in Proposition \ref{prop:osc_persists} is $u(x_2,0) = -\mathbf{1}_{\{x_2 < 0\}} + \mathbf{1}_{\{x_2 > 0\}}$, as boundary data for a lower half-plane solution, with $c_{\pm} = \pm 1$.

The downwards direction behavior discussed here is of course in stark contrast to the situation when looking upwards (in the direction of motion of the translating soliton), where the above analysis in Proposition \ref{thm:exponential} showed that an $L^\infty$ bound on a wedge is enough to ensure exponentially fast convergence (we did not write this fact out for the upper half-plane, but the same result there works as well, not only for narrower wedges).

\subsection{Periodic and finite total curvature translators, exterior problems} \label{subsec:periodic}

The special cases of periodic solutions and exterior domains (complements in $\mathbb{R}^n$ of bounded domains) are easier to treat as such, the latter ``infinite annulus'' case of these naturally showing up in the finite total curvature situation. Both cases are of course included in our general analysis in the above, and they are in general more well-behaved. We therefore record in this section some easy-to-state corollaries in these special cases.

\begin{cor}\label{cor:periodic}
Suppose $u$ is bounded and solves the translating soliton equation \eqref{TSE}, defined over the entire lower half-plane $\{(0,x_2,x_3):\: x_2\in\mathbb{R}, \: x_3\leq 0\}$ and is $x_2$-periodic, i.e. for some period $T>0$:
\[
\forall x_2\in\mathbb{R},\: x_3\leq 0:\quad u(x_2 + T, x_3) = u(x_2, x_3).
\]
Then all downward limits exist and agree:
\begin{equation}
\exists K_\infty\forall x_2:\quad \lim_{x_3\to-\infty} u(x_2,x_3) = K_\infty.
\end{equation}

The analogous statement holds for upward limits for bounded solutions which are periodic and entire over an upper half-plane.
\end{cor}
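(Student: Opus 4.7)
The plan is to follow the overall strategy of Proposition \ref{inhomog_down} --- decompose $u$ into an $L$-harmonic piece plus an inhomogeneous correction --- but to replace the use of sideways limits $c_\pm$ (which do not literally exist for a genuinely periodic $u$) by the interval-average criterion already built into Corollary \ref{down_limit}. The key observation is that for $T$-periodic boundary data $g(x_2) := u(x_2,0)$, a direct computation gives
\[
\lim_{a\to\infty} I_a \;=\; \lim_{a\to\infty}\tfrac{1}{2a}\int_{-a}^{a} g(x_2')\,dx_2' \;=\; \bar{u} \;:=\; \tfrac{1}{T}\int_0^T g(x_2')\,dx_2',
\]
using only boundedness and periodicity of $g$; so the hypothesis required by Corollary \ref{down_limit} is satisfied even though strict sideways limits do not exist.

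First I would construct via compact exhaustion and constant sub-/super-solutions a bounded $u_0 \in C^2(\Omega_0)\cap L^\infty(\Omega_0)$ on $\Omega_0 = \{x_3 \leq 0\}$ solving $Lu_0 = 0$ with $u_0|_{\partial\Omega_0} = g$; uniqueness (and hence well-definedness) is supplied by Lemma \ref{Phragmen_Lindelof_L_infty}. Then $\hat{u} := u - u_0$ lies in $L^\infty(\Omega_0)$, solves $L\hat{u} = P(\nabla u, \Hess u)$, and has zero boundary trace. For $u_0$, the interval-average convergence above together with Corollary \ref{down_limit} yields $\lim_{x_3 \to -\infty} u_0(x_2,x_3) = \bar{u}$, independently of $x_2$. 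For $\hat{u}$, I would invoke the quasilinear decay bound $|P(\nabla u,\Hess u)|(p)\leq c_P |x_3|^{-2}$ from Proposition \ref{P_decay_estimates}, and compare $\pm\hat{u}$ against the exponential-integral barrier $w_b$ from \eqref{Ei_barrier} via Lemma \ref{Phragmen_Lindelof_L_infty}, exactly as in the proof of Proposition \ref{inhomog_down}, concluding $\hat{u} \to 0$ as $x_3\to-\infty$. Adding the two contributions gives $K_\infty = \bar{u}$.

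The upper half-plane statement is lighter: for any fixed $x_2$ the vertical ray $\{x_2\}\times[0,\infty)$ sits inside some translated upward open wedge $V_\alpha$ of aperture, say, $2\alpha = \pi/2$, so Proposition \ref{limit_line_up} directly supplies an upward limit $K_\infty(x_2)$ along that ray, and the independence-of-$x_2$ clause of the same proposition makes $K_\infty$ locally constant in $x_2$; chaining overlapping wedges then forces one common value.

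The one point that needs care --- and which I expect to be the main obstacle --- is that the barrier step for $\hat{u}$ must produce a two-sided bound $|\hat{u}| \leq c_P\, e^{-b}|\mathrm{Ei}(b)|$ at $x_3\to-\infty$ without any a priori asymptotic information on $\hat{u}$ beyond $L^\infty$-boundedness (we do \emph{not} know a priori that $\hat u$ has limits). This is precisely covered by Lemma \ref{Phragmen_Lindelof_L_infty}(ii) applied to the bounded $L$-subharmonic function $\hat{u} + w_b$, which requires only $L^\infty$-boundedness and prescribed boundary values and no decay of $\hat{u}$ itself; consequently the Phragm\'en--Lindel\"of step adapts to the periodic setting without change, and the rest of the argument is mechanical.
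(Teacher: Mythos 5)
Your proposal is correct and follows essentially the same route as the paper, which gives only the one-line hint ``the same barrier as in Proposition \ref{inhomog_down} is again used.'' You usefully spell out the step the paper leaves implicit: that the interval-average criterion of Corollary \ref{down_limit} (rather than the literal sideways-limits hypothesis of Proposition \ref{inhomog_down}, which fails for genuinely periodic data) is what applies to the $L$-harmonic piece, since for $T$-periodic bounded boundary data $I_a \to \tfrac{1}{T}\int_0^T g$; the barrier comparison for $\hat{u}$ and the upward-wedge argument for the second statement both match the paper's intended reading.
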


\begin{proof}
The same barrier as in the proof of Proposition \ref{inhomog_down} is again used at heights $-b < 0$.

As for upwards limits of solutions sideways periodic on an upper half-plane, it's a simple corollary of the same fact proven about upward wedges.
\end{proof}

As for finite total curvature, this condition does thus not hold for the tilted grim reaper cylinders themselves, finite total curvature is indeed a quite restrictive class. Namely, in our case, tilted reapers (see the many examples in  \cite{HIMW-2, HMW-2, HMW-1, Ng}, e.g. in Figure \ref{fig:wings}) play the basic role as solution ``atoms'', on which more general infinite ends/wings are asymptotically modeled, an idea which large part of the present paper will be devoted to making precise. The general class of complete surfaces $\Sigma$ with $\int_\Sigma |A|^2dA < \infty$, which by Huber's theorem have finite genus and finitely many ends, has been studied extensively in the case of classical minimal surfaces ($H = 0$) in $\R^3$, where they form a very rich and classically important class. For translating solitons the class was studied in \cite{Ilyas}, where Khan showed that such an embedded surface, if it has only one end, must be a flat vertical plane, and in generality is asymptotic to a number of parallel planes, which in turn implies it is a (not necessarily convex) collapsed soliton under our definitions. Note furthermore that finite total curvature in general implies finite entropy, as it firstly implies quadratic area growth by \cite[Corollary 2.11]{Ilyas} (proved using \cite[Proposition  1.3]{Li-Tam}), which by e.g. \cite[Theorem 9.1]{BrianBoundary} is equivalent to finite entropy. Thus, it is all in all clear that, by working in the class of finite entropy, finite genus and collapsed, we are in the present paper considering a strictly richer class which contains within it the whole class of finite total curvature self-translating solitons.

Essential to the finite curvature situation is the study of the exterior problems, where most of the difficulties faced in the general setting are absent. For completeness, we record a corollary for that special case (which recovers a result contained in \cite{Ilyas} on ``infinite annuli''):

\begin{cor}\label{cor:exterior}
Suppose $u$ is bounded and solves the translating soliton equation \eqref{TSE}, defined over an exterior domain $\mathbb{R}^2\setminus \Omega$, for $\Omega$ bounded. Then the limits along all rays exist and agree:
\begin{equation}
\exists K_\infty\in\mathbb{R} \;\forall \oomega\in\mathbb{S}^1: \quad \lim_{s\to \infty} u(s\;\oomega) = K_\infty,
\end{equation}
where we recall that $\mathbb{S}^1=\{(0,x_2,x_3) \in \R^3  \: : \: x_2^2+x_3^2=1\}$.

Furthermore, except when $\oomega = -\ee_3$, this convergence is exponentially fast, with rates depending on $\oomega$.
\end{cor}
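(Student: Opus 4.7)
The plan combines the limit-existence results of Sections \ref{sec:Rem-Sin} with the exponential decay of Theorem \ref{thm:exponential}, adapted via tilted exponential supersolutions for the directions not covered by symmetric upward wedges.

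For existence of a common limit $K_\infty$: since $\Omega$ is bounded, the exterior contains an upward wedge $V_\alpha$ with vertex below $\Omega$ and $\alpha<\pi/2$, so Proposition \ref{limit_line_up} gives an upward vertical limit $K_\infty$ independent of the line. The exterior also contains downward U-shaped domains of the type in Theorem \ref{structure-thm}, so Propositions \ref{sideways_limits} and \ref{cor:remov_sing} yield $c_\pm = K_\infty$ for the sideways limits and for all ray limits in the open lower half, and Proposition \ref{inhomog_down} then provides the straight downward limit $(c_++c_-)/2 = K_\infty$. Combined, $\lim_{s\to\infty}u(s\oomega) = K_\infty$ for every $\oomega\in\mathbb{S}^1$. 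For the exponential rate when $\sin\phi>0$, the tail of the ray $s\oomega$ lies in a wedge $V_{\alpha'}$ with vertex below $\Omega$ and $\alpha'<\pi/2$ chosen so that $\alpha'>|\pi/2-\phi|$; Theorem \ref{thm:exponential} then immediately gives $|u(s\oomega)-K_\infty|\leq Ce^{-\beta s}$ for $s\geq s_0$, with $\beta = \beta(\alpha')>0$.

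For the remaining $\oomega\neq -\ee_3$ with $\sin\phi\leq 0$, symmetric upward wedges fail to capture the tails of the rays, and this is the main obstacle. The replacement is the tilted exponential
\[
w_\phi(x_2,x_3) := \exp\Bigl(-\tfrac{1}{2}\bigl(x_2\cos\phi + x_3(1+\sin\phi)\bigr)\Bigr),
\]
which is $L$-harmonic by a direct computation using the identity $\cos^2\phi + (1+\sin\phi)^2 = 2(1+\sin\phi)$. The Hessian calculation in the proof of Lemma \ref{lem:superbarrier} then shows that $Aw_\phi+K_\infty$ is a quasilinear supersolution and $K_\infty-Aw_\phi$ a quasilinear subsolution of \eqref{TSE} for every $A\geq 0$. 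Along the ray $s\oomega$ one has $w_\phi(s\oomega) = e^{-s(1+\sin\phi)/2}$, which decays exponentially precisely when $\oomega\neq -\ee_3$.

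Setting $b := 1+\sin\phi > 0$, the tilted half-plane $H := \{x_2\cos\phi + b\,x_3 \geq -C\}$ satisfies the upward sausages condition of Definition \ref{dfn:sausage_property}; for $C$ large enough, $\Omega$ lies in the interior of $H$ and the improved decay estimates of Propositions \ref{grad_estimate_quasilinear_all}--\ref{P_decay_estimates} apply on $H$. Fix $A$ large enough that $|u-K_\infty|\leq Aw_\phi$ on $\partial H$ (possible since $w_\phi\equiv e^{C/2}$ there), and exhaust $H$ by the compact subdomains $H\cap B_R$. Applying the Serrin-type quasilinear comparison principle, as in the proof of Theorem \ref{thm:exponential}, to each $H\cap B_R$: on $\partial H\cap B_R$ the boundary values of $v := u-K_\infty - Aw_\phi$ are nonpositive by construction, while on the arcs $H\cap\partial B_R$ the supremum of $v$ is controlled by $\sup_{H\cap\partial B_R}|u-K_\infty|$, which tends to zero as $R\to\infty$ (uniformly, since the closure in the sphere of directions of $H$ excludes the singular direction $-\ee_3$). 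Passing $R\to\infty$ yields $u-K_\infty \leq Aw_\phi$ on $H$, and the symmetric argument with $K_\infty - Aw_\phi$ gives the two-sided bound $|u-K_\infty|\leq Aw_\phi$. Evaluating on the ray produces $|u(s\oomega)-K_\infty|\leq Ae^{-(1+\sin\phi)s/2}$, the claimed exponential rate depending on $\oomega$ and degenerating only as $\oomega\to -\ee_3$.
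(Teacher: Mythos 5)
Your route to the common limit $K_\infty$ mirrors the paper's proof exactly, and the upper-half exponential rate via Theorem \ref{thm:exponential} is correct. Your single tilted exponential $w_\phi$ is a genuinely nice observation: the identity $\cos^2\phi+(1+\sin\phi)^2 = 2(1+\sin\phi)$ does give $Lw_\phi=0$, convexity makes $Aw_\phi+K_\infty$ a supersolution of \eqref{TSE} exactly as in Lemma \ref{lem:superbarrier}, and $w_\phi(s\oomega)=e^{-s(1+\sin\phi)/2}$ decays along the ray precisely when $\oomega\neq-\ee_3$. In fact $w_\phi = e^{-(x_3+\alpha x_2)/(1+\alpha^2)}$ with $\alpha=\cos\phi/(1+\sin\phi)$, so this is one of the one-sided exponentials underlying the paper's $w_\alpha$. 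Where the paper is terse (``appropriately slanted wedge subdomains''), your write-up is more explicit, and that is welcome.

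There is, however, a real gap in the comparison step. You apply the barrier on the tilted half-plane $H=\{x_2\cos\phi+b\,x_3\geq -C\}$ and need $\sup_{H\cap\partial B_R}|u-K_\infty|\to 0$ as $R\to\infty$. You justify this by noting that the closure of directions of $H$ in $\mathbb{S}^1$ excludes $-\ee_3$; but pointwise convergence $u(s\oomega')\to K_\infty$ along each ray in a compact arc does not by itself give uniform convergence over that arc, and that is exactly what the claim requires. Moreover the mechanism the paper uses in the proof of Theorem \ref{thm:exponential} is unavailable here: there the polynomial bound $|u-K_\infty|\leq C/|p|$ on the steeper wedge $V_{\hat\alpha}$ comes from $\dist(p,\partial V_\alpha)\gtrsim|p|$, i.e. from the boundary rays of the inner wedge receding linearly from those of the outer one. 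A half-plane has no such property: $\partial H'$ sits at \emph{constant} distance from $\partial H$ for any parallel $H'\subsetneq H$, so the improved estimates of Propositions \ref{grad_estimate_quasilinear_all}--\ref{P_decay_estimates} give no decay near the two ends of the arc $H\cap\partial B_R$, which approach the boundary-line directions $\pm\phi'$ of $H$. The $\varepsilon$-trick therefore cannot be closed as written.

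The cleanest repair is to replace the single half-plane by a \emph{slanted wedge}: an intersection $V'=H_1\cap H_2$ of two half-planes $\{\alpha_i x_2+\beta_i x_3\geq c_i\}$ with $\beta_i>0$, chosen so that $V'$ avoids $\Omega$, contains the tail of $s\oomega$ strictly in its interior, and satisfies the upward sausages condition (which it does since $\beta_i>0$). Use as barrier the \emph{sum} of the two $L$-harmonic exponentials each constant on one boundary line of $V'$ (the slanted analogue of $w_\alpha$ in Lemma \ref{lem:superbarrier}); this is $\geq 1$ on $\partial V'$, and the interior polynomial decay $|u-K_\infty|\leq C/\dist(\cdot,\partial V'')\lesssim C/|p|$ on a slightly wider slanted wedge $V''\supset V'$ then controls the arc $V'\cap\partial B_R$ uniformly, just as in Theorem \ref{thm:exponential}. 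Alternatively, you could first establish $\sup_{\partial B_R}|u-K_\infty|\to 0$ directly — combining the slanted-half-plane decay of Proposition \ref{cor:remov_sing} for directions bounded away from $-\ee_3$ with the heat-kernel representation of Proposition \ref{GlobalStab} for directions near $-\ee_3$ — but this is substantially more work than what you have written, and is not the paper's route.
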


\begin{proof}
This follows from Proposition \ref{cor:remov_sing}, or directly from Theorem \ref{thm:exponential}, for all rays except $\oomega = -\ee_3$, by picking appropriately slanted wedge subdomains of $\mathbb{R}^2\setminus\Omega$, which ensures that all limits agree, including sideways. We label this limit $K_\infty$.

For the exceptional direction $\oomega = -\ee_3$, the convergence then follows from Corollary \ref{down_limit}, seeing as the limit value by Equation \eqref{gennemsnit} simply gives $\frac{c_- + c_+}{2}=\frac{K_\infty + K_\infty}{2} = K_\infty$.
\end{proof}

\begin{remark}\label{rem:sq_root_barrier}
In \cite{Ilyas}, the special barrier that was used to show a convergence similar to this special case, Corollary \ref{cor:exterior}, was in reality the square root of the Green's function of the linearized operator $L$, which we exhibited in Equation \eqref{GreensL}--\eqref{u_K}, i.e. $\sqrt{G_L}$.

We note that the natural rate of decay in this nonlinear exterior problem is as the decay of the linear Green's function $G_L$ itself (exponential along all rays except straight down, where the rate is $|x|^{-1/2}$), rather than the slower rates of $\sqrt{G_L}$. This improvement of convergence rates e.g. from $|x|^{-1/4}$ to $|x|^{-1/2}$ (straight down) can indeed also be shown to hold true, with a bit more work.

\end{remark}


\section{Counterexamples to uniqueness of tangent planes at infinity}\label{sec:counterexamples}
In this section, we prove the necessity of the global nature of the proofs of our positive results on uniqueness of tangent planes at infinite times, Theorem \refThmAnospace, where we made essential use of the specific shape of the domains (see Figure \ref{fig:Xi1}). We will do this by constructing counterexamples: There exist bounded translating solitons which are $C^\infty$ graphs over the lower half-space, solving \eqref{TSE}, but for which the subsequential limit planes of $\Sigma + t\ee_3$, as $t\to + \infty$, fail to be independent of the chosen subsequences $\{t_k\}$ tending to infinity.

Such a counterexample thus cannot be the restriction of any translating soliton which is complete, collapsed, embedded, of finite genus and finite entropy, as this would violate Theorem \refThmAnospace. It also cannot be periodic, as seen from Corollary \ref{cor:periodic}, and cannot come as the restriction of any exterior problem solution, as Corollary \ref{cor:exterior} shows. It will of course also have to get asymptotically flat with respect to the distance to the boundary, obeying the gradient and curvature decay rates imposed on it by Propositions \ref{grad_estimate_quasilinear_all} and \ref{ImprovedHessian}. 

Despite these obstructions, using the tools we have developed in the preceding sections, we can now construct a counterexample:

\begin{thm}\label{WobbleEksempel}
There exist smooth bounded solutions $u$ to the translating soliton equation \eqref{TSE}, defined over the lower half-plane $\Omega=\{(0,x_2,x_3):\: x_2\in\mathbb{R}, \: x_3\leq 0\}$, for which

\begin{equation}
\forall x_2\in\mathbb{R}:\quad \limsup_{x_3\searrow\: -\:\infty}u(x_2,x_3) - \liminf_{x_3\searrow -\infty}u(x_2,x_3) \: >\: 0.
\end{equation}

That is, fixing $x_2$, the function $x_3 \mapsto u(x_2,x_3)$ exhibits persistent oscillation which does not attenuate, and hence $u$ fails to have a limit as $x_3\searrow -\infty$.

Furthermore, given any interval $J:=[\alpha,\beta]\subseteq \R$, there exists a bounded solution $u_J$ to \eqref{TSE} defined over the lower half-plane, for which, for every $\gamma\in (\alpha,\beta)$, there exists a corresponding sequence of times $\{t^\gamma_k\}$ such that $t^\gamma_k\nearrow +\infty$ and there's smooth convergence on compact subsets of $\R^3$ to the plane at height $\gamma$:
\begin{equation}
\Graph[u_J] + t_k^\gamma \ee_3\:\longrightarrow\: \{ x_1 = \gamma \}.
\end{equation}
\end{thm}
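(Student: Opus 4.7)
The strategy is to reduce persistent oscillation for the quasilinear equation \eqref{TSE} to persistent oscillation of bounded $L$-harmonic functions on the lower half-plane, and then via Proposition \ref{GlobalStab} to the classical non-relaxation of 1D heat flows with $L^\infty$ data (Denisov-Zhikov). The nonlinear correction is controlled by a small-parameter device: one scales the boundary data by $\varepsilon > 0$ and treats the quadratic term $P$ as a perturbation.

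I begin by choosing $\varphi \in L^\infty(\R)$ whose interval averages $I_a$ from \eqref{interval_averages} fail to have a limit as $a \to +\infty$. By Corollary \ref{down_limit}, this is equivalent to non-convergence of the associated 1D heat flow. Concretely, take $\varphi$ piecewise constant on rapidly expanding annuli $[-R_{k+1}, -R_k) \cup [R_k, R_{k+1})$ with $R_{k+1}/R_k \to \infty$, alternating between $0$ and $1$; the averages $I_{R_k}$ then oscillate between neighborhoods of $0$ and $1$. Then solve the Dirichlet problem $L u_0 = 0$ on $\Omega = \{x_3 < 0\}$ with $u_0|_{\partial\Omega} = \varphi$ via the Poisson-Duffin formula \eqref{L_Poisson-Duffin}; existence in $L^\infty$ follows from standard theory and uniqueness from Lemma \ref{Phragmen_Lindelof_L_infty}. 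Proposition \ref{GlobalStab} gives, uniformly in $x_2$,
\begin{equation*}
u_0(x_2, -t) = \frac{1}{\sqrt{4\pi t}}\int_\R e^{-\frac{(x_2 - x_2')^2}{4t}}\varphi(x_2')\,dx_2' + O(t^{-1/2}),
\end{equation*}
so $u_0$ inherits the persistent oscillation from the 1D heat flow.

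To upgrade to the quasilinear equation, I construct the full solution with boundary data $\varepsilon \varphi$, for $\varepsilon > 0$ small, by writing $u = \varepsilon u_0 + w$ with $w$ solving $Lw = P(\nabla(\varepsilon u_0 + w), \Hess(\varepsilon u_0 + w))$ and $w|_{\partial\Omega} = 0$, where $P$ is from \eqref{RHS}. Set up a Banach contraction $w \mapsto \tilde w$ in a ball of $L^\infty(\Omega)$ functions with norm $\leq C\varepsilon^2$, using the a priori estimates of Propositions \ref{grad_estimate_quasilinear_all}, \ref{ImprovedHessian}, \ref{P_decay_estimates} combined with the quadratic structure of $P$ to bound $\|P\|_\infty = O(\varepsilon^2)$, the Ei-type barrier \eqref{Ei_barrier} to yield an $L^\infty$ bound $|\tilde w| = O(\varepsilon^2)$, and the $L^\infty$ uniqueness from Lemma \ref{Phragmen_Lindelof_L_infty} to make the iteration well-defined. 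Interior Schauder theory furnishes smoothness of the fixed point. Since the oscillation of $\varepsilon u_0$ is of order $\varepsilon$ while $|w| = O(\varepsilon^2)$, the solution $u = \varepsilon u_0 + w$ exhibits persistent oscillation for $\varepsilon$ small enough.

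For the stronger exhausting statement, design $\varphi$ so that $\{I_a\}_{a > 0}$ has $[\alpha, \beta]$ as its full set of limit points. Enumerate a countable dense $\{\gamma_n\} \subset [\alpha, \beta]$ and extend $\varphi$ inductively on rapidly growing annuli, choosing each new constant so that $I_{R_{n+1}}$ lies within $1/n$ of $\gamma_n$; continuity of $a \mapsto I_a$ on intermediate scales then guarantees that for every $\gamma \in (\alpha, \beta)$ there is a sequence $a_k \nearrow +\infty$ with $I_{a_k} \to \gamma$. Via Proposition \ref{GlobalStab} and the $O(\varepsilon^2)$ perturbation of the previous step, the heights $t_k^\gamma \sim a_k^2$ realize smooth convergence on compacta $\Graph[u_J] + t_k^\gamma \ee_3 \to \{x_1 = \gamma\}$, the smoothness coming from the uniform decay of Propositions \ref{grad_estimate_quasilinear_all}--\ref{ImprovedHessian}. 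The main obstacle is the global nature of the fixed-point argument on the unbounded half-plane, where the $L^\infty$ maximum principle of Lemma \ref{Phragmen_Lindelof_L_infty} and the Ei-barrier from the proof of Proposition \ref{inhomog_down} are essential precisely because the drift term of $L$ does not decay at infinity; the small-parameter device is crucial, as it renders the quasilinearity a genuine perturbation of the explicit linear construction and preserves the oscillation intact.
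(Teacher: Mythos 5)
Your reduction of the problem to the non-relaxation of 1D heat flows via Proposition \ref{GlobalStab} matches the paper's analytic backbone, and the idea of perturbing off an explicit $L$-harmonic solution is a natural alternative strategy. However, the central nonlinear step has a genuine gap. The fixed-point map $w \mapsto \tilde w$ you propose sends $w$ to the solution of $L\tilde w = P(\nabla(\varepsilon u_0 + w), \Hess(\varepsilon u_0 + w))$ with zero boundary data, and you place this iteration in a ball of $L^\infty(\Omega)$. But $P$ depends pointwise on $\Hess w$, which is not controlled by $\|w\|_{L^\infty}$, so the map is not a well-defined self-map on an $L^\infty$ ball, and the Ei-barrier bound $|\tilde w| = O(\varepsilon^2)$ cannot be extracted: the estimate $|P| \lesssim |x_3|^{-2}$ fed into the barrier needs Hessian decay of the iterate, which Propositions \ref{grad_estimate_quasilinear_all}--\ref{P_decay_estimates} furnish only for actual solutions of \eqref{TSE}, not for arbitrary members of the iteration ball. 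To run your scheme you would need to set up weighted $C^{2,\alpha}$ spaces on the lower half-plane and prove the corresponding Schauder theory and mapping properties for the non-decaying-drift operator $L$ there; this is precisely the functional-analytic machinery the paper's introduction says it deliberately avoids. The paper's own proof sidesteps the contraction entirely: it solves the nonlinear Plateau problem directly by exhaustion plus boundary Schauder, so that the a priori estimates for genuine \eqref{TSE}-solutions apply, and then controls the solution at a fixed intermediate level $x_3=-b$ using \emph{nonlinear} barriers built from long strings of tilted grim reaper cylinders (the ``barrier reef'' $w_N$ of \eqref{GreatBarrierReef}), after which the linear comparison via the Ei-barrier and Proposition \ref{GlobalStab} takes over.

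A secondary issue: your $\varepsilon$-scaling necessarily shrinks the oscillation to $O(\varepsilon)$. That suffices for the first assertion ($\limsup - \liminf > 0$), but since \eqref{TSE} is not invariant under $u \mapsto \lambda u$, rescaling back up is not available; your construction therefore only produces limit-set intervals of small width, so the second assertion (exhausting an arbitrary $[\alpha,\beta]$) is not covered as written. The paper handles this by building the boundary profile to oscillate between neighborhoods of $\alpha$ and $\beta$ directly, with the grim-reaper barriers tuned to those heights. Finally, note that the Ei-barrier \eqref{Ei_barrier} is singular at $x_3 = 0$ (since $\mathrm{Ei}(0^-) = -\infty$); it gives useful constants only when applied from a level $b < 0$ with controlled slice data there, which is exactly why the paper introduces the intermediate level $-b$ rather than applying the barrier directly from the boundary.
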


\begin{proof}
We first address the existence of a solution of \eqref{TSE}, defined over the lower half-space $\Omega$. For that, fix any continuous boundary data $c:\partial \Omega\to\R$ with $\|c\|_\infty< \infty$. 

Consider the three planes $x_3=0, x_1=\|c\|_\infty$ and $x_1=-\|c\|_\infty.$  Denote by $\mathcal{B}_{s}$ the bowl soliton with vertex at $(-\|c\|_\infty,0,-s).$ For all sufficiently large $s$, the planes $\{x_3=0\}, \{x_1=\|c\|_\infty\}, \{x_1=-\|c\|_\infty\},$ and $\mathcal{B}_{s}$ bound a piecewise smooth compact domain in $\R^3,$ which we call $\mathcal{R}(s).$ 

Assuming $s$ is a large constant, we consider in $\partial \mathcal{R}(s)$ the Jordan curve given by 
\[
\Gamma(s)=[\mathcal{B}_s\cap\{x_1=-\|c\|_\infty,\; x_3\leq 0\}]\cup \gamma_s^1\cup\gamma_s^2\cup \text{Graph}(c|_{[y_s^1,y_s^2]}),
\]
where $(0,y_s^1,c(y_s^1))$ and $(0,y_s^2,c(y_s^2))$ are the first point of contact between $\text{Graph}(c)=\{(0,x_2,c(x_2))\;:\;x_2\in\R\}$ and $\mathcal{B}_s\cap\{x_1\geq -\|c\|_\infty,\; x_3=0\},$ that is, they satisfy 
\[
\text{Graph}(c|_{[a,b]})\cap [\mathcal{B}_s\cap\{x_1\geq -\|c\|_\infty,\; x_3=0\}]=\varnothing,
\]
for every $y_s^1<a<b<y_s^2.$ Here $\gamma_s^1$ and $\gamma_s^2$ are the two arcs of $\mathcal{B}_s\cap\{x_1\geq -\|c\|_\infty,\; x_3=0\}$ connecting the endpoints of $\mathcal{B}_s\cap\{x_1=-\|c\|_\infty,\; x_3\leq 0\}$ to $(0,y_s^1,c(y_s^1))$ and $(0,y_s^2,c(y_s^2)),$ respectively (see Figure \ref{bowl-planes}). Note that $\Gamma(s)$ a $\ee_1$-graph. 

\begin{figure}
    \centering
    \includegraphics[width=.6\linewidth]{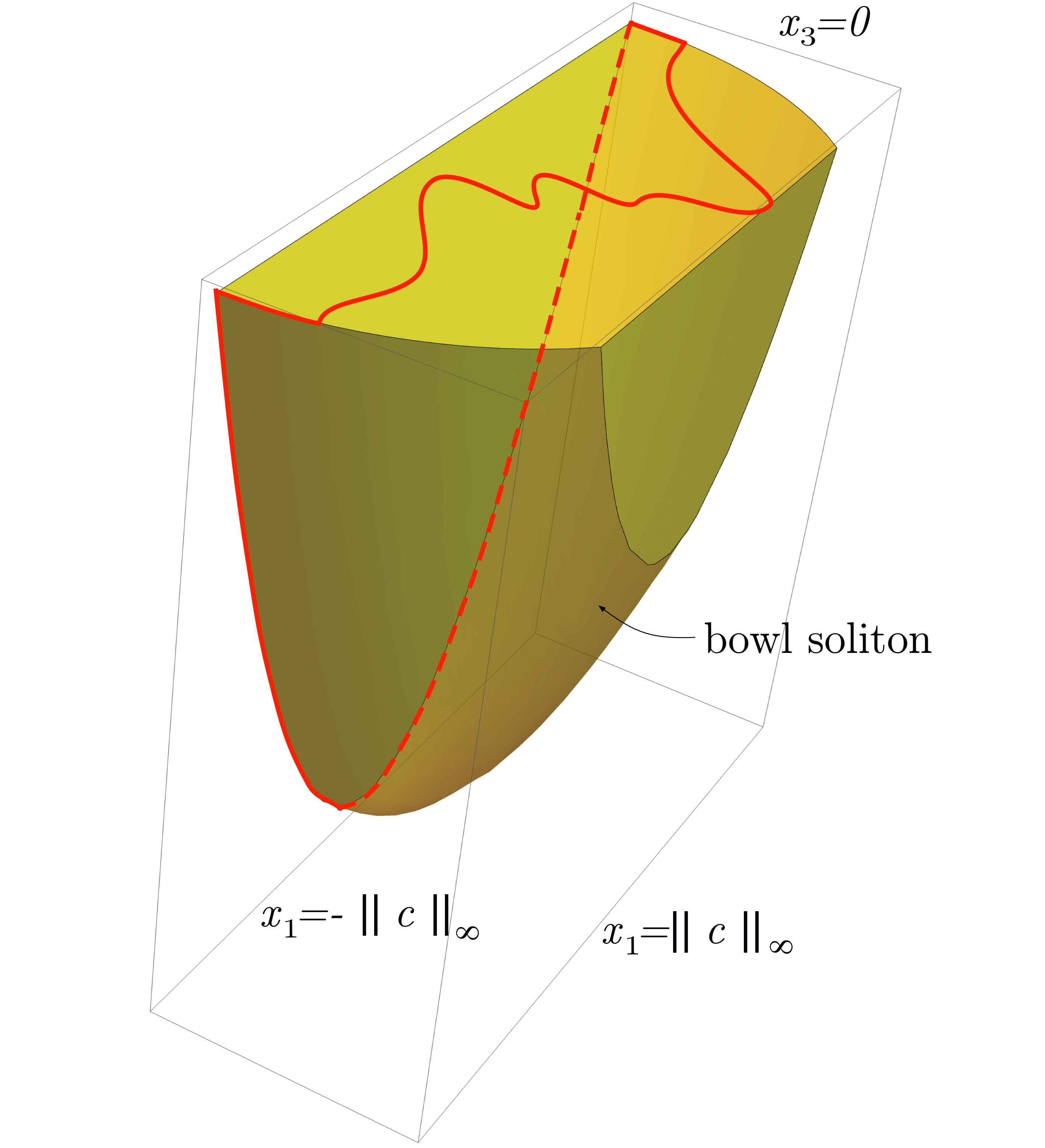}
    \caption{Representation of the domain $\mathcal{R}(s)$ (in yellow), for sufficiently large $s$. The curve $\Gamma(s)$ is represented in red.}
    \label{bowl-planes}
\end{figure}

Noting (see Figure \ref{bowl-planes}) that $H_M > 0$ with respect to the outward normal on the top boundary, while $H_M \equiv 0$ on all other boundaries ($H_M$ denoting the mean curvature with respect to Ilmanen's metric, see Proposition \ref{prop:Nicos} or \eqref{IlmanenMetric}). Hence, the domain $\mathcal{R}(s)$ lies on the mean convex side of $\partial\mathcal{R}(s)$. Furthermore, the curve $\Gamma(s)$ is nullhomotopic in $\mathcal{R}(s)$. By Meeks-Yau \cite{Meeks-Yau} (see also \cite[Theorem 1.1]{spadaro})  we can find an embedded topological disk solution of the translator Plateau problem $\Sigma(s)$ in $\mathcal{R}(s)$ with \begin{equation} \label{eq:Job}\Gamma(s)=\partial\Sigma(s)=\Sigma(s) \cap \partial \mathcal{R}(s).\end{equation} 
We deduce from \eqref{eq:Job} that the $\ee_1$-projection of the interior of $\Sigma(s)$ and the $\ee_1$-projection of $\Gamma(s)$ are disjoint. This fact and the assumption of $\ee_1-$graphicality of $\Gamma(s)$ further imply, via a version of Rad\'o's classical argument, that: 
\begin{Claim}
    $\Sigma(s)$ is an $\ee_1-$graph over the domain $\Omega(s)=(\mathcal{B}(s)+\|c\|_\infty\ee_1)\cap\{x_1=0\}.$ Moreover, if we write $\Sigma(s)=\mathrm{Graph}[u_s,\Omega(s)],$ then the function $u_s$ is smooth at the interior of $\Omega(s)$.\end{Claim}
\begin{proof}The argument is straightforward, but we include it here for completeness. 
We argue by contradiction. Suppose that \( \Sigma(s) \) is not a graph in the direction of $\ee_1$. Using compactness of \( \Sigma(s) \), translate it in the \( \mathbf{e}_1 \)-direction until the last time it makes contact with its $\ee_1$ translates. 
This last contact cannot occur along the boundary, since the boundary is a graph. 
Moreover, as discussed earlier, it cannot occur between an interior point and a boundary point, 
because their \( \mathbf{e}_1 \)-projections are disjoint. 
Hence, the last point of contact must occur between interior points. 
By the maximum principle, this implies that the two surfaces must coincide locally. 
Consequently, \( \Sigma(s) \) would coincide with a nontrivial \( \mathbf{e}_1 \)-translation of itself, 
which is impossible, since \( \Gamma(s) \) is an \( \mathbf{e}_1 \)-graph. 
This contradiction shows that \( \Sigma(s) \) must be the graph  $u_s$ as in the statement of the Claim. 

Finally, we want to prove that $u_s$ is smooth at any interior point of $\Omega(s).$ Notice that this is equivalent to assert that $\langle\nu(p),\ee_1 \rangle \neq 0$, for any $p$ in the interior of $\Sigma(s).$ Again we proceed via proof by contradiction, assume that there were $p_0 \in \Sigma(s)\setminus \Gamma(s)$ such that $\langle\nu(p),\ee_1 \rangle = 0.$ Among other things, this implies that $T_{p_0} \Sigma(s)= \{x_1=k_0\},$ for some $k_0\in \R.$ Furthermore, $\Sigma(s)\cap \{x_1=k_0\}$ is an analytic curve, which is regular in a neighborhood of $p_0.$ 

If we label $\mathcal{S}(x_1,x_2,x_3)=(2 k_0-x_1,x_2,x_3)$ and denote:
\begin{eqnarray*}
    \Sigma^+(s) & = &\Sigma(s) \cap \{x_1 >k_0\}, \\
    \Sigma^-(s) & = &\Sigma(s) \cap \{x_1 <k_0\}, \\
    \Sigma^*(s) & = & \mathcal{S}(\Sigma^-(s)),
\end{eqnarray*}
then it is clear that (as $\Sigma(s)$ is a $\ee_1$-graph) $\Sigma^+(s)$ is at one side of $\Sigma^*(s)$ around the point $p_0$ (which is at the common boundary of both surfaces) and that they are tangent at this point. This is contrary to the maximum principle at the boundary. This contradiction proves the claim.
\end{proof}

Note that $\displaystyle\bigcup_{s}\Omega(s)=\Omega$, the whole lower half-plane.

Now, take any sequence $s_n\searrow -\infty$ and set $u_n=u_{s_n}.$ By compactness of the set of graphical solutions with common supremum bound defined over fixed disks (which is standard, seeing as we have in this case derivative estimates \eqref{sec-decay} which depend only on the supremum norm of the graph), we may assume $u_n\to u$,
where $u:\Omega\to\R$ is a solution of \eqref{TSE} with boundary data
\begin{equation} \label{eq:1234}
u|_{\partial \Omega} = c \quad \text{and} \quad \|u\|_\infty = \|c\|_\infty.
\end{equation}

Now that existence of the nonlinear Plateau solution has been established, we can address the oscillation problem. The basic proof idea will be to let $u(x_2,0)$ be given by longer and longer bumps which e.g. stay identically $\alpha$ and $\beta$, alternating on ever longer intervals, and then solve the nonlinear Plateau problem for Equation \eqref{TSE}, maintaining good enough control of all error terms. In the proof, we tackle the remaining technical difficulties to make this idea work for the translator equation \eqref{TSE} itself, using the detailed understanding of drift $L$-harmonic functions and the linear-with-drift and nonlinear term estimates proven elsewhere in this paper.

For ease of exposition, we will in the proof type out the counterexamples for $\alpha = 0$ and $\beta = 1$, with $\gamma\in (\tfrac{1}{2},\tfrac{3}{4})$, the general case being a straightforward modification.

We will make use of the uniformity of the constants in our improved gradient and Hessian estimates \eqref{grad_down}-\eqref{up_RHS}, which only depend on the supremum norm of the translator graphs, which again by use of constant barriers is controlled by the supremum norm of the boundary data. In particular, by Proposition \ref{P_decay_estimates}, we have for a fixed constant C that for all translating graphs $u$ over the lower half-plane $\Omega=\{(0,x_2,x_3):\: x_2\in\mathbb{R}, \: x_3\leq 0\}$ with $\|u\|_{L^\infty({\Omega})}\leq 1$, the uniform bounds, for all such $u$:
\begin{equation}\label{recap_RHS}
|P(\nabla u,\Hess u)|(p)\leq C \left[\mathrm{dist}(p,\partial \Omega)\right]^{-2}, \quad\textrm{when}\quad \mathrm{dist}(p,\partial \Omega)\geq 1.
\end{equation}

To exploit the nonlinear barrier argument in Proposition \ref{inhomog_down}, we first pick $b = b(\varepsilon_1) > 0$ large enough so that
\begin{equation}\label{ikke_lin_tolerance}
C' \, b^{-\frac{1}{2}} < \varepsilon_1,
\end{equation}
with $\varepsilon_1>0$ to be chosen small later, where $C'$ is $C$ from \eqref{recap_RHS} multiplied by the constant from the $\mathrm{Ei}$ barrier in the earlier proof, \eqref{Ei_barrier}. We then get, as before, that for any bounded translator Plateau solution with boundary values such that $\|u_{\partial \Omega}\|_\infty\leq 1$, where by using use of constant barriers, we ensure $0\leq u\leq 1$, we see that any corresponding solution $\bar{u}$ to $L\bar{u} = P(\nabla u,\Hess u)$ with boundary condition $\bar{u}(\cdot, -b) = 0$ satisfies $\|\bar{u}\|_{L^\infty(\{x_3 \leq -b\})}\leq \varepsilon_1$. We then consider $b = b(\varepsilon_1)$ fixed for the rest of the argument, which is the distance from the boundary at which we will ``start'' the $L$-harmonic solutions in the construction that follows.

Thus, we now need to arrange the boundary data for the Plateau solution $u$ in such a way that the $L$-harmonic solution on $$\Omega_b := \{(x_2,x_3) : x_2 \in \R,  x_3\leq -b\}$$ with boundary data $u(\cdot, -b)$ will oscillate between (nearly) 0 and 1 in value as $x_3\searrow -\infty$. This will be arranged by placing tilted grim reapers as nonlinear barriers, with larger and large tilts, recalling that we work with a fixed value of $b$.

\underline{Barrier details:} We also refer to Figure \ref{fig:GreatBarrierReef}. The tilted grim reaper cylinder of angle $\zeta\in(0,\pi/2)$ has as profile curve in the intersection with $\{x_3 = z_0\}$, the planar curve of the form:
\begin{equation}\label{ReaperProfil}
x_1 = -\frac{\log\cos\left(x_2\cos\zeta\right)}{\cos\zeta\sin\zeta} - z_0\cos\zeta, \quad x_2\in (-\tfrac{\pi}{2\cos\zeta}, \tfrac{\pi}{2\cos\zeta})
\end{equation}
The factor $(\sin\zeta)^{-1} > 1$ means that this is not exactly a homethetic scaling of the standard reaper curve, while of course the additional stretch factor $(\sin\zeta)^{-1} \searrow 1$, 
in the limit as $\zeta \nearrow \pi/2$. When $z_0 = 0$, this curve has its apex at $(x_1,x_2) = (0,0)$, while for $b>0$, the $\{x_3 = -b\}$ section curve is shifted up to have an apex at $(b \, \cos\zeta,0)$. To ensure that this is close enough to the axis, we will fix an $\varepsilon>0$ (to later be chosen small) and choose $\zeta=\zeta(\varepsilon_1,\varepsilon)$ close enough to $\pi/2$ so that $b\cos\zeta \leq \varepsilon/4$, so for definiteness we will take 
\begin{equation}\label{zeta}
\zeta=\zeta(\varepsilon_1,\varepsilon):= \arccos\left(\frac{\varepsilon}{4b(\varepsilon_1)}\right).
\end{equation}
Working only with one tilted grim reaper as a barrier would complicate the integral estimates needed below, due to the large scaling involved, and for this reason, we will work with long strings of $x_2$-translated copies of a fixed scale/angle, which we will refer to as ``barrier reefs'' (see Figure \ref{fig:GreatBarrierReef}). For this, we will take the periodicity constant $\tau := \tau(\varepsilon_1,\varepsilon)$ to be the $x_2$-width at $x_1$-height $\varepsilon/4$ of the convex curve in \eqref{ReaperProfil}, i.e. explicitly, with $\zeta(\varepsilon_1,\varepsilon)$ from \eqref{zeta}:
\begin{equation}\label{periode}
\tau(\varepsilon_1,\varepsilon) := \tfrac{2}{\cos\zeta} \arccos\left(e^{-\varepsilon\cos\zeta\sin\zeta}\right) >0.
\end{equation}

Then we may for $N \in \{1,2,\ldots\}$, a large number to be chosen later, define the $N$-fold barrier reef $w_{N} := w_{N,\varepsilon_1,\varepsilon}$, as the (merely continuous, piece-wise $C^1$) function:
\begin{equation}\label{GreatBarrierReef}
w_{N}(x_2,x_3) := \min_{k = 0,\ldots, N-1} w_\zeta(x_2 - k\tau,x_3),\: x_2\in \left(-\tfrac{\pi}{2\cos\zeta}, \tfrac{\pi}{2\cos\zeta}+(N-1)\tau\right),\: x_3\in\mathbb{R},
\end{equation}
where $w_\zeta$ is the tilted grim reaper cylinder graph defined over a strip, with the value of $w_\zeta$ defined as $+\infty$ whenever evaluated outside its domain of definition.

The nonlinear solutions which we will be constructing by the limits of exhaustions by finite Plateau problems will respect each one of the $N$ complete smooth barriers in \eqref{GreatBarrierReef} (see Figure \ref{fig:GreatBarrierReef}). Hence the solutions $u$ will also lie below the minimum of this whole collection, which is the function $w_{N}$. Notice from the definition of the barrier reef $w_N$, via and its arranged flatness $\tfrac{\varepsilon}{4}$ and its apex height in a slice also of size $\tfrac{\varepsilon}{4}$, that $w_N\leq \tfrac{\varepsilon}{4} + \tfrac{\varepsilon}{4} = \frac{\varepsilon}{2}$ in the interval between its left- and rightmost apex. In particular, this therefore means that $0\leq u(x_2, -b)\leq \varepsilon/2$ will hold for all $x_2$.

To start the construction of our boundary data $u(x_2, 0)$, we first claim that we may localize and glue data, in the sense of the following claim, where all data $g$ is tacitly assumed to lie between 0 and 1. Recall as in the proof of Proposition \ref{GlobalStab}, that the time parameter is here in reality to be understood as $t = -x_3$.\\

\begin{figure}[h!]\label{fig:GreatBarrierReef}
\centering
\begin{tikzpicture}[scale=1.1]
    \begin{axis}[
        width=\textwidth,
        height=5cm,
        axis y line=none,
        axis x line=middle,
        axis line style={black, -},
        xmin=-15, xmax=15,
        ymin=0, ymax=5,
        clip=false,
        xtick=\empty
    ]

    \draw[blue, ultra thick, cap=round]
        (axis cs:-15, 0) -- (axis cs:-8, 0)
        plot[smooth, tension=0.7] coordinates {
            (axis cs:-8, 0) (axis cs:-6, 0.1) (axis cs:-4, 0.5) (axis cs:-1.5, 0.9)
            (axis cs:0, 0.5)
            (axis cs:1.5, 0.6) (axis cs:4, 0.3) (axis cs:6, 0.05) (axis cs:8, 0)
        } -- (axis cs:15, 0);

    \pgfplotsinvokeforeach{-13, -11, -9, -7, 7, 9, 11, 13}
    {
        \addplot[
            purple, very thick, cap=round, loosely dotted,
            domain=-1.3:1.3, samples=41,
        ]
        ({1.2*x + #1}, {1.2*(-ln(cos(deg(x)))) + 1.2});
    }

    \pgfplotsinvokeforeach{-13, -11, -9, -7, 7, 9, 11, 13}
    {
        \addplot[
            red, ultra thick, cap=round, solid,
            domain=-1.3:1.3, samples=41,
        ]
        ({1.2*x + #1}, {1.2*(-ln(cos(deg(x)))) + 0.4});
    }

    \draw[black, very thick, loosely dotted] (14, 0) -- (14, 3.35);
    \draw[black, very thick] (14, -0.1) -- (14, 0.1);
    \draw[black, very thick, loosely dotted] (-14, 0) -- (-14, 3.35);
    \draw[black, very thick] (-14, -0.1) -- (-14, 0.1);
    \draw[black, very thick, loosely dotted] (2, 0) -- (2, 3.35);
    \draw[black, very thick] (2, -0.1) -- (2, 0.1);
    \draw[black, very thick, loosely dotted] (-2, 0) -- (-2, 3.35);
    \draw[black, very thick] (-2, -0.1) -- (-2, 0.1);

    \node[anchor=north, font=\large, yshift=-3pt] at (14, 0) {$\phantom{-}a_1$};
    \node[anchor=north, font=\large, yshift=-3pt, xshift=-3pt] at (-14, 0) {$-a_1$};
    \node[anchor=north, font=\large, yshift=-3pt] at (2, 0) {$\phantom{-}a_0$};
    \node[anchor=north, font=\large, yshift=-3pt, xshift=-4.5pt] at (-2, 0) {$-a_0$};
    \node[anchor=north, font=\large, yshift=-3pt] at (0, 0) {$0$};

    \end{axis}
\end{tikzpicture}
\caption{Illustration for the proof of the inequalities \eqref{claim:heat_approx}, showing initial data and sections of the tilted grim reaper cylinder barriers. Red indicates a translate of a ``barrier reef'' $w_N$ from \eqref{GreatBarrierReef} evaluated at height approximately $x_3 = 0$, while purple indicates $w_N$ at level $-b$, whose apex has there lifted by at most $\varepsilon/2$. Blue indicates the arbitrary initial data on $[-a_0,a_0]$ and its arbitrary $C^\infty$ extension to  $[-a_1,a_1]$ (with finite $C^2(\R)$-norm) chosen below the sufficiently long barrier reef $w_N$.}
\end{figure}

\begin{Claim}\label{Claim1}
For any $a_0 >0$ and $t_0>0$, regardless of the smooth data $g_0(\cdot, 0)|_{[-a_0,a_0]}$ there exists an $a_1 > a_0$ a time $t_1\geq t_0$ and smooth extensions $g_1^\pm(\cdot, 0)|_{[-a_1,a_1]}$ 
such that for all smooth extensions of these to $g^\pm_1(\cdot, 0)|_{[-\infty,\infty]}$ (as a technicality assumed to have finite $C^2(\R)$-norm, in order for Plateau solutions to exist), it holds that the heat equation solutions $h_1^\pm$, starting from data given by the sections $u^\pm(x_2,-b)$ of the nonlinear Plateau solution $u$ of the translator equation \eqref{TSE} with boundary data $u^\pm(\cdot, -b) = g_1^\pm(\cdot, -b)|_{[-\infty,\infty]}$ satisfies the estimates:

\begin{equation}\label{claim:heat_approx}
\begin{split}
h_1^-(0,t_1) &\leq 2\varepsilon,\\
h^+_1(0,t_1) &\geq 1 - 2\varepsilon.
\end{split}
\end{equation}
\end{Claim}
\vskip 3mm

\begin{proof}

Recall first that generally, a heat solution $h$ starting from a general $g$ has the form:
\begin{equation}
h(x,t) = \frac{1}{\sqrt{4\pi t}}\int_{\mathbb{R}} e^{-\frac{(x-x')^2}{4t}}g(x')dx'
\end{equation}

Letting $x = 0$ and after the change of variables $\xi=x'/\sqrt{4t}$, we recover the convolution with the standard heat kernel $\Phi$,
\begin{equation}\label{heat_convolv}
h(0,t) = (\Phi * g)(0,t) = \frac{1}{\sqrt{\pi}}\int_{\mathbb{R}} e^{-\xi^2}\, g(\sqrt{4t}\,\xi)\, d\xi,
\end{equation}
where $0\leq g\leq 1$ and below we will arrange that $g$ is below the barrier reef on the interval $[-a_1,a_1]\setminus [-a_0,a_0]$ (with $a_1$ to be chosen), meaning in particular of height less than $\varepsilon/2$.

The guiding idea is now simple: the heat kernel convolution at the origin is an average of the initial data $g$ weighted by the Gaussian $e^{-\xi^2}$. To show that this average can be made arbitrarily small (or symmetrically, for $h^+$, close to unity), we split the Gaussian integral into three regions. The \emph{central region} around the origin contributes very little because its length shrinks when $t$ is large. The \emph{tail region} contributes very little because the Gaussian decays. What remains is a bounded \emph{intermediate region}, which we can control by our choice of auxiliary cutoffs and the ``barrier reef construction'' \eqref{GreatBarrierReef}. With this strategy in mind, we now make these estimates precise while simultaneously completing the needed choices of parameters.

\smallskip
\noindent
\emph{Step 1. Tail estimate parameters.}

Since the Gaussian is $L^1$, given our chosen $\varepsilon>0$ we can also find a cutoff parameter $\chi_\varepsilon>0$ such that
\begin{equation}\label{xi_epsilon}
\frac{2}{\sqrt{\pi}}\int_{\chi_\varepsilon}^\infty e^{-\xi^2}\,d\xi \;\leq\; \frac{\varepsilon}{2}.
\end{equation}
This will in the below ensure that the far-field contribution to the convolution integral becomes negligible.

\smallskip
\noindent
\emph{Step 2. Control of the central region.}  
Next, we want the contribution from $[-\tfrac{a_0}{\sqrt{4t_1}},\, \tfrac{a_0}{\sqrt{4t_1}}]$ to be small, by arranging that this interval has short length. To arrange this, we pick $t_1\geq t_0$ so large that
\begin{equation}\label{a_0_frac}
\frac{a_0}{\sqrt{4t_1}} \;\leq\; \frac{\varepsilon}{2},
\end{equation}
where we will for definiteness fix:
\begin{equation}
t_1 := \max\!\left(t_0,\frac{a_0^2}{\varepsilon^2}\right).
\end{equation}

\smallskip
\noindent
\emph{Step 3. Intermediate cut-off.}

Next we introduce the size parameter of the domain of the extension of the data:
\begin{equation}
    a_1 := \chi_\varepsilon\sqrt{4t_1} = \max\left(\sqrt{4t_0},\tfrac{a_0}{\varepsilon}\right)\!\chi_\varepsilon,
\end{equation}
which defines the size of the intermediate interval where the Gaussian weight is not too small but the support is nevertheless bounded. 

\smallskip
\noindent
\emph{Step 4. Choice of the barrier reef function.}  
We now apply the construction in \eqref{GreatBarrierReef}, picking, with the period $\tau$ from \eqref{periode}, the number of repetition periods needed being
\begin{equation}\label{PickN}
N:=\Big\lceil\frac{a_1}{\tau}\Big\rceil + 1 =
\Big\lceil\frac{\max\left(\sqrt{4t_0},\tfrac{a_0}{\varepsilon}\right)\!\chi_\varepsilon}{\tau(\varepsilon_1,\varepsilon)}\Big\rceil + 1,
\end{equation}
which thus via \eqref{GreatBarrierReef} defines the ``barrier reef function'' $w_{N}$ adapted to these parameters.

We then add two suitably translated and reflected (under $x_2\mapsto -x_2$) copies of $w_N$, where by abuse of notation we still label the resulting barrier function by $w_N$, in such a way that $w_N(\pm a_0) = +\infty$. We then simply extend the given data 
$g_0(\cdot, 0)|_{[-a_0,a_0]}$ to $g_1^\pm(\cdot, 0)|_{[-a_1,a_1]}$ defined as any smooth extension that lies below $w_N$, e.g. by cutting off smoothly to zero, (see Figure \ref{fig:GreatBarrierReef}), and then allow for an arbitrary further smooth extension beyond $x_2\leq|a_1|$ to any $g^\pm_1(\cdot, 0)|_{[-\infty,\infty]}$ with finite $C^2(\R)$-norm. By construction, the maximum principle ensures that the Plateau solution $u$ stays below $w_N$, i.e. at level $x_3 = -b$ it has height at most $\varepsilon/2$.

\smallskip
\noindent
\emph{Step 5. Estimate of the heat convolution.}
With these choices in place, 
recall \eqref{heat_convolv}, evaluated at time $t_1$. We now split the integral representation of $h^-(0,t_1)$ into three regions:
\[
\int_{|\xi|\leq \frac{a_0}{\sqrt{4t_1}}} e^{-\xi^2}\, g(\sqrt{4t_1}\,\xi)\, d\xi +
\int_{\frac{a_0}{\sqrt{4t_1}}\leq |\xi|\leq \chi_\varepsilon} e^{-\xi^2}\, g(\sqrt{4t_1}\,\xi)\, d\xi + 
\int_{|\xi|\geq \chi_\varepsilon} e^{-\xi^2}\, g(\sqrt{4t_1}\,\xi)\, d\xi.
\]

The first region, by \eqref{a_0_frac}, is an interval of length at most $\varepsilon$, hence using $0\leq g\leq 1$, this contribution to \eqref{heat_convolv} is bounded by
$\frac{\varepsilon}{\sqrt{\pi}}$. On the second interval, with $g$ chosen below the barrier reef $w_N$ in \eqref{GreatBarrierReef} with $N$ as in \eqref{PickN}, recall that we have $|g(x)|\leq \varepsilon/2$ for $|x|\leq a_1$, which by using the definition of $a_1$ means the region $|\xi|\leq \chi_\varepsilon$, and thus this integral contributes at most $(\varepsilon/2)\int_{\mathbb{R}} e^{-\xi^2} d\xi$.  
On the third region, our choice of $\chi_\varepsilon$ in \eqref{xi_epsilon} guarantees the contribution is at most $\varepsilon/2$.

Putting these together, we finally obtain
\begin{equation}\label{h_plus_bound}
h^-(0,t_1) = 
(\Phi * g)(0,t_1) \;\leq\; \frac{1}{\sqrt{\pi}}\left[\varepsilon\int_{\mathbb{R}} e^{-\xi^2}\,d\xi + \frac{\varepsilon}{2}\int_{\mathbb{R}} e^{-\xi^2}\,d\xi\right] + \frac{\varepsilon}{2}
\;\leq\; 2\varepsilon.
\end{equation}
This proves the desired heat estimate.

\smallskip
\noindent
\emph{Step 6. Lower bound.}  
Exactly the same argument, just placing the ``barrier reef'' from below, i.e. the functions $1 - w_ N$, shows that one can likewise construct initial data leading to the flipped inequality $h^+(0,t_1) > 1-2\varepsilon$. This finishes the proof of Claim \ref{Claim1}.\end{proof}

Iterating now this construction from the claim alternately with the ``$\pm$'' choices, one may obtain a smooth initial profile $g(\cdot,0)|_{[-\infty,\infty]}$ with bounded first and second derivatives (so that the usual exhaustion using boundary Schauder estimates apply), and with the property that along one sequence of times $\{t^-_k\}\to\infty$, the heat evolution from the Plateau slice profiles at $x_3 = -b$
\begin{equation}\label{h_low}
\lim_{k\to\infty} h(0,t^-_k) \leq 2\varepsilon,
\end{equation}
while along another sequence $\{t^+_k\}\to\infty$,
\begin{equation}\label{h_high}
\lim_{k\to\infty} h(0,t^+_k) \geq 1-2\varepsilon.
\end{equation}

Since the error in Proposition \ref{GlobalStab}, which approximates the $L$-harmonic solution, which we label by $u_0$, by the heat solution $h$, tends to zero as $t\to\infty$, the same dichotomy as in \eqref{h_low}-\eqref{h_high} persists for the corresponding $L$-harmonic function. Thus, recalling that time $t = - x_3$:
\[
\limsup_{x_3\searrow -\infty} u_0(0,x_3) \;\geq\; 1-2\varepsilon 
\;>\; 2\varepsilon \;\geq\; \liminf_{x_3\searrow -\infty} u_0(0,x_3).
\]
This establishes a failure of convergence with a gap of at least $1-4\varepsilon$, for the $L$-harmonic function. Finally, tuning $\varepsilon$ sufficiently small relative to the nonlinear error tolerance $\varepsilon_1>0$ in \eqref{ikke_lin_tolerance}, we conclude as claimed that the same oscillatory behavior persists for the translating-graph quasilinear Plateau solution $u$ to \eqref{eq:1234} defined over the lower half-plane:
\[
\limsup_{x_3\searrow -\infty} u(0,x_3) \;>\; \liminf_{x_3\searrow -\infty} u(0,x_3).
\]

Tuning now the tolerances $\varepsilon_1 > 0$ and $\varepsilon > 0$ additionally shows that this gap can be made of size large enough to cover the interval $(\tfrac{1}{2},\tfrac{3}{4})$, and hence any value $\gamma$ in this interval occurs as the height for a limiting tangent plane of $u$, for some subsequence of $x_3$-values with $x_3\searrow -\infty$. The more general version of the theorem then also follows, for arbitrary intervals $[\alpha,\beta]$, by a straightforward modification of the argument.
\end{proof}

\section{Uniqueness of tangent planes at infinite times}\label{sec:Uniqueness}

We are now ready to prove the main theorem of this paper: Theorem \refThmAnospace. As we mentioned before, Gama, Mart\'{\i}n, and M{\o}ller  \cite{entropy} demonstrated that if our complete, embedded translator \( \Sigma \) possesses finite width, finite genus, and finite entropy, then the family of surfaces \( \Sigma \pm t \, \ee_3 \) converges, as \( t \) approaches infinity, to a finite set of (parallel) vertical planes. The convergence was there only known to happen subsequentially, meaning that along certain sequences \( t_k \to \infty \), the surfaces \( \Sigma \pm t_k \ee_3 \) converge to  planes, potentially with some multiplicity.

Building upon this foundation, the analysis carried out in the previous sections of the present work, concerning removable singularities theorems for the drift Laplacian $L$ as well as for the quasi-linear translating soliton equation \eqref{TSE}, can be employed to strengthen this result. Specifically, it can now be shown that the aforementioned limit is independent of the chosen sequence \( t_k \) that tends to infinity.  This independence of the limit on the sequences \( t_k \) is a strong indication of the robust geometric and analytic structure underlying the translator surface \( \Sigma \) and it has important applications, as we will see in the next sections.

\begin{thm}\label{thm-A}
Let $\Sigma \subset \R^3$ be a complete embedded translator with finite genus, finite entropy, and of width $2w$ such that
$\Sigma \subseteq\mathcal{S}_w$. Then, one has:
\begin{enumerate}
    \item There exists a partition $\{-w=u_1<\cdots<u_{k-1}<u_k=w\}$ of $[-w,w]$ and positive integer numbers $\sigma_1,\ldots,\sigma_k$ so that \[\Sigma+t \, \ee_3\to\sigma_1\{x_1=u_1\} \sqcup \ldots \sqcup \sigma_k\{x_1=u_k\}\ {\rm as}\  t\to-\infty.\]Furthermore, one has \[\lambda(\Sigma)=\sum_{j=1}^k\sigma_j.\] 
    \item There exists a finite set $\{d_1<\cdots<d_{l}\}$ of $[-w,w]$ and positive integer numbers $\delta_1,\ldots,\delta_l$ so that    \[\Sigma+t \, \ee_3\to \delta_1\{x_1=d_1\} \sqcup \ldots \sqcup \delta_l\{x_1=d_l\}\ {\rm as}\ t\to+\infty .\]  Furthermore, $\displaystyle2 \, \sum_{j=1}^l\delta_j$ coincides with the number of planar wings of $\Sigma.$ 
 \end{enumerate}
In both cases, the convergence is smooth on compact subsets of $\R^3$.

In particular, uniqueness holds in the subsequential convergence to tangent planes at infinite times $t\to\pm\infty$.

\end{thm}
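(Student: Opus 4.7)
The plan is to combine the geometric decomposition of Theorem \ref{structure-thm} with the removable singularity and convergence results of Section \ref{sec:Rem-Sin}, applied piece by piece to the $\ee_1$-graph functions $\xi_j$ and $\kappa_i$. Since \cite{entropy} already guarantees subsequential convergence of $\Sigma+t\ee_3$ as $t\to\pm\infty$ to a finite union of vertical planes (counted with multiplicity), what remains is to show that the pointwise limits of the defining graph functions exist (with no subsequence chosen), which combined with the uniform $C^k_{\rm loc}$ estimates from Propositions \ref{grad_estimate_quasilinear_all} and \ref{ImprovedHessian} promotes the subsequential smooth convergence to honest sequential smooth convergence.

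First I would handle the case $t\to -\infty$. Translating $\Sigma$ downward by $|t|$ and observing the result in a fixed window near the origin forces us to look at the upper parts ($x_3\to +\infty$) of $\Sigma$. By Theorem \ref{structure-thm}(1), each upward piece $\Sigma_j^{\rm up}$ contains a graph of $\xi_j$ over an upward wedge $\Wedge(p,\theta)\subset V_\alpha$. Proposition \ref{limit_line_up} (upgraded to the exponential rate of Theorem \ref{thm:exponential}) produces for each $j$ a unique limit $K_j\in[-w,w]$ with $\lim_{x_3\to+\infty}\xi_j(\cdot,x_3)=K_j$, independent of $x_2$. Grouping by the value $K_j$ gives the partition $\{u_1<\cdots<u_k\}$ with multiplicity $\sigma_j$ counting how many upward wedges share the limit $u_j$. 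Since Proposition \ref{eq:decomposition-1} produces exactly $\lambda(\Sigma)$ upward wedges, $\sum_j\sigma_j=\lambda(\Sigma)$ is automatic.

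Next I would handle $t\to +\infty$, which looks at the downward parts ($x_3\to-\infty$). For each downward graph $\Sigma_i^{\rm down}=\Graph(\kappa_i)$ over the U-shaped domain $H$, its two outer sides $\sigma_\pm$ are contained in upward planar-wing graphs $\Sigma_{i_0}^{\rm up}, \Sigma_{i_1}^{\rm up}$, whose limits $K_{i_0}, K_{i_1}$ were determined in the previous step. Proposition \ref{sideways_limits} then promotes these boundary-trace asymptotics to genuine sideways limits $\lim_{x_2\to\pm\infty}\kappa_i(x_2,x_3)=c_\pm\in\{K_{i_0},K_{i_1}\}$, uniformly in $x_3$. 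Proposition \ref{inhomog_down} finally gives
\begin{equation*}
\lim_{x_3\to-\infty}\kappa_i(x_2,x_3)=\tfrac{c_-+c_+}{2},
\end{equation*}
independent of $x_2$. Assembling this across the $\omega_P(\Sigma)/2$ downward graphs yields the set $\{d_1<\cdots<d_\ell\}$ with multiplicities $\delta_\ell$ satisfying $2\sum_\ell\delta_\ell=\omega_P(\Sigma)$, as grim reaper wings contribute nothing in this direction (they escape to $\varnothing$ under $+t\ee_3$).

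The main obstacle is the global matching of data at the overlaps between upward and downward graphical pieces: to apply Proposition \ref{inhomog_down} to $\kappa_i$ one must know its sideways limits $c_\pm$, and these are only accessible \emph{globally}, via the identification of the lateral boundary traces of $\kappa_i$ with the upward-wedge limits $K_{i_0},K_{i_1}$ already pinned down in the first step. This is exactly the global character emphasized in Section \ref{subsec:PDE_tech}: locally on a downward half-plane, uniqueness may fail (as demonstrated by the counterexamples of Theorem \ref{WobbleEksempel}), and it is precisely the specific wedge-shape of $H$, together with the matching across the upward-flanking wedges, that rules out the oscillatory behavior. Once that matching is secured, the pointwise limits $K_j$ and $(K_{i_0}+K_{i_1})/2$ give independence of the subsequence, and the $C^\infty_{\rm loc}$-convergence follows from the uniform derivative decay already established.
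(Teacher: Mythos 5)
Your proposal follows essentially the same route as the paper: invoke the graphical decomposition of Theorem \ref{structure-thm}, apply Theorem \ref{thm:exponential} (upward exponential convergence) to the wedge graphs $\xi_j$ for the $t\to -\infty$ limit, and apply Proposition \ref{sideways_limits} followed by Proposition \ref{inhomog_down} to the U-shaped graphs $\kappa_i$ for the $t\to +\infty$ limit. The accounting $\sum\sigma_j = \lambda(\Sigma)$ and $2\sum\delta_j = \omega_P(\Sigma)$, and the observation that grim reaper wings vanish under $+t\ee_3$, are all as in the paper, as is the emphasis that the downward limit hinges globally on the sideways boundary traces matching the upward wedge values.

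One small but real gap: the statement asserts the partition is of $[-w,w]$ with $u_1 = -w$ and $u_k = w$, i.e., that both boundary planes of the slab occur among the upward limits $K_j$. You establish only that each $K_j\in[-w,w]$, not that the extremes are attained. The paper closes this by citing Chini's theorem (that a collapsed translator of width $2w$ has both $\{x_1 = \pm w\}$ in the blow-down set, \cite[Theorem 10]{Chini}), which, once the uniqueness of limits is in hand, forces $-w$ and $w$ to be among the values $K_j$. Without this, one could a priori have all upward limits strictly inside $(-w,w)$, contradicting that the slab width equals $2w$ but requiring an argument to rule out. Adding that one citation completes your proof.
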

\begin{proof}
   Let $\{\xi_i:\Wedge(p_i,\theta)\to\R\}_{i=1}^{\lambda(\Sigma)}$ be the functions in Theorem \ref{structure-thm} so that each upward limit is given by the limit of these functions, where $p_i$ is the sequence of vertex associate graph, and $\{\kappa_j:H\to\R\}_{j=1}^{k}$, where $H$ is the domain given by Theorem \ref{structure-thm}, be the functions on Theorem \ref{structure-thm} so that each downward limit is given by the limit of these functions. By Theorem \ref{thm:exponential}, and Proposition \ref{inhomog_down}, we obtain the existence of the limits:
   \[
   \lim_{x_3\to+\infty} \xi_i(x_2,x_3)=s_i\ {\rm and}\ \lim_{x_3\to-\infty} \kappa_j(x_2,x_3)=b_j.
   \]
   By \cite[Theorem 10]{Chini}, one has that $-w,w\in\{s_1, \ldots s_{\lambda(\Sigma)} \}$, so we can organize the set $\{s_i\}_{i=1}^{\lambda(\Sigma)}$ in such a way that it forms a partition $\{-w=u_1<\ldots u_{k-1}<u_k=w\}$. Set $\sigma_i$ to be the number of functions $\xi_j$ so that $\displaystyle\lim_{x_3\to+\infty} \xi_j(x_2,x_3)=u_i$. Notice that $\sigma_i$ counts the multiplicity in which the plane $\{x_1=u_i\}$ appears in the upward limit $\displaystyle \lim_{t\to+\infty}(\Sigma-t\ee_3).$ In particular, it must hold $\sum_{i=1}^k\sigma_i=\lambda(\Sigma)$ and
   \[
   \Sigma+t \, \ee_3\to\sigma_1\{x_1=u_1\} \sqcup \ldots \sqcup \sigma_k\{x_1=u_k\}\ {\rm as}\  t\to-\infty.
   \]
About the downward case, we only know that $\{b_1, \ldots, b_k\}\subseteq[-w,w]$ (the set could be empty if our translator only has grim reaper type wings). As before we order the set $\{b_j\}$ in increasing order to get a finite set $\{d_1<\ldots<d_l\}$ and setting $\delta_j$ to be the number of functions $\kappa_i$ so that $\displaystyle\lim_{x_3\to-\infty} \kappa_i(x_2,x_3)=b_j$ to finally conclude 
   \[\Sigma+t \, \ee_3\to \delta_1\{x_1=d_1\} \sqcup \ldots \sqcup \delta_l\{x_1=d_l\}\ {\rm as}\ t\to+\infty \]  
   and $\displaystyle2 \, \sum_{j=1}^l\delta_j=2k$ coincides with the numbers of planar wings of $\Sigma.$   
\end{proof}

\section{Asymptotic behaviors along wings of translators}\label{sec:asym-wings}
In this section, we will study the asymptotic behavior of the translator along a wing.

\subsection{Asymptotic behaviors along wings of planar type} \label{subsec:planar}

We consider $\Sigma$ a collapsed translator with finite topology, finite entropy and finite width and $W$ a planar wing of  $\Sigma.$ Up to a rigid motion, we assume that 
\begin{equation*}
    W= \{ (u(x_2,x_3), x_2,x_3) \; : \; x_2\geq t\} ,
\end{equation*}
where $u$ is a solution of \eqref{TSE}. From Theorem \ref{thm-A}, we have that there exist $a_{\pm}\in\R$ such that
\begin{eqnarray*}
    \lim_{x_3 \to +\infty} u(x_2,x_3) &= & a_+, \\
    \lim_{x_3 \to -\infty} u(x_2,x_3) & = & a_-.
\end{eqnarray*}
Again, after possibly rotating, we can assume that $$ a_-\leq a_+.$$
Under these circumstances, we have proven in Proposition \ref{sideways_limits} (more generally, Proposition \ref{cor:remov_sing}) that for any $x_3$, the following holds:
    \begin{equation}\label{eq:planar_sideways}
    \lim_{x_2\to+\infty}u(x_2,x_3)=a_+.
    \end{equation}

As mentioned in Remark \ref{rem:down_discontinuity}, the straight down direction may exhibit a jump discontinuity in the values of limits along rays, as elucidated more precisely by the following proposition, which shows that a continuum of tangent planes may in that situation occur as subsequential limits along sequences escaping to infinity:

\begin{prop} \label{prop:planar}
    Under the above hypotheses, for any $\kappa \in [a_-,a_+],$ there exists a sequence $p_n=(x_1^n,x_2^n,x_3^n) \in W$ such that:
    \begin{enumerate}
        \item $x_2^n \nearrow +\infty,$
        \item $x^n_3\to-\infty,$
        \item $W-(0,x_2^n,x_3^n)$ converges smoothly on compact subsets of $\RR^3$ to the vertical plane $\{x_1= \kappa\}.$
    \end{enumerate}
\end{prop}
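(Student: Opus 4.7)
The plan is to realize every $\kappa\in[a_-,a_+]$ as a subsequential limit $u(x_2^n,x_3^n)\to\kappa$ along sequences escaping to $(+\infty,-\infty)$ in the domain, and then to upgrade to smooth convergence of the translated graphs using the sharp gradient and Hessian estimates available on the right half-plane $\Omega=\{x_2\geq t\}$, which satisfies the upward sausages property (cf.\ Remark \ref{sausage_sampler}). The three key asymptotic inputs are $u\to a_+$ as $x_3\to+\infty$ and $u\to a_-$ as $x_3\to-\infty$ (both given), together with the sideways limit $u(x_2,x_3)\to a_+$ as $x_2\to+\infty$ for fixed $x_3$, which is \eqref{eq:planar_sideways}.

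For an interior value $\kappa\in(a_-,a_+)$, I would fix $x_3^n=-n$ and apply the intermediate value theorem to the continuous function $x_2\mapsto u(x_2,-n)$: for $n$ large it is below $\kappa$ near $x_2=t$ (since $u(t,-n)\to a_-$) and tends to $a_+>\kappa$ as $x_2\to+\infty$, producing $x_2^n\in(t,+\infty)$ with $u(x_2^n,-n)=\kappa$. To show $x_2^n\to+\infty$, I argue by contradiction: if $\liminf_n x_2^n<\infty$, extract $x_2^{n_k}\to x_2^*$ in a compact interval; by Theorem \ref{thm-A}, $\Sigma+t\,\ee_3$ converges smoothly on compact subsets of $\R^3$, which translates to uniform convergence $u(\cdot,-n)\to a_-$ on $[t,x_2^*+1]$ as $n\to\infty$, contradicting $u(x_2^{n_k},-n_k)=\kappa\neq a_-$. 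The endpoint values are handled by direct selection: for $\kappa=a_+$, pick $x_2^n\geq n$ with $|u(x_2^n,-n)-a_+|\leq 1/n$ using the sideways limit; for $\kappa=a_-$, use a diagonal argument on the uniform-on-compacts downward convergence to choose $x_2^n\to+\infty$ slowly enough that $|u(x_2^n,-n)-a_-|\to 0$.

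For the smooth convergence of the translated graphs, set $u_n(y_2,y_3):=u(y_2+x_2^n,y_3+x_3^n)$, defined on a sequence of domains exhausting $\R^2$. For any compact $K\subset\R^2$ and $n$ large, $\dist((y_2+x_2^n,y_3+x_3^n),\partial\Omega)=y_2+x_2^n-t\to+\infty$ uniformly on $K$. The upward-sausages estimates \eqref{grad_up} and \eqref{Hess_up_improved}, together with their bootstrap analogues for higher derivatives, then yield uniform decay of all derivatives of $u_n$ on $K$. Combined with the uniform $L^\infty$ bound $|u_n|\leq\max(|a_-|,|a_+|)$, Arzel\`a--Ascoli plus a diagonal extraction produces smooth subsequential limits, each of which has vanishing gradient and takes value $\kappa$ at the origin, hence equals the constant $\kappa$. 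Uniqueness of the limit upgrades this to convergence of the whole sequence, so $\Graph(u_n)\to\{x_1=\kappa\}$ in $C^\infty_{\mathrm{loc}}$.

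The main obstacle is the step $x_2^n\to+\infty$, which rests crucially on the \emph{uniform-on-compact-sets} character of the downward convergence $u(\cdot,x_3)\to a_-$, a property not present in the pointwise hypotheses but supplied precisely by the smooth tangent plane convergence in Theorem \ref{thm-A}. In this sense the proposition is inherently global and cannot be extracted from directional asymptotics alone.
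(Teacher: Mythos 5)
Your proof is correct, but it reorganizes the argument relative to the paper's in two genuinely different ways, both of which are instructive. First, to produce the sequence, the paper fixes $x_1^n \to \kappa$ strictly interior and $x_2^n \nearrow +\infty$, applies the intermediate value theorem along vertical lines $x_3\mapsto u(x_2^n,x_3)$ to get $x_3^n$, and then invokes Proposition \ref{cor:remov_sing} to force $x_3^n\to-\infty$; you instead fix $x_3^n = -n$, apply IVT along the horizontal slice $x_2\mapsto u(x_2,-n)$ to get $x_2^n$, and derive $x_2^n\to+\infty$ from the uniform-on-compacts downward convergence supplied by Theorem \ref{thm-A}. This swaps which variable is prescribed and which is deduced; your version sidesteps the slight care needed at the endpoint $\kappa=a_+$ in the paper's step, at the cost of the separate diagonal treatment of the endpoints. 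Second, for the smooth convergence of $W-p_n$ the paper uses entropy rigidity: by compactness the limit is a complete boundaryless translator of entropy one, hence a vertical plane, identified as $\{x_1=\kappa\}$ since $x_1^n\to\kappa$. You instead feed the upward-sausages decay estimates \eqref{grad_up}, \eqref{Hess_up_improved} (plus higher-order analogues) into Arzel\`a–Ascoli to show that any subsequential limit has vanishing derivatives and value $\kappa$ at the origin, hence is constant. Both routes are valid; yours is more self-contained relative to Section \ref{sec:PDEs_drift}, the paper's is shorter given the compactness framework of \cite{entropy}. One thing worth making explicit in your write-up: when you use Theorem \ref{thm-A} to get the uniform convergence $u(\cdot,-n)\to a_-$ on a fixed interval $[t,x_2^*+1]$, you are implicitly using that the multisheeted smooth convergence on a compact box makes the sheet given by the wing $W$ converge to its own limit plane $\{x_1=a_-\}$ uniformly there; this is correct, but deserves a sentence.
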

\begin{proof}
From our hypotheses, we have that \begin{equation}\label{eq:chr}
  \overline{P_{x_1, x_2} (W)} \supseteq [a_-,a_+]\times \RR,  
\end{equation} 
where $P_{x_1,x_2}$ denotes the orthogonal projection over $(x_1,x_2)-$plane. 

    Take $\kappa\in [a_-,a_+].$ Then, \eqref{eq:chr} implies that it is possible to find sequences $x_1^n \in(a_-,a_+) $, $x_1^n \to \kappa$, $x_2^n \nearrow +\infty$ and such that there exists $x_3^n$ satisfying $$x_1^n=u(x_2^n,x_3^n).$$
So, $p_n=(x_1^n,x_2^n,x_3^n) \in W$. Besides, Proposition \ref{cor:remov_sing} implies $x_3^n\to-\infty$. Moreover,  $W-(0,x_2^n,x_3^n)$ is a sequence of side-wise (bounded) graphs. In particular, there is a subsequence that converges to a complete translator $W_\infty$ without boundary and entropy $1$. Consequently, $W_\infty$ must be a vertical plane. Since $W_\infty$ is contained in the slab $a_-\leq x_1\leq a_+$, and $x_1^n \to \kappa$, we conclude that $W_\infty=\{x_1=\kappa\}.$
\end{proof}

\begin{rem}
    Notice that, by Proposition \ref{cor:remov_sing}, we must have $x_3^n/x^n_2\to -\infty,$ as $n\to +\infty.$ In particular, $x_3^n$ must go to $-\infty$ faster than any linear function of $x_2^n,$ as $n\to+\infty.$
\end{rem}

\subsection{Asymptotic behavior along wings of grim reaper type}

Consider, as before, $\Sigma$ a complete, collapsed translating soliton of finite entropy and finite genus. 
In \cite[Proposition 7.2]{entropy}, the authors proved that for a grim reaper type wing, $W \subseteq \Sigma$, we have that for any sequence $$p_i\in \mathcal{M}_W \quad \mbox{so that} \quad |x_2(p_i)| \to \infty,$$ then $W-p_i$ smoothly converges, subsequentially, to a complete translating graph, i.e. to either a tilted grim reaper or a $\Delta$-wing. We introduced the term ``grim reaper type'' wing, because we suspected that the above asymptotic limit was always a grim reaper. 

The main goal of this section is to prove that we were using the correct term. In other words, we are going to prove that the above limit does not depend on the subsequence that we consider and that the limit is a tilted grim reaper determined by the limit planes of $W$, as $x_3\to \infty$. 

Up to a rigid motion in $\R^3$, we can assume that
 \begin{eqnarray}\label{normalization-wing}
 &W\subseteq\{x_2\geq 0\},\ \partial W\subseteq\{x_2= 0\},\ W-t\ee_3\to\{x_1=\pm b,x_2\geq 0\}&\\ 
 \nonumber&{\rm and}\quad \inf\{x_3\;:\;(x_1,0,x_3)\in\partial W\}=0.&
 \end{eqnarray}
   Under these assumptions, the curve $\mathcal{M}_W$ (see Section \ref{sec:reaperwings}) will be parametrized as 
   \begin{equation}\label{minimal-para}
       \gamma(s):=(x_1(s), s, f_W(s))
   \end{equation}  where the function $s\in[0,+\infty)\mapsto f_W(s)$ satisfies $f_W(0)=0.$ 

The main theorem of this part can be stated as follows:

\begin{thm}
    \label{prop:Asymp-GR}
Let $s(b)\geq 0$ be the slope of the tilted grim reaper cylinder of width $2b.$ Then for $\Sigma$ as above, either \[\lim_{s\to+\infty}f'_W(s)=s(b)\ {\rm or}\ \lim_{s\to+\infty}f'_W(s)=-s(b).\]    
\end{thm}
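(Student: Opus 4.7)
The plan is to combine the subsequential smooth convergence of \cite[Proposition 7.2]{entropy} with this paper's PDE estimates (Theorem~\ref{thm-A} and the exponential decay in Theorem~\ref{thm:exponential}) to identify every subsequential limit of $W-\gamma(s_n)$, for $s_n\to\infty$, with a tilted grim reaper cylinder of width exactly $2b$ passing through the origin. This will confine the subsequential limits of $f'_W(s)$ to $\{\pm s(b)\}$, after which continuity of $f'_W$ together with the intermediate value theorem forces a single sign. The main obstacle is upgrading a trivial ``width $\leq 2b$'' estimate on the limit $W_\infty$ to ``width $=2b$'', which is where the sharp exponential convergence of Theorem~\ref{thm:exponential} plays an essential role.

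Given any sequence $s_n\to\infty$, I pass (via \cite[Proposition 7.2]{entropy}) to a subsequence so that $W-\gamma(s_n)$ converges smoothly on compact subsets of $\R^3$ to a complete translating graph $W_\infty$, which is either a tilted grim reaper cylinder or a $\Delta$-wing. I then rule out the $\Delta$-wing case using Lemma~\ref{minimal-set}: since $\gamma(s_n)\in\mathcal{M}_W$, the unit normal to $W-\gamma(s_n)$ at the origin is horizontal, and this property passes to $\nu_{W_\infty}(0)\perp\ee_1$ in the smooth limit. However a $\Delta$-wing is a single $\ee_1$-graph, whose unit normal is proportional to $(1,-u_{2},-u_{3})$ and hence is never horizontal. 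Therefore $W_\infty$ must be a tilted grim reaper cylinder.

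To pin down $W_\infty$, I first use $|x_1(\gamma(s_n))|\leq b$ (from $W\subset\{|x_1|\leq b\}$) to pass to a further subsequence with $x_1(\gamma(s_n))\to c\in[-b,b]$, giving the slab containment $W_\infty\subset\{|x_1+c|\leq b\}$. To upgrade this containment to equality of asymptotic width, I apply Theorem~\ref{thm:exponential} to each of the two sheets of the bi-graph $W$ over its upward wedge subdomains; since the constants there depend only on the $L^\infty$-norm of the defining graphs, which is invariant under translations, the same exponential estimates hold uniformly for all $W-\gamma(s_n)$ and therefore persist in the smooth limit. This forces both sheets of $W_\infty$ to converge exponentially in the upward direction to the two vertical planes $\{x_1=\pm b-c\}$, so $W_\infty$ has width exactly $2b$. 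The classification of tilted grim reaper cylinders then pins down $W_\infty$ up to two choices of tilt: the tilt angle $\zeta\in[0,\pi/2)$ is determined by $\cos\zeta=\pi/(2b)$, and the curve of minima $\mathcal{M}_{W_\infty}$ is the straight axis line, passing through the origin with slope $\pm\tan\zeta=\pm s(b)$ in the $(x_2,x_3)$-plane. The smooth convergence of the transversal slices $(W-\gamma(s_n))\cap\{\nu\perp\ee_1\}\to\mathcal{M}_{W_\infty}$ then translates to $f'_W(s_n)\to\pm s(b)$ along this subsequence.

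Finally, to remove the sign ambiguity, observe that if $b=\pi/2$ then $s(b)=0$ and the conclusion is immediate. For $b>\pi/2$ we have $s(b)>0$, and if both $+s(b)$ and $-s(b)$ occurred as subsequential limits of $f'_W(s)$, then smoothness of $f_W$ and the intermediate value theorem would produce a further sequence $s_n^{(0)}\to\infty$ with $f'_W(s_n^{(0)})\to 0$. Rerunning the argument above along $s_n^{(0)}$ would then yield a tilted grim reaper limit of width $2b$ whose curve of minima through the origin has slope $0$, contradicting $s(b)>0$. Hence $f'_W(s)$ admits a single limit as $s\to+\infty$, equal to either $+s(b)$ or $-s(b)$.
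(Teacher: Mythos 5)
The main step that fails is the claimed elimination of the $\Delta$-wing limit. You assert that ``a $\Delta$-wing is a single $\ee_1$-graph, whose unit normal is proportional to $(1,-u_{2},-u_{3})$ and hence is never horizontal,'' and use this together with $\nu_{W_\infty}(0)\perp\ee_1$ (from Lemma~\ref{minimal-set}) to rule it out. But a $\Delta$-wing is \emph{not} a single $\ee_1$-graph: it is a complete translating \emph{vertical} graph $x_3=u(x_1,x_2)$ over a strip, equivalently an $\ee_1$-\emph{bi}-graph. Along its axis of symmetry $\{x_1=0\}$ one has $\partial_{x_1}u=0$, so $\nu\propto(0,-\partial_{x_2}u,1)$ is perpendicular to $\ee_1$ there. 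Thus $\Delta$-wings have exactly the kind of point you invoke to exclude them; the argument you used rules out vertical planes, not $\Delta$-wings. This is not a cosmetic flaw: the $\Delta$-wing is precisely the alternative limit allowed by \cite[Proposition 7.2]{entropy}, and a width-$2b$ $\Delta$-wing passing through the origin with $\nu(0)\perp\ee_1$ has axis slope at the origin that can take \emph{any} value in $(-s(b),s(b))$. Your remaining steps (width $=2b$, then IVT) therefore no longer force the slope to $\pm s(b)$, and even the subsequence you produce with $f'_W(s_n^{(0)})\to 0$ would be consistent with a $\Delta$-wing limit rather than giving a contradiction.

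Beyond that, the overall strategy differs from the paper's: you try to identify $W_\infty$ first (as a grim reaper of width exactly $2b$) and then read off the slope, whereas the paper argues directly by contradiction from $|s|<s(b)$. It never needs to know in advance whether $W_\infty$ is a reaper or a $\Delta$-wing — it translates to the carefully set up wedge graphs $\xi_{q(p)}$ of Theorem~\ref{structure-thm}, picks the sheet whose upward limit plane is not on the boundary of the minimal slab of $W_\infty$ (which is possible precisely because $|s|<s(b)$ forces a strict gap), and then uses the uniform oscillation bound \eqref{osc-inequ} coming from Theorems~\ref{P_decay_estimates} and~\ref{thm:exponential} to contradict that gap. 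If you want to keep the ``identify $W_\infty$ first'' strategy, you would need an honest elimination of the $\Delta$-wing — e.g.\ one based on a rigidity/translation-invariance argument or on the explicit slope mismatch — rather than the normal-direction argument. Your width-$2b$ step via the uniform exponential estimates is a plausible idea, though as written it glosses over the fact that the constants in Theorem~\ref{thm:exponential} also depend on the wedge angle, so one needs the uniform-angle statement from Proposition~\ref{cl:Up} to make the translation-invariance of the bound precise.
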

\begin{proof}
From \cite[Theorem A. 4]{HMW-A} there exists $s_b\geq0$ so that
\begin{equation}\label{control-wing}
    f_W(s)\geq f_W(s_0)-s_b |s-s_0|,
\end{equation}
for all $s,s_0\in[0,+\infty).$ In particular, for $s_0=0$, using \eqref{normalization-wing}, one has
\begin{equation*}\label{control-wing-0}
    f_W(s)\geq -s_b s.
\end{equation*}
Furthermore, notice that \eqref{control-wing} also implies that for all $R>0$
there exists $s_0$ so that 
\begin{equation}\label{initial-control}
(s,f_W(s))\in \{(x_2,x_3)\;:\;x_3\geq R-s_bx_2\},\ \forall s\geq s_0.    
\end{equation}

By Theorem \ref{structure-thm}, we can fix two points $p_1$ and $p_2$ in $\partial W,$ with $x_3(p_1)=x_3(p_2)=R>0$ and choose $\alpha$ sufficiently small so that if $\Wedge(p_i,\alpha)=\{(0,x_2,x_3+R)\;:\;x_3\geq\tan\alpha |x_2|\}$ there exist two smooth functions $\xi_i:\Wedge(p_i,\alpha)\to\R$ so that
\[p_i\in \Graph(\xi_i),\] where $\Wedge(p,\theta)$ denotes the wedge in Definition \ref{eq:def-wedge}. We say that $p_i$ is the vertex of $\Graph(\xi_i).$ Observe $\Wedge(p_1,\alpha)=\Wedge(p_2,\alpha)$ 

Now, from \cite[Theorem B.7]{HMW-A} we know that \[-s(b)\leq\liminf_{s\to+\infty}f_W'(s)\leq\limsup_{s\to+\infty}f_W'(s)\leq s(b).\] 
Using this, we see that for all  $\theta>0$ with $\cot\theta>\max\{s_b, s(b),\cot\alpha\}$, we can find $s(\theta)\geq s_0$ so that $| f_W'(s)|<\cot\theta,$ when $s\geq s(\theta).$ Fix one of those $\theta>0$ from now on. From this choice, we can infer from Section \ref{sector} that for all $q\in W_1=\{q\in W\;:\; {\rm dist}(q,\mathcal{M}_W)\geq 1,\ x(p)\geq s(\theta)\},$ it holds true
 \[\Wedge(p,\theta)\subseteq \Wedge(p_1,\alpha)\cup \Omega_1,\] where $\Omega_{1}=P_{x_2,x_3}(W_1),$ and $\Omega=P_{x_2,x_3}(W),$  where $P_{x_2,x_3}$ denotes the orthogonal projection over $(x_2,x_3)-$plane.

At this moment, we use  \eqref{control-wing} and \eqref{initial-control} to infer that for all $p\in W_1$ with $x_2(p)\geq s(\theta),$ we can find a unique point $q(p)\in\{(x_1,x_2,x_3)\in\partial W_1\cap\{x_2\geq s(\theta)\}\}$ with $x_2(q(p))=x_2(p)$ so that $p$ belongs to the graph of $\xi_{q(p)}:\Wedge(q(p),\theta)\to\R.$ Regarding the function $\xi_{q(p)}$, we further know from Theorem \ref{P_decay_estimates} and Proposition \ref{thm:exponential} that there exists a universal constant $C>0$ depending on $\theta$ and $b$ such that 

\begin{equation}\label{osc-inequ}
|\xi_{q(p)}(x_2(p),x_3)-\xi_{q(p)}(x_2(p),\hat x_3)|\leq C (e^{-\beta x_3}+e^{-\beta\hat{x}_3}),\ \forall x_3\geq\hat{x}_3\geq x_3(q(p)),   
\end{equation}
for all $p\in W_1.$ 

Assume that there was a sequence $s_n\nearrow+\infty$ so that $f_W'(s_n)\to s$ with $|s|<s(b).$ By passing to a subsequence, we can assume W.L.O.G that $W_\infty=\displaystyle\lim_{n} (W-s_n\ee_2-f_W(s_n)\ee_3).$ Since $W-t\ee_3\to\{x_1=\pm b,x_2\geq0\}$ as $t\to+\infty,$ we see that we could find a positive constant $K$ and sequences $x_3^n> \hat{x}_3^n\to+\infty$ as $n\to+\infty$ so that
\begin{equation}\label{estimate-111}
|\xi_{n}(s_n,x_3^n)-\xi_n(s_n,\hat{x}_3^n)|\geq K,\ \forall n\ {\rm large\ enough}
\end{equation}
where $\xi_n$ is $\xi_{q_n},$ where $q_n$ is the unique point in $\partial W_1\cap\{x_2=s_n\}$.

To see this, we note that there are two possibilities for $\xi_n.$ We select the one for which the plane $\{x_1=\displaystyle\lim_{x_3\to+\infty} \xi_n(s_n,x_3)\}$ does not lie in the boundary of the minimal slab of $W_\infty$, i.e. the smallest vertical slab that contains $W_\infty$. With this choice, the constant $K$ can be taken as half of the distance from $\{x_1=\displaystyle\lim_{x_3\to+\infty} \xi_n(s_n,x_3)\}$ to the boundary of the minimal slab of $W_\infty$.

Finally, \eqref{osc-inequ} and \eqref{estimate-111} give us the desired contradiction, since \[\lim_{n}|\xi_{n}(s_n,x_3^n)-\xi_n(s_n,\hat{x}_3^n)|=0.\] 

Therefore, we have proven that for any sequence $s_n\to+\infty$ so that  $\displaystyle\lim_n  f_W'(s)$ exists, one must hold that either \[\displaystyle\lim_n f_W'(s)=s(b)\ {\rm or}\ \displaystyle\lim_n  f_W'(s)=-s(b).\] From this, we can easily prove that it must hold true that either \[\displaystyle\lim_{s\to+\infty} f_W'(s)=s(b)\ {\rm or} \displaystyle\lim_{s\to+\infty}  f_W'(s)=-s(b).\] This completes the proof.
\end{proof}
The next corollary shows that controlling the slope of $f_W$ at $\infty$ is equivalent to control the asymptotic limit of our wing. Recall that $\gamma$ in \eqref{minimal-para} parametrizes the curve of minima.

\begin{cor}        \label{eq:corB}
Let $W$ be a grim reaper type wing satisfying the normalization described in \eqref{normalization-wing}. Then $W-\gamma(s)$ converges smoothly on compact sets, as $s \to \infty$, to a unique tilted grim reaper of width $2 b$.
\end{cor}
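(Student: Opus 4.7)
The plan is to combine the dichotomy $\lim_{s \to +\infty} f'_W(s) \in \{\pm s(b)\}$ from Theorem \ref{prop:Asymp-GR} with the subsequential compactness result of \cite[Proposition 7.2]{entropy}, and then to pin down the limit uniquely by analyzing its curve of minima. Without loss of generality I assume $f'_W(s) \to s(b)$; the other case is symmetric.

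First, given any sequence $s_n \to +\infty$, by \cite[Proposition 7.2]{entropy} I can extract a subsequence (still denoted $s_n$) along which $W - \gamma(s_n)$ converges smoothly on compact subsets to a complete embedded translator $W_\infty$ contained in the slab $\{|x_1| \le b\}$, which must be either a tilted grim reaper or a $\Delta$-wing of width $2b$. Since $\gamma(s_n) \in \mathcal{M}_W$ and $\mathcal{M}_W \subseteq \{\langle \nu, \ee_1\rangle = 0\}$ by Lemma \ref{minimal-set}, the origin lies on $\mathcal{M}_{W_\infty}$. Next, I would examine the translated minimum functions
\[
\tilde{f}_n(x_2) := f_W(x_2 + s_n) - f_W(s_n),
\]
which converge smoothly on compact subsets of $\R$ to the minimum function $f_{W_\infty}$ of $W_\infty$ by the smooth convergence $W - \gamma(s_n) \to W_\infty$. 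Since $\tilde{f}_n'(x_2) = f'_W(x_2 + s_n) \to s(b)$ uniformly on compact sets by Theorem \ref{prop:Asymp-GR}, together with $\tilde{f}_n(0) = 0$, I deduce that $f_{W_\infty}(x_2) = s(b)\, x_2$, so $\mathcal{M}_{W_\infty}$ is the straight line $\{(0, t, s(b)\, t) : t \in \R\}$.

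Finally, a $\Delta$-wing of width $2b$ is bi-graphical over a wedge-shaped planar domain, and hence its curve of minima is V-shaped, consisting of two half-lines with asymptotic slopes $+s(b)$ and $-s(b)$ meeting at a vertex; in particular it is never a straight line. This forces $W_\infty$ to be the tilted grim reaper of width $2b$ whose curve of minima is precisely $\{(0, t, s(b)\, t) : t \in \R\}$, and this data determines $W_\infty$ uniquely and independently of the chosen subsequence; a standard subsequence-of-subsequence argument then upgrades the subsequential convergence to full convergence as $s \to +\infty$. The main subtlety will be to justify rigorously that a $\Delta$-wing cannot have a straight curve of minima, which I expect to handle by invoking the explicit bi-graphical structure of $\Delta$-wings over wedge domains from the classification work \cite{HIMW}.
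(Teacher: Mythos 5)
Your proposal is correct and follows essentially the same route as the paper: subsequential compactness via \cite[Proposition 7.2]{entropy}, Theorem \ref{prop:Asymp-GR} to pin down the slope of the curve of minima of the limit, and the resulting incompatibility with a $\Delta$-wing (whose curve of minima is strictly convex, hence never achieves slope $\pm s(b)$) to force the limit to be the tilted grim reaper of width exactly $2b$. The only difference is cosmetic — the paper reads off $\partial_2 u(0,0)=\pm s(b)$ at the single point of tangency, whereas you integrate to describe the entire curve of minima as the straight line $\{x_3 = s(b)\, x_2\}$ — and one small imprecision: asserting ``of width $2b$'' immediately after invoking \cite[Proposition 7.2]{entropy} is premature, since that result only gives a complete graphical translator in a slab of width at most $2b$; the width is only fixed to be exactly $2b$ once $f_{W_\infty}'\equiv s(b)$ is established.
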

\begin{proof}
    We know that $W-\gamma(s)$ converges, subsequentially (by \cite[Proposition 7.2]{entropy}) to a graph contained in a slab of width at most $2b$ (vertical planes parallel to the slab being ruled out by the direction of the normals along $\gamma(s)$). 
    Let's call $u(x_1,x_2)$ the function defining this graph.
 
    Moreover, Theorem \ref{prop:Asymp-GR}, implies that
    \begin{equation}
        \label{eq:partialu}
        \partial_{2} u(0,0)=  s(b) \: \:{\rm or } \: -s(b).
    \end{equation}
    The only graph over a strip of width $\leq 2 b$ satisfying \eqref{eq:partialu} is the tilted grim reaper of slope $s(b)$ or $-s(b)$. In particular, the slab coincides with $\{|x_1|\leq b\}.$
\end{proof}

The next two technical consequences will be needed in the next section.
\begin{prop}\label{conseq.}
 Let $W$ be a grim reaper type wing satisfying the normalization described in \eqref{normalization-wing}. Consider $\{q_n=(x_1^n,x_2^n,x_3^n)\}$ any divergent sequence on $W$ so that $x_2^n\to+\infty$ and $\displaystyle\lim_{n\to+\infty}{\rm dist}\{q_n,\mathcal{M}_W\}=+\infty,$ then \[\Sigma-x_2^n\ee_2-x_3^n\ee_3 \to \{x_1=-b\}\cup\{x_1=b\}.\]
\end{prop}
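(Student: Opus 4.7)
Setting $s_n:=x_2^n$ and $h_n:=x_3^n-f_W(s_n)\geq 0$, the inequality
\[
\mathrm{dist}(q_n,\mathcal{M}_W)\leq |q_n-\gamma(s_n)|=\sqrt{(x_1^n-x_1(s_n))^2+h_n^2}\leq \sqrt{4b^2+h_n^2}
\]
together with the hypothesis forces $h_n\to+\infty$. The remaining wings of $\Sigma$ (distinct from $W$) either have bounded $x_2$-coordinate or escape to $x_2\to-\infty$, so their contributions to any fixed compact set vanish after the translation $-s_n\ee_2-x_3^n\ee_3$. It thus suffices to analyze the two $\ee_1$-bigraph sheets $u_\pm:\Omega_W\to\mathbb{R}$ of $W\setminus\mathcal{M}_W$ (Proposition \ref{bigraph}), which solve the translating soliton equation \eqref{TSE} and, by Theorem \ref{thm-A}, have unique upward limits $\lim_{x_3\to+\infty}u_\pm(x_2,x_3)=\pm b$.

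The plan is to apply Theorem \ref{thm:exponential} uniformly along the curve of minima. The wedge construction in the proof of Theorem \ref{prop:Asymp-GR} (leading to \eqref{osc-inequ}) produces, for every $p\in W$ with $x_2(p)$ sufficiently large, a vertical upward wedge $\Wedge(q(p),\theta)\subseteq\Omega_W$ with vertex $q(p)\in\mathcal{M}_W$ at the same $x_2$-coordinate as $p$. The half-angle $\theta$ may be fixed uniformly with $\cot\theta>s(b)$ (the asymptotic slope from Theorem \ref{prop:Asymp-GR}), so all such wedges genuinely lie inside $\Omega_W$. Theorem \ref{thm:exponential} then applies with constants depending only on $\theta$, $\|u_\pm\|_\infty\leq b$, and $K_\infty=\pm b$, and yields
\begin{equation}\label{expdec-plan}
|u_\pm(p)-(\pm b)|\leq C\,e^{-\beta\,(x_3(p)-f_W(x_2(p)))}\quad\text{uniformly in }p,
\end{equation}
for universal constants $C,\beta>0$.

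Fix now a compact $K\subset\R^2$. For $(y,z)\in K$ and $n$ large, the point $p_n(y,z):=(s_n+y,x_3^n+z)$ lies in $\Omega_W$, and the Lipschitz bound $|f_W'|\leq C_W$ from Proposition \ref{bigraph} gives
\[
x_3(p_n)-f_W(x_2(p_n))=h_n+z-\bigl(f_W(s_n+y)-f_W(s_n)\bigr)\geq h_n-|z|-C_W|y|.
\]
Inserting this into \eqref{expdec-plan} shows that both $u_+(p_n(y,z))\to b$ and $u_-(p_n(y,z))\to-b$ uniformly on $K$, so the two sheets of $W-s_n\ee_2-x_3^n\ee_3$ approach the constant functions $\pm b$ in $C^0$; the uniform gradient and Hessian decay estimates of Propositions \ref{grad_estimate_quasilinear_all} and \ref{ImprovedHessian} on upward wedges upgrade this to smooth convergence on compact subsets of $\R^3$, proving the claim.

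The chief technical obstacle is precisely the uniformity of the constants $C,\beta$ in \eqref{expdec-plan} as the wedge vertex $q(p_n)\in\mathcal{M}_W$ escapes along the curve of minima. This is made possible by the fact that those constants depend only on the wedge half-angle $\theta$ — fixed once and for all via the asymptotic slope $s(b)$ from Theorem \ref{prop:Asymp-GR} — together with the $L^\infty$-bound $b$ and the fixed asymptotic values $\pm b$, none of which vary with $n$.
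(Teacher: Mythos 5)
There is a genuine gap at the very first step: you assert that ``the remaining wings of $\Sigma$ (distinct from $W$) either have bounded $x_2$-coordinate or escape to $x_2\to-\infty$, so their contributions to any fixed compact set vanish after the translation.'' This is false. By the decomposition in Theorem~\ref{structure-thm}, $\Sigma$ may have several right wings (planar and of grim reaper type) all with $x_2\to+\infty$, and whether any of them intersects a fixed compact set after the translation $-x_2^n\ee_2-x_3^n\ee_3$ depends on how their height functions $f_{W'}$ compare to the sequence $x_3^n$ — this is not controlled by the hypothesis, which only constrains $\mathrm{dist}(q_n,\mathcal{M}_W)$ for the single wing $W$. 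You therefore never rule out extra planes (or grim reapers) from other wings appearing in the limit. The paper handles exactly this issue by passing to an arbitrary subsequential limit $\Sigma_\infty$ of $\{\Sigma-x_2^n\ee_2-x_3^n\ee_3\}$, arguing it has only planar wings, and then invoking the normalization \eqref{normalization-wing} together with Theorem~\ref{thm-A} and the structural Theorem 8.2 of the earlier paper \cite{entropy} to conclude $\lambda(\Sigma_\infty)=2$, so that $\Sigma_\infty$ is a union of exactly two planes. Your argument, which only analyzes the two bigraph sheets $u_\pm$ of $W$, never touches the rest of $\Sigma$.

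The remainder of your proof — deducing $h_n\to+\infty$ from the distance hypothesis, then applying Theorem~\ref{thm:exponential} with vertex on $\mathcal{M}_W$ and checking that the constants $C,\beta$ are uniform because they depend only on the wedge angle, $\|u_\pm\|_\infty\leq b$, and $K_\infty=\pm b$ — is essentially sound, and it is a cleaner and more direct route to identifying the limit planes $\{x_1=\pm b\}$ than the paper's contradiction argument via the oscillation bound \eqref{osc-inequ}. Both arguments ultimately rest on the same exponential estimate, Theorem~\ref{thm:exponential}, applied along the curve of minima as in the proof of Theorem~\ref{prop:Asymp-GR}, so that portion of your proposal is a legitimate alternative. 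But to complete the proof you still need to establish that $\Sigma_\infty$ consists of only two planes, and for that the structural/entropy count from \cite{entropy} (or some substitute) is indispensable.
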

\begin{proof}
   Let $\Sigma_\infty$ be the limit of any convergent subsequence of $\{\Sigma-x_2^n\ee_2-x_3^n\ee_3\}.$ Our hypothesis yields that it only has planar type wings. Furthermore, using \eqref{normalization-wing} and Theorem \ref{thm-A} (see also \cite[Theorem 8.2]{entropy}), we conclude that $\lambda(\Sigma_\infty)=2.$ Hence, $\Sigma_\infty=\{x_1=t_1\}\cup\{x_1=t_2\},$ where $t_1<t_2.$ 
   
   To prove that we do in fact have $t_1=-b$ and $t_2=b,$ we argue by contradiction. Let us assume $t_1\neq -b$. To achieve the desired contradiction, note that \eqref{osc-inequ} applied to $(x_2^n,x_3^n)$ and any point $(x_2^n,\tilde{x}^n_3)$, so that
   \[
   x_3^n<<\tilde{x}_3^n\quad {\rm and}\quad |\xi_n(x_2^n,\tilde{x}^n_3)-\lim_{y\to+\infty}\xi_n(x_2^n,x_3^n)|\leq \frac{1}{n}
   \]
   gives $\displaystyle\lim_{n\to+\infty}|\xi_n(x_2^n,x_3^n) -\xi_n(x_2^n,\tilde{x}^n_3)|=0.$ 
   
   On the other hand, taking $2K=\min\{|b-t_1|,|b+t_1|\}>0$ one would see that $\displaystyle\liminf_{n\to+\infty}|\xi_n(x_2^n,x_3^n) -\xi_n(x_2^n,\tilde{x}^n_3)|\geq K$, as in \eqref{estimate-111}. This contradiction proves $t_1=-b$. Similarly, we can prove $t_2=b,$ and thus $\Sigma_\infty=\{x_1=-b\}\cup\{x_1=b\}.$ 
 \end{proof}

The next proposition quantifies the eventual almost symmetry of the curve of minima along a wing of grim reaper type.

\begin{prop}\label{conseq.1}
    Let $W$ be a grim reaper type wing satisfying the normalization described in \eqref{normalization-wing}. Then for all $\epsilon>0$, there exists $s_\epsilon>0$ so that 
    \[\min \left\{{\rm dist}\left(\{x_1=-b\},\mathcal{M}_W\cap\{x_2\geq s_\epsilon\}\right),{\rm dist}\left(\{x_1=b\},\mathcal{M}_W\cap\{x_2\geq s_\epsilon\}\right)\right\}\geq b-\epsilon.\] 
    \end{prop}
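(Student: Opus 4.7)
Writing $\gamma(s) = (x_1(s), s, f_W(s))$, the stated inequality is equivalent to $|x_1(s)| \leq \epsilon$ for $s \geq s_\epsilon$, since $\mathrm{dist}(\{x_1 = \pm b\}, \gamma(s)) = b \mp x_1(s)$. The plan is to argue by contradiction: suppose there exist $\epsilon_0 > 0$ and a sequence $s_n \nearrow +\infty$ with $|x_1(s_n)| \geq \epsilon_0$; after passing to a subsequence, assume $x_1(s_n) \to x_1^* \in \R$ with $|x_1^*| \geq \epsilon_0$.

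By Corollary \ref{eq:corB}, $W - \gamma(s_n)$ converges smoothly on compact subsets to a (possibly tilted) grim reaper cylinder $G$ of width exactly $2b$. Since the normalization \eqref{normalization-wing} yields $W \subseteq \{|x_1| \leq b\}$ (the wing being contained in the slab bounded by its asymptotic planes, by the maximum principle applied to $\Sigma$ and the translator planes $\{x_1 = \pm b\}$), we obtain $W - \gamma(s_n) \subseteq \{-b - x_1(s_n) \leq x_1 \leq b - x_1(s_n)\}$; passing to the limit gives $G \subseteq \{-b - x_1^* \leq x_1 \leq b - x_1^*\}$, a slab of width exactly $2b$. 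Since $G$ has width $2b$, this is necessarily its minimal slab, centered at $x_1 = -x_1^*$.

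Every (possibly tilted) grim reaper cylinder is invariant under reflection across the central plane of its minimal slab, so $G$ is symmetric under $x_1 \mapsto -2x_1^* - x_1$. The curve of minima $\mathcal{M}_G$, which for such a bi-graph coincides with $\{p \in G : \langle \nu, \ee_1\rangle(p) = 0\}$ (cf.\ Lemma \ref{minimal-set} and Proposition \ref{bigraph}), is preserved by this reflection and thus lies in the fixed plane $\{x_1 = -x_1^*\}$. Since $\gamma(s_n) \in \mathcal{M}_W$, the origin lies in $\mathcal{M}_{W - \gamma(s_n)}$ for each $n$, and smooth convergence of the unit normals preserves the condition $\langle \nu, \ee_1\rangle = 0$, so $0 \in \mathcal{M}_G$. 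This forces $-x_1^* = 0$, contradicting $|x_1^*| \geq \epsilon_0$.

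The main subtle step is locating the minimal slab of $G$ correctly via the slab containment of the wing; once this is in hand, the reflection symmetry of tilted grim reapers together with the fact that the locus $\{\nu \cdot \ee_1 = 0\}$ passes to smooth limits finishes the argument immediately.
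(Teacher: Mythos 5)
The main gap is in your slab-containment claim $W \subseteq \{|x_1| \leq b\}$. The interior maximum principle applies to the complete, boundaryless surface $\Sigma$ and yields only $\Sigma \subseteq \{|x_1| < w\}$ for $2w = \mathrm{width}(\Sigma)$ (suitably recentered by the rigid motion), and in general $w > b$: the constant $b$ in \eqref{normalization-wing} is the half-gap of this particular wing's asymptotic planes, which need not coincide with half the width of $\Sigma$ when $\Sigma$ has several wings with interior asymptotic planes. The wing $W$ itself has boundary (along $\{x_2 = 0\}$), so you cannot apply the interior maximum principle to $W$; and even after cutting off to $W \cap \{x_2 \geq s_0\}$, bounding the graph functions $u^\pm$ of the bi-graph by $\pm b$ would require already knowing that the curve of minima stays in $[-b,b]$ in $x_1$ and that the convergence $W - t\ee_3 \to \{x_1 = \pm b\}$ is uniform in $x_2$ --- which is essentially the content of the proposition itself, so the argument becomes circular at this point.

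With the slab containment removed, what you correctly establish is $W - \gamma(s_n) \to G$ with $0 \in \mathcal{M}_G$, hence $G$ is the tilted grim reaper of width $2b$ with minimal slab $\{|x_1|<b\}$. But this alone does not force $x_1(s_n) \to 0$: the subsequential limits of the un-recentered translates $W - s_n\ee_2 - f_W(s_n)\ee_3$ could a priori be $G + x_1^*\ee_1$ for various $x_1^*$, and the local smooth convergence on compact sets does not by itself rule this out. The paper's (terse) proof closes this gap via a different mechanism, namely the uniform exponential oscillation estimate \eqref{osc-inequ} established in the proof of Theorem \ref{prop:Asymp-GR}: going up along the wedge at the point $q_n \in \partial W_1 \cap \{x_2 = s_n\}$ near $\gamma(s_n)$, the graph piece $\xi_n$ converges to the fixed height $b$ (or $-b$) at a rate that is uniform in $n$, forcing the asymptotic planes of any subsequential limit $G + x_1^*\ee_1$ to equal $\{x_1 = \pm b\}$, whence $x_1^* = 0$. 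Your reflection-symmetry argument locating $\mathcal{M}_G$ in the middle plane of $G$'s minimal slab, and the observation that $0 \in \mathcal{M}_G$ by smooth convergence of the normals, are both correct and consistent with the paper's argument; what is missing is a valid substitute for the slab containment, and the right tool for that is \eqref{osc-inequ}.
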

    \begin{proof}
        Indeed, given $\epsilon>0.$ By Theorem \ref{prop:Asymp-GR}, we know
        \[
\lim_{s\to+\infty}(W-s \,\ee_2-f_W(s)\ee_3)=\mathcal{G}(b),
        \]
        where $\mathcal{G}(b)$ is the tilted grim reaper of slope $\displaystyle\lim_{s\to+\infty} f'_W(s).$ Using that the curve of minima of $\mathcal{G}(b)$ is on the plane $\{x_1=0\},$ we see that there exist $s_\epsilon>0$ so that 
        \[\min\{{\rm dist}\{\{x_1=-b\},\mathcal{M}_W\cap\{x_2\geq s_\epsilon\}\},{\rm dist}\{\{x_1=b\},\mathcal{M}_W\cap\{x_2\geq s_\epsilon\}\}\}\geq b-\epsilon.\] 
        This concludes the proof of the proposition.
    \end{proof}

We would like to end this section with the following control of the growth of the function $f_W.$

\begin{prop}\label{growth-function}  Let $W$ be a grim reaper type wing satisfying the normalization described in \eqref{normalization-wing}. Then, one has:
    \begin{itemize}
        \item[] If $\displaystyle c>\lim_{s\to+\infty}f_W'(s),$ then for all $d>0$ there exists $s(d)$ such that
    \[
    cs+d>f_W(s), \forall s\geq s(d).
    \]
    \item[] If $\displaystyle c<\lim_{s\to+\infty}f_W'(s),$ then for all $d>0$ there exists $s(d)$ such that
    \[
    cs+d<f_W(s), \forall s\geq s(d).
    \]
    \end{itemize}
\end{prop}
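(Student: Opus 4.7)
The plan is to reduce the proposition to the existence of the limiting slope, which has already been established by Theorem \ref{prop:Asymp-GR}. Set $L := \lim_{s \to +\infty} f_W'(s)$; by that theorem, $L \in \{-s(b), +s(b)\}$ is a well-defined finite number. With the limit in hand, each of the two implications becomes a one-variable calculus fact about integrating a derivative whose limit is $L$.

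The first step is to prove the first bullet. Given $c > L$, choose $\varepsilon := (c-L)/2 > 0$. By the existence of the limit $L$, there exists $s_0 \geq 0$ such that
\begin{equation*}
f_W'(s) \;<\; L + \varepsilon \;=\; c - \varepsilon, \qquad \forall s \geq s_0.
\end{equation*}
Integrating this inequality from $s_0$ to $s$ gives $f_W(s) \leq f_W(s_0) + (c-\varepsilon)(s - s_0)$. Rearranging,
\begin{equation*}
cs + d - f_W(s) \;\geq\; \varepsilon (s - s_0) + cs_0 + d - f_W(s_0),
\end{equation*}
which is strictly positive provided $s > s_0 + \varepsilon^{-1}(f_W(s_0) - c s_0 - d)$. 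Taking $s(d)$ to be the maximum of $s_0$ and this threshold proves the claim.

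The second bullet is completely symmetric: if $c < L$, set $\varepsilon := (L - c)/2 > 0$, pick $s_0$ with $f_W'(s) > c + \varepsilon$ for all $s \geq s_0$, integrate, and solve for the critical $s(d)$ in the same way.

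\textbf{Main obstacle.} There is essentially no analytic obstacle at this stage: the entire substance of the statement is encoded in the fact that $\lim_{s\to+\infty} f_W'(s)$ exists, which is Theorem \ref{prop:Asymp-GR}. That, in turn, is the nontrivial input, since it requires the exponential convergence estimate of Theorem \ref{thm:exponential} together with the normalization \eqref{normalization-wing} and the identification of the asymptotic tilted grim reaper from Corollary \ref{eq:corB}. Given those results, the present proposition reduces to the elementary mean-value-theorem argument sketched above.
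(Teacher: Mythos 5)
Your proof is correct and follows essentially the same route as the paper's: both proofs take as input the existence of the limit $L=\lim_{s\to+\infty}f_W'(s)$ guaranteed by Theorem \ref{prop:Asymp-GR}, bound $f_W'$ strictly below (resp. above) $c$ by a margin for large $s$, and integrate the resulting differential inequality to force $cs+d-f_W(s)$ eventually positive (resp. negative). The paper phrases this via an auxiliary function $g(s)=cs+d-f_W(s)$ with $g'>c'>0$, which is the same elementary argument with different bookkeeping.
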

\begin{proof}
    Let us first assume $\displaystyle c>\lim_{s\to+\infty}f_W'(s).$ In a similar way we can prove the second case. Take $c'>0$ so that $\displaystyle c>c'+\lim_{s\to+\infty}f_W'(s).$ Define $g(s)=cs+d-f_W(s).$ Our hypothesis ensures that we can find $g'(s)=c-f'_W(s)=c-c'-f'_W(s)+c'>c',$ for all $s\geq s_0,$ for some $s_0$ large enough. Hence, $g(s)>c' s+g(s_0)$ for all $s\geq s_0$ which implies that for some $s(d)$ we must have $g(s)>0$, whenever $s\geq s(d).$ This completes the proof.
\end{proof}

\subsection{Proofs of Theorems B, C and D}

At this stage, we have developed all the necessary tools needed in the proofs of the (non-)removable singularities and structure theorems.

\begin{proof}[Proof of Theorem B]
Recall the normalization $\mathbb{S}^1=\{(0,x_2,x_3) \in \R^3 \; : \;  x_2^2+x_3^2=1\}$, and denote $\mathbb{S}^1_+=\{(0,x_2,x_3) \in \mathbb{S}^1 \; : \;  x_2>0\}$ and $\mathbb{S}^1_-=\{(0,x_2,x_3) \in \mathbb{S}^1 \; : \;  x_2<0\}.$ For given $\oomega \in \mathbb{S}^1_{\pm}$ denote by $\mathcal{G}(\oomega,t)$ the unique (possibly tilted) grim reaper passing through $(t,0,0)$ and so that $\oomega$ is tangent to $\mathcal{G}(\oomega,t)$ and $\mathcal{G}(\oomega,t)$ is symmetric across $\{x_1=t\}.$ Notice that $\mathcal{G}(\oomega,t)=\mathcal{G}(-\oomega,t).$

Let $\Sigma \subset \R^3$ be a properly embedded translator with finite entropy, finite genus and contained in the slab
$$ \mathcal{S}_w= \{(x_1,x_2,x_3) \in \R^3 \: \colon \: |x_1| < w\}.$$

We define the set $\mathscr{R}(\Sigma)$ as follows: firstly define
\[
 \mathscr{R}_+(\Sigma):=\begin{Bmatrix}
\oomega\in\mathbb{S}^1_+ & :\;\exists\ \text{right\ wing}\ W\subseteq \Sigma\;:\;\displaystyle\lim_{y\to+\infty}(W-y\ee_2-f_W(y)\ee_3)=\mathcal{G}(\oomega,t_W)
\end{Bmatrix}
\]
and
\[
 \mathscr{R}_-(\Sigma):=\begin{Bmatrix}
\oomega\in\mathbb{S}^1_- & :\;\exists\ \text{left\ wing}\ W\subseteq \Sigma\;:\;\displaystyle\lim_{y\to-\infty}(W-y\ee_2-f_W(y)\ee_3)=\mathcal{G}(\oomega,t_W)
\end{Bmatrix}
\]
then we set $\mathscr{R}(\Sigma)=\mathscr{R}_+(\Sigma)\cup \mathscr{R}_-(\Sigma)$, here $t_W\in(-w,w)$ depends only on $W$ by Corollary \ref{eq:corB}.

Regarding the set $\mathscr{R}(\Sigma)$, since $\Sigma$ has finite wings by \cite[Theorem 1.4]{entropy}, we conclude that $\Bbbk:=\sharp \mathscr{R}(\Sigma)<\infty.$ Indeed, one has that $$\Bbbk\leq \omega_G(\Sigma),$$ where $\omega_G(\Sigma)$ denotes the number of grim reaper type wings of $\Sigma.$ 

If we parametrize $$ \mathbb{S}^1= \{(0,\cos \varrho, \sin \varrho) \; : \; \; \varrho \in [-\tfrac \pi2, \tfrac{3 \pi}2 ) \}. $$
Then, we can write 
\[\mathscr{R}(\Sigma)= 
\left\{\begin{array}{c|c}
    \oomega_j=(0,\cos \varrho_j, \sin \varrho_j)       & \begin{matrix}
j=1, \ldots , \Bbbk\\
-\tfrac \pi2 < \varrho_1<\varrho_2< \cdots < \varrho_i<\tfrac \pi2<\varrho_{i+1} < \cdots <\varrho_\Bbbk < \tfrac{3\pi}2
\end{matrix}    
      \end{array}\right\}
\]

    Notice that with this notation, one has: $$\mathscr{R}_+(\Sigma)=\{ \oomega_1, \ldots , \oomega_i\} \quad \mbox{and} \quad  \mathscr{R}_-(\Sigma)=\{ \oomega_{i+1}, \ldots , \oomega_\Bbbk\}.$$
If $\oomega\in\mathbb{S}^1\setminus \mathscr{R}(\Sigma)$ we set $\digamma_{\oomega}\equiv 0.$ 
\vskip 3mm

\noindent If $\oomega_j \in \mathscr{R}(\Sigma)$, we pick one wing $W$ such that $$\displaystyle\lim_{x_2\to+\infty}(W-x_2 \, \ee_2-f_W(x_2)\ee_3)=\mathcal{G}(\oomega_j,t_W).$$ Let $s_j$ denote the slope of $\mathcal{G}(\oomega_j,t_W).$ Then one has
\begin{eqnarray*}
    x_2 \, \ee_2+f_W(x_2)\ee_3&=& x_2 \, (\ee_2+s_j\ee_3)+(f_W(x_2)-s_j\, x_2)\ee_3 \\ &=& x_2 \cos(\varrho_j)\oomega_j+(f_W(x_2)-s_j \,x_2)\ee_3.
    \end{eqnarray*} 
    Hence, if we set $r=x_2 \, \sec\varrho_j$ and $\digamma_{\oomega_j}(r)=-(f_W(x_2)-s_j x_2)$, one gets (using Theorem \ref{prop:Asymp-GR}) that: 
\[\lim_{r\to+\infty}\digamma_{\oomega_j}'(r)=0\ \text{and}\ \mathcal{G}(\oomega_j,t_W)\subseteq\lim_{r\mapsto+\infty}(\Sigma-r\oomega+\digamma_{\oomega_j}(r)\ee_3)=:\Sigma_\infty(\oomega),\]

\noindent Our choice of the function $\digamma_{\oomega_j}$, together with Theorem \ref{thm-A}, Corollary \ref{eq:corB}, and Propositions \ref{sideways_limits}, \ref{cor:remov_sing}, and \ref{conseq.} imply that \(\Sigma_\infty(\oomega_j)\) is a (possibly empty) finite multiset of planes and (possibly tilted) grim reapers tangent to $\oomega_j,$ for all $j\in \{1,\ldots,\Bbbk\}.$ Notice that if $W'$ is another wing of grim reaper type, tangent to $\oomega_j$ at infinity, then the contribution of $W'$ to $\Sigma_\infty(\oomega_j)$ is:
\begin{itemize}
    \item A (possibly tilted) grim reaper, tangent to $\oomega_j$, if $|f_W -f_{W'}$ is bounded (Corollary \ref{eq:corB}),
    \item A couple of vertical planes, if $|f_W -f_{W'}$ is unbounded (Proposition \ref{conseq.}.)
\end{itemize}

Finally, given any $\oomega\in\mathbb{S}^1\setminus(\mathscr{R}(\Sigma)\:\cup\:\{-\ee_3\}),$ we can write it as $\oomega=(0,\cos \varrho, \sin \varrho),$ where $\varrho \in [-\pi/2, 3 \pi/2) \setminus \{\varrho_1, \ldots , \varrho_\Bbbk\}. $ The behavior of the set $\Sigma_\infty(\oomega)$ is given by:
\begin{itemize}
    \item For $\varrho \in (-\pi/2, \varrho_1)$, then $\Sigma_\infty(\omega)$ consists of  the set of planes $\mathscr{P}_+$ which contains $\omega_P(\Sigma)/2$ planes with multiplicity, corresponding to the upper limit of all the right planar wings of $\Sigma.$ This is a direct consequence of Propositions \ref{sideways_limits} and \ref{cor:remov_sing}. Notice that if $\Sigma$ does not have planar wings, then $\mathscr{P}_+$ is empty.

     \item By induction, we conclude that whenever $\varrho \in (\varrho_j,\varrho_{j+1})$, $j<i$, the limit set $\Sigma_\infty(\oomega)$ consists of the previous set of planes $\mathscr{P}_+$ plus a pair of planes for each wing of grim reaper type tangent at infinity to $\oomega_j$ (Proposition \ref{conseq.}.)

\item For $\varrho \in (\varrho_i,\varrho_{i+1})$ the limit set $\Sigma_\infty(\oomega)$ consists of the previous set of planes $\Sigma_\infty(\tilde{\oomega})$ associated to any $\tilde{\oomega}\in(\varrho_{i-1},\varrho_i)$ plus a pair of planes for each wing of grim reaper type tangent at infinity to $\oomega_i$ (Proposition \ref{conseq.}.) In this case, Theorem \ref{thm-A} says that this set contains exactly $\lambda(\Sigma)$ planes.

  \item By induction, we find that whenever $\varrho \in (\varrho_{i+j},\varrho_{i+j+1})$, $j<\Bbbk-i$, the limit set $\Sigma_\infty(\oomega)$ consists of the previous set of planes $\Sigma_\infty(\bar{\oomega})$ associate to any $\bar{\oomega}\in(\varrho_{i+j-1},\varrho_{i+j})$ minus a pair of planes for each wing of grim reaper type tangent at infinity to $\oomega_{i+j}$ (Proposition \ref{conseq.}.)

  \item Finally, for $\varrho \in (\varrho_\Bbbk,3\pi/2)$, consists of  the set of planes $\mathscr{P}_-$ which contains $\omega_P(\Sigma)/2$ planes with multiplicity,, corresponding to the upper limit of all the left planar wings of $\Sigma$ ( Propositions \ref{sideways_limits} and \ref{cor:remov_sing}.)
\end{itemize}

In summary, for $\oomega \in \mathbb{S}^1\setminus(\mathscr{R}\cup \{-\ee_3\})$, the behavior of the limit set $\Sigma_\infty(\oomega)$ follows a kind of monotonicity. We start with a collection of planes (possibly empty), $\mathscr{P}_+$ associated with the right planar-type wings. As we move counterclockwise along $\mathbb{S}^1$, the number of planes (which is locally constant) increases by an even amount each time we cross a direction asymptotic to a right grim–reaper-type wing. This number of planes reaches its maximum when we are near the vertical direction, $\ee_3$. This maximum coincides with the entropy of $\Sigma$. From that point on, the number of planes begins to decrease (again by an even amount) each time we cross a direction asymptotic to a left grim–reaper-type wing. Finally, when $\oomega$ is once again close to $-\ee_3$, we return to the family of planes $ \mathscr{P}_-$ associated with the left planar-type wings.
\end{proof}

\begin{proof}[Proof of Theorem C] As we have already explained in the proof of Theorem \refThmBnospace, if $\oomega\in\mathbb{S}^1\setminus\mathscr{R}(\Sigma)$, Theorem \ref{thm-A} and Propositions \ref{conseq.} and \ref{growth-function} ensure that each connected component of $\Sigma_\infty(\oomega)$  is a plane with certain multiplicity. 

On the other hand, when $\oomega\in\mathscr{R}(\Sigma)$, our choice of the function $\digamma_{\oomega}$ implies that $\Sigma_\infty(\oomega)$ contains at least one (possibly tilted) grim reaper tangent to $\oomega$ and (possibly empty) finite planes. Observe again that, in light of the results we are invoking, both the limit planes and the limit grim reapers may arise with a certain multiplicity, rather than appearing as single copies. This finishes the proof of Theorem C.
\end{proof}

Recall that Theorem D's second part has already been proven in Section \ref{sec:counterexamples}.
Before finalizing the proof Theorem~\refThmD (namely, the precise formulation of the first part of Theorem~\refThmDnospace), it is useful to introduce a geometric notion that will play an important role in our arguments. We shall refer to this notion as a flap. Informally, a flap can be thought of as something larger than a wing, in the sense of the collapsed translators described in~\cite{entropy}, but still smaller than an entire topological end. This makes it a natural bridging concept, positioned between the geometric detail of a wing and the global extent of an end, and well-suited for the structural analysis we require.

\begin{dfn}[Flaps]\label{flip-flop-flap}
Let us consider the sets $(-w,w) \times (-t,t) \times (-s, +\infty)$, where $t$ and $s$ are positive constants given by Proposition \ref{eq:decomposition-1}. We call a flap of $\Sigma$ any connected component of the difference $$\Sigma \setminus (-w,w) \times (-t,t) \times (-s, +\infty).$$
Notice that this is an intermediate notion, between that of a wing and a topological end of the surface $\Sigma$.

We say that a flap of $\Sigma$ is of pitchfork type, if it contains the graph of a smooth function $u:\Omega\to\R,$ where $\Omega=\left((-\infty,-t)\times \R \right) \cup  \left([-t,t] \times (-\infty,-s) \right)\cup  \left((t,\infty) \times \R\right)$ is an unbounded $U$-shaped domain in the $(x_2,x_3)$-plane, for some $s,t>0$, such that
\[
\forall\; x_3\in\R: \quad \displaystyle\lim_{x_2\to+\infty} u(x_2,x_3)\neq \displaystyle\lim_{x_2\to-\infty} u(x_2,x_3).
\]
\end{dfn}

We can therefore now conclude this section by stating and proving a more precise version of the first half of Theorem~\refThmDnospace, as follows.

\begin{thm}\label{Thm-D}
Let $\Sigma$ be a complete embedded translator with finite genus, finite entropy, and finite width. The multiset-valued map $\oomega\in\mathbb{S}^1\mapsto\Sigma_\infty(\oomega)$ is discontinuous at all points of $\mathscr{R}(\Sigma)$ with respect to Hausdorff distance.

Furthermore, there exists an open neighborhood $B=B(\Sigma)$ of $-\ee_3$ in $\mathbb{S}^1$ such that:
\begin{itemize}
    \item [(i)] either $\Sigma_\infty(-\ee_3)=\Sigma_\infty(\oomega)$ for all $\oomega\in B(\Sigma),$
    \item [(ii)] or $\Sigma_\infty(-\ee_3)\neq\Sigma_\infty(\oomega)$ for all $\oomega\in B(\Sigma)\setminus\{-\ee_3\}.$
\end{itemize}
In the first case, $\Sigma$ does not have flaps of pitchfork type. In the second case $\Sigma$ must have at least one flap of pitchfork type.
\end{thm}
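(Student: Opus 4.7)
The plan is to prove the theorem in two independent parts: the discontinuity at points of $\mathscr{R}(\Sigma)$, and the dichotomy at $-\ee_3$.

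For the discontinuity, I would first recall that $\mathscr{R}(\Sigma)$ is a finite subset of $\mathbb{S}^1\setminus\{\pm\ee_3\}$ (as established in the proof of Theorem~\refThmBnospace), so for any $\oomega_0\in\mathscr{R}(\Sigma)$ there is a punctured neighborhood $U\setminus\{\oomega_0\}\subset \mathbb{S}^1\setminus\mathscr{R}(\Sigma)$. Theorem~\refThmCnospace(ii) provides that $\Sigma_\infty(\oomega_0)$ contains a (possibly tilted) grim reaper cylinder $\mathcal{G}$ tangent to $\oomega_0$, placed in a fixed compact region of $\R^3$ by the recentering $\digamma_{\oomega_0}$ from the proof of Theorem~\refThmBnospace. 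For $\oomega\in U\setminus\{\oomega_0\}$, on the contrary, $\digamma_\oomega\equiv 0$ and Theorem~\refThmCnospace(i) yields that $\Sigma_\infty(\oomega)$ consists of vertical planes alone. Since a curved grim reaper cylinder cannot be Hausdorff-approximated by any finite union of planes on a sufficiently large ball of $\R^3$, the Hausdorff distance $d_H(\Sigma_\infty(\oomega_0),\Sigma_\infty(\oomega))$ is bounded below by a positive constant as $\oomega\to\oomega_0$, establishing the discontinuity.

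For the dichotomy, I would apply Theorem~\refThmBnospace(i) to shrink $B$ so that the map $\oomega\mapsto\Sigma_\infty(\oomega)$ is constant on each component of $B\setminus\{-\ee_3\}$; from the proof of Theorem~\refThmBnospace, these two constant values are the multisets $\mathscr{P}_+$ and $\mathscr{P}_-$ of upper-limit planes of the right/left planar wings. Enumerate the downward flaps by $i$, let $\kappa_i:H\to\R$ be their graphing functions from Theorem~\ref{structure-thm}, and write $c_\pm^i$ for the sideways limits. Proposition~\ref{inhomog_down} gives $d_i := \lim_{x_3\to-\infty}\kappa_i(x_2,x_3) = (c_-^i+c_+^i)/2$, and by Theorem~\ref{thm-A}(2) we have $\Sigma_\infty(-\ee_3) = \{\{x_1 = d_i\}\}_i$ while $\mathscr{P}_\pm = \{\{x_1 = c_\pm^i\}\}_i$. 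When $\Sigma$ has no pitchfork flaps, $c_-^i = c_+^i$ for every $i$, hence $\mathscr{P}_+=\mathscr{P}_- = \Sigma_\infty(-\ee_3)$, yielding case~(i).

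If instead $\Sigma$ has at least one pitchfork flap, I need to rule out the mixed possibility $\Sigma_\infty(-\ee_3) = \mathscr{P}_+$ (and symmetrically $=\mathscr{P}_-$). The key is to exploit that $\Sigma$ is embedded: any two distinct downward sheets $\kappa_i,\kappa_j$ over the connected overlap of their H-shaped domains cannot coincide in value, so $\kappa_i - \kappa_j$ has constant sign there. Taking the three limits $x_2\to\pm\infty$ and $x_3\to-\infty$ yields simultaneously
\[
\operatorname{sign}(c_-^i-c_-^j) \,=\, \operatorname{sign}(c_+^i-c_+^j) \,=\, \operatorname{sign}(d_i-d_j).
\]
Hence, after relabeling so that $c_-^1 \leq \cdots \leq c_-^n$, the sequences $(c_+^i)$ and $(d_i)$ are also monotone non-decreasing. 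A multiset equality $\{d_i\} = \{c_+^i\}$ would then force $d_i = c_+^i$ term-by-term, hence $c_-^i = c_+^i$ for every $i$, contradicting the pitchfork assumption. The symmetric argument excludes $\{d_i\} = \{c_-^i\}$, so case~(ii) holds. The main obstacle will be this embedding-based ordering step: verifying that any two H-shaped domains have connected overlap extending to infinity in the three required directions, and handling the degeneracies when $c_\pm^i = c_\pm^j$ for distinct flaps (resolved by grouping coincident limits). Both should follow from the uniform structure of the H-domains in Theorem~\ref{structure-thm} together with the properness of $\Sigma$.
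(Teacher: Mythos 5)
Your proposal is correct and follows essentially the same route as the paper's proof, using the same three ingredients: (a) the locally constant structure of $\oomega\mapsto\Sigma_\infty(\oomega)$ away from $\mathscr{R}(\Sigma)\cup\{-\ee_3\}$ together with the grim-reaper-vs.-planes dichotomy of Theorem C, for the discontinuity at $\mathscr{R}(\Sigma)$; (b) the averaging formula $d_i=(c_-^i+c_+^i)/2$ from Proposition~\ref{inhomog_down} together with the wing bijection giving $\mathscr{P}_\pm=\{c_\pm^i\}_i$, for the ``no pitchfork implies (i)'' direction; and (c) embeddedness of the downward sheets $\kappa_i$, for the ``pitchfork implies (ii)'' direction.

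The only place where you genuinely diverge is in the execution of step (c). The paper asserts (with only a brief appeal to embeddedness) the existence of specific witness planes $d_+\in\mathscr{P}_+\setminus\mathscr{P}_-$, $d_-\in\mathscr{P}_-\setminus\mathscr{P}_+$ with $d_k=(d_++d_-)/2\notin\mathscr{P}_\pm$; as stated, such simultaneous witnesses need not exist when several pitchfork flaps have overlapping limit values (e.g.\ $(c_-^1,c_+^1)=(0,2)$, $(c_-^2,c_+^2)=(2,4)$, which is compatible with embeddedness and gives $\mathscr{P}_+=\{2,4\}$, $\mathscr{P}_-=\{0,2\}$, $\Sigma_\infty(-\ee_3)=\{1,3\}$, with no valid choice of $(d_+,d_-,d_k)$). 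Your multiset-sorting argument — using the embeddedness sign $\kappa_i\lessgtr\kappa_j$ on the connected overlap of the $H$-domains to obtain a single total order that simultaneously sorts $(c_-^i)$, $(c_+^i)$, $(d_i)$, and then concluding from $\{d_i\}=\{c_+^i\}$ that $d_i=c_+^i$ term-by-term, hence $c_-^i=c_+^i$ — gives the needed inequality $\Sigma_\infty(-\ee_3)\neq\mathscr{P}_\pm$ cleanly and covers the degenerate cases. This is a more complete version of what the paper intends; the two degeneracy points you flag (connectedness of overlapping $H$-domains after taking uniform $s,t$, and consistent tie-breaking when some $c_\pm^i=c_\pm^j$) both go through as you expect, the latter being automatic from the identity $d_i-d_j=\tfrac12\bigl[(c_-^i-c_-^j)+(c_+^i-c_+^j)\bigr]$ combined with the constant sign of $\kappa_i-\kappa_j$.
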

\begin{proof}
We first observe that, as we have already explained in the proof of Theorem \refThmB above, the map $\oomega\mapsto\Sigma_{\infty}(\oomega)$ is discontinuous at all points of $\mathscr{R}(\Sigma).$ 

Regarding $\oomega_0=-\ee_3,$ we set $d=\text{dist}_{\mathbb{S}^1}(-\ee_3,\mathscr{R}(\Sigma))>0.$ If $\Sigma$ does not have any flap of pitchfork type, then Theorem \ref{thm-A}, Corollary \ref{cor:exterior} and Propositions \ref{sideways_limits}, \ref{inhomog_down}, \ref{prop:planar}, and \ref{growth-function} imply 
\[
\Sigma_\infty(\oomega)=\Sigma_\infty(-\ee_3), \; \; \forall \oomega\in B:=B_{\mathbb{S}^1}(-\ee_3,d).
\]

On the other hand, in light of the notation of the proof of Theorem \refThmBnospace, if $\Sigma$ has at least one flap of pitchfork type, then \cite[Corollaries 8.3 and 8.5]{entropy}, Proposition \ref{sideways_limits}, and Theorem \ref{thm-A} give
\[
\Sigma_\infty(\oomega)=\begin{cases}
\mathscr{P}_+, & \text{if}\ \oomega\in\mathbb{S}_+^1\cap B\\
\mathscr{P}_-, & \text{if}\ \oomega\in\mathbb{S}_-^1\cap B
\end{cases}\]
and
\[
\Sigma_\infty(-\ee_3)=\{x_1=d_1\} \cup \ldots \cup\{x_1=d_l\},
\]
for some $l\geq 1, $ where $d_1\leq d_2\leq \cdots \leq d_l$.  In turn, Proposition \ref{inhomog_down} implies $\Sigma_\infty(\oomega)\neq\Sigma_\infty(-\ee_3),\ \forall\;\oomega\in B=B_{\mathbb{S}^1}(-\ee_3,d)\setminus\{-\ee_3\}.$ Indeed, those results and the embeddedness of $\Sigma$ guarantee that there must exist two planes $\{x_1=d_+\}\in\mathscr{P}_+$ and $\{x_1=d_-\}\in\mathscr{P}_-$, with $d_+\neq d_-,$ with $\{x_1=d_\pm\}\notin\mathscr{P}_\mp,$ and $d_k$ such that $d_k=(d_++d_-)/2,$ and $\{x_1=d_k\}\notin \mathscr{P}_\pm,$ which implies the desired claim. This completes the proof of the theorem.
\end{proof}
\begin{rem}
    Note that when $\Sigma$ possesses more than one wing whose asymptotic (possibly tilted) grim reaper cylinder is tangent to the same $\oomega$, then the above does not a priori rule out that some other grim reaper type wings are lost in the limit as we follow asymptotically one chosen wing. This could occurs because we lack information about the asymptotic expansion of the function $f_W$ near infinity. Although Proposition \ref{growth-function} establishes the expansion is sub-quadratic, we cannot guarantee that the difference of $f_W$ of different wings behave at infinity in a similar way.
\end{rem}

\section{Collapsed translators of finite type}\label{sec:collapsed}

In \cite{HMW-A}, Hoffman, Martín and White introduced the concept of translators of finite type. In order to understand this concept, it is necessary to recall that Ilmanen showed \cite{ilmanen} $\Sigma$ is a translator in $\R^3$ if and only if it is a minimal surface with respect to the conformal metric 
\begin{equation}\label{IlmanenMetric}
g= {\rm e}^{x_3} \left( dx_1^2+d x_2^2+dx_3^2\right)
\end{equation}

Suppose that $\Sigma$ is a proper minimal surface  in $M = (\R^3,g)$.
Suppose that $F:\R^3\to\R$ is a continuous function  such that
 the level sets of $F$ are minimal surfaces, and 
 for each $t$, $\{F=t\}$ is in the closure of $\{F>t\}$ and of $\{F<t\}$.
Suppose also that $F$ is nonconstant on each connected component of $\Sigma$. 
Then $\mathsf{N}(F|_\Sigma)$ denotes the number interior critical points of $F|_\Sigma$ (counting multiplicity).

\noindent{}There are two such types of functions on $(\R^3,g)$ which are particularly interesting:

Firstly, if $\vv$ is a horizontal unit vector in $\RR^3$, then
 the function 
\begin{equation}\label{definition-F_v}
\begin{aligned}
&F_\vv: \RR^3\rightarrow \RR, \\
&F_\vv(p) = \langle \vv, p \rangle
\end{aligned}
\end{equation}
is a $g$-minimal foliation function.

Secondly, suppose that $U$ is $\RR^2$ or an open strip in $\RR^2$ and that $h:U\to\RR$ is a function
whose graph is a complete translator.  Then
\begin{equation}\label{general-H}
\begin{aligned}
   &\mathcal{H}: U\times \RR \rightarrow \RR, \\
   &\mathcal{H}(x_1,x_2,x_3) = x_3 - h(x_1,x_2)
\end{aligned}
\end{equation}

\begin{dfn}[Translators of finite type]
    \label{finite-type-definition}
We say that a translator $\Sigma$ in $\R^3$ is of finite type
provided there are finite numbers $c$, $K$, and $k$ such that
\begin{enumerate}
\item\label{area-item-def} $\area(\Sigma\cap\BB)\le cr^2$ for every ball $\BB$ of radius $r$.
\item\label{curvature-item-def} For every $p\in \Sigma$, the norm of the second fundamental form satisfies:
\[
    |A(p)| \, \min\{1, \dist(p,\partial \Sigma)\} \le K.
\] 
\item\label{Fv-item} $\mathsf{N}(F_\vv|_\Sigma)\le k$ for each horizontal unit vector $\vv$.
\item\label{H-item} $\mathsf{N}(\mathcal{H}|_\Sigma)\le k$ for every function $\mathcal{H}(x_1,x_2,x_3)=x_3-h(x_1,x_2)$ whose level sets
are grim reaper surfaces (tilted or untilted).
\end{enumerate}
\end{dfn}

\begin{remark}\label{entropy-remark}
Condition~\eqref{area-item-def} in Definition~\ref{finite-type-definition} is equivalent to the condition that $\lambda(\Sigma)<\infty$ (see ~\cite[Theorem~9.1]{BrianBoundary})
.
\end{remark}

\begin{remark}\label{general-position-remark}
Let $\Hh$ be the collection of all functions $\mathcal{H}$ as in condition~\eqref{H-item} of Definition~\ref{finite-type-definition}.  
By lower semicontinuity (\cite[Theorem~4.2]{HMW-A}), to prove
$\mathsf{N}(\mathcal{H}|_\Sigma)\le k$ for all $\mathcal{H}\in \Hh$, it suffices to prove it for a dense set of $\mathcal{H}\in \Hh$.  Likewise,
to prove $\mathsf{N}(F_\vv|_\Sigma)\le k$ for all horizontal unit vectors $\vv$, it suffices to prove it for a dense
set of such $\vv$.
\end{remark}

In the present paper, we are mainly interested in translators without boundary. 
For a surface without boundary,
$\dist(p,\partial \Sigma)=\infty$, so that the curvature bound in~\eqref{curvature-item-def} of Definition~\ref{finite-type-definition} simplifies to
\[
  |A(p)| \le K, \quad \mbox{for all $p\in \Sigma$.}
\]

\begin{thm} \label{th:finite-type}
    Let $\Sigma$ be a properly embedded, collapsed translator with $\lambda(\Sigma)<\infty$ and $\genus(\Sigma) <\infty.$ Then $\Sigma$ is of finite type.

\end{thm}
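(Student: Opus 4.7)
My plan is to verify the four conditions of Definition \ref{finite-type-definition} in turn, relying on the geometric decomposition of $\Sigma$ from Theorem \ref{structure-thm} and the asymptotic convergence established in Theorem \ref{thm-A} and Corollary \ref{eq:corB}. Condition (1) is immediate: finite entropy is equivalent to quadratic area growth by Remark \ref{entropy-remark}. For condition (2), since $\partial\Sigma=\emptyset$ we need only a uniform bound $|A|\le K$ on $\Sigma$. I would argue this piecewise: on the compact central region $\Sigma\cap\{|x_2|\le t,\,|x_3|\le s\}$, smoothness and precompactness give bounded curvature; on each planar wing, convergence to vertical planes (Theorem \ref{thm-A}) together with the improved Hessian estimates of Proposition \ref{ImprovedHessian} yields $|A|\to 0$ at infinity; on each grim reaper type wing, Corollary \ref{eq:corB} gives $C^\infty$ convergence to a tilted grim reaper whose second fundamental form is uniformly bounded. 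Patching these pieces together gives the uniform estimate.

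For condition (3), critical points of $F_\vv|_\Sigma$ are precisely the points where $\vv\parallel\nu$. By Remark \ref{general-position-remark}, it suffices to prove a uniform bound $\mathsf{N}(F_\vv|_\Sigma)\le k$ for $\vv$ in a dense subset of horizontal unit vectors. On a planar wing with limit plane $\{x_1=c\}$, the asymptotic $\nu\to\pm\ee_1$ guarantees, for generic $\vv\not\parallel\ee_1$, a neighborhood of infinity in the wing containing no critical points. On a grim reaper type wing, the $C^\infty$ convergence to a tilted grim reaper cylinder reduces the count at infinity to the count on the model, which is bounded by a constant depending only on the transversality between $\vv$ and the cylinder axis; excluding the measure-zero set of special directions keeps us within a dense subset. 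The central region contributes only finitely many critical points by compactness. Summing across the finitely many wings of Theorem \refThmBnospace then yields a uniform $k$, valid on a dense set of $\vv$ and hence, by semicontinuity, on all of $\mathbb{S}^1\cap\{x_3=0\}$.

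Condition (4) follows by the same scheme, with $F_\vv$ replaced by $\mathcal{H}$ whose level sets are grim reaper translators. On planar wings, $\Sigma$ becomes asymptotically vertical and thus transverse to the level grim reapers, eliminating asymptotic tangencies. On grim reaper type wings, the tangency count between two grim reaper cylinders is elementary and finite and transfers to $\Sigma$ via Corollary \ref{eq:corB} and the $C^\infty$ convergence there. The compact core again contributes finitely many tangencies. The main obstacle throughout conditions (3) and (4) is uniformity in the parameter ($\vv$ or $\mathcal{H}$): a priori, critical points could escape to infinity along a wing as parameters vary, or coalesce into degenerate critical sets. The first pathology is ruled out by the smooth, \emph{uniquely determined} asymptotic models provided by Theorems \refThmA--\refThmC, which rigidly constrain the Gauss map of $\Sigma$ at infinity; the second is handled by the lower semicontinuity of $\mathsf{N}$ in Remark \ref{general-position-remark}, allowing a reduction to parameters in general position where Morse--Radó transversality applies cleanly. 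It is precisely this combined control at infinity, furnished by the full asymptotic structure theory built in the preceding sections, that secures the uniform bounds and completes the verification.
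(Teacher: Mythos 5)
Your treatment of condition (1) matches the paper exactly. For conditions (2)--(4), you take a genuinely different route from the paper, and for (3) and (4) the route has a real gap.

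For condition (2) the paper does not argue piecewise. Instead it runs a clean contradiction by blowup: if $\lambda_i:=|A(p_i)|\to\infty$, then by \cite[Theorem 1]{BrianCompactness} the rescaled surfaces $\lambda_i(\Sigma-p_i)$ subconverge to a complete, embedded, non-flat minimal surface in $\R^3$, which by finite topology of $\Sigma$ must be simply connected, and simply connected complete embedded minimal surfaces of finite total curvature are planes, a contradiction. Your piecewise decay argument is not unreasonable, but it has to contend with the transition zones near $\partial\Omega$ of each graphical piece (where the estimates of Proposition \ref{ImprovedHessian} give no bound, as they require $\dist(p,\partial\Omega)\ge 1$), and with the noncompact overlap regions between $\Sigma_j^{\rm up}$ and $\Sigma_i^{\rm down}$; patching these would need more care than you supply. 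The blowup argument avoids all of this.

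For conditions (3) and (4) the gap is the one you yourself name and then wave away: uniformity of the bound $k$ in the parameter. Knowing, for each fixed generic $\vv$, that a neighborhood of infinity in each wing carries no critical points does not produce a $\vv$-independent integer $k$ bounding $\mathsf{N}(F_\vv|_\Sigma)$, because that neighborhood shrinks as $\vv\to\pm\ee_1$, so the ``compact core'' that could harbor critical points grows without bound, and $\mathsf{N}$ is only \emph{lower} semicontinuous in $\vv$, which controls coalescence but says nothing against the count increasing along a sequence $\vv_n$. The appeal to the ``rigidly constrained Gauss map at infinity'' is a qualitative statement and does not close this. The paper sidesteps the issue entirely: Theorem \refThmA is used to show that $F_\vv|_\Sigma$ (for $\vv\ne\pm\ee_1$) and $\mathcal{H}|_\Sigma$ are \emph{proper} maps, and then the Morse--Rad\'o theorem \cite[Theorem 4]{morserado} yields the explicit, parameter-free bounds
\[
\mathsf{N}(F_\vv|_\Sigma)\le\lambda(\Sigma)+\tfrac12\,\omega_P(\Sigma)-\chi(\Sigma),
\qquad
\mathsf{N}(\mathcal{H}|_\Sigma)\le\lambda(\Sigma)-\chi(\Sigma),
\]
which are finite by hypothesis and independent of $\vv$ and $\mathcal{H}$. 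The missing ingredient in your argument is exactly this topological input from Morse--Rad\'o theory; without it, I do not see how you would produce a uniform $k$.
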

\begin{proof}
As we mentioned in Remark \ref{entropy-remark}, Statement \eqref{area-item-def} in Definition \ref{finite-type-definition} is a direct consequence of the fact $\lambda(\Sigma)<\infty.$

In order to prove Statement \eqref{curvature-item-def} we proceed by contradiction. As $\partial \Sigma=\varnothing$, then we are assuming that there exists a sequence $p_i \in \Sigma$ so that \[\lambda_i:=|A(p_i)| \to +\infty.\] It is clear that $|p_i|\to +\infty.$

Then, using Theorem 1 in \cite{BrianCompactness}, we have that $\lambda_i(\Sigma-p_i)$ converges to a complete, non-flat, embedded minimal surface in $\RR^3.$ However, as $\Sigma$ has finite topology, then the limit of $\lambda_i(\Sigma-p_i)$ must be simply connected. But this is contradicts the fact that the limit is non-planar, because the only complete embedded, simply connected  minimal surfaces with finite total curvature are planes. This contradiction proves statement \eqref{curvature-item-def}.

In order to prove Statement \eqref{Fv-item}, we are going to consider $\vv$ a horizontal unitary vector $\vv \neq \pm \ee_1.$ Theorem A implies that the map \[ F_\vv: \Sigma \longrightarrow \R,\] is proper.
Then, we can make use \cite[Theorem 4]{morserado} to deduce that \begin{equation} \label{eq:prey}\mathsf{N}(F_\vv|_\Sigma ) \leq \lambda(\Sigma)+\frac 12 \,\omega_P(\Sigma)-\chi(\Sigma),\end{equation}
where we recall that $\omega_P(\Sigma)$ represents the number of planar wings of $\Sigma$ (which is always even). Taking into account Remark \ref{general-position-remark}, we conclude that $\Sigma$ item \eqref{Fv-item} in Definition \ref{finite-type-definition}.

Finally, let us consider $x_3=h(x_1,x_2)$ a grim reaper (tilted or untilted) whose plane of symmetry is not parallel to the plane $\{x_1=0\},$ and define $\mathcal{H}(x_1,x_2,x_3)=x_3-h(x_1,x_2).$ 
As in the previous paragraph, we can use Theorem A to deduce that $\mathcal{H}: \Sigma \to\R$ is a proper map.  Hence, using again \cite[Theorem 4]{morserado}, we see that \[\mathsf{N}(\mathcal{H}|_\Sigma) \leq \lambda(\Sigma)-\chi(\Sigma).\]
Taking into account Remark \ref{general-position-remark}, we conclude that $\Sigma$ also satisfies \eqref{H-item} in Definition \ref{finite-type-definition}.
\end{proof}

\section{A classification result for translators in half-slabs}\label{sec:half-slab}

The main goal of this section is to establish a version of the half-space theorem for translators. To be more precise, we are going to study {\em simply connected}, collapsed translators with finite entropy whose limit planes, as $x_3 \to +\infty$, have multiplicity one. We will see that if we impose the additional hypothesis that the translator is contained in a half-slab, then we will show that it must be either a tilted grim reaper or a $\Delta$-wing.

\begin{dfn} \label{def:half-slab}
We say that a surface $\Sigma$ lies in a half-slab, if there are $\theta\in[0,\pi), c\in\R$ and $w>0$ so that 
\[
\Sigma\subseteq \mathcal{S}(w,\theta,c),
\]
where $\mathcal{S}(w,\theta,c):=\{(x_1,x_2,x_3)\in\R^3\;:\;|x_1|\leq w,\ \cos\theta x_2+\sin\theta x_3\geq c\}.$ 
\end{dfn}

Throughout this section, we will assume that $\Sigma$ is a complete, connected translating soliton of finite genus and finite width with only grim reaper-type wings. Furthermore, we assume that
\[
\Sigma \subseteq \{-w < x_1 < w\}, \quad \text{where } 2w = \width(\Sigma).
\]
We further assume that $\Sigma$ satisfies the multiplicity-one property at infinity. Specifically, this means that
\begin{equation}\label{limit-1}
\Sigma - t \ee_3 \to \{x_1 = w_1\} \cup \ldots \cup \{x_1 = w_{\lambda(\Sigma)}\} \quad \text{as } t \to +\infty
\end{equation}
with multiplicity one, where $\{-w = w_1 < \cdots < w_{\lambda(\Sigma)} = w\}$ is the partition from Theorem \ref{thm-A}.

Under these assumptions, \cite[Theorem 4.3 and Corollary 8.3]{entropy} say we can decompose $\Sigma$ as follows:
\begin{equation*}\label{division}
\Sigma = \bigcup_{i=1}^{\frac{\omega_G(\Sigma)}{2}} W_i^- \cup \Sigma(-s, s) \cup \bigcup_{i=1}^{\frac{\omega_G(\Sigma)}{2}} W_i^+,
\end{equation*}
where $W_i^+$ (respectively, $W_i^-$) denotes the right (respectively, left) wings of $\Sigma$, $\Sigma(-s, s) = \Sigma \cap \{|x_2| < s\}$, with $s$ a fixed large constant. We order the index $i$ so that if we set
\[
W_i^+ - t \ee_3 \to \{x_1 = \kappa^+_i, x_2 \geq s\} \cup \{x_1 = \rho^+_i, x_2 \geq s\} \quad \text{as } t \to +\infty,
\]
and
\[
W_i^- - t \ee_3 \to \{x_1 = \kappa^-_i, x_2 \leq -s\} \cup \{x_1 = \rho^-_i, x_2 \leq -s\} \quad \text{as } t \to +\infty,
\]
then, we have the following inequalities:
\begin{equation}\label{order}
\kappa^\pm_1 < \kappa^\pm_2 < \cdots < \kappa^\pm_{\omega_G(\Sigma)/2}, \quad \text{and} \quad \rho_i^\pm > \kappa_i^\pm, \quad \forall i \in \{1, \ldots, \omega_G(\Sigma)/2\}.
\end{equation}

The next lemma summarizes the properties we will use later. For simplicity, we will state it only for the right wings.
\begin{lem}\label{1-property}
The following properties hold:
\begin{itemize}
    \item[(i)] We have $\rho_i^+ \neq \rho_j^+$ and $\kappa_j^+ \neq \rho_i^+$ for all $i\neq j$;
    \item[(ii)] $\kappa^+_1 = w_1$;
    \item[(iii)] Either $\kappa^+_{2} = w_{2}$ or $\kappa^+_{2} = w_{3}$;
    \item[(iv)] $\kappa^+_{2} = w_{3}$ if and only if $\rho^+_1 = w_2$;
    \item[(v)] If $\kappa^+_i = w_j$, then $\rho^+_i = w_{j + 2L(i) + 1}$ for some $L(i) \geq 0$.
\end{itemize}
\end{lem}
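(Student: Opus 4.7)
The plan is to use the multiplicity-one hypothesis to set up a bijection between the asymptotic data $\{\kappa_i^+,\rho_i^+\}_i$ of the right wings and the partition $\{w_1,\ldots,w_{\lambda(\Sigma)}\}$, and then read off (i)--(v) by combining this bijection with the ordering of the $\kappa_i^+$ and with embeddedness of $\Sigma$.

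The bijection is straightforward to set up: the portion of $\Sigma-t\ee_3$ over $\{x_2\geq s\}$, as $t\to+\infty$, is contributed only by the right wings, since left wings live in $\{x_2\leq -s\}$ and the central region $\Sigma(-s,s)$ is bounded in $x_2$. Under the hypothesis $\omega_P(\Sigma)=0$, Theorem~\ref{lambda-formula} gives $\lambda(\Sigma)=\omega_G(\Sigma)$, and each right wing contributes exactly two half-planes. Multiplicity-one then forces $\{\kappa_i^+,\rho_i^+\}_{i=1}^{\omega_G/2}$ to equal $\{w_1,\ldots,w_{\lambda(\Sigma)}\}$ as sets, each $w_j$ appearing exactly once; item (i) is an immediate corollary.

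For (ii)--(iv), I would then track the smallest and second-smallest values of the bijection. With $\kappa_i^+$ denoting the smaller of each wing's two asymptotes and $\kappa_1^+<\kappa_2^+<\cdots$, the value $\kappa_1^+$ is strictly the smallest over all $\{\kappa_i^+,\rho_i^+\}$; since $w_1$ is the minimum of the partition, this proves (ii). The second-smallest element must be either $\kappa_2^+$ or $\rho_1^+$, since every $\rho_j^+$ with $j\geq 2$ strictly exceeds $\kappa_2^+$. If $\kappa_2^+<\rho_1^+$, then $\kappa_2^+=w_2$; otherwise $\rho_1^+=w_2$ and $\kappa_2^+$ becomes the third-smallest element $w_3$. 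This dichotomy yields (iii), and (iv) is the corresponding equivalence.

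The principal geometric ingredient, and where I anticipate the main obstacle, lies in (v). Here I plan to establish a nesting-or-disjointness alternative for the asymptotic slabs $[\kappa_i^+,\rho_i^+]$: for any two distinct right wings the slabs are either disjoint or one is contained in the other. Partial overlap would be ruled out by embeddedness of $\Sigma$, because the asymptotic tilted grim reaper cylinders (uniquely determined by Corollary~\ref{eq:corB}) would otherwise intersect in every sufficiently high horizontal slice, contradicting embeddedness of $\Sigma$ in its high-$x_3$ region. Granting this alternative, for each wing with $\kappa_i^+=w_j$, every $w_k$ lying strictly between $\kappa_i^+$ and $\rho_i^+$ must be an asymptote of a wing whose slab is nested inside $[\kappa_i^+,\rho_i^+]$, and each such nested wing contributes exactly two such intermediate values. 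Hence the number of $w$'s strictly between $\kappa_i^+$ and $\rho_i^+$ equals $2L(i)$ where $L(i)\geq 0$ is the number of nested wings, giving $\rho_i^+=w_{j+2L(i)+1}$ as required.
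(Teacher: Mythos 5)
Your proposal is correct and follows essentially the same approach as the paper: use multiplicity one to identify $\{\kappa_i^+,\rho_i^+\}_i$ with $\{w_1,\ldots,w_{\lambda(\Sigma)}\}$ bijectively, extract (i)--(iv) from this bijection together with the ordering of the $\kappa_i^+$, and derive (v) from an embeddedness-forced nesting-or-disjointness alternative for the asymptotic slabs. (The paper phrases (iii)--(iv) as a contradiction argument tracking which index $w_{1+l}$ could equal $\kappa_2^+$, whereas you directly identify the second- and third-smallest values; and for (v) the paper simply asserts the nesting dichotomy ``from embeddedness'' where you supply the reason — these are stylistic rather than substantive differences.) One small note of caution: when justifying non-overlap you appeal to intersections ``in every sufficiently high horizontal slice'' — the cleaner picture is a fixed large-$x_2$ slice, where each grim-reaper wing appears as a convex U-shaped curve over its slab $[\kappa_i^+,\rho_i^+]$, and two such curves over partially overlapping slabs necessarily cross; at fixed large $x_3$ the asymptotic lines are disjoint and the contradiction is less immediate.
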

\begin{proof}
The statements (i) and (ii) follows directly from \eqref{limit-1} and \eqref{order}. To derive (iii), we observe the following: By the multiplicity-one property at infinity and (ii), we know that $\kappa^+_{2} = w_{1+l}$ for some $l \geq 1$. If $l \geq 3$, then there must exist a wing $W^+_j$ for some $j \geq 3$ such that either $\kappa^+_j = w_3$ or $\rho^+_j = w_3$. In the latter case, we would have $\kappa^+_j = w_2$. Hence, in both cases, we would obtain 
\[
w_{1} = \kappa^+_1 < \kappa^+_j < \kappa^+_{2},
\] 
which contradicts \eqref{order}. This implies that $\kappa^+_{2}$ must be either $w_2$ or $w_3$.

Regarding (iv), if it was true $\kappa^+_2 = w_3$ and $\rho^+_1 \neq w_2$, then there would exist a wing $W^+_j,$ for some $j \geq 3,$ such that either $\kappa^+_j = w_2$ or $\rho^+_j = w_2$, leading to a contradiction with \eqref{order}.

The proof (v) can be obtained as follows: Let $W_i^+$ and $W_j^+$ be two right wings of $\Sigma.$ Suppose that either $\kappa_i^+>\kappa_j^+$ or $\rho_j^+>\rho_i^+$ one sees from the embeddedness of the surface that 
\[
\kappa_i^+>\kappa_j^+>\rho_j^+>\rho_i^+.
\]
Once we get this, $L(i)$ will be the number of wings $W_J^+$ so that $\kappa_i^+>\kappa_j^+>\rho_j^+>\rho_i^+.$ This completes the proof. 
\end{proof}

The following lemma provides a relationship between the left and right ordering. Here, we are going to assume henceforth that $\Sigma$ is simply connected.

\begin{lem}\label{2-property}
Assume that $\Sigma$ is also simply connected. Then, it holds $\kappa^+_{2}=w_{2},$ iff $\kappa^-_2=w_3.$ Similarly, $\kappa^-_{2}=w_{2},$ iff $\kappa^+_2=w_3.$
\end{lem}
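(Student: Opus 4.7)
The plan is to argue by contradiction, combining the combinatorial structure given by Lemma~\ref{1-property} with the topological constraint of simple connectivity. It suffices to rule out the two cases $\kappa_2^+=\kappa_2^-=w_2$ and $\kappa_2^+=\kappa_2^-=w_3$: the two equivalences in the statement then follow from the dichotomy $\kappa_2^\pm\in\{w_2,w_3\}$ of Lemma~\ref{1-property}(iii). The second case reduces to the first by applying the reflection $x_1\mapsto -x_1$ (which interchanges $w_j$ with $w_{\lambda+1-j}$ and swaps the roles of $w_2$ and $w_{\lambda-1}$), so the main task is to rule out $\kappa_2^+=\kappa_2^-=w_2$.

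First, by Lemma~\ref{1-property}(iv)--(v) applied to both sides, this hypothesis forces the right pairings of planes by wings to realize $(w_2,w_{\rho_2^+})$ strictly nested inside $(w_1,w_{\rho_1^+})$ with $\rho_1^+\geq w_4>\rho_2^+\geq w_3$, and the same doubly nested configuration must hold on the left. Next, combining Theorem~\ref{structure-thm} with the multiplicity-one property, I can use the decomposition of $\Sigma$ into the $\lambda$ upward $\ee_1$-graphs $\Sigma_j^{\rm up}$ glued along the curves of minima $\mathcal{M}_W$ to trace an explicit closed curve $\gamma\subset\Sigma$: start on $\Sigma_2^{\rm up}$ in the central region; enter the inner right wing $W_2^+$ and cross $\mathcal{M}_{W_2^+}$ onto $\Sigma_{\rho_2^+}^{\rm up}$; return through the central region; and close up symmetrically via $W_2^-$ and $\mathcal{M}_{W_2^-}$ on the left, using the connectedness of $\Sigma$ and the shared inner vertex $w_2$ to guarantee that the pieces match up into a simple closed curve.

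To finish I plan to show that $\gamma$ is not null-homotopic in $\Sigma$ via a linking-number argument in $\R^3$. Choose a vertical line $\ell=\{(x_1^*,0,z):z\in\R\}$ with $x_1^*\in(w_2,w_{\rho_2^+})$ positioned so that $\ell\subset\R^3\setminus\Sigma$, which is possible because at generic heights the sheets of $\Sigma$ cluster near the horizontal planes $\{x_1=w_j\}$ and avoid generic vertical lines strictly between them. By construction, $\gamma$ traverses the inner slab $\{w_2<x_1<w_{\rho_2^+}\}$ along the two inner bi-graph sheets on opposite sides of $\Sigma$, making one circuit around $\ell$, so that $\mathrm{lk}(\gamma,\ell)\neq 0$. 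However, a null-homotopic loop in an embedded surface $\Sigma\subset\R^3$ bounds a disk $D\subset\Sigma$, and by embeddedness $D\cap\ell=\varnothing$, so its linking with $\ell$ would vanish. This contradiction rules out the assumed configuration.

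The main technical obstacle will be the last step: justifying rigorously that $\ell$ can be chosen in $\R^3\setminus\Sigma$ and that the resulting linking number is genuinely non-zero. Both points rest on precise control of the positions of the inner sheets of $\Sigma$ in the slab $\{w_2<x_1<w_{\rho_2^+}\}$ and on tracking the homotopy class of $\gamma$ in $\R^3\setminus\ell$; this should follow from the asymptotic description in Section~\ref{sec:asym-wings} (in particular Corollary~\ref{eq:corB} and Proposition~\ref{conseq.1}) combined with the multiplicity-one hypothesis, which together pin down the transverse location of each sheet to arbitrary accuracy at sufficiently large $x_3$.
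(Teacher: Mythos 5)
Your approach diverges substantially from the paper's, and it has two genuine gaps. For context: the paper's proof rules out only the configuration $\kappa_2^+=\kappa_2^-=w_3$, observing via Lemma~\ref{1-property}(iv) that then $\rho_1^\pm=w_2$, so no wing of $\Sigma$ straddles the slab $\{w_2<x_1<w_3\}$; hence $\Sigma\cap\{x_1=\tfrac{w_2+w_3}{2}\}$ is a compact union of circles, and simple connectivity together with the strong maximum principle (an innermost circle would bound a disk tangent from one side to a vertical plane, itself a translator) gives the contradiction. You correctly note that the biconditional also requires ruling out $\kappa_2^+=\kappa_2^-=w_2$ — a point the paper's written proof does not explicitly address, and one for which the compact-slice argument does not apply, since in that configuration $W_1^\pm$ does straddle $\{w_2<x_1<w_3\}$. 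That observation is a genuine strength of your proposal. But neither of your two steps to handle it is sound.

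The first gap is the reflection reduction. The reflection $x_1\mapsto -x_1$ sends $w_j$ to $-w_{\lambda+1-j}$, so the ``second wing'' $W_2^{+}$ of the reflected surface is determined by the pairing of $w_\lambda$ and $w_{\lambda-1}$ in $\Sigma$, not by the pairing of $w_2$ and $w_3$. Thus reflection does not interchange the cases $\kappa_2^\pm=w_2$ and $\kappa_2^\pm=w_3$ except (possibly) when $\lambda=4$, and even then the wing re-indexing would need to be checked. Your claimed reduction therefore fails, and the case $\kappa_2^\pm=w_3$ is not handled at all by your argument. The second gap is the existence of the vertical line $\ell\subset\R^3\setminus\Sigma$ with $x_1(\ell)\in(w_2,w_{\rho_2^+})$. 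Your heuristic controls large $|x_3|$, where $\Sigma$ clusters near the limit planes, but for $|x_3|$ bounded the slice $\Sigma\cap\{x_2=0\}$ is a one-dimensional curve sweeping across the rectangle $(-w,w)\times[-N,N]$ in the $(x_1,x_3)$-plane, and there is no reason a full vertical line can be threaded through its complement; this is precisely the step you flag as ``the main technical obstacle,'' and as written it is unresolved, so the linking-number contradiction is not established. In short: the paper uses a compact-slice/maximum-principle argument for the case it treats, whereas you attempt a linking argument for a different case, with a wrong reduction between the two and an unjustified existence claim for the linking line.
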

\begin{proof}
Both cases will follow by verifying that we cannot have $\kappa^+_{2}=\kappa^-_{2}=w_{3}$. Indeed, if this were true, then by Lemma \ref{1-property}, we would have $\rho_1^+=\rho_1^-=w_2$. Consequently, the multiplicity-one property at infinity and \eqref{order} imply the nonexistence of wings of $\Sigma$ such that its upward tangent planes are half-planes of $\{x_1=w_2\}\cup\{x_1=w_3\}$ and so that the intersection  $\Sigma \cap \left\{x_1 = (w_2 + w_3)/2 \right\}$ is not compact. In particular, this implies that $\Sigma \cap \left\{x_1 = (w_2 + w_3)/2\right\}$ would be a compact set, and thus, it is a union of topological circles. Since $\Sigma$ is simply connected, each circle in $\Sigma \cap \left\{x_1 = (w_2 + w_3)/2\right\}$ is null-homotopic, and so, by the maximum principle, it would bound a planar region in $\Sigma \cap \left\{x_1 = (w_2 + w_3)/2\right\},$ which is absurd. This contradiction completes the proof of the lemma.
\end{proof}

The previous lemma provides an intriguing rigidity result for strictly mean convex translators in light of \cite{IRM}.
\begin{prop}
    Let $\Sigma$ be a complete embedded translating soliton of finite entropy and genus satisfying the multiplicity-one property at infinity so that ${\rm width}(\Sigma)\in[\pi, 2\pi]$ and without planar type wings. Then, $\Sigma$ is either a tilted grim reaper or $\Delta-$wing. 
\end{prop}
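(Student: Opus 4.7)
The plan is to show that $\lambda(\Sigma)=2$ and then invoke Theorem \refThmFnospace. Since $\Sigma$ has no planar wings, $\omega_P(\Sigma)=0$, so Theorem \refThmA item (2) yields $\displaystyle\lim_{t\to+\infty}(\Sigma+t\ee_3)=\emptyset$, matching the asymptotic hypothesis of Theorem \refThmFnospace. Moreover, Theorem \ref{lambda-formula} gives $\lambda(\Sigma)=\omega_G(\Sigma)$, an even integer $\ge 2$. Thus the core task is to prove $\omega_G(\Sigma)\le 2$; once this is done, Theorem \refThmF delivers the conclusion, with the boundary case $2w=\pi$ singling out the standard grim reaper cylinder.

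Arguing by contradiction, assume $\omega_G(\Sigma)\ge 4$, so that $\Sigma$ admits at least two right wings $W_1^+,W_2^+$. By Corollary \ref{eq:corB}, each right wing is asymptotic to a (possibly tilted) grim reaper cylinder of width in $[\pi,2w]\subseteq[\pi,2\pi]$, with a definite asymptotic slope $\pm s(b_i)\ge 0$ provided by Theorem \ref{prop:Asymp-GR}. By Lemma \ref{1-property} together with the embeddedness of $\Sigma$, the asymptotic $x_1$-ranges of the two right wings are either pairwise disjoint or nested. In the disjoint case, each range has length $\ge\pi$, and the multiplicity-one property at infinity forbids the two ranges from sharing an endpoint, so their total $x_1$-extent strictly exceeds $2\pi$, contradicting $2w\le 2\pi$. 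In the nested case, Theorem \ref{thm-A} together with \cite[Theorem~10]{Chini} forces the outer wing to span the full slab (asymptotic width equal to $2w$), while the inner wing has asymptotic width strictly less than $2w$ and at least $\pi$. This is already contradictory when $2w=\pi$. For $2w\in(\pi,2\pi]$, the nested case must be ruled out using finer information: the distinct asymptotic slopes $s(w)>s(b_2)\ge 0$, the asymptotic positions of the curves of minima from Proposition \ref{conseq.1}, the analogous dichotomy on the left side (where a disjoint pairing again violates the width bound and a nested pairing forces matching outer and inner planes), and the embeddedness of $\Sigma$ as a single connected surface threading through the central region $\Sigma(-s,s)$. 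A slicewise analysis of $\Sigma\cap\{x_1=c\}$ for $c$ strictly between the inner and outer asymptotic planes, combined with the Morse--Rad\'o critical-point bound $\mathsf{N}(F_\vv|_\Sigma)\le\lambda(\Sigma)-\chi(\Sigma)$ from Section \ref{sec:collapsed} and the finite genus hypothesis, is expected to produce the desired contradiction.

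The main obstacle is precisely the nested case when $2w\in(\pi,2\pi]$: as independent embedded surfaces, a nested pair of tilted grim reapers of widths $\pi\le 2b_2<2w$ can coexist inside the slab $|x_1|\le w$, so the contradiction must come from the requirement that both appear as ends of a single connected embedded translator of finite genus satisfying the multiplicity-one property. Carrying this step out rigorously will likely draw on the full strength of the asymptotic analysis of Sections \ref{sec:PDEs_drift}--\ref{sec:asym-wings}, the finite-type structure provided by Theorem \ref{th:finite-type}, and the Morse--Rad\'o critical-point counting used in its proof.
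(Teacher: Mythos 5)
Your reduction to showing $\lambda(\Sigma)=2$ (via Theorem \ref{Class-2}, using $\omega_P(\Sigma)=0$ to feed Theorem A(2) and the entropy formula of Theorem \ref{lambda-formula}) is correct, and it is exactly the paper's route. Your disjoint-case argument is also essentially the paper's ``second case'' ($\kappa_2^+=\kappa_2^-=w_3$, $\rho_1^\pm=w_2$): two disjoint right-wing intervals of length $\ge\pi$ separated by the strict gap $w_3-w_2>0$ force $\mathrm{width}(\Sigma)>2\pi$. And your observation that the nested case is immediately absurd when $\mathrm{width}(\Sigma)=\pi$ is sound, since the outer wing then has width $2w=\pi$ and the inner wing would have width strictly less than $\pi$.

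However, the heart of the proof is the nested case with $\mathrm{width}(\Sigma)\in(\pi,2\pi]$, and here you explicitly concede you do not have an argument; the list of ingredients you say the contradiction is ``expected to'' come from (Morse--Rad\'o critical-point counts, slicewise analysis of $\Sigma\cap\{x_1=c\}$, Theorem \ref{th:finite-type}) is not a proof, and in fact none of those tools appears in the paper's argument for this proposition. The missing ingredient is Lemma \ref{gap-structure}, which is purely combinatorial. When the two right wings are nested, $\kappa_1^+=w_1<\kappa_2^+<\rho_2^+<\rho_1^+\le w_{\lambda(\Sigma)}$, the multiplicity-one partition together with Lemmas \ref{1-property} and \ref{2-property} forces a \emph{left} wing $W_3$ whose asymptotic $x_1$-range is precisely the inner gap $[\kappa_1^+,\kappa_2^+]=[w_1,w_2]$ (one has $\kappa_1^-=w_1$, $\rho_1^-=w_2=\kappa_2^+$). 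This produces a second grim-reaper interval, disjoint (up to a shared endpoint) from that of the inner right wing $W_2^+$. Each has length $\ge\pi$, hence $\rho_2^+-\kappa_1^+=(\rho_2^+-\kappa_2^+)+(\kappa_2^+-\kappa_1^+)\ge 2\pi$, and the strict inequality $\rho_1^+>\rho_2^+$ gives $2w\ge\rho_1^+-w_1>\rho_2^+-w_1\ge 2\pi$, the contradiction. In short: in the nested case the ``extra'' interval of length $\ge\pi$ comes not from another right wing but from a left wing filling the left-hand gap, and identifying that is what your proposal leaves open.
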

\begin{proof}
We only need to prove that $\lambda(\Sigma)=2,$ by Theorem \ref{Class-2} below. Assume this were not the case. The proof of Lemma \ref{2-property} implies that either $\kappa^+_{2}=w_{2},$ and $\kappa^-_2=w_3$, or $\kappa^+_{2}=\kappa^-_{2}=w_{3}.$ In the first case, Lemma \ref{gap-structure} below will implies that $\kappa_1^+=\kappa_1^-=w_1<\rho^-_1=\kappa^+_2=w_2<w_3\leq\rho_2^+<\rho_1^+\leq w_{\lambda(\Sigma)},$ which leads ${\rm Width}(\Sigma)> 2\pi,$ contradicting our hypothesis.

In the second case, one has
    \[
    \kappa_1^+=\kappa_1^-=w_1<\rho^+_1 =\rho^-_1=w_2<\kappa^+_2=\kappa^-_2=w_3\ {\rm and}\ \rho_2^+,\rho_2^-\leq w,
    \]
    which again implies ${\rm Width}(\Sigma)> 2\pi,$ absurd. Hence, we have proven that $\lambda(\Sigma)=2.$ This concludes the proof.
\end{proof}

The next lemma captures a strong geometric property enjoyed by the ``first'' wings of  $\Sigma$. More precisely, we have:
\begin{lem}\label{gap-structure}
If $\lambda(\Sigma)\geq4$, then $\Sigma$ possesses, up to reflection across $(x_1,x_3)-$plane,  
two right wings $W_1$, $W_2$, and a left wing $W_3$ so that
\begin{eqnarray*}
    \nonumber&W_{1}-t\ee_3\to\{x_1=\kappa_1,x_2\geq s\}\cup\{x_1=\rho_1,x_2\geq s\}&\\ 
    \nonumber &W_{2}-t\ee_3\to\{x_1=\kappa_2,x_2\geq s\}\cup\{x_1=\rho_2,x_2\geq s\}&\\
    &{\it and}&\\ 
    \nonumber&W_{3}-t\ee_3\to\{x_1=\kappa_3,x_2\leq -s\}\cup\{x_1=\rho_3,x_2\leq -s\}\ {\rm as}\ t\to+\infty,&
\end{eqnarray*}
where
\[\kappa_3=\kappa_1<\kappa_2=\rho_3<\rho_2<\rho_1.\]
\end{lem}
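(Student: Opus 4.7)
My plan is to combine the combinatorial restrictions of Lemmas~\ref{1-property} and~\ref{2-property} with a topological embeddedness argument to control the nesting of right wings.

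First I would record that, since $\Sigma$ has no planar wings, the multiplicity-one assumption~\eqref{limit-1} combined with Lemma~\ref{1-property}(i) forces each of the sets $\{\kappa_j^+,\rho_j^+\}_j$ and $\{\kappa_j^-,\rho_j^-\}_j$ to exhaust the partition $\{w_1,\ldots,w_{\lambda(\Sigma)}\}$ with distinct entries. Consequently $\omega_G^+=\omega_G^-=\lambda(\Sigma)/2$, and the hypothesis $\lambda(\Sigma)\geq 4$ guarantees the existence of at least two right wings $W_1^+,W_2^+$ and two left wings $W_1^-,W_2^-$.

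Next, using Lemma~\ref{2-property} and reflecting $\Sigma$ across the $(x_1,x_3)$-plane if necessary (which interchanges left and right wings and swaps the two alternatives in the lemma), I may assume $\kappa_2^+=w_2$ and $\kappa_2^-=w_3$. Lemma~\ref{1-property}(ii) yields $\kappa_1^+=\kappa_1^-=w_1$; the left-wing analog of Lemma~\ref{1-property}(iv) applied to $\kappa_2^-=w_3$ forces $\rho_1^-=w_2$. I then set $W_1:=W_1^+$, $W_2:=W_2^+$, $W_3:=W_1^-$, so that $\kappa_1=\kappa_3=w_1$ and $\kappa_2=\rho_3=w_2$; the required equalities and strict inequality $\kappa_3=\kappa_1<\kappa_2=\rho_3$ are immediate.

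The remaining content is the strict inequality $\rho_2<\rho_1$, which I expect to be the main obstacle because it is the only step that goes beyond the purely combinatorial lemmas and requires a direct geometric use of embeddedness. My plan is to establish a disjoint-or-nested dichotomy for the asymptotic slabs $[\kappa_j^+,\rho_j^+]$ of the right wings. At sufficiently large $x_2=c$, the cross-section of $W_j^+$ by $\{x_2=c\}$ is $C^\infty$-close to the U-shaped cross-section of its asymptotic tilted grim reaper cylinder (Theorem~\ref{prop:Asymp-GR} and Corollary~\ref{eq:corB}); in particular it is an unbounded arc in the $(x_1,x_3)$-plane whose two ends escape to $x_3\to+\infty$ along $x_1=\kappa_j^+$ and $x_1=\rho_j^+$. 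An interleaved configuration $\kappa_1^+<\kappa_2^+<\rho_1^+<\rho_2^+$ would, by a Jordan-type argument in the one-point compactification of the upper half-plane, force the two arcs to cross, contradicting embeddedness of $\Sigma$. Hence the slabs are either disjoint or nested.

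Finally I would rule out the disjoint case: it would require $\rho_1^+<\kappa_2^+=w_2$, and combined with $\rho_1^+>\kappa_1^+=w_1$ and $\rho_1^+\in\{w_1,\ldots,w_{\lambda(\Sigma)}\}$ this would force $\rho_1^+=w_2$, which is contradicted by Lemma~\ref{1-property}(iv) since $\kappa_2^+=w_2\neq w_3$. We are therefore in the nested case $\kappa_1^+<\kappa_2^+<\rho_2^+<\rho_1^+$, yielding $\rho_2=\rho_2^+<\rho_1^+=\rho_1$ and completing the proof.
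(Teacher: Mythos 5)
Your proposal is correct and follows essentially the same route as the paper: both proofs are driven by the combinatorics of Lemma~\ref{1-property} (parts (ii)--(v)) together with Lemma~\ref{2-property} and the embeddedness-induced nesting of right-wing slabs. The main stylistic difference is that you reflect at the outset so as to fix $\kappa_2^+=w_2$ and then argue in a single configuration, while the paper instead does an explicit case split on whether $\rho_1^+=w_2$ or $\rho_1^+>w_2$ and reflects at the end. One small redundancy: your paragraph establishing the disjoint-or-nested dichotomy via a Jordan-arc argument for the asymptotic cross-sections is essentially re-proving the content that appears inside the paper's own proof of Lemma~\ref{1-property}(v) (the chain $\kappa_i<\kappa_j<\rho_j<\rho_i$ from embeddedness); since only the proof of (v), not its statement, contains the nesting, making it explicit as you do is defensible, but you could equally well cite that argument directly rather than re-derive it. Finally, a minor clean-up: when ruling out the disjoint case, $\rho_1^+<\kappa_2^+=w_2$ together with $\rho_1^+>w_1$ and $\rho_1^+\in\{w_1,\dots,w_{\lambda(\Sigma)}\}$ is already impossible outright (there is no partition point strictly between $w_1$ and $w_2$), so the appeal to Lemma~\ref{1-property}(iv) is only needed if one wishes to allow the degenerate touching case $\rho_1^+=\kappa_2^+$, which is in any case already excluded by part (i).
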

\begin{proof}
We begin by noting that $\rho_1^+$ is either equal to $w_2$ or greater than $w_2$. Assume first that $\rho^+_1 = w_2$. Then, by Lemma \ref{1-property} (iii), we have $\kappa^+_2 = w_3$. Consequently, Lemma \ref{2-property} and the embeddedness of $\Sigma$ imply that $\kappa^-_2 = w_2$. However, Lemma \ref{1-property} (iv) and (v) then ensure that $\rho^-_1 = w_{2 + 2L} > \rho^-_2$, where $L \geq 1$. In other words, we have verified that the wings $W_1^-$, $W_2^-$, and $W_1^+$ satisfy the desired properties.

Similarly, one can deduce that $W_1^+$, $W_2^+$, and $W^-_1$ also satisfy the desired properties. This concludes the proof of the lemma.
\end{proof}

\begin{rem}
    Notice that the wings $W_2$ lies geometrically above $W_1$ with respect to $x_3-$axis. To be more precise, if $L=\{(x_1,x_2,s)\;:\;s\in\R\}$ is any vertical line that intersects $W_1$ and $W_2,$ then
    \[
\sup\{s\;:\;(x_1,x_2,s)\in L\cap W_1\}\leq\inf\{s\;:\;(x_1,x_2,s)\in L\cap W_2\}.
    \] We will indicate this condition by writing $W_1\leq W_2.$

Comparing with prongs constructed by \cite{HMW-A}, this lemma accurately captures the behavior of the wings that appear in any prong. 
\end{rem}

Now we are ready to show the main result of this section.
\begin{thmE}\label{Classification-half-space}
The unique complete, simply connected translating solitons of finite width and entropy in $\R^3$ satisfying the multiplicity-one property at infinity and lying in a half-slab are tilted grim reaper surfaces, and $\Delta$-wings.
\end{thmE}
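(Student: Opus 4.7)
My plan is to reduce Theorem~E to the entropy-two classification of Theorem~\refThmF\ via two successive reductions enabled by the half-slab hypothesis: first eliminating all planar-type wings, and then ruling out $\lambda(\Sigma)\ge 3$.

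\emph{Step 1.} I will show that $\omega_P(\Sigma)=0$, equivalently $\lim_{t\to+\infty}(\Sigma+t\ee_3)=\emptyset$. By Definition~\ref{def:half-slab}, $\Sigma\subseteq\mathcal S(w,\theta,c)$ for some $\theta\in[0,\pi)$. For $\theta\in(0,\pi)$, $\sin\theta>0$ and the translate satisfies
\begin{equation*}
\Sigma+t\ee_3\;\subseteq\;\left\{\cos\theta\,x_2+\sin\theta\,x_3\ge c+t\sin\theta\right\},
\end{equation*}
whose right-hand region recedes to infinity as $t\to+\infty$. Every compact subset of $\R^3$ is therefore eventually disjoint from $\Sigma+t\ee_3$, so the smooth-on-compact-sets limit must be empty. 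Theorem~\ref{thm-A}~(2) then forces $\omega_P(\Sigma)=0$, and hence $\lambda(\Sigma)=\omega_G(\Sigma)$ via the entropy formula of Theorem~\ref{lambda-formula}. The degenerate case $\theta=0$ (so $\Sigma\subseteq\{x_2\ge c\}$) is incompatible with the hypotheses: every tilted grim reaper cylinder or $\Delta$-wing extends geometrically in both $x_2$-directions (its curve of minima is a straight line of finite slope, unbounded in $x_2$), so the asymptotic structure theorem of Theorem~\refThmB\ together with Corollary~\ref{eq:corB} would force $\Sigma$ to admit wings escaping to $x_2\to-\infty$, contradicting $\Sigma\subseteq\{x_2\ge c\}$ (the alternative of $\Sigma$ being a single vertical plane being excluded by the multiplicity-one property at infinity combined with $\lambda(\Sigma)\ge 2$).

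\emph{Step 2.} I will show $\lambda(\Sigma)\le 2$ by contradiction. Suppose $\lambda(\Sigma)\ge 3$. In the subcase $\lambda\ge 4$, Lemma~\ref{gap-structure} yields a prong of three wings: two right wings $W_1,W_2$ and one left wing $W_3$, with nested asymptotic planes $\kappa_3=\kappa_1<\kappa_2=\rho_3<\rho_2<\rho_1$. By Corollary~\ref{eq:corB} and Theorem~\ref{prop:Asymp-GR}, each $W_i$ is asymptotic to a unique tilted grim reaper cylinder of width $2b_i\in(0,\pi]$ and asymptotic slope $\pm s(b_i)$, with $s(b)=\sqrt{(\pi/(2b))^2-1}$. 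The half-slab condition translates into explicit slope-sign inequalities: for a right wing of slope $+s(b)$ the constraint is $\cos\theta+\sin\theta\,s(b)\ge 0$, and analogous conditions for the other three sign cases. Running through the case analysis of $\pm$ sign assignments to the three prong wings, combined with the nested plane ordering $\rho_1>\rho_2>\kappa_2>\kappa_1=\kappa_3$, I show that no consistent assignment coexists with a single half-slab angle~$\theta$; heuristically, the three asymptotic reapers of distinct widths demand three mutually incompatible ``tilt-up'' directions, while a half-slab selects only one. The remaining case $\lambda=3$ is handled similarly: Lemmas~\ref{1-property}-\ref{2-property} force the configuration (up to reflection across the $(x_1,x_3)$-plane) $W_1^+=\{w_1,w_3\}$, $W_2^+=\{w_2,w_3\}$, $W_1^-=\{w_1,w_2\}$, and the same style of slope-incompatibility argument yields the contradiction.

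\emph{Conclusion.} With $\lambda(\Sigma)=2$, $\lim_{t\to+\infty}(\Sigma+t\ee_3)=\emptyset$, finite width and finite genus, Theorem~\refThmF\ applies and concludes that $\Sigma$ is a (possibly tilted) grim reaper cylinder or a $\Delta$-wing.

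The main obstacle will lie in Step~2, namely in making rigorous the heuristic that ``the three asymptotic tilted reapers of the prong (or of the $\lambda=3$ configuration) cannot coexist in a single half-slab.'' The technical core is a case-by-case analysis combining (i) the explicit formula $s(b)=\sqrt{(\pi/(2b))^2-1}$ relating reaper width and asymptotic slope, (ii) the slope-sign constraints arising from the half-slab angle $\theta\in[0,\pi)$, and (iii) the nested plane orderings dictated by Lemmas~\ref{gap-structure} and~\ref{2-property}, and closing out each combinatorial case consistently.
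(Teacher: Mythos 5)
Your proposal follows the same overall strategy as the paper's proof: reduce to showing $\lambda(\Sigma)=2$, then invoke the entropy-two classification, with the core obstruction being that the nested prong of wings from Lemma~\ref{gap-structure} cannot coexist because their asymptotic tilted grim reapers (Theorem~\ref{prop:Asymp-GR}, Corollary~\ref{eq:corB}, Proposition~\ref{growth-function}) are incompatible. However, there is a genuine gap in your Step~2, together with some lesser issues.

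The gap. Your ``slope-sign case analysis'' is, as you say yourself, heuristic, and you never actually close the cases. The paper proceeds differently: it dichotomizes on $\inf_\Sigma x_3 > -\infty$ versus $\inf_\Sigma x_3 = -\infty$. In the first case, all wing slopes are forced to be nonnegative (a negative asymptotic slope would drive $f_W\to-\infty$, contradicting the bound below), and then the two nested right wings $W_2\ge W_1$ have reapers of strictly different widths, hence strictly different slopes $\eta>\varsigma>0$, and Proposition~\ref{growth-function} forces the faster-growing inner minimum curve to overtake the outer one, giving an intersection. The second case is treated by an analogous slope comparison. Your sign-based combinatorics might be shoehorned into the same mold, but as written it does not amount to a proof; this step needs to be replaced with an actual argument.

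Two smaller points. First, once your Step~1 establishes $\omega_P(\Sigma)=0$, you automatically have $\lambda(\Sigma)=\omega_G(\Sigma)$, which is even: under multiplicity one, each grim-reaper wing contributes a pair of half-planes all on one side, and these must pair up into full planes, forcing equal numbers of left and right wings. So the separate $\lambda=3$ case you propose is vacuous; in fact the configuration $W_1^+=\{w_1,w_3\}$, $W_2^+=\{w_2,w_3\}$, $W_1^-=\{w_1,w_2\}$ is inconsistent (four right half-planes cannot pair with only two left half-planes). Second, your slope formula is inverted: the tilted reaper of width $2b\ge\pi$ has slope $s(b)=\sqrt{(2b/\pi)^2-1}$, not $\sqrt{(\pi/(2b))^2-1}$.

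On the positive side, your Step~1 is a useful addition: the paper relies on the section-wide standing assumption of ``only grim reaper type wings'' without explicitly deriving it from the half-slab hypothesis, and your translation argument handles $\theta\ne 0,\pi/2$ cleanly. For $\theta=0$, though, the invocation of Theorem~\refThmBnospace{} and Corollary~\ref{eq:corB} doesn't actually give you wings escaping to $x_2\to-\infty$; a cleaner route is to note that $\Sigma\subseteq\{x_2\ge c\}$ is incompatible with Theorem~\ref{thm-A}, which gives full vertical planes as the $t\to-\infty$ limit, while a surface confined to $\{x_2\ge c\}$ could only limit to half-planes.
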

\begin{proof}
First, observe that we cannot have $\theta = \pi/2$ by \cite[Theorem 1]{Chini-Moller}. Moreover, up to reflection across the $(x_1,x_3)$-plane, we can assume that $\Sigma \subset S(\theta,0,w)$, where $w > 0,$ $\theta \in [0, \pi/2)$ and $S(\theta,0,w)=\{(x_1,x_2,x_3)\in\R^3\;:\;|x_1|\leq w,\ \cos\theta x_2+\sin\theta x_3\geq 0\}.$  Under this normalization, Theorem \refThmF below (see also \cite[Theorem 1]{Chini} or \cite[Theorem 10.1]{entropy}) shows that we only need to verify that $\lambda(\Sigma) = 2$. We proceed by contradiction and assume that $\lambda(\Sigma) \geq 4$. To reach a contradiction, we divide the proof into two cases: $\displaystyle\inf_\Sigma z>-\infty$ and $\displaystyle\inf_\Sigma z=-\infty.$

Assume first that $\displaystyle\inf_\Sigma z > -\infty$. Without loss of generality, we may assume $\displaystyle\inf_\Sigma z = 0$. Then Lemma \ref{gap-structure} implies that there exist two right wings $W_1$ and $W_2$ such that $W_2 \geq W_1$, up to reflection across the $(x_1,x_3)$-plane. Now, by Theorem \ref{prop:Asymp-GR}, we obtain \[\lim_{s\to+\infty} (W_1-s\ee_2-f_{W_1}(s)\ee_3)=\mathcal{G}\left(\eta,\frac{\kappa_1+\rho_1}{2}\right)\] and \[\lim_{s\to+\infty}(W_2-s\ee_2-f_{W_2}(s\ee_3))=\mathcal{G}\left(\varsigma,\frac{\kappa_2+\rho_2}{2}\right),\] where $\mathcal{G}(\beta,t)$ denotes the tilted grim reaper of width $\pi/\cos \beta$ with plane of symmetry $\{x_1=t\}$, $(\rho_1-\kappa_1)\cos\eta=\pi$ and $(\rho_2-\kappa_2)\cos\varsigma=\pi.$ By our assumptions on $\Sigma$, we have $\frac{\pi}{2} > \eta > \varsigma > 0$. Consequently, Proposition \ref{growth-function} implies that $W_1$ and $W_2$ must have an intersection, which is a contradiction. 

Now, assume instead that $\displaystyle\inf_\Sigma z = -\infty$. In this case, we must have $\theta \in (0, \pi/2)$. One can see that the two wings of Lemma \ref{gap-structure} cannot be left wings. Otherwise, using the notation of the previous paragraph, we could infer $\varsigma < \eta < \theta + \frac{\pi}{2}$, which would mean that the wings $W_1$ and $W_2$ intersect by Proposition \ref{growth-function}, yielding a contradiction. Consequently, we conclude that $W_1$ and $W_2$ are the right wings. Using the notation from the second paragraph, it can be shown that $\varsigma > \eta > \theta + \frac{\pi}{2}$. 

On the other hand, the third wing $W_3$ from Lemma \ref{gap-structure} would be a left wing, such that \[\lim_{s\to-\infty}(W_3-s\ee_2-f_{W_2}(s)\ee_3)=\mathcal{G}\left(\beta,\frac{\kappa_1+\kappa_2}{2}\right),\] where $\beta>\theta+\frac{\pi}{2}.$ Hence, Proposition \ref{growth-function} implies that $W_3$ cannot lie in $S(\theta,0,w)$, which again leads to a contradiction. This completes the proof of the theorem.
\end{proof}
\begin{cor}
The unique complete translating graph of finite width in $\R^3$ lying in a half-slab are tilted grim reaper surfaces, and $\Delta$-wings.
\end{cor}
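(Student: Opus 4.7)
The plan is to derive the corollary as a direct consequence of Theorem E by verifying all its hypotheses for any complete translating graph of finite width in a half-slab. Specifically, letting $\Sigma$ be such a surface---a graph in some direction $\mathbf{w}\in\mathbb{S}^2$---I would check each condition of Theorem E in turn and then invoke that theorem to conclude $\Sigma$ is a (possibly tilted) grim reaper cylinder or a $\Delta$-wing.

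First I would verify the easier hypotheses. The half-slab and finite width assumptions are given. Simple-connectivity of $\Sigma$ follows from its being a graph over an open, simply connected planar domain $\Omega\subseteq\mathbf{w}^{\perp}$ (a strip, half-plane, or $\R^2$ after normalization using the slab and half-slab conditions); the same fact yields genus zero. Finite entropy follows from the width bound $\Sigma\subseteq\mathcal{S}_w$: one has $\area(\Sigma\cap B_r(p))\leq Cwr$ for all $p\in\R^3$ and $r>0$, i.e., quadratic area growth, which by \cite[Theorem 9.1]{BrianBoundary} is equivalent to $\lambda(\Sigma)<\infty$.

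The main obstacle will be verifying the multiplicity-one property at infinity. By Theorem \ref{thm-A}, as $t\to-\infty$, the family $\Sigma+t\ee_3$ converges to a union $\sigma_1\{x_1=u_1\}\sqcup\cdots\sqcup\sigma_k\{x_1=u_k\}$ with $\sum_{j}\sigma_j=\lambda(\Sigma)$, and I need each $\sigma_j=1$. The key observation is that being a $\mathbf{w}$-graph is preserved under vertical translations, so every $\Sigma+t\ee_3$ is again a single-valued $\mathbf{w}$-graph. If some plane $\{x_1=u_j\}$ appeared with multiplicity $\sigma_j\geq 2$, then along a subsequence $t_k\to-\infty$ smooth local convergence near this plane would produce at least two distinct smooth sheets of $\Sigma+t_k\ee_3$ in a neighborhood of $\{x_1=u_j\}$, each a local $\mathbf{w}$-graph whose base projections onto $\mathbf{w}^{\perp}$ overlap; this contradicts the global single-valuedness of $\Sigma+t_k\ee_3$ as a $\mathbf{w}$-graph. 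Having verified all hypotheses, Theorem E then applies and directly yields the stated classification.
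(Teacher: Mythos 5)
Your overall strategy mirrors the paper's: verify the hypotheses of Theorem E (simple connectivity, finite entropy, multiplicity-one at infinity) and then invoke that theorem. The difference is that the paper simply cites \cite[Propositions 3.6 and 4.1]{entropy} for all three facts, whereas you try to re-derive them directly. Unfortunately, your argument for the multiplicity-one property has a genuine gap.

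You argue that if a limit plane $\{x_1=u_j\}$ had multiplicity $\sigma_j\ge 2$, the two sheets of $\Sigma+t_k\ee_3$ near that plane would be $\mathbf{w}$-graphs with overlapping base projections, contradicting single-valuedness. This works if $\mathbf{w}$ is transverse to the limit plane (e.g.\ $\mathbf{w}=\ee_1$): both sheets then project onto essentially the same disk in $\mathbf{w}^\perp$. But the interesting case for translating graphs in a slab is $\mathbf{w}\perp\ee_1$ --- notably $\mathbf{w}=\ee_3$, the direction for the tilted reapers and $\Delta$-wings you are trying to recover. In that case the limit plane $\{x_1=u_j\}$ \emph{contains} $\mathbf{w}$, so it is a degenerate $\mathbf{w}$-graph, and the two nearby sheets, being $C^1$-close to planes $\{x_1=a_1\}$ and $\{x_1=a_2\}$ with $a_1\ne a_2$, project onto two \emph{thin, a priori disjoint} strips near $\{x_1=a_i\}\cap\mathbf{w}^\perp$. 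There is no immediate overlap and hence no contradiction; more global information about how those sheets connect (which is where the wing decomposition of \cite{entropy} enters) is needed.

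Two smaller points. First, the assertion that the base domain $\Omega$ is simply connected (hence $\genus(\Sigma)=0$) is stated but not proved; a priori $\Omega$ could have holes, and ruling this out for a complete graph is precisely part of what \cite[Proposition 3.6]{entropy} does. Second, the area bound $\area(\Sigma\cap B_r(p))\le Cwr$ is not correct as written (it is linear, not quadratic, and e.g.\ for the grim reaper the $\sqrt{1+|\nabla v|^2}$ factor contributes an extra factor of $r$); the correct statement is quadratic growth $\le Cr^2$, which does yield finite entropy via \cite[Theorem 9.1]{BrianBoundary} but requires a more careful derivation than the one-line estimate you give.
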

\begin{proof}
    Indeed, by \cite[Propositions 3.6 and 4.1]{entropy}, any complete graph of finite width is simply connected,  has finite entropy and satisfies the multiplicity-one property at infinity. Hence, the result follows from Theorem \refThmE.
\end{proof}

\section{Translators of entropy two and grim reaper type wings}\label{sec:two-clas}
We conclude the paper with a section devoted to characterizing translators of entropy 2 (see Theorem \refThmF in the introduction and compare with the discussion following Theorem~\refThmFnospace. More precisely, our goal is to classify embedded, complete, collapsed translators \(\Sigma \subset \mathbb{R}^3\) that meet the following criteria:
\begin{enumerate}[(a)]
\item  \(\Sigma\) has finite genus.
\item The entropy \(\lambda(\Sigma)\) equals 2.
\item The limit \(\Sigma + t \, \ee_3 \to \varnothing\) as \(t \to +\infty\).
\end{enumerate}
Hypothesis (c) implies that \(\Sigma\) consists solely of grim reaper-type wings, as per our terminology. Meanwhile, hypothesis (b) specifies that \(\Sigma\) has exactly two wings.
We are going to show that a complete, embedded translator \(\Sigma \subset \mathbb{R}^3\), satisfying (a), (b) and (c) above, must fall into one of two distinct categories: either a tilted grim reaper surface or a \(\Delta\)-wing surface.

For the special case where the width of the strip is precisely \(\pi\), we will establish that the only possible configuration for \(\Sigma\) is the standard grim reaper cylinder. This is because a strip of width \(\pi\) uniquely restricts the shape of the translator, ensuring that \(\Sigma\) must take the form of the classical grim reaper soliton.\vskip3mm

Thus, henceforth, \(\Sigma\) will represent a complete translator with entropy 2, finite genus, and width \(2w\), having two wings of the grim reaper type. We can further normalize \(\Sigma\) (up to a rigid motion) such that:
\begin{equation} \label{norm}
    \Sigma \subseteq \mathcal{S}_w \quad \text{and} \quad \lim_{t \to +\infty} (\Sigma - t \, \ee_3) = \{x_1 = -w\} \cup \{x_1 = w\}.
\end{equation}

\begin{lem}\label{sym}
 $\Sigma$ is symmetric with respect the plane $\{x_1=0\}$. Moreover, it is bi-graph over this plane.   
\end{lem}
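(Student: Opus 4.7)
The plan is to apply the Alexandrov moving planes method with vertical planes $P_c := \{x_1 = c\}$ perpendicular to $\ee_1$. Reflections $R_c$ across $P_c$ fix the translation direction $\ee_3$, hence preserve both the translating soliton equation \eqref{TSE} and Ilmanen's conformal metric $g = e^{x_3}\delta$ in which $\Sigma$ is a minimal surface. As a preliminary, I note that by Corollary \ref{eq:corB}, each grim reaper-type wing $W_\pm$ of $\Sigma$ is asymptotic to a unique (possibly tilted) grim reaper cylinder $\mathcal{G}_\pm$. The normalization \eqref{norm}, together with $\lambda(\Sigma)=2$ and Theorem \ref{thm-A}, forces each $\mathcal{G}_\pm$ to have width $2w$ and to lie inside the closed slab $\mathcal{S}_w$; the only way a width-$2w$ reaper fits inside a width-$2w$ slab is that its plane of symmetry is $\{x_1=0\}$. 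Thus $\Sigma$ is asymptotically symmetric about $\{x_1=0\}$ along both its wings, and in particular, far out along each wing, the reflection of the ``right half'' of $\Sigma$ in $P_0$ agrees with its ``left half'' up to errors going to zero.

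I would then start the sweep from large $c$. For $c$ slightly less than $w$, the set $\Sigma_c^+ := \Sigma \cap \{x_1 \geq c\}$ lies in a thin neighborhood of $\{x_1 = w\}$; the improved gradient estimates of Proposition \ref{grad_estimate_quasilinear_all}, together with the asymptotic symmetry just noted, show that $\Sigma_c^+$ is a graph over its projection to $P_c$ and that its reflection $R_c(\Sigma_c^+)$ lies in $\{x_1 < c\}$, disjoint from $\Sigma \setminus P_c$. Let $c_0 \in [0,w)$ be the infimum of those $c$ for which, on every $(c',w)$, the reflection $R_{c'}(\Sigma_{c'}^+)$ remains graphical and disjoint from $\Sigma \setminus P_{c'}$. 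If $c_0 > 0$, a first contact occurs at $c = c_0$, either at an interior tangency between $\Sigma$ and $R_{c_0}(\Sigma_{c_0}^+)$, or as a boundary tangency along $P_{c_0}$ (where $\Sigma$ becomes tangent to $P_{c_0}$). In both cases, the Hopf-type maximum principle for $g$-minimal surfaces applied to $\Sigma$ and $R_{c_0}(\Sigma)$ forces $R_{c_0}(\Sigma) = \Sigma$. But then $R_{c_0}(\mathcal{G}_+)$, a grim reaper centered at $\{x_1 = 2c_0\}$, must by Corollary \ref{eq:corB} coincide with $\mathcal{G}_-$, centered at $\{x_1 = 0\}$; this forces $c_0 = 0$, contradicting $c_0 > 0$. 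Hence $c_0 = 0$ and $\Sigma$ is invariant under $R_0$, establishing the symmetry.

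The bi-graph property is then immediate from the sweep: for each $c > 0$, $\Sigma \cap \{x_1 > c\}$ is graphical over its projection to $P_c$; letting $c \searrow 0$, $\Sigma \cap \{x_1 > 0\}$ is a graph over $\{x_1 = 0\}$, and by the symmetry just proved $\Sigma \cap \{x_1 < 0\} = R_0(\Sigma \cap \{x_1 > 0\})$ is the mirror graph. The main technical obstacle is to rule out contact ``at infinity'' during the moving planes sweep, which is non-trivial because $\Sigma$ is not compact and the wings extend indefinitely. To handle this, I would use quantitative asymptotic control: Theorem \ref{prop:Asymp-GR} and Corollary \ref{eq:corB} yield the $C^1$-asymptotic match to the symmetric reapers $\mathcal{G}_\pm$; Proposition \ref{conseq.1} provides the uniform distance of the curve of minima from $\{x_1 = \pm w\}$; and the exponential upward convergence in Theorem \ref{thm:exponential} upgrades $C^1$-closeness to a uniform positive gap between $\Sigma$ and $R_c(\Sigma_c^+)$ near infinity, valid for all $c$ in any closed subinterval of $(0,w)$. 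This guarantees that any first contact during the sweep arises from an interior point of finite bounded region, where the maximum principle applies cleanly.
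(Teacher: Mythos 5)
Your proposal is essentially the same Alexandrov moving planes argument as the paper, relying on the same asymptotic tools (Theorem \ref{prop:Asymp-GR}, Corollary \ref{eq:corB}, Proposition \ref{conseq.1}, the exponential upward convergence) to control the non-compactness and rule out contact at infinity, and on the Hopf/boundary maximum principle for $g$-minimal surfaces at a first contact. Your key observation that the normalization \eqref{norm} with $\lambda(\Sigma)=2$ forces both wings' asymptotic reaper cylinders $\mathcal{G}_\pm$ to have width $2w$ and symmetry plane $\{x_1=0\}$ is the same mechanism the paper implicitly uses when declaring ``$\Sigma_-^*(-w+b)=\Sigma_+(-w+b)$ is impossible since $b<w$''.

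Two small corrections. First, to start the sweep (graphicality of $\Sigma_c^+$ for $c$ near $w$) what is needed is control on the set $\{\langle\nu,\ee_1\rangle=0\}$, not gradient decay per se; Proposition \ref{grad_estimate_quasilinear_all} is not the apt citation. The right inputs are that each grim reaper wing is an $\ee_1$-bigraph (Proposition \ref{bigraph}), that $\{\langle\nu,\ee_1\rangle=0\}$ eventually stays near $\{x_1=0\}$ (Proposition \ref{conseq.1}), and compactness for the bounded $|x_2|$-part; this is how the paper justifies the initial step. Second, the reflection $R_{c_0}$ across $\{x_1=c_0\}$ fixes $x_2$, so it carries the right wing to a right wing; hence $R_{c_0}(\mathcal{G}_+)$ must coincide with $\mathcal{G}_+$ (not $\mathcal{G}_-$), which again pins the symmetry plane and forces $c_0=0$. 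Your conclusion is unaffected. Finally, the paper's treatment of contact at infinity is slightly more careful than yours: it splits the offending sequence into bounded vs.\ unbounded distance to the curve of minima, and invokes Proposition \ref{conseq.} (collapse to two planes) in the latter case; this extra case is necessary because the ``$C^1$-match to $\mathcal{G}_\pm$'' you quote only controls a bounded neighborhood of the curve of minima.
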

\begin{proof} The proof of this lemma relies on the classical Alexandrov moving planes method, a symmetry technique frequently used in geometric analysis. Although the argument is well known, we include it here for completeness.

Hence, let us define \(\Sigma_{-}(t) = \{(x_1, x_2, x_3) \in \Sigma \;:\; x_1 \leq t\}\), \(\Sigma_{+}(t) = \{(x_1, x_2, x_3) \in \Sigma \;:\; x_1 \geq t\}\), and \(\Sigma_{-}^*(t)\) to be the reflection of \(\Sigma_-(t)\) across the plane \(\{x_1 = t\}\).

We claim there exists \(\eta\) small enough such that
\begin{eqnarray}\label{moving-plane}
\Sigma_{-}(-w + \eta)\ \text{is an} \ \ee_1\text{-graph} \ {\rm and}\ \Sigma_-^*(-w + \eta) \leq \Sigma_+(-w + \eta),
\end{eqnarray}
where by \(S_1 \leq S_2\)  we mean that for every horizontal line $L$ which intersect $S_1$ and $S_2$ all intersections of $S_1$ with $L$ lie to the left of all intersections of $S_2$ with $L.$ In other words, in our particular situation we want to see that
\[
\sup\{s\;:\; (s,x_2,x_3)\in L\cap \Sigma_-(-w+\eta)^*\leq\inf\{s\;:\;(s,x_3,x_3)\in L\cap\Sigma_+(-w+\eta)\}\}.
\]

To prove this, we know \(\Sigma \setminus \{|x_2| \geq s\}\), for sufficiently large \(s\), has two connected components that are \(\ee_1\)-bi-graphs, by Proposition \ref{bigraph}. Thus, from Theorem \ref{structure-thm}, Theorem \ref{thm:exponential}, Proposition \ref{conseq.1}, and \eqref{norm}, we may conclude that there exists \(\epsilon > 0\) small enough such that \(\Sigma_-(-w + \eta)\) is an \(\ee_1\)-graph for all \(\eta < \epsilon \leq w\). This concludes the proof of the first part of the claim.

Regarding the second part, define the set
\[
\mathcal{A}:=\{s<0\;:\; (\Sigma_-^*(-w+\eta)+s\ee_1)\cap\Sigma_+(-w+\eta)=\varnothing\}.
\]
Let \(s_0=\sup\mathcal{A}\leq 0.\) We assert that \(s_0=0.\) In fact, if \(s_0<0\), we would have either 
\[
(\Sigma_-^*(-w+\eta)+s_0\ee_1)\cap\Sigma_+(-w+\eta)=\varnothing \ {\rm and}\ \mathrm{dist}((\Sigma_-^*(-w+\eta)+s_0\ee_1),\Sigma_+(-w+\eta))=0
\]
or 
\[
(\Sigma_-^*(-w+\eta)+s_0\ee_1)\cap\Sigma_+(-w+\eta)\neq \varnothing.
\]
However, the maximum principle implies that the second case cannot happen. Regarding the first case, it follows from Theorem \ref{thm:exponential} that there exist two sequences \(\{q_k\}\subseteq \Sigma_-^*(-w+\eta)+s_0\ee_1)\) and \(\{p_k\}\subseteq\Sigma_+(-w+\eta)\) which diverge along a wing \(W\) of \(\Sigma\) such that \(|p_k-q_k|\to 0\) and \( x_2(q_k)\to+\infty.\)

Assume first that \(\mathrm{dist}(p_k,\mathcal{M}_W)<+\infty.\) By Theorem \ref{prop:Asymp-GR}, we have \(\Sigma - x_2^k \ee_2 - f_W(x_2^k) \ee_3 \to \mathcal{G}\), where \(\mathcal{G}\) is the tilted grim reaper of width \(2w\) in \(\mathcal{S}_w\), and \(p_k=(x_1^k,x_2^k,x_3^k).\) Furthermore, we may assume that \(x_1^k\to x_\infty.\) Now, our assumption about \(s_0\) implies that \((x_\infty,0,0)\in\mathcal{G}_{-}^{*}(-w + \eta) \cap \mathcal{G}_{+}(-w + \eta) \neq \varnothing\), which is a contradiction since \(\eta<\epsilon\leq w.\)

On the other hand, if \(\mathrm{dist}(p_k,\mathcal{M}_W)=+\infty\) and writing \(p_k=(x_1^k,x_2^k,x_3^k)\) as before, after passing to a subsequence, we can assume \(\Sigma - x_2^k \ee_2 - x_3^k \ee_3 \to \{x_1 = -w\} \cup \{x_1 = w\}\), by Proposition \ref{conseq.}, and \(x_1^k \to x_\infty\), which leads to a contradiction again since \(\eta < w\) and \((x_\infty, 0, 0) \in \{x_1 = -w\} \cap \{x_1 = w\}.\) In both cases, we arrive at a contradiction, and thus it must hold that \(\sup\mathcal{A}=0.\)

Once we have proven \eqref{moving-plane}, define
\[
\mathcal{B}=\begin{Bmatrix}
 & \Sigma_{-}(-w + \eta)\ \text{is an} \ \ee_1\text{-graph} \\ 
\eta\geq0\;:\; & {\rm and} \\ 
& \Sigma_-^*(-w + \eta) \leq \Sigma_+(-w + \eta)
\end{Bmatrix}.
\]
We will normalize the Gauss map \(\nu\) of \(\Sigma\) so that \(\langle\nu,\ee_1\rangle >0\) when \(\Sigma_{-}(-w + \eta)\) is a smooth \(\ee_1\)-graph. Let $b=\sup \mathcal{B}.$ We claim that $b=w.$ We argue by contradiction. Let us assume $b<w.$ Our first step is to prove that $b\in\mathcal{B}.$ Indeed, $\Sigma_{-}(-w + b)$ is a continuous $\ee_1-$graph and $\Sigma_-^*(-w + b) \leq \Sigma_+(-w + b),$ by continuity. We claim $\langle \nu,\ee_1\rangle>0$ on $\Sigma_{-}(-w + b).$ Namely, this is the case for ${\rm int} \Sigma_{-}(-w + b).$ In turn, if $\langle \nu,\ee_1\rangle=0$ at $q\in\partial \Sigma_{-}(-w + b),$ then by the boundary maximum principle applying to $\Sigma_{-}^*(-w + b)$ and $\Sigma_{+}(-w + b)$ at $p$ we achieve $\Sigma_{-}^*(-w + b)=\Sigma_{+}(-w + b),$ which is impossible since $b<w.$ Hence, we must have $\langle \nu,\ee_1\rangle=0$ on $\Sigma_{-}(-w + b),$ which finally implies that $b\in\mathcal{B}.$

Now, we are going to prove the existence of $\epsilon>0$ so that $b+\epsilon\in\mathcal{B},$ which is impossible, and thus it shall hold true that $b=w.$ Take $\eta=\frac{w-b}{2}.$ From Proposition \ref{conseq.1}, we can find $s_\eta>0$ so that if we set $W_1$ and $W_2$ to be the right and left wings of $\Sigma\setminus \{|x_2|\geq s_\eta\},$ respectively, then for $i=1,2$, one has 
\begin{multline*}
\min \left[{\rm dist}\{\{x_1=-w\},\mathcal{M}_{W_i}\cap\{|x_2|\geq s_\eta\}\}, \right.\\ \left.{\rm dist}\{\{x_1=w\},\mathcal{M}_{W_i}\cap\{|x_2| \geq s_\eta\}\}\right]\geq w-\eta.\end{multline*}
From this and using that $W_1$ and $W_2$ are bi-graphs, by Proposition \ref{bigraph}, we conclude that $\Sigma_{-}(-w+b+\eta)\cap\{|x_2|\geq s_\eta\}$ is a smooth $\ee_1-$graph. In turn, since $\langle \nu,\ee_1\rangle >0$ on $\Sigma_{-}(-w+b),$ we can find $\epsilon<\eta$ small enough so that
\[
\langle \nu, \ee_1\rangle>0\ {\rm on}\  \Sigma_{-}(-w+b+\epsilon)\cap\{|x_2|\leq s_\eta\}.
\]
Pulling it all together, we have proven that $\langle \nu, \ee_1\rangle>0$ on $\Sigma_{-}(-w+b+\epsilon),$ which directly infers $\Sigma_-(-w+b+\epsilon)$ is an $\ee_1-$graph. Regarding the inequality $\Sigma_-^*(-w + b+\epsilon) \leq \Sigma_+(-w + b+\epsilon),$ we can apply the same proof used to conclude $\sup \mathcal{A}=0$ to conclude this. Therefore, it follows that $b+\epsilon\in\mathcal{B}.$ This concludes the proof of the lemma.
\end{proof}

From the previous lemma, we conclude that $\{p \in \Sigma \: : \: \langle \nu(p),\ee_1\rangle=0\}$ is a smooth $1$-manifold, where $\nu$ denotes the Gauss map of $\Sigma.$ Besides, this set lies on $\{x_1=0\}.$ 
Furthermore, we have that $\Sigma$ is a bi-graph over a domain $\Omega$ in the $(x_2,x_3)$-plane. The boundary $\partial \Omega$ consists of a non-compact arc $\Gamma$ and $g=\genus(\Sigma)$ Jordan curves, $\gamma_1, \ldots,\gamma_g,$ which are contained in one of the two connected components of the complement of $\Gamma$ in the $(x_2,x_3)$-plane (see Figure \ref{fig:holedomains}.) Indeed, it is not hard to see that $\Gamma=\{p\in\Sigma\;:\;\langle p,\ee_3\rangle=\inf\{\langle q,\ee_3\rangle\;:\;q\in\Sigma\cap\{x_2=F_{\ee_2}(p)\}\}\}\subseteq\{x_1=0\}$. Let us define  $$\mathcal{H}:=\{p \in \Sigma \: : \: H(p)=0\}=\{p \in \Sigma \: : \: \langle \nu(p) , \ee_3\rangle=0\}.$$ Note $C(F_{\ee_2})=\{\langle \nu,\ee_1\rangle=0\}\cap\mathcal{H},$ where $C(F_{\ee_2})$ denotes the set of critical points of $F_{\ee_2}$.

 \begin{figure}
     \centering
     \includegraphics[width=0.4\linewidth]{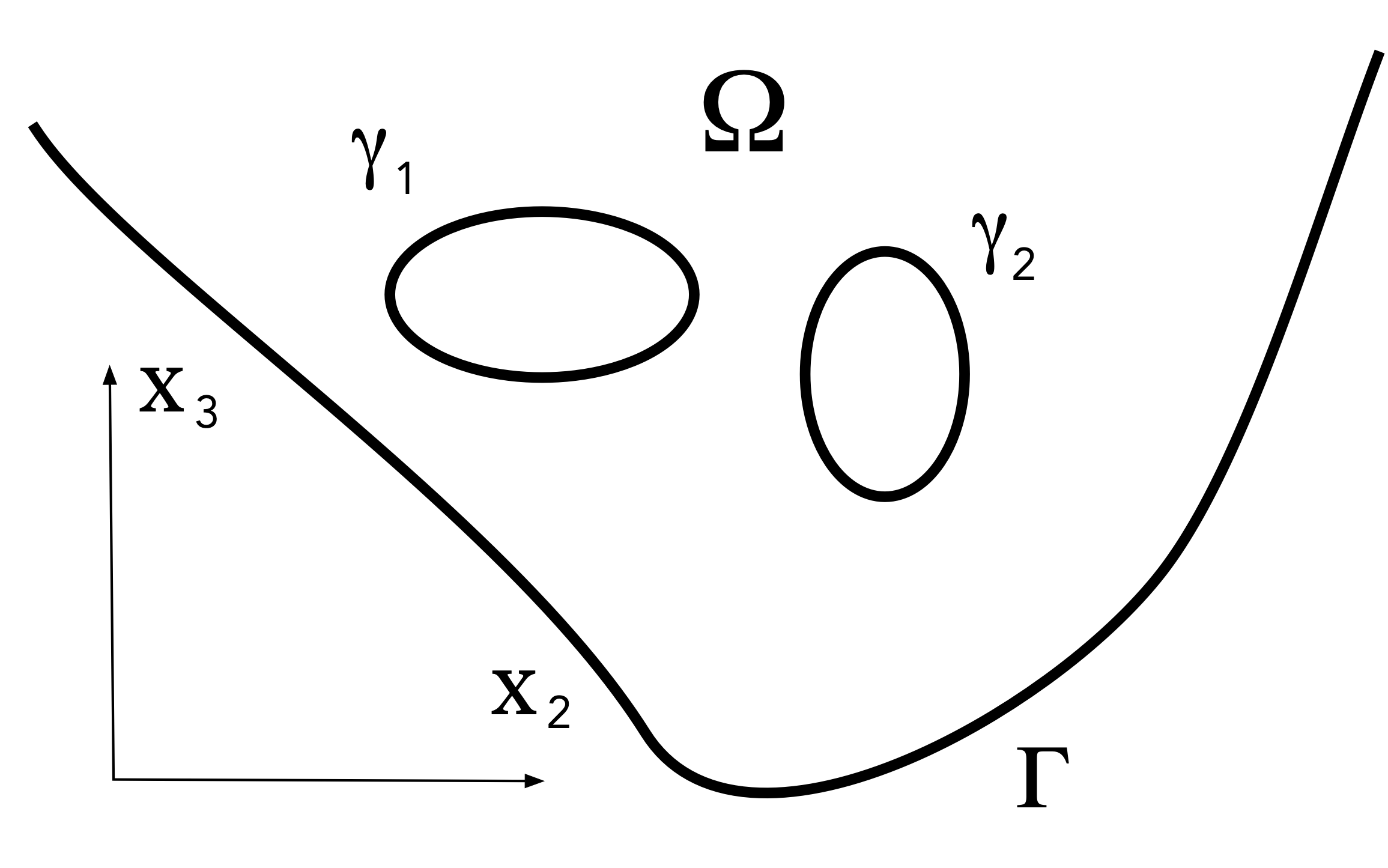}
     \caption{The  domain $\Omega$, for $g=2$. }
     \label{fig:holedomains}
 \end{figure}




\begin{lem} \label{lem:Gamma-graph}
   One has $\Gamma\cap\mathcal{H}=\varnothing$. In particular, the curve $\Gamma$ is a graph over the line $\{x_1=x_3=0\}$.
\end{lem}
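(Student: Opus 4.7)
The plan is to argue the first assertion by contradiction using the sharp critical-point count established in Lemma \ref{counting-crit}, and to then deduce the graph property from the normal direction at points of $\Gamma$. First I would suppose that some $p \in \Gamma \cap \mathcal{H}$ exists. Since $\Gamma \subseteq \{x_1 = 0\}$ and $\Sigma$ is symmetric across this plane by Lemma \ref{sym}, the tangent plane $T_p\Sigma$ contains $\ee_1$, i.e. $\langle \nu(p), \ee_1\rangle = 0$. The additional assumption $p \in \mathcal{H}$ gives $\langle \nu(p), \ee_3\rangle = 0$, so $\nu(p) \in \mathrm{span}\{\ee_2,\ee_3\}$ with vanishing $\ee_3$-component, forcing $\nu(p) = \pm \ee_2$. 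By the characterization of $C(F_{\ee_2})$ already recorded in the paper, this exhibits $p$ as a critical point of $F_{\ee_2}|_\Sigma$.

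The next step is to show that this extra critical point is incompatible with the exact count $\mathsf{N}(F_{\ee_2}|_\Sigma) = 2\,\genus(\Sigma)$. For this I would revisit the reasoning of Lemma \ref{counting-crit}: each closed curve $\gamma_i$ lies in $\{x_1 = 0\}$, hence its tangent vector lies in the horizontal plane $\mathrm{span}\{\ee_2,\ee_3\}$, and at the two points of $\gamma_i$ where $x_2|_{\gamma_i}$ attains its maximum and its minimum the tangent is parallel to $\ee_3$. At such points $T\Sigma \supseteq \mathrm{span}\{\ee_1,\ee_3\}$, so $\nu = \pm \ee_2$ there. This gives at least two critical points of $F_{\ee_2}$ on each $\gamma_i$, hence at least $2\,\genus(\Sigma)$ altogether. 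The putative point $p \in \Gamma$ lies in a connected component of $\partial\Omega$ disjoint from all $\gamma_i$, so it provides a strictly additional critical point, giving $\mathsf{N}(F_{\ee_2}|_\Sigma) \geq 2\,\genus(\Sigma) + 1$, contradicting Lemma \ref{counting-crit}. This establishes $\Gamma \cap \mathcal{H} = \emptyset$.

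For the graph property, I would argue as follows. At any $p \in \Gamma$, we have $\ee_1 \in T_p\Sigma$ (so $\nu(p) \in \mathrm{span}\{\ee_2,\ee_3\}$) and, by Step 1, $\langle \nu(p),\ee_3\rangle \neq 0$. The tangent vector to $\Gamma$ at $p$ is a unit vector in $T_p\Sigma \cap \{x_1 = 0\}$, namely the direction in $\mathrm{span}\{\ee_2,\ee_3\}$ orthogonal to $\nu(p)$. Writing $\nu(p) = a\ee_2 + b\ee_3$ with $b \neq 0$, this tangent equals $\pm(b\ee_2 - a\ee_3)$, whose $\ee_2$-component is $\pm b \neq 0$. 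Consequently $x_2$ has nowhere-vanishing derivative along the smooth, connected, non-compact arc $\Gamma$; since $\Gamma$ is diffeomorphic to $\R$ and its two ends coincide, for $|x_2|$ large, with the graphical minima curves $\mathcal{M}_W$ of the two grim-reaper-type wings (so $x_2|_\Gamma$ is proper and surjective onto $\R$), the map $x_2: \Gamma \to \R$ is a global diffeomorphism, i.e. $\Gamma$ is a graph over $\{x_1 = x_3 = 0\}$. The main conceptual point is identifying $p \in \Gamma \cap \mathcal{H}$ with a genuine extra critical point of $F_{\ee_2}$ and then exploiting the \emph{sharp} equality in Lemma \ref{counting-crit}; the remainder is elementary once this normal-direction dichotomy is in place.
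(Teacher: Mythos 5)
Your proof is correct and follows essentially the same two-step approach as the paper: use the sharp count $\mathsf{N}(F_{\ee_2}|_\Sigma)=2\,\genus(\Sigma)$ from Lemma \ref{counting-crit} (all critical points accounted for by the $\gamma_i$) to rule out a critical point on $\Gamma$ and hence $\Gamma\cap\mathcal{H}=\varnothing$, then derive the graph property from the consequent non-vanishing of the $\ee_2$-component of $\Gamma$'s tangent. The only cosmetic difference is that the paper proves the second part by contradiction (an interior extremum of $x_2$ on an arc of $\Gamma$ would lie in $\mathcal{H}$), whereas you argue directly that $x_2$ has nowhere-vanishing derivative along the proper arc $\Gamma$ — the same geometric content.
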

\begin{proof}
    If there were a critical point $p \in C(F_{\ee_2}) \cap \Gamma$, then  the intersection $\Sigma\cap (p+[\ee_2]^\perp)$ would be a union of $n$ smooth curves passing through $p$ and meeting at angles $\frac{\pi}{n}$ for some $n\geq2$. But then $p$ could not be a minimum of
$\{\langle q,\ee_3\rangle \;:\; q\in\Sigma\cap\{x_2=\langle p,\ee_2\rangle\}\}$, contradicting our hypothesis on $p$.  In particular, there are no critical point on $\Gamma$ and $\Gamma\cap\mathcal{H}=\varnothing.$

    Regarding the second statement, we first observe that the vertical line $\{(0,x_2,x_3)\;:\;x_3\in\R\}$ cannot contain a segment of $\Gamma$, as established in the first part of the proof. Next, let us assume that $\{(0,x_2,x_3)\;:\;x_3\in\R\}$ intersects $\Gamma$ at more than one point. In this case, we could find an arc $\alpha$ of $\Gamma$ such that the endpoints of $\alpha$ lie on $\{(0,x_2,x_3)\;:\;x_3\in\R\}$, and $F_{\ee_2}|_{\alpha}$ would have either a maximum or minimum point. However, this point would belong to $\mathcal{H}$, leading to a contradiction. This concludes the proof. 
\end{proof}
We would like to note that there is a proof of the previous lemma using Morse-Rad\'o theory, but we have chosen this approach because it is shorter.

Notice that Lemma \ref{lem:Gamma-graph} implies that we can parametrize $\Gamma$ by a smooth map $f_\Sigma:\R \to\Sigma$ so that $f_\Sigma(t)=\Gamma\cap\{x_2=t\}.$ Hence, we define \[\hslash(\Sigma)=\sup\{|p-f_\Sigma(\langle\ee_2,p\rangle)|\;:\; p\in\Sigma\cap\{x_1=0\}\}\geq0.\] Note that $\hslash(\Sigma)<+\infty,$ since $\Sigma$ has finite topology.

\begin{lem}\label{topology}
One has that  $\hslash(\Sigma)=0.$ In particular, $\genus(\Sigma)=0.$
\end{lem}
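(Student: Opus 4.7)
Strategy: argue by contradiction, reducing the question to a local extremum analysis at the topmost point of the compact loops. Assume $\hslash(\Sigma)>0$, so $\Sigma\cap\{x_1=0\}$ contains at least one compact loop and $g=\genus(\Sigma)\ge 1$; the finitely many loops $\gamma_1,\ldots,\gamma_g$ are bounded, so I may choose $p_*$ on some loop $\gamma_{i_0}$ maximizing $x_3$ over $\bigcup_i\gamma_i$, and set $Z:=x_3(p_*)$.

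The first step is the tangent-plane analysis at $p_*$. Since $p_*$ is a local maximum of $x_3|_{\gamma_{i_0}}$ and $\gamma_{i_0}\subset\{x_1=0\}$, the tangent vector to $\gamma_{i_0}$ at $p_*$ is horizontal and lies in $\mathrm{span}(\ee_2)$; combined with $\ee_1\in T_{p_*}\Sigma$ coming from the bi-graph fold of Lemma~\ref{sym}, this forces $T_{p_*}\Sigma=\mathrm{span}(\ee_1,\ee_2)$ and $\nu(p_*)=\pm\ee_3$. Near $p_*$, the surface is therefore graphical in the $\ee_3$-direction: $\Sigma=\{x_3=h(x_1,x_2)\}$ with $h(0,x_2(p_*))=Z$ and $\nabla h(0,x_2(p_*))=0$. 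Applying the vertical-graph translator equation \eqref{OldGraphs} at this critical point gives $\Delta h(p_*)=1$. The reflection symmetry $h(-x_1,x_2)=h(x_1,x_2)$ from Lemma~\ref{sym} kills the mixed derivative, and since $p_*$ is a local maximum of $h(0,\cdot)$ one has $h_{22}(p_*)\le 0$, so that $h_{11}(p_*)\ge 1>0$, i.e.\ $h$ is strictly convex in $x_1$ at $p_*$.

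The strict $x_1$-convexity at $p_*$ means that moving off $\{x_1=0\}$ near $p_*$ strictly raises $x_3$ on $\Sigma$: there are points of $\Sigma$ with $x_3>Z$ arbitrarily close to $p_*$ and lying on either side of the symmetry plane. I close the contradiction by tracking the connected component $U$ of $\Sigma\cap\{x_3>Z\}$ whose closure contains $p_*$. Above $Z$ the set $\Sigma\cap\{x_1=0\}$ reduces to $\Gamma\cap\{x_3>Z\}$, since by maximality of $Z$ all compact loops lie in $\{x_3\le Z\}$; and the reflection symmetry relates the two mirror halves of $U$. This sets up a dichotomy: either $U$ reconnects with $\{x_1=0\}$ through $\Gamma\cap\{x_3>Z\}$, in which case the saddle-type level structure at $p_*$ together with the $\Z_2$-symmetry produces a new closed component of $\Sigma\cap\{x_1=0\}$ lying strictly above $Z$, contradicting the choice of $Z$; or the two mirror halves of $U$ escape to infinity without re-entering $\{x_1=0\}$, in which case the asymptotic analysis of Section~\ref{sec:asym-wings} (Corollary~\ref{eq:corB} and Theorem~\ref{prop:Asymp-GR}) forces $\Sigma$ to agree globally with the common asymptotic tilted grim reaper cylinder, which has genus zero, again contradicting $g\ge 1$. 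The main obstacle is the execution of this dichotomy: specifically, it requires precise control of the topology of $U$ using the fact that $x_3|_\Sigma$ has no critical points in $\{x_3>Z\}$ (the gradient flow then foliates $U$ cleanly), together with the rigidity step in the escape case, which uses a sliding/maximum-principle comparison against the tilted grim reaper with matched asymptotics at both $x_2\to\pm\infty$.
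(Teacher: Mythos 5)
Your approach diverges substantially from the paper's: you do a local second-order analysis at the topmost point of the compact loops and then try to derive a global topological contradiction, whereas the paper runs a clean Alexandrov-type sliding argument. It exploits the $\ee_1$-bi-graph structure from Lemma~\ref{sym} directly: write $\Sigma=\Graph(u)\cup\Graph(u)^*$, shift $\Graph(u)$ down by $\hslash(\Sigma)+\varepsilon$ and sideways in $\ee_1$, use the (interior) maximum principle to slide the sideways parameter back to zero, conclude $\Graph(u)\cap(\Graph(u)-(\hslash+\varepsilon)\ee_3)=\varnothing$, and then let $\varepsilon\searrow 0$ so that the two graphs touch from one side; the boundary maximum principle gives $\Graph(u)=\Graph(u)-\hslash(\Sigma)\ee_3$, which forces $\hslash(\Sigma)=0$. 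This is short, needs no Morse theory for $x_3|_\Sigma$, and produces genus zero directly since $\Sigma\cap\{x_1=0\}$ then reduces to $\Gamma$ alone.

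Your local analysis at $p_*$ is essentially right: $\ee_1\in T_{p_*}\Sigma$ from the bi-graph fold, the tangent to $\gamma_{i_0}$ at the top point is $\pm\ee_2$, so $\nu(p_*)=\pm\ee_3$, and the vertical-graph equation plus evenness of $h$ in $x_1$ give $h_{11}(p_*)\ge 1>0$, $h_{22}(p_*)\le 0$. (You do need to address the borderline case $h_{22}(p_*)=0$, where $p_*$ is a degenerate critical point of $x_3|_\Sigma$ and the local level-set picture is not the clean "X"-saddle you then rely on.)

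The global part of your argument has more serious gaps. First, the assertion that $x_3|_\Sigma$ has no critical points in $\{x_3>Z\}$ is unjustified: critical points of $x_3|_\Sigma$ are exactly the fold points where $\nu=\pm\ee_3$, and these may occur anywhere along $\Gamma$, which is an unbounded curve that can (and, e.g.\ for a $\Delta$-wing, does) have interior critical points of its $x_3$-coordinate. The result $\Gamma\cap\mathcal{H}=\varnothing$ only says $\nu\ne\pm\ee_2$ on $\Gamma$, which is the opposite condition. Second, the "escape" horn of your dichotomy asserts that if the two mirror halves of $U$ go to infinity then $\Sigma$ must coincide with the asymptotic tilted grim reaper; this does not follow from asymptotic agreement (the $\Delta$-wings, and a priori the unknown $\Sigma$ itself, are asymptotic to a tilted grim reaper without being one), and the "sliding against the matched reaper" you gesture at is precisely the kind of argument the paper makes rigorous, but applied to $\Graph(u)$ vs.\ itself, not to $\Sigma$ vs.\ a model surface. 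Third, the "reconnect" horn's claim that joining through $\Gamma\cap\{x_3>Z\}$ produces a new closed component of $\Sigma\cap\{x_1=0\}$ above $Z$ is not substantiated: $\Gamma$ is non-compact, so a connection through $\Gamma$ does not by itself close up into a Jordan curve. As written, the dichotomy is not exhaustive and neither branch yields the stated contradiction.
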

\begin{proof}
    Since we know that $\Sigma$ is an $\ee_1-$bi-graph, let us decompose \[\Sigma=\Graph(u)\cup \Graph(u)^*,\] where $u:\Omega\to\R$ is a non-negative smooth function so that $u=0$ on $\partial \Omega$. Let us denote  by $\Graph(u)^*$ the reflection of $\Graph(u)$ across $\{x_1=0\}.$

    For any $\varepsilon>0$, define \[\mathcal{A}=\{s\in[0,+\infty)\;:\; \Graph(u)\cap (\Graph(u)-(\hslash(\Sigma)+\varepsilon)\; \ee_3+s\; \ee_1)=\varnothing\}.\] 
    Since ${\rm width}(\Sigma)<+\infty,$ we get $\mathcal{A}\neq\varnothing.$ Using that $\Gamma$ is a graph over the line $\{x_1=0\}$ over the $x_1x_2-$plane and $\Sigma$ is an $\ee_1-$bi-graph, we conclude that $\inf\mathcal{A}=0.$ Furthermore, the maximum principle implies that $\Graph(u)\cap (\Graph(u)-(\hslash(\Sigma)+\varepsilon)\ee_3)=\varnothing.$ Finally, letting  $\varepsilon\searrow 0$ we get
    \[
    \Graph(u)\geq(\Graph(u)-\hslash(\Sigma)\ee_3)\ {\rm and}\ \Graph(u)\cap (\Graph(u)-\hslash(\Sigma)\ee_3)\neq\varnothing.
     \]
     Thus, the boundary maximum principle guarantees \(\Graph(u)=\Graph(u)-\hslash(\Sigma)\ee_3,\) which means $\hslash(\Sigma)=0.$ This concludes the proof. 
\end{proof}

Finally, we can prove the main result of this section.

\begin{thmF}\label{Class-2}
Let $\Sigma$ be  a complete, embedded translator with finite width, finite genus and entropy $\lambda(\Sigma)=2$. Assume that  the limit, as $t \to +\infty$, of $\Sigma+t \ee_3$ is the empty set. Then $\Sigma$ is either a tilted grim reaper or a $\Delta$-wing. In particular, if the width of $\Sigma$ is $\pi$, then $\Sigma$ is the standard grim reaper cylinder.
\end{thmF}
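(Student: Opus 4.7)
The plan is to combine the structural lemmas of Section \ref{sec:two-clas} to reduce the theorem to Chini's classification of simply connected, collapsed, entropy-two translators \cite{Chini}, and then to deduce the width-$\pi$ rigidity from the explicit widths of the classifying families. The hard analytic work is already carried out in the lemmas; the task is to assemble them correctly.

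First I would extract the coarse combinatorial structure: the assumption $\Sigma + t\ee_3 \to \varnothing$ as $t \to +\infty$ together with Theorem \refThmAnospace shows that $\Sigma$ has no planar wings, so the entropy formula of Theorem \ref{lambda-formula} forces $\omega_G(\Sigma) = 2$, i.e.\ $\Sigma$ has exactly two wings, both of grim reaper type. A horizontal rigid motion then yields the normalization \eqref{norm}.

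Next I would feed this into the lemmas. Lemma \ref{sym} (Alexandrov moving planes, relying on the sharp exponential decay of Theorem \ref{thm:exponential} and the almost-symmetry at infinity of Proposition \ref{conseq.1}) gives that $\Sigma$ is symmetric across $\{x_1 = 0\}$ and is an $\ee_1$-bigraph of some $u \geq 0$ over a planar domain $\Omega$, whose boundary consists of a single unbounded arc $\Gamma$ plus $g := \genus(\Sigma)$ Jordan curves. Because $\Sigma$ is of finite type (Theorem \ref{th:finite-type}), the Morse--Radó bound \eqref{eq:prey-2} combined with the elementary observation that each hole $\gamma_i$ contributes at least two critical points of $F_{\ee_2}$ saturates the inequality and yields $\mathsf{N}(F_{\ee_2}|_\Sigma) = 2g$ (Lemma \ref{counting-crit}); as a corollary $\Gamma$ is free of critical points, hence graphical over the $x_2$-axis. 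The sliding argument in Lemma \ref{topology} then forces $\hslash(\Sigma) = 0$ and therefore $g = 0$. With $\Sigma$ now simply connected, Chini's classification \cite{Chini} concludes that $\Sigma$ is a (possibly tilted) grim reaper cylinder or a $\Delta$-wing.

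For the width-$\pi$ assertion I would appeal to the explicit widths of these two families: tilted grim reapers have widths $\pi/\cos\theta \geq \pi$ with equality only for zero tilt, and every $\Delta$-wing has width strictly greater than $\pi$. Hence width equal to $\pi$ forces the standard (untilted) grim reaper cylinder. I expect the main obstacle in this assembly to be Lemma \ref{topology}: the sliding step needs $\Graph(u)$ and its vertical translate to approach each other cleanly at infinity so that the boundary maximum principle along $\Gamma$ can be applied globally. This again leans on the sharp asymptotic information along the grim reaper wings developed throughout the paper, rather than on any new ingredient introduced in this section.
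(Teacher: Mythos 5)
Your proposal is correct and follows essentially the same route as the paper's proof: Theorem~\ref{lambda-formula} and the emptiness hypothesis give $\omega_G(\Sigma)=2$, then Lemmas~\ref{sym}, \ref{counting-crit}, and \ref{topology} force $\genus(\Sigma)=0$, and Chini's classification finishes. The only (minor) divergence is the width-$\pi$ rigidity, which you deduce directly from the explicit widths $\pi/\cos\theta$ of tilted reapers and the fact that $\Delta$-wings have width $>\pi$, whereas the paper cites the HIMW classification; both are valid and equivalent in substance.
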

\begin{proof}
    Lemma \ref{topology} says that $\Sigma$ is simply connected. Hence, by Chini's result \cite[Theorem 1]{Chini} (see also \cite[Theorem 10.1]{entropy}) we have that $\Sigma$ is either a (possibly tilted) grim reaper cylinder or a $\Delta-$wing.
    
    Regarding the second part, when the width is $\pi$, we conclude that $\Sigma$ coincides with the standard grim reaper cylinder, by the classification theorem for mean convex translators in \cite{HIMW}.
\end{proof}

\bibliographystyle{acm}
\bibliography{bibliography_v3.bib}
\end{document}